\theoremstyle{plain}
\numberwithin{equation}{section}
\newcommand{\U}{\underline{U}}
\renewcommand{\H}{{\cal H}}
\newcommand{\F}{\mathcal{F}}
\newtheorem{theorem}{Theorem}[section]
\newtheorem{proposition}[theorem]{Proposition}
\newtheorem{lemma}[theorem]{Lemma}
\newtheorem{corollary}[theorem]{Corollary}
\newtheorem{remark}[theorem]{Remark}
\newtheorem{remarks}[theorem]{Remark}
\newtheorem{definition}[theorem]{Definition}
\newcommand{\be}{\begin{equation}}
\newcommand{\ee}{\end{equation}}
\newcommand{\uno}{\mathds{1}}
\newcommand{\e}{\varepsilon}
\newcommand{\ov}{\overline}
\newcommand{\R}{\mathbb R}
\newcommand{\C}{\mathbb C}
\newcommand{\Z}{\mathbb Z}
\newcommand{\N}{\mathbb N}
\newcommand{\T}{\mathbb T}
\newcommand{\s }{\sigma }
\newcommand{\ii }{{\rm i} }
\renewcommand{\o }{\omega }
\renewcommand{\O }{\mathcal{O} }
\newcommand{\x }{\xi }
\newcommand{\diag}{{\rm diag}}
\newcommand{\A}{\mathcal{A}}
\newcommand{\opw}{{Op^{\mathrm{W}}}}
\newcommand{\ophw}{{Op_h^{\mathrm{W}}}}
\newcommand{\opbw}{{Op^{\mathrm{BW}}}}
\def\supp#1{{\rm supp(#1)}}
\def\hat{\widehat}
\def\bar{\overline}
\def\cal{\mathcal}
\def\poscalr#1#2{\langle#1,#2\rangle}
\def\poscals#1#2{\langle#1,#2\rangle_{\mathcal{H}^0}}
\renewcommand{\Re}{\mathrm{Re}\,}
\renewcommand{\Im}{\mathrm{Im}\,}
\def\ba{\begin{aligned}}
\def\ea{\end{aligned}}
\def\beginm{\begin{multline}}
\def\endm{\end{multline}}
\providecommand{\vect}[2]{{\bigl[\begin{smallmatrix}#1\\#2\end{smallmatrix}\bigr]}}   
\providecommand{\sm}[4]{{\bigl[\begin{smallmatrix}#1&#2\\#3&#4\end{smallmatrix}\bigr]}}
\def\l@subsection{\@tocline{2}{0pt}{2.5pc}{5pc}{}}
\def\l@subsubsection{\@tocline{3}{0pt}{4.5pc}{5pc}{}}
\renewcommand\tocchapter[3]{%
  \indentlabel{\@ifnotempty{#2}{\ignorespaces#2.\quad}}#3%
}
\newcommand\@dotsep{4.5}
\def\@tocline#1#2#3#4#5#6#7{\relax
  \ifnum #1>\c@tocdepth % then omit
  \else
    \par \addpenalty\@secpenalty\addvspace{#2}%
    \begingroup \hyphenpenalty\@M
    \@ifempty{#4}{%
      \@tempdima\csname r@tocindent\number#1\endcsname\relax
    }{%
      \@tempdima#4\relax
    }%
    \parindent\z@ \leftskip#3\relax \advance\leftskip\@tempdima\relax
    \rightskip\@pnumwidth plus1em \parfillskip-\@pnumwidth
    #5\leavevmode\hskip-\@tempdima{#6}\nobreak
    \leaders\hbox{$\m@th\mkern \@dotsep mu\hbox{.}\mkern \@dotsep mu$}\hfill
    \nobreak
    \hbox to\@pnumwidth{\@tocpagenum{#7}}\par
    \nobreak
    \endgroup
  \fi}
\def\l@subsection{\@tocline{2}{0pt}{2.5pc}{5pc}{}}
\begin{document}
\bibliographystyle{plain}

\title[Control  for quasilinear NLS]{Controllability of  quasi-linear Hamiltonian Schr\"odinger equations on tori}
 %in infinite depth}

\date{}

\author{Felice Iandoli} 
\address{\scriptsize{Universit\`a della Calabria}}
\email{felice.iandoli@unical.it}

\author{Jingrui Niu}
\address{\scriptsize{LJLL (Sorbonne Universit\'e)}}
\email{jingrui.niu@sorbonne-universite.fr}

%\author{Roberto Feola}
%\address{\scriptsize{Laboratoire de Math\'ematiques Jean Leray, Universit\'e 
%de Nantes, 
%UMR CNRS 6629
%\\
%2, rue de la Houssini\`ere 
%\\
%44322 Nantes Cedex 03, France}
%}
%\email{roberto.feola@univ-nantes.fr}

%\author{Beno\^it Gr\'ebert}
%\address{\scriptsize{Laboratoire de Math\'ematiques Jean Leray, Universit\'e de Nantes, UMR CNRS 6629\\
%2, rue de la Houssini\`ere \\
%44322 Nantes Cedex 03, France}}
%\email{benoit.grebert@univ-nantes.fr}

\thanks{This research has been supported  by ERC grant ANADEL 757996: Analysis of the geometrical effects on dispersive equations. \\The first author has been partially supported 
by PRIN project 2017JPCAPN (Italy): Qualitative and quantitative aspects of nonlinear PDEs}

\begin{abstract}  
We prove exact controllability for quasi-linear Hamiltonian Schr\"odinger equations on tori of dimension greater or equal then two. The  result holds true for sufficiently small initial conditions satisfying natural minimal regularity assumptions, provided that the region of control satisfies the geometric control condition.
\end{abstract}  

 \keywords{{\bf quasi-linear Schr\"odinger, exact control, energy estimates, well-posedness}}

\maketitle
\tableofcontents

\section{Introduction}
In this paper, we study the exact controllability for the following quasi-linear  perturbation of the Schr\"odinger equation 
\begin{equation}\label{NLS}
\begin{aligned}
&\ii u_{t}+\Delta u+g_1'(|u|^2)\Delta\big(g_1(|u|^2)\big)u+g_2(|u|^2)u=0 \,,\\
 \end{aligned}\end{equation}
 where $u=u(t,x)$, 
$ x=(x_1,\ldots,x_{d})\in \mathbb{T}^{d}:=(\mathbb{R}/2\pi \mathbb{Z})^{d}$ and $g_1$ and $g_2$ are polynomial functions of degree at least one vanishing at the origin.
In order to state our main theorem we need the following definition.
\begin{definition}\label{gcc}
We say that a nonempty open subset $\omega\subset \mathbb{T}^d$ satisfies the \textit{geometric control condition} if every geodesic of $\mathbb{T}^d$ eventually enters into $\omega$. In particular, since we endow $\mathbb{T}^d$ with the standard metric, this means that for every $(x,\xi)\in\mathbb{T}^d\times\mathbb{S}^{d-1}$, there exists some $t\in (0,\infty)$, such that $x+t\xi\in\omega$.
\end{definition}
\noindent We set $\langle j \rangle:=\sqrt{1+|j|^{2}}$ for $j\in \mathbb{Z}^{d}$,
we endow the classical Sobolev space $H^{s}(\mathbb{T}^{d};\mathbb{C})$ with the norm 
$
\|u(\cdot)\|_{H^{s}}^{2}:=\sum_{j\in \mathbb{Z}^{d}}\langle j\rangle^{2s}|u_{j}|^{2}\,
$, where $u_j$ is the corresponding Fourier coefficient of $u$.
The main result  is the following.
\begin{theorem}{\bf (Exact Controllability).}\label{main}
Let $T>0$, and suppose that  $\omega\subset \mathbb{T}^d$ satisfies the geometric control condition. For any $s>d/2+2$ there exists $\epsilon_0>0$ sufficiently small such that for all $u_{in},u_{end}\in H^{s}(\mathbb{T}^d;\mathbb{C})$ satisfying 
$    \|u_{in}\|_{H^{s}}+\|u_{end}\|_{H^{s}}<\epsilon_0$ the following holds.
There exists $f\in C([0,T],H^{s}(\mathbb{T}^d;\mathbb{C}))$ such that for all $supp\,f(t,\cdot)\subset\omega$  for any $t\in[0,T]$ and
     there exists a unique solution $u\in C([0,T],H^{s}(\mathbb{T}^d;\mathbb{C}))$ of
\begin{equation}\label{eq: scalar NLS with control}
\left\{\begin{array}{l}
      \ii u_{t}+\Delta u+g_1'(|u|^2)\Delta\big(g_1(|u|^2)\big)u+g_2(|u|^2)u=f,\quad (t,x)\in [0,T]\times\mathbb{T}^d ,   \\
     u|_{t=0}=u_{in},
\end{array}
\right.
\end{equation}
which verifies that $u|_{t=T}=u_{end}$. 
\end{theorem}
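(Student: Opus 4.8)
**The plan is to combine a local well-posedness theory with a controllability result for a linearized (paradifferential) equation, closing the loop by a fixed-point / iteration argument.** The quasilinearity of \eqref{NLS} means one cannot treat the nonlinearity perturbatively in Sobolev spaces directly; instead the strategy is to conjugate the equation, via a paradifferential change of variables (Alinhac good unknown, or a flow generated by $\opbw$ of a suitable symbol), into an equation whose highest-order part is a constant-coefficient Schrödinger operator plus a bounded (smoothing, or order-zero semiclassically tame) remainder. This is the standard scheme for quasi-linear dispersive PDE (as in the Berti–Delort and Feola–Iandoli circle of ideas): the energy estimates alluded to in the keywords are exactly the $H^s$-estimates for this conjugated equation.

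**The controllability itself I would obtain by the classical Hilbert Uniqueness Method / Lebeau-type observability for the linear Schrödinger group, upgraded to the variable-coefficient paradifferential operator.** Concretely: (i) prove an observability inequality $\|v_0\|_{L^2}^2 \lesssim \int_0^T \|\chi_\omega v(t)\|_{L^2}^2\,dt$ for solutions of the linearized equation, using that $\omega$ satisfies the geometric control condition — here one invokes that the Schrödinger flow on $\mathbb{T}^d$ is observable from any GCC set (Lebeau, Anantharaman–Macià, etc.), and that the paradifferential perturbation is lower order so a perturbative/compactness-uniqueness argument preserves observability for small data; (ii) by HUM duality this yields exact controllability in $L^2$, hence (by the structure of the construction and elliptic/energy regularity) in $H^s$, with a control operator bounded linearly in the target data; (iii) feed this into a Picard iteration or Schauder fixed point on the map "data $\mapsto$ control $\mapsto$ solution," whose contraction is guaranteed by the smallness $\epsilon_0$ and the tame energy estimates. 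The support condition $\mathrm{supp}\,f(t,\cdot)\subset\omega$ is enforced by multiplying the HUM control by a fixed cutoff supported in $\omega$ (shrinking $\omega$ slightly if needed, or using that GCC is an open condition).

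**A cleaner route, which I would actually follow, factors the problem as: control to zero on $[0,T/2]$, then control from zero to $u_{end}$ on $[T/2,T]$ by time-reversal.** Both half-problems are "null-controllability with small data," which interacts well with the fixed-point argument because the target is $0$: one sets up the affine map $u \mapsto$ (solution with control driving the linearization to hit the required endpoint), shows it maps a small ball of $C([0,T/2],H^s)$ into itself and is a contraction. The reversibility of \eqref{NLS} (it is Hamiltonian, and $u(t,x)\mapsto \overline{u(T-t,x)}$ is a symmetry up to conjugation of $g_i$) lets one transfer the second half-problem to the first. Global existence on $[0,T]$ for the controlled trajectory is then automatic since the solution is pinned to $0$ at the midpoint.

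**The main obstacle is the paradifferential reduction in the genuinely quasilinear regime combined with the need for estimates uniform down to the critical threshold $s>d/2+2$ and uniform in the control term $f$.** The term $g_1'(|u|^2)\Delta(g_1(|u|^2))u$ contributes at second order with a coefficient depending on $u$ and its first derivatives, so after symmetrization the transport-type subprincipal symbol must be handled (a further change of variables straightening the vector field, or a clever choice of the good unknown) to avoid a loss of derivative in the energy estimate; ensuring that the conjugating maps and their inverses are bounded on $H^s$ for the stated low $s$, and that they commute appropriately with the spatial cutoff $\chi_\omega$ (so that observability from $\omega$ survives the conjugation), is the delicate technical heart. The interplay between the nonlinear change of variables and the linear observability — i.e. showing the observed quantity for the conjugated equation still controls the original $H^s$ norm from the region $\omega$ — is where I expect the real work to lie; everything downstream (HUM, fixed point, gluing the two half-intervals) is then fairly standard.
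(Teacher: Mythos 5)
Your overall architecture (paralinearization, HUM for a frozen-coefficient linear problem, an iteration closed by smallness, null control on $[0,T/2]$ glued to a time-reversed null control on $[T/2,T]$, cutoffs to enforce $\mathrm{supp}\,f\subset\omega$) matches the paper. But there is a genuine gap at the single most important step: you assume you can conjugate the paralinearized equation ``into an equation whose highest-order part is a constant-coefficient Schr\"odinger operator plus a bounded remainder,'' and then deduce observability by invoking the known observability of the flat Schr\"odinger flow on $\T^d$ together with a perturbative/compactness-uniqueness argument, on the grounds that ``the paradifferential perturbation is lower order.'' Neither half of this is available in dimension $d\geq 2$. The quasilinear term $g_1'(|u|^2)\Delta(g_1(|u|^2))u$ produces, after paralinearization, the \emph{principal} symbol $E\,A_2(\underline U;x)|\xi|^2$ with $A_2=\uno+O(\epsilon_0)$ a full $2\times2$ matrix with $x$-dependent entries. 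In one dimension this can be flattened by a diffeomorphism of the circle plus a time reparametrization (this is exactly Baldi--Haus--Montalto), but for $d\geq2$ no change of variables reduces $\lambda(\underline U;x)|\xi|^2$ to $|\xi|^2$ modulo bounded operators: the discrepancy is a second-order operator with small coefficients, i.e.\ small in $\mathcal{L}(\H^2,\H^0)$ but \emph{not} in $\mathcal{L}(\H^0,\H^0)$. Consequently it is neither a compact perturbation (so compactness-uniqueness does not remove it) nor an operator-norm-small one on $L^2$ (so the observability constant of the flat flow cannot absorb it). A perturbation that is small only at principal order can a priori destroy observability at high frequencies, and this is precisely where the real work lies.

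What the paper does instead, following Zhu's scheme for water waves, is: diagonalize the $2\times2$ principal matrix symbol (Proposition \ref{diago}) --- which is a genuinely different operation from flattening the coefficients, and which your proposal does not mention --- and then prove the observability of the resulting variable-coefficient scalar operators directly, by a semiclassical argument. One localizes in frequency via functional calculus $\chi(h^2\mathscr{B}^w)$, proves a semiclassical observability inequality on time windows of length $hT$ by contradiction using propagation of semiclassical defect measures (the smallness $\epsilon_n\to0$ enters only in the limit, where the measure propagates along straight lines and GCC applies), sums over dyadic $h$ via Littlewood--Paley to get a weak observability with an $\|Y(0)\|_{\H^{-N}}$ remainder, and removes that remainder by uniqueness-compactness for the \emph{limit} (constant-coefficient) equation. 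Your proposal collapses all of this into a one-line appeal to Lebeau/Anantharaman--Maci\`a plus ``the perturbation is lower order,'' which is exactly the step that fails for a quasilinear equation in $d\geq2$. The remaining ingredients you would also need but do not supply --- the modified energy $\langle\Lambda^{2\sigma}(\underline U)W,W\rangle$ adapted to the variable coefficients for the $\H^s$ bounds on the control operator, and the Lipschitz/contraction estimates on $\mathscr{K}^{-1}$ and $\mathcal{L}_P$ in the low norm needed to make the iteration converge --- are comparatively routine, but the observability for the variable-coefficient principal part is the heart of the theorem and is missing from your argument.
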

%\begin{remark}
%We shall look for a control function having the following form
%\begin{equation}\label{eq:special form of control function}
%    f(t,x)=\chi_T(t)\varphi_{\omega}(x)\Tilde{f}(t,x),
%\end{equation}
%where
%\begin{enumerate}
%    \item $\Tilde{f}\in C([0,T],H^{s}(\mathbb{T}^d;\mathbb{C}))$;
%    \item $\chi_T(\cdot)=\chi_1(\cdot/T)\in C^{\infty}(\R)$ where $\chi_1(t)=1$ for $t\leq\frac{1}{2}$ and $\chi_1(t)=0$ for $t\geq\frac{3}{4}$;
%    \item $0\leq \varphi_{\omega}\in C^{\infty}(\mathbb{T}^d)$ satisfies $\uno_{\omega'}\leq\varphi_{\omega}\leq\uno_{\omega}$, where $\omega'$ also satisfies the geometric control condition and $\Bar{\omega'}\subset\omega$. Such $\omega'$ exists because that $\T^d$ is compact.
%\end{enumerate}
%It is easy to verify that $supp\,f(t,\cdot)\subset\omega$  for any $t\in[0,T]$. We shall prove that Theorem \ref{main-null} implies the exact controllability of \eqref{eq: scalar NLS with control} (that is , Theorem \ref{main}) in Section \ref{sec: establish the exact controllability}.  \textcolor{red}{$\tilde{f}$ is used here and in the null control th}
%\end{remark}
Theorem \ref{main} generalizes the one dimensional result by Baldi-Haus-Montalto in \cite{BHM} to the case of quasi-linear NLS in  dimension $d\geq 2$. The equation considered in \cite{BHM} is the most general (local) Hamiltonian NLS, we decided to study the less general case \eqref{NLS} in order to avoid extra technical  difficulties. 
Thanks to the special structure of the nonlinearity in \eqref{NLS}, we tried to optimize the amount of regularity required on the initial condition. For the equation in the full generality, i.e. as considered for instance in \cite{BHM} and \cite{FIJMPA, FIAP}, we do not know if it is possible to reach the control with these mild regularity assumptions: even  the problem of  the local well-posedness with mild regularity assumptions seems non-trivial. 

Besides these mathematical considerations, quasi-linear Schr\"odinger equations of the specific form \eqref{NLS} 
%(with $V\equiv 0$) 
appear in many domains 
of physics, like plasma physics and fluid mechanics \cite{sergio,goldman}, 
quantum mechanics \cite{hasse}, condensed matter theory \cite{fedayn}. 
They are also important  in the study of Kelvin waves 
in the superfluid turbulence \cite{LLNR}. Many mathematicians studied these equations for local and long time existence: for a complete overview of the related literature we refer to \cite{FGI}.

The literature concerning linear and semi-linear (i.e. when $g_1=0$ in \eqref{NLS})  Schr\"odinger equations is wide. Without trying to be exhaustive we  quote Jaffard \cite{Jaff}, Lebeau \cite{Lebeau}, Beauchard-Laurent \cite{BL}, Dehman-G\'erard-Lebeau \cite{DGL}, Anantharaman-Maci\`a \cite{AM}. A complete overview of the results regarding linear and semi-linear Schr\"odinger equations may be found in the survey articles by Laurent \cite{Laurent} and Zuazua \cite{ZZ}.

The main difference between \eqref{NLS} and linear, or semi-linear, Schr\"odinger equations is the presence of two derivatives in the nonlinearity. When one linearizes a quasi-linear equation, one ends up with variable coefficients equation, for which Sobolev energy estimates are non-trivial.
In \cite{BHM}, the authors exploit the fact that they are working in a one dimensional context: they can completely remove the dependence on space of the coefficients of the linearized equation through a diffeomorphism of the circle. 
%(Egorov theorem or paracomposition, see also \cite{FIAP}) 
Hence, after a reparametrization of time, they essentially are in a semi-linear framework, therefore they can obtain observability for the linearized equation by means of Ingham inequalities. In such a way one controls linear problems. After that, a Nash-Moser iterative scheme is used to find the nonlinear control for the quasi-linear equation. One could avoid using a Nash-Moser scheme and rely on para-differential  approximation schemes {\em\`a la Kato} (see for instance \eqref{n-prob}). This is the strategy adopted by Alazard-Baldi-Kwan  in \cite{ABK} for the $2d$ gravity-capillary water waves equations, the torus diffeomorphism  is still performed in order to reduce the paralinearized equations to constant coefficients ones.

When working in dimension $d\geq 2$ it is not possible to remove the $x$-dependence on the coefficients of the equations, therefore a different strategy has to be adopted.  The breakthrough result in this direction is the one by Zhu \cite{Zhu} for the gravity-capillary water waves equation. Zhu uses an iterative scheme {\em\`a la Kato} and obtains {\em a priori} estimates by means of a modified energy, which is equivalent to the $H^s$-norm for $s$ sufficiently large. To obtain observability of linearized equations, since Ingham inequalities are not available in dimension $d\geq 2$, he uses a semi-classical approach (Lebeau \cite{Lebeau96}) for high frequencies and a uniqueness-compactness for low frequencies (Bardos-Lebeau-Rauch \cite{BLR}). Then one passes to the limit thanks to the {\em a priori} estimates.

We follow the lines of the aforementioned strategy introduced by Zhu, adapting it to \eqref{NLS} and trying to optimize  the amount of regularity needed on the initial condition. We now explain the proof of Theorem \ref{main} and the structure of the paper.

The general strategy to establish the exact controllability is to reduce it to the \textit{null controllability}, i.e. the exact controllability with null final datum ($u_{end}=0$). To be more specific, we first prove the following result.
\begin{theorem}{\bf (Null Controllability).}\label{main-null}
Let $T>0$, and suppose that  $\omega\subset \mathbb{T}^d$ satisfies the geometric control condition. For any $s>d/2+2$ there exists $\epsilon_0>0$ sufficiently small such that for all $u_{in}\in H^{s}(\mathbb{T}^d;\mathbb{C})$ satisfying 
$    \|u_{in}\|_{H^{s}}<\epsilon_0$ the following holds.
There exists $\Tilde{f}\in C([0,T],H^{s}(\mathbb{T}^d;\mathbb{C}))$ and
     there exists a unique solution $u\in C([0,T],H^{s}(\mathbb{T}^d;\mathbb{C}))$ of
\begin{equation}\label{eq: scalar NLS with null control}
\left\{\begin{array}{l}
      \ii u_{t}+\Delta u+g_1'(|u|^2)\Delta\big(g_1(|u|^2)\big)u+g_2(|u|^2)u=\chi_T\varphi_{\omega}\Tilde{f},\quad (t,x)\in [0,T]\times\mathbb{T}^d ,   \\
     u|_{t=0}=u_{in},
\end{array}
\right.
\end{equation}
which verifies that $u|_{t=T}=0$.  Furthermore, the control function verifies
\begin{enumerate}
    \item $\Tilde{f}\in C([0,T],H^{s}(\mathbb{T}^d;\mathbb{C}))$;
    \item $\chi_T(\cdot)=\chi_1(\cdot/T)\in C^{\infty}(\R)$ where $\chi_1(t)=1$ for $t\leq\frac{1}{2}$ and $\chi_1(t)=0$ for $t\geq\frac{3}{4}$;
    \item $0\leq \varphi_{\omega}\in C^{\infty}(\mathbb{T}^d)$ satisfies $\uno_{\omega'}\leq\varphi_{\omega}\leq\uno_{\omega}$, where $\omega'$ also satisfies the geometric control condition and $\Bar{\omega'}\subset\omega$. Such $\omega'$ exists because that $\T^d$ is compact.
    \end{enumerate}
\end{theorem}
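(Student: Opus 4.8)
The plan is to follow the Kato-type approximation scheme combined with modified-energy a priori estimates, exactly in the spirit of Zhu's work on water waves. First I would set up the truncated linearized problem: around a reference state $v$, the paralinearization of \eqref{NLS} produces a linear Schr\"odinger-type equation with an operator of the form $\ii\partial_t + \opbw(a(v)\cdot\xi\otimes\xi) + \text{(lower order)}$, where the principal symbol is a positive-definite quadratic form in $\xi$ perturbing $|\xi|^2$ by terms controlled by $\|v\|_{H^s}$. For $\epsilon_0$ small, the geometry of this symbol stays a small perturbation of the flat Laplacian, so the bicharacteristic flow still satisfies the geometric control condition relative to $\omega'$. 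Using the HUM (Hilbert Uniqueness Method) together with the observability inequality for this variable-coefficient Schr\"odinger operator — obtained by the semiclassical propagation argument of Lebeau \cite{Lebeau96} for high frequencies and the Bardos-Lebeau-Rauch \cite{BLR} unique continuation plus a compactness argument for low frequencies — I would construct, for each fixed reference state $v$, a control $\mathcal{F}(v)$ supported in $\omega$ (via the cutoff $\varphi_\omega$) driving the linearized flow from $u_{in}$ to $0$ in time $T$, with the quantitative bound $\|\mathcal{F}(v)\|_{C([0,T],H^s)} \lesssim \|u_{in}\|_{H^s}$ uniformly in $v$ over the ball $\|v\|_{C([0,T],H^s)} \le \epsilon_0$.

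Next I would iterate: define $u^{(0)}$ as the free solution, and inductively let $u^{(n+1)}$ solve the linearized-at-$u^{(n)}$ equation with control $\chi_T\varphi_\omega \mathcal{F}(u^{(n)})$ and initial datum $u_{in}$, forcing $u^{(n+1)}|_{t=T}=0$. The modified energy — equivalent to $\|\cdot\|_{H^s}^2$ for $s>d/2+2$, built by conjugating with a suitable paradifferential flow to cancel the worst-order commutator terms — gives uniform a priori bounds $\|u^{(n)}\|_{C([0,T],H^s)}\le C\epsilon_0$ for all $n$, provided $\epsilon_0$ is small enough. One then shows the sequence is Cauchy in the lower-regularity norm $C([0,T],H^{s-1})$ (or $H^{s-2}$, depending on how many derivatives the quasilinear terms cost), by subtracting consecutive equations and running a Gronwall estimate on the difference; interpolation with the uniform $H^s$ bound upgrades the limit $u$ to $C([0,T],H^s)$, and $\mathcal{F}(u^{(n)})\to \Tilde{f}$ in $C([0,T],H^s)$ by continuity of the HUM operator. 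The limit $u$ solves \eqref{eq: scalar NLS with null control} with $u|_{t=T}=0$; uniqueness follows from the well-posedness theory for the quasilinear equation (again a Gronwall argument on the difference of two solutions at regularity $s-1$). Properties (1)--(3) are satisfied by construction: (1) from the uniform HUM bound, (2) and (3) are the fixed choices of cutoffs, and the existence of $\omega'$ with $\overline{\omega'}\subset\omega$ still satisfying GCC follows from compactness of $\mathbb{T}^d$ and openness of the GCC condition.

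The main obstacle, as usual in this circle of ideas, is the observability inequality for the linearized variable-coefficient operator with the \emph{uniform} constant over the family of reference states, together with the modified-energy construction that must be compatible with the control: one needs the conjugating paradifferential transformation to be (approximately) invertible, to preserve the support condition well enough (or to correct the support loss), and to commute with the observability estimate without degrading its constant. Controlling the quantitative dependence of the HUM control on the reference state $v$ — i.e. Lipschitz or at least continuous dependence $v\mapsto\mathcal{F}(v)$ in the right topology — is the delicate point that makes the contraction/limiting argument go through; the low-frequency unique continuation step via \cite{BLR} is qualitative and must be combined with a normal-families/compactness argument to yield a constant uniform in $v$, which is where most of the technical care is required. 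Optimizing the regularity threshold to $s>d/2+2$ forces one to be economical about how many derivatives the paralinearization and the energy conjugation spend, and to exploit the special gauge/Hamiltonian structure of \eqref{NLS} (the $g_1'(|u|^2)\Delta(g_1(|u|^2))u$ term) so that the quasilinear part reduces to an operator whose principal symbol is real and of the exact form needed for the modified energy.
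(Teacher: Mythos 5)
Your proposal follows essentially the same route as the paper: paralinearization, a Kato-type iteration on linearized problems, HUM with an observability inequality obtained by semiclassical defect measures at high frequency and a uniqueness--compactness argument at low frequency, a modified energy adapted to the quasilinear principal symbol, Lipschitz dependence of the control operator on the reference state (with loss of derivatives), and contraction in a lower norm. You also correctly single out the genuinely delicate points (uniformity of the observability constant in the reference state, the commutator estimates between the HUM operator and the energy weight).

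There is, however, one step in your endgame that does not work as stated: you claim that ``interpolation with the uniform $H^s$ bound upgrades the limit $u$ to $C([0,T],H^s)$.'' Interpolating convergence in $C([0,T],\H^{s-2})$ against a uniform bound in $L^\infty([0,T],\H^{s})$ only yields strong convergence in $C([0,T],\H^{s'})$ for every $s'<s$; the limit is then in $C([0,T],\H^{s'})\cap L^{\infty}([0,T],\H^{s})$ but not automatically strongly continuous in time with values in $\H^{s}$, and likewise the controls $\mathcal{F}(u^{(n)})$ converge a priori only in the lower norm, since the Lipschitz estimate on the control operator loses two derivatives. The paper closes this gap with a Bona--Smith argument: one regularizes the initial datum to $U_0^N=\sum_{|k|\le N}\hat U_{0,k}e^{\ii k\cdot x}$, solves the controlled problem with smooth data, and exploits the smoothing/growth properties of the frequency truncation ($\|U_0-U_0^N\|_{\H^{\sigma}}\lesssim N^{-2-\varepsilon}\|U_0\|_{\H^s}$ and $\|U_0^N\|_{\H^{s+2}}\lesssim N^{2}\|U_0\|_{\H^s}$) together with the Lipschitz estimate on the control operator to show convergence of both $U^N$ and $F^N$ in $C([0,T],\H^{s})$. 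You need this (or an equivalent device) to obtain the continuity in the top norm claimed in the statement.
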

As said, in order to prove Theorem \ref{main-null}, the first step is to consider a sequence of linear problems that approximates the non-linear one. In order to do that,  we  first paralinearize the equation in the sense of Bony. We obtain the system \eqref{NLSpara}, where $\mathcal{A}$ is a self-adjoint para-differential operator and $R$ is a bounded remainder. Once achieved the paralinearization we consider the sequence of problems \eqref{n-prob}, this is done in the spirit of \cite{ABK, FIAP, FIJMPA}.   The main difficulty is to prove the $L^2$- controllability of problem \eqref{n-prob}.
For simplicity, we may suppose that the smoothing remainder is equal to zero. In other words we consider  \begin{equation}\label{esempio}
\partial_tV=\ii E\mathcal{A}(\underline{U})V -\ii\chi_T\varphi_{\omega}EF,
\end{equation}
where $E$, $F$ are defined in \eqref{matriciozze}, $V:=(v,\bar v)$ and $\U$ is a fixed function in the space $\mathscr{C}^{1,s_0}(T,\epsilon_0)$ defined in Definition \ref{func-space} for $\epsilon_0$ sufficiently small and $s_0>d/2+2$. We look for a control function having the form $F=\mathcal{L}(\U)U_{in}$, where $\mathcal{L}(\U)$ is a bounded linear operator from $\H^0$ to $C([0,T];\H^0)$ (see \eqref{eq: defi of H^s})
, sending the initial datum $U_{in}$ to the final target $U(T)=0$. To construct the control operator we use the Hilbert Uniqueness Method, HUM for short. 
The  HUM  establishes the equivalence between the null controllability of the  system \eqref{esempio} and an observability inequality to its adjoint system, see estimate
\eqref{eq: L^2-observability inequality}. To achieve \eqref{eq: L^2-observability inequality}, we first need to diagonalize the equation \eqref{esempio}
 at the highest order (we have a full matrix of order two, due to the quasi-linear term), this is done through a change of variable $\Phi(\U)$ built in Proposition \ref{diago}. We obtain a new system $\partial_t Y=\ii\mathscr{B}(\U)Y$, with  $\mathscr{B}$ defined after \eqref{eq:diagonalized simplified adjoint equation}.
We first prove its semi-classical version, i.e. \eqref{eq: semiclassical observability inequality}, where $h>0$ is small enough. The proof of \eqref{eq: semiclassical observability inequality} is obtained by contradiction. A careful study of the propagation of semi-classical defect measures is needed, this is the point where the geometric control condition, see Definition \ref{gcc}, is used. Owing to \eqref{eq: semiclassical observability inequality}, by means of Littlewood Paley's theory, we obtain the weak observability inequality \eqref{eq: weak observability inequality}. The presence of the low frequency remainder $\|Y(0)\|_{\H^{-N}}$ in \eqref{eq: weak observability inequality} is due to the fact that we obtain \eqref{eq: semiclassical observability inequality} only for $h$ small enough. 
We use a uniqueness-compactness argument, see Section \ref{UCSO}, to remove such a remainder. In such a way, thanks to the HUM principle, we may obtain the existence of the control operator $\mathcal{L}$ as a bounded operator from $\H^0$ to $C([0,T];\H^0)$. To study the $\H^s$ regularity we need to perform some commutator estimates, in this point, it is important, due to the quasi-linear character of the equation, to use the equivalent energy norm adapted to the equation, built in Section \ref{sec:mod}. After that one has to pass to the limit in the equation \eqref{n-prob}. To do this it is important to prove several {\em contraction} estimates on the control operator, this part is performed in  Section \ref{sec: nonlinear control}. We prove that  the null controllability implies to exact controllability in Section \ref{sec: establish the exact controllability}. 

The paper is organized as follows. In Section \ref{lwp}, we recall some notions of para-differential calculus and we prove the local well-posedness of the linearized problems. Section \ref{sec: L^2 controllability for the linear system} is the core of the paper, here we prove the $L^2$ null controllability for the linearized problems. In Section \ref{sec: H^s linear control} we prove the $\H^s$ linear null controllability, while in Section \ref{sec: nonlinear control} we pass to the limit and recover the null controllability of the nonlinear equation \eqref{NLS}. Finally, in Section \ref{sec: establish the exact controllability} we show the exact controllability.

\section{Local existence of the para-linearazed equation}\label{lwp}
 \subsection{Para-differential calculus}
 In this section, we recall some results about para-differential calculus, all the proof of these results may be found in \cite{BMM1}.  \begin{definition}
Given $m,s\in\mathbb{R}$ we denote by $\Gamma^m_s$ the space of functions $a(x,\xi)$ defined on $\T^d\times \R^d$ with values in $\C$, which are $C^{\infty}$ with respect to the variable $\xi\in\R^d$ and such that for any $\beta\in \N\cup\{0\}$,
there exists a constant $C_{\beta}>0$ such that 
\begin{equation}\label{stima-simbolo}
\|\partial_{\xi}^{\beta} a(\cdot,\xi)\|_{H^s}\leq C_{\beta}\langle\xi\rangle^{m-\beta}, \quad \forall \xi\in\R.
\end{equation}
\end{definition}
We endow the space $\Gamma^m_{s}$ with the family of seminorms 
\begin{equation}\label{seminorme}
|a|_{m,s,n}:=\max_{\beta\leq n} \sup_{\xi\in\R^d}\|\langle\xi\rangle^{\beta-m}a(\cdot,\xi)\|_{H^s}.
\end{equation}
Analogously for a given Banach space $W$ we denote by $\Gamma^m_{W}$ the space of functions which verify the \eqref{stima-simbolo} with the $W$-norm instead of $H^s$, we also denote by $|a|_{m,W,n}$ the $W$ based seminorms \eqref{seminorme} with $H^s\rightsquigarrow W$.\\
We say that a symbol $a(x,\xi)$ is spectrally localised if there exists $\delta>0$ such that $\hat{a}(j,\xi)=0$ for any $|j|\geq \delta \langle\xi\rangle.$\\
Consider a function $\chi\in C^{\infty}(\R,[0,1])$ such that $\chi(\xi)=1$ if $|\xi|\leq 1.1$ and $\chi(\xi)=0$ if $|\xi|\geq 1.9$. Let $\epsilon\in(0,1)$ and define moreover $\chi_{\epsilon}(\xi):=\chi(\xi/\epsilon).$ Given $a(x,\xi)$ in $\Gamma^m_s$ we define the regularised symbol
\begin{equation*}
a_{\chi}(x,\xi):=\sum_{j\in \Z^d}\hat{a}(j,\xi)\chi_{\epsilon}(\tfrac{j}{\langle\xi\rangle})e^{\ii jx}.
\end{equation*}
 For a symbol $a(x,\x)$ in $\Gamma^m_s$
we define its Weyl and Bony-Weyl
quantization as 
\begin{equation}\label{quantiWeylcl}
\opw(a(x,\xi))g:=\frac{1}{(2\pi)}\sum_{j\in \mathbb{Z}^d}e^{\ii j x}
\sum_{k\in\mathbb{Z}^d}
\hat{a}\big(j-k,\frac{j+k}{2}\big)\widehat{g}(k),
\end{equation}
\begin{equation}\label{quantiWeyl}
\opbw(a(x,\xi))g:=\frac{1}{(2\pi)}\sum_{j\in \mathbb{Z}^d}e^{\ii j x}
\sum_{k\in\mathbb{Z}^d}
\chi_{\epsilon}\Big(\frac{|j-k|}{\langle j+k\rangle}\Big)
\hat{a}\big(j-k,\frac{j+k}{2}\big)\widehat{g}(k).
\end{equation}
%We define also the standard quantization 
%\begin{equation}\label{quantiCla}
%\opb(a(x,\xi))h:=\frac{1}{(2\pi)}\sum_{j\in \mathbb{Z}}e^{\ii j x}
%\sum_{k\in\mathbb{Z}}
%\chi_{\epsilon}\Big(\frac{|j-k|}{\langle k\rangle}\Big)
%\hat{a}\big(j-k,k\big)\widehat{h}(k).
%\end{equation}
%We recall the formula  
%\begin{equation}\label{wey-cla}
%\opb(a)u=\opbw(b), \quad \hat{b}(j,\xi)=\hat{a}(j,\xi-\frac{j}{2}).
%\end{equation}
We list below a series of theorems and lemmas that will be used in the paper. All the statements have been taken from \cite{BMM1}. The first one is a result concerning the action of a para-differential operator  on Sobolev spaces. This is Theorem 2.4 in \cite{BMM1}.
\begin{theorem}\label{azione}
Let $a\in\Gamma^m_{s_0}$, $s_0>d/2$ and $m\in\R$. Then $\opbw(a)$ extends as a bounded operator from ${H}^s(\T^d)$ to ${H}^{s-m}(\T^d)$ for any $s\in\R$ with estimate  
\begin{equation}\label{ac1}
\|\opbw(a)u\|_{H^{s-m}}\lesssim |a|_{m,s_0,4}\|u\|_{H^s},
\end{equation}
for any $u$ in $H^s(\T^d)$. Moreover for any $\rho\geq 0$ we have for any ${u}\in H^s(\T^d)$
\begin{equation}\label{ac2}
\|\opbw(a)u\|_{H^{s-m-\rho}}\lesssim |a|_{m,s_0-\rho,4}\|u\|_{H^s}.
\end{equation}
\end{theorem}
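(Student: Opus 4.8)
The plan is to establish the two estimates \eqref{ac1} and \eqref{ac2} by a Littlewood--Paley decomposition of the symbol in the $x$ variable combined with the almost-orthogonality built into the Bony--Weyl quantization. First I would decompose $a(x,\xi)=\sum_{p\ge 0}\Delta_p^x a(x,\xi)$, where $\Delta_p^x$ is a Littlewood--Paley projector acting on the $x$-variable only; the key point is that by the spectral cutoff $\chi_\epsilon(|j-k|/\langle j+k\rangle)$ in \eqref{quantiWeyl}, each piece $\opbw(\Delta_p^x a)$ maps frequencies near $2^q$ (in $k$) to frequencies near $2^q$ (in $j$) whenever $2^p\lesssim \epsilon\, 2^q$, and vanishes otherwise. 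Hence $\opbw(a)u$ has a dyadic block at frequency $2^q$ which is a sum of $\opbw(\Delta_p^x a)(\Delta_q^x u)$ over a bounded number of scales, up to a harmless low-frequency tail.

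Next I would estimate a single block $\opbw(\Delta_p^x a)(\Delta_q^x u)$ in $L^2$. Writing the kernel explicitly from \eqref{quantiWeyl}, it is a Fourier multiplier-type operator whose symbol is $\Delta_p^x a$ frozen near the dyadic shell; by Schur's test (bounding the $\ell^1$ norms of the kernel in $j-k$ by $\sup_\xi \|\Delta_p^x a(\cdot,\xi)\langle\xi\rangle^{-m}\|_{L^\infty_x}$ in a suitable sense, and using that $\chi_\epsilon$ restricts $|j-k|\lesssim \langle\xi\rangle$) one gets $\|\opbw(\Delta_p^x a)(\Delta_q^x u)\|_{L^2}\lesssim 2^{mq}\|\Delta_p^x a(\cdot,\xi)\|_{L^\infty_x,\,\xi\sim 2^q}\,\|\Delta_q^x u\|_{L^2}$, and then $\|\Delta_p^x a(\cdot,\xi)\|_{L^\infty_x}\lesssim 2^{p d/2}\|\Delta_p^x a(\cdot,\xi)\|_{L^2_x}$ (Bernstein), which is summable in $p$ against $2^{-p s_0}$ since $s_0>d/2$, producing the seminorm $|a|_{m,s_0,\cdot}$ on the right. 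Multiplying by $\langle\xi\rangle^{s-m}\sim 2^{(s-m)q}$ and taking $\ell^2$ in $q$ gives \eqref{ac1}; the index $4$ in $|a|_{m,s_0,4}$ is exactly the finite number of $\xi$-derivatives needed to run the Schur estimate on the symbol near the shell (integration by parts in $j-k$). For \eqref{ac2}, the only change is that instead of landing at frequency $\sim 2^q$ we keep track of the decay $\langle\xi\rangle^{-\rho}$ available from the symbol class, i.e. we put $\rho$ powers of $\langle\xi\rangle$ onto the output, which costs $\rho$ fewer derivatives of regularity in $x$ and hence the seminorm $|a|_{m,s_0-\rho,4}$.

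I expect the main obstacle to be purely bookkeeping rather than conceptual: carefully tracking how the cutoff $\chi_\epsilon(|j-k|/\langle j+k\rangle)$ interacts with the two Littlewood--Paley scales $2^p$ (symbol) and $2^q$ (function), so that the output block sits at a well-defined frequency and the double sum over $(p,q)$ collapses to an almost-diagonal sum. Once that geometry is pinned down, Schur's test plus Bernstein's inequality plus $\ell^2$-summation in $q$ (Cauchy--Schwarz exploiting $\sum_p 2^{-p(s_0-d/2)}<\infty$) finish both claims. Since the statement is quoted verbatim from \cite{BMM1} as Theorem~2.4 there, in the paper itself one would simply cite that reference; the sketch above is the argument underlying it.
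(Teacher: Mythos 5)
The paper does not prove this theorem: as you observe at the end of your proposal, it is quoted verbatim from \cite{BMM1} (Theorem 2.4 there) and used as a black box, so there is no in-paper argument to compare against. Your sketch is a correct outline of the standard proof underlying the citation: the cutoff $\chi_{\epsilon}\big(|j-k|/\langle j+k\rangle\big)$ forces $\langle j\rangle\sim\langle k\rangle\sim\langle \tfrac{j+k}{2}\rangle$ on the support of the kernel, so the operator is almost diagonal in dyadic frequency blocks, and Schur's test in the variable $j-k$ combined with $\sum_{\ell}\sup_{\xi}\langle\xi\rangle^{-m}|\hat a(\ell,\xi)|\lesssim |a|_{m,s_0,0}$ (Cauchy--Schwarz, using $s_0>d/2$, which is exactly your Bernstein step) yields \eqref{ac1}; for \eqref{ac2} the sacrificed factor $\langle\xi\rangle^{-\rho}$ on the output is converted into $\langle j-k\rangle^{-\rho}$ decay of the kernel via the support condition $\langle j-k\rangle\lesssim\epsilon\langle\xi\rangle$, which restores summability when the symbol has only $H^{s_0-\rho}$ regularity in $x$. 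One small correction: the index $4$ in $|a|_{m,s_0,4}$ does not come from integrations by parts in $j-k$ --- no $\xi$-derivatives of the symbol are needed for the action estimate, and the stated bound holds a fortiori since $|a|_{m,s_0,0}\le|a|_{m,s_0,4}$; the $4$ is simply the seminorm convention of \cite{BMM1}, kept uniform with the composition theorem where $\xi$-derivatives genuinely enter.
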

We now state a result regarding symbolic calculus for the composition of Bony-Weyl para-differential operators.   Given two symbols $a$ and $b$ belonging to $\Gamma^m_{s_0+\rho}$ and $\Gamma^{ m'}_{s_0+\rho}$ respectively, we define for $\rho\in(0,2]$
\begin{equation}\label{cancelletto}
a\#_{\rho} b= \begin{cases}
ab \quad \rho\in(0,1]\\
ab+\frac{1}{2\ii}\{a,b\}\quad \rho\in (1,2],\\
%ab+\frac{1}{2\ii}\{a,b\}-\frac18\mathfrak{s}(a,b) \quad \rho\in (2,3],\\
\end{cases}
\end{equation}
where we denoted by $\{a,b\}:=\partial_{\xi}a\partial_xb-\partial_xa\partial_{\xi}b$ the Poisson's bracket between symbols.
% and $\mathfrak{s}(a,b):=\partial_{xx}^2a\partial_{\x\x}^2b-2\partial_{x\x}^2a\partial_{x\x}^2b+\partial_{\x\x}^2a\partial_{xx}^2b$. 
\begin{remark}\label{simmetrie}
According to the notation above we have $ab\in\Gamma^{m+m'}_{s_0+\rho}$ and $\{a,b\}\in\Gamma^{m+m'-1}_{s_0+\rho-1}$. Moreover $\{a,b\}=-\{b,a\}$.
% and $\mathfrak s(a,b)=\mathfrak s(b,a).$
\end{remark}
The following is Theorem 2.5 of \cite{BMM1}.
%, we just need some more precise symbolic calculus since we shall deal with nonlinearities containing three derivatives, while in \cite{BMM1} they have nonlinearities with two derivatives. 
\begin{theorem}\label{compo}
Let $a\in\Gamma^m_{s_0+\rho}$ and $b\in\Gamma^{m'}_{s_0+\rho}$ with $m,m'\in\mathbb{R}$ and $\rho\in(0,2]$. We have $\opbw(a)\circ\opbw(b)=\opbw(a\#_{\rho}b)+R^{-\rho}(a,b)$, where the linear operator $R^{-\rho}$ is defined on $H^s$ with values in $H^{s+\rho-m-m'}$, for any $s\in\R$ and it satisfies 
\begin{equation}\label{resto}
\|R^{-\rho}(a,b)u\|_{H^{s-(m+m')+\rho}}\lesssim (|a|_{m,s_0+\rho,N}|b|_{m',s_0,N}+|a|_{m,s_0,N}|b|_{m',s_0+\rho,N})\|u\|_{H^s},
\end{equation}
where $N\geq 3d+4$.\end{theorem}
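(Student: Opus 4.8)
\textbf{Proof proposal for Theorem~\ref{compo} (symbolic calculus for composition).}
The plan is to reduce the composition of two Bony--Weyl operators to a single quantization by a direct computation at the level of Fourier coefficients, exploiting the spectral localization built into \eqref{quantiWeyl}, and then to extract the leading term $a\#_\rho b$ by a Taylor expansion of the symbol $\hat a(j-\ell,\cdot)$ around the frequency $\tfrac{j+k}{2}$. First I would write $\opbw(a)\circ\opbw(b)$ explicitly: applying \eqref{quantiWeyl} twice, $(\opbw(a)\opbw(b)g)$ has Fourier coefficient at $j$ equal to a double sum over an intermediate frequency $\ell$ and the input frequency $k$, with two cutoff factors $\chi_\epsilon(|j-\ell|/\langle j+\ell\rangle)$ and $\chi_\epsilon(|\ell-k|/\langle \ell+k\rangle)$ and the product $\hat a(j-\ell,\tfrac{j+\ell}{2})\,\hat b(\ell-k,\tfrac{\ell+k}{2})$. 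The key structural point is that on the support of the two cutoffs both $|j-\ell|$ and $|\ell-k|$ are small compared to the relevant $\langle\cdot\rangle$, so $\ell$ is a small perturbation of $\tfrac{j+k}{2}$; this is what allows the symbol of the composition to be localized near $\tfrac{j+k}{2}$ in the first argument and near $j-k$ in the second, matching the form of a single Bony--Weyl quantization.

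Next I would Taylor expand. Writing $\ell = \tfrac{j+k}{2} + r$ with $r$ of the order of $|j-\ell|+|\ell-k|$, I expand $\hat a\big(j-\ell,\tfrac{j+\ell}{2}\big)$ in the $\xi$-variable around $\tfrac{j+k}{2}$: the zeroth order term reassembles into $\opbw(ab)$, the first order term (involving $\partial_\xi a$ paired with a factor $\ell - k$, i.e.\ essentially $\partial_x b$) reassembles into $\opbw\big(\tfrac{1}{2\ii}\{a,b\}\big)$ after symmetrizing the Weyl quantization, and the remaining terms of order two and higher are collected into $R^{-\rho}(a,b)$. For $\rho\in(0,1]$ one keeps only the zeroth order term and the first-order term already goes into the remainder; for $\rho\in(1,2]$ one keeps both. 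The gain of $\rho$ derivatives in $R^{-\rho}$ comes from the fact that each factor of $|j-\ell|$ (resp.\ $|\ell-k|$) extracted from the Taylor remainder is, on the support of the cutoff, bounded by $\epsilon\langle j+\ell\rangle$ (resp.\ $\epsilon\langle\ell+k\rangle$), and is compensated by a corresponding decay of $\widehat{\partial_\xi^\beta a}(j-\ell,\cdot)$ or $\widehat{\partial_x b}(\ell-k,\cdot)$ in the frequency difference; controlling these by the $H^{s_0+\rho}$ norm of the symbols (to keep $d/2$-summability in the differences, whence the loss from $s_0$ to $s_0+\rho$ on one factor, symmetrized to give the two terms in \eqref{resto}) and by the $\Gamma^m,\Gamma^{m'}$ symbol estimates for the $\xi$-behaviour yields the bound, with the constant $N\ge 3d+4$ absorbing the number of $\xi$-derivatives and lattice-summation exponents needed.

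The Sobolev mapping statement for $R^{-\rho}$ then follows from the same estimate: once $R^{-\rho}(a,b)$ is realized as an operator with kernel whose Fourier multiplier bound carries the factor $\langle j\rangle^{m+m'-\rho}$ times an $\ell^2$-summable-in-differences weight, one argues exactly as in the proof of Theorem~\ref{azione}, using Schur's test / Young's inequality on the frequency lattice, to conclude boundedness from $H^s$ to $H^{s-(m+m')+\rho}$ for every $s$.

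\textbf{Main obstacle.} I expect the delicate point to be the bookkeeping of the two cutoff functions: a priori $\chi_\epsilon(|j-\ell|/\langle j+\ell\rangle)\,\chi_\epsilon(|\ell-k|/\langle\ell+k\rangle)$ is \emph{not} of the form $\chi_\epsilon(|j-k|/\langle j+k\rangle)$ times a harmless factor, so reassembling the leading terms into genuine Bony--Weyl quantizations $\opbw(ab)$ and $\opbw(\tfrac{1}{2\ii}\{a,b\})$ requires showing that replacing the product of cutoffs by the single ``correct'' cutoff costs only a $\rho$-smoothing error — i.e.\ that the mismatch region, where exactly one of $|j-\ell|,|\ell-k|,|j-k|$ is comparable to the relevant $\langle\cdot\rangle$, contributes only to $R^{-\rho}$. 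This is a quantitative statement about how the supports of the three localizations interlock, and it (together with the symmetrized Weyl-ordering manipulation needed to turn the first-order Taylor term into the Poisson bracket) is where the real work lies; the Sobolev estimates afterwards are routine given Theorem~\ref{azione}.
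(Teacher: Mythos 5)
The paper does not prove this statement at all: it is quoted verbatim as Theorem 2.5 of \cite{BMM1}, so there is no internal proof to compare against. Your outline is, however, essentially the standard argument used in that reference (and in the Berti--Delort framework it follows): explicit composition of the two Fourier sums, Taylor expansion of the symbols at the symmetric point $\tfrac{j+k}{2}$, identification of the zeroth- and first-order terms with $\opbw(ab)$ and $\opbw(\tfrac{1}{2\ii}\{a,b\})$, and control of the Taylor remainder and of the cutoff mismatch by the gain of $\rho$ derivatives, with one symbol factor paying the extra $H^{s_0+\rho}$ regularity. You have also correctly isolated the genuinely delicate point, namely that $\chi_\epsilon(|j-\ell|/\langle j+\ell\rangle)\chi_\epsilon(|\ell-k|/\langle \ell+k\rangle)$ is not the cutoff of a single Bony--Weyl quantization and that the discrepancy must be shown to be $\rho$-smoothing. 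One small imprecision: to produce the full Poisson bracket $\tfrac{1}{2\ii}(\partial_\xi a\,\partial_x b-\partial_x a\,\partial_\xi b)$ you must Taylor-expand \emph{both} arguments, writing $\tfrac{j+\ell}{2}=\tfrac{j+k}{2}+\tfrac{\ell-k}{2}$ in $\hat a$ \emph{and} $\tfrac{\ell+k}{2}=\tfrac{j+k}{2}-\tfrac{j-\ell}{2}$ in $\hat b$; your sketch only spells out the first expansion (giving $\partial_\xi a\,\partial_x b$), and the phrase ``after symmetrizing the Weyl quantization'' should be made precise along these lines, since it is exactly the second expansion that supplies the $-\partial_x a\,\partial_\xi b$ term and the antisymmetry recorded in Remark~\ref{simmetrie}. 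With that clarification the strategy is sound and coincides with the proof in the cited source.
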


%We shall need also the following composition estimates. This is Theorem 2.8 in \cite{BMM1}.
%\begin{theorem}
%Let $0\in I$ an open interval and consider $F$ a $C^{\infty}$ function from $I$ to $\R$ such that $F(0)=0$. Let $J\subset I$ a compact interval. For any function $u,v\in H_0^s(\T,\R)$, $s>1/2$, with values in $J$ we have
%\begin{equation*}
%\begin{aligned}
%\|F(u)\|_{H^s_0}&\leq C(s,F,J)\|u\|_{H^s_0},\\
%\|F(u)-F(v)\|_{H^s_0}&\leq C(s,F,J) \big(\|u-v\|_{H^s_0}+(\|u\|_{H^s_0}+\|v\|_{H^s_0})(\|u-v\|_{L^{\infty}})\big).
%\end{aligned}
%\end{equation*}
%\end{theorem}
%\end{theorem}

\begin{lemma}{\bf (Paraproduct).}\label{Paraproduct}
Fix $s_0>d/2$ and 
let $f,g\in H^{s}(\mathbb{T}^d;\mathbb{C})$ for $s\geq s_0$. Then
\begin{equation}\label{eq:paraproduct}
fg=\opbw(f)g+\opbw({g})f+\mathcal{R}(f,g)\,,
\end{equation}
where
\begin{equation}\label{eq:paraproduct2}
\begin{aligned}
\widehat{\mathcal{R}(f,g)}(\x)&=\frac{1}{(2\pi)^d}
\sum_{\eta\in \mathbb{Z}^d}
a(\x-\eta,\xi)\hat{f}(\x-\eta)\hat{g}(\eta)\,,\\
 |a(v,w)|&\lesssim\frac{(1+\min(|v|,|w|))^{\rho}}{(1+\max(|v|,|w|))^{\rho}}\,,
\end{aligned}
\end{equation}
for any $\rho\geq0$.
%Fix $s_0>d/2$, then, for any $s\geq s_0$ and 
For $0\leq \rho\leq s-s_0$ one has
\begin{equation}\label{eq:paraproduct22}
\|\mathcal{R}(f,g)\|_{{H}^{s+\rho}}\lesssim\|f\|_{{H}^{s}}\|g\|_{{H}^{s}}\,.
\end{equation}
\end{lemma}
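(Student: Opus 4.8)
The plan is to obtain the identity \eqref{eq:paraproduct} as a frequency-space splitting of the product and then to estimate the leftover symbol $a$ by a direct inspection of its support. Since $s\ge s_0>d/2$, the Fourier coefficients $\hat f,\hat g$ lie in $\ell^{1}(\Z^{d})$, so $\widehat{fg}(\x)=c_d\sum_{\eta\in\Z^{d}}\hat f(\x-\eta)\hat g(\eta)$, with $c_d$ the normalisation constant of \eqref{quantiWeyl}. Applying \eqref{quantiWeyl} to the $\x$-independent symbols $f(x)$ and $g(x)$ and, in the second case, relabelling the summation variable, one finds
\[
\widehat{\opbw(f)g}(\x)=c_d\sum_{\eta}\chi_{\epsilon}\!\Big(\tfrac{|\x-\eta|}{\la\x+\eta\ra}\Big)\hat f(\x-\eta)\hat g(\eta),
\]
\[
\widehat{\opbw(g)f}(\x)=c_d\sum_{\eta}\chi_{\epsilon}\!\Big(\tfrac{|\eta|}{\la 2\x-\eta\ra}\Big)\hat f(\x-\eta)\hat g(\eta).
\]
Subtracting these two from $\widehat{fg}$ yields \eqref{eq:paraproduct}--\eqref{eq:paraproduct2} with the explicit symbol
\[
a(\x-\eta,\x)=1-\chi_{\epsilon}\!\Big(\tfrac{|\x-\eta|}{\la\x+\eta\ra}\Big)-\chi_{\epsilon}\!\Big(\tfrac{|\eta|}{\la 2\x-\eta\ra}\Big).
\]

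The heart of the matter is to localise the support of $a$. Abbreviate $\zeta:=\x-\eta$, and recall $\chi_{\epsilon}\equiv1$ on $[-1.1\epsilon,1.1\epsilon]$ and $\chi_{\epsilon}\equiv0$ off $[-1.9\epsilon,1.9\epsilon]$. If the first cutoff equals $1$ then $|\zeta|\le1.1\epsilon\la\zeta+2\eta\ra$, which for $\epsilon$ small forces $|\zeta|\lesssim\epsilon\la\eta\ra$; but then $\la 2\zeta+\eta\ra\sim\la\eta\ra$, so $|\eta|/\la2\zeta+\eta\ra$ is bounded below and the second cutoff vanishes, hence $a=1-1-0=0$. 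The symmetric argument (exchanging the roles of $\zeta$ and $\eta$) shows $a=0$ whenever the second cutoff equals $1$. Therefore $a\neq0$ forces both cutoffs to be strictly less than $1$: the first then gives $|\zeta|\gtrsim\epsilon|\eta|$ and the second $|\eta|\gtrsim\epsilon|\zeta|$, so that $|\zeta|\sim|\eta|$ on $\mathrm{supp}\,a$, with constants depending only on the fixed $\epsilon$. Since trivially $|a|\le3$, on this set $|a|\lesssim1\sim(1+\min(|\zeta|,|\eta|))^{\rho}/(1+\max(|\zeta|,|\eta|))^{\rho}$ for every $\rho\ge0$, which is the bound in \eqref{eq:paraproduct2}.

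For \eqref{eq:paraproduct22}, put $F_{\zeta}:=\la\zeta\ra^{s}|\hat f(\zeta)|$ and $G_{\eta}:=\la\eta\ra^{s}|\hat g(\eta)|$, so $\|F\|_{\ell^{2}}=\|f\|_{H^{s}}$ and $\|G\|_{\ell^{2}}=\|g\|_{H^{s}}$. On $\mathrm{supp}\,a$ one has $\la\x-\eta\ra\sim\la\eta\ra$ and $\la\x\ra\lesssim\la\eta\ra$, hence $\la\x\ra^{s+\rho}\la\x-\eta\ra^{-s}\la\eta\ra^{-s}\lesssim\la\eta\ra^{\rho-s}$, and so
\[
\la\x\ra^{s+\rho}\big|\widehat{\mathcal{R}(f,g)}(\x)\big|\lesssim\sum_{\eta}F_{\x-\eta}\big(\la\eta\ra^{\rho-s}G_{\eta}\big)=\big(F*(\la\cdot\ra^{\rho-s}G)\big)(\x).
\]
Young's inequality bounds the $\ell^{2}_{\x}$-norm of the right-hand side by $\|F\|_{\ell^{2}}\,\|\la\cdot\ra^{\rho-s}G\|_{\ell^{1}}$, and Cauchy--Schwarz gives $\|\la\cdot\ra^{\rho-s}G\|_{\ell^{1}}\le\big(\sum_{\eta}\la\eta\ra^{2(\rho-s)}\big)^{1/2}\|G\|_{\ell^{2}}$; the last series converges because $\rho\le s-s_{0}$ and $s_{0}>d/2$ imply $2(s-\rho)\ge2s_{0}>d$. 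This gives $\|\mathcal{R}(f,g)\|_{H^{s+\rho}}\lesssim\|f\|_{H^{s}}\|g\|_{H^{s}}$, which is the claim. The only genuinely delicate step is the support analysis of the second paragraph: the cancellation in $1-\chi_{\epsilon}-\chi_{\epsilon}$ is precisely what confines $\mathcal{R}(f,g)$ to the balanced-frequency régime, and it hinges on taking $\epsilon$ small together with the $\la j+k\ra$ normalisation built into the Bony--Weyl cutoff \eqref{quantiWeyl}.
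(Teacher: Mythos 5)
The paper itself offers no proof of this lemma (it is imported verbatim from \cite{BMM1}), so your argument must stand on its own. Its architecture is the standard and correct one: compute $\opbw(f)g$ and $\opbw(g)f$ from \eqref{quantiWeyl}, identify the leftover symbol $a=1-\chi_{\epsilon}(|\zeta|/\la\zeta+2\eta\ra)-\chi_{\epsilon}(|\eta|/\la2\zeta+\eta\ra)$ with $\zeta=\xi-\eta$, localize its support to the balanced-frequency region, and close with Young plus Cauchy--Schwarz; your last paragraph (the Sobolev bound, using $2(s-\rho)\geq 2s_{0}>d$) is correct. A cosmetic remark: you prove the decay of $a$ in the variables $(|\xi-\eta|,|\eta|)$, whereas the statement literally writes $a(\xi-\eta,\xi)$; the version you prove is the true one (and the one needed for \eqref{eq:paraproduct22}), so you have tacitly corrected a typo in the statement.

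There is, however, one step that fails as written: the claim that if one cutoff equals $1$ then the other vanishes, and hence that $a\neq0$ forces both cutoffs to be strictly less than $1$. This is false at low frequencies: at $\zeta=\eta=0$ both arguments are $0$, both cutoffs equal $1$, and $a=-1\neq0$. The culprit is the assertion that ``$|\eta|/\la2\zeta+\eta\ra$ is bounded below'': it is comparable to $|\eta|/\la\eta\ra$, which is bounded below only when $|\eta|\gtrsim1$. The repair is short. If $a\neq0$ and the first cutoff equals $1$, then the second cutoff is nonzero, so \emph{both} cutoffs are nonzero; this gives $|\zeta|\leq1.9\epsilon\la\zeta+2\eta\ra$ and $|\eta|\leq1.9\epsilon\la2\zeta+\eta\ra$ simultaneously, and adding the two inequalities yields $(1-C\epsilon)(|\zeta|+|\eta|)\lesssim\epsilon$, i.e.\ both frequencies are $O(1)$, where the claimed bound on $a$ is trivial since $1+\min\sim1\sim1+\max$. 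In the complementary case both cutoffs are $<1$ and your comparability argument $|\zeta|\gtrsim\epsilon\la\eta\ra$, $|\eta|\gtrsim\epsilon\la\zeta\ra$ applies. Either way $\la\zeta\ra\sim_{\epsilon}\la\eta\ra$ on $\mathrm{supp}\,a$, and the remainder of your proof goes through unchanged. (Minor constant slips --- ``equals $1$'' gives the threshold $1.9\epsilon$, not $1.1\epsilon$, and in fact $|a|\leq1$ rather than $3$ --- are harmless.)
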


%We need the following Lipschitz estimates on the composition operator.
%\begin{lemma}
%Let $I\subset \mathbb{R}$ and open interval and $F\in C^{\infty}(I,\mathbb{C})$, $F(0)=0$. Let $J\subset I$ be a compact interval. For any functions $u,v$ in $H^s(\mathbb{T}^d,\mathbb{C})$, $s>d/2$ and $|v|,|u|$ with values in $J$, then we have
%\begin{align}
%&\|F(u)\|_{H^s}\leq C(s,F,J)\|u\|_{H^s}\label{F1}\\
%&\|F(u)-F(v)\|_{H^s}\leq C(s,F,J)(\|u-v\|_{H^s}+(\|u\|_{H^s}+\|v\|_{H^s})\|u-v\|_{L^{\infty}}),\label{F2}
%\end{align}
%where $C>0$.
%
%\end{lemma}

\subsection{Paralinearization}
We paralinearize the equation \eqref{NLS}. This is essentially Proposition 4.2 in \cite{FGI}.

\bigskip We shall use the following 
notation throughout  the rest of the  paper
\begin{equation}\label{matriciozze}
U:=\vect{u}{\bar{u}}\,, \quad F:=\vect{f}{\bar{f}},\quad  E:=\sm{1}{0}{0}{-1}\,,\quad
\uno:=\sm{1}{0}{0}{1}\,,\quad \diag(b):=b\uno\,,\,\;\; b\in\mathbb{C}\,.
\end{equation}
Besides the classical Sobolev space $H^s(\T^d,\C)$, we also consider spaces $H^s(\T^d,\C^2)$ endowed with the norm defined by
\begin{equation*}
    \|(u_1,u_2)\|^2_{H^s(\T^d,\C^2)}=\|u_1\|^2_{H^s(\T^d,\C)}+\|u_2\|^2_{H^s(\T^d,\C)}, \forall (u_1,u_2)\in H^s(\T^d,\C^2).
\end{equation*}
Moreover, we define the subspace $\H^s$ of $H^s(\T^d,\C^2)$ as 
\begin{equation}\label{eq: defi of H^s}
    \H^s=\{U=(u,\bar{u}): u\in H^s(\T^d,\C^2)\}.
\end{equation}
In particular, we define the scalar product in $\H^0$ as
\begin{equation}\label{eq: scalar product in H^0}
    \poscals{U}{V}:=\int_{\T^d}u(x)\bar{v}(x)dx+\int_{\T^d}v(x)\bar{u}(x)dx.
\end{equation}
Define the following \emph{real} symbols
\begin{equation}\label{simboa2}
\begin{aligned}
a_2(x):=&\, \left[g_1'(|u|^2)\right]^2|u|^2\,,
\quad b_2(x):=
\left[g_1'(|u|^2)\right]^2u^2,\\
\vec{a}_1(x)\cdot\xi:=&\,\left[g_1'(|u|^2)\right]^2
\sum_{j=1}^d\Im(u\bar{u}_{x_j})\xi_j\,, \quad \xi=(\xi_1,\ldots,\xi_d)\,. 
\end{aligned}
\end{equation}
We define also the matrix of functions
\begin{equation}\label{matriceA2}
A_2(x):=A_2(U;x):=\sm{1+a_2(U;x)}{b_2(U;x)}{\ov{b_2(U;x)}}{1+a_2(U;x)}=
\sm{1+a_2(x)}{b_2(x)}{\ov{b_2(x)}}{1+a_2(x)}
%\left(\begin{matrix}a_2(U;x) & b_{2}(U;x) \vspace{0.2em}\\
%\ov{b_{2}(U;x)} & {a_{2}(U;x)} \end{matrix}\right):=\left(\begin{matrix}a_2(x) & b_{2}(x) \vspace{0.2em}\\
%\ov{b_{2}(x)} & {a_{2}(x)} \end{matrix}\right)\,,
\end{equation}
with $a_2(x)$ and $b_2(x)$ defined in \eqref{simboa2} and 
\begin{equation}\label{eq: operator A}
\A(U):=-\big(E \opbw(A_2(U;x)|\xi|^2)+ \diag(\vec{a}_1(U;x)\cdot\xi)\big)
\end{equation}

In the following proposition we rewrite \eqref{NLS} as a system of para-differential equations. This has been already proven in \cite{FIJMPA}, one can also look at \cite{BMM1}.
\begin{proposition}[Paralinearization]\label{paralinearization}
The equation \eqref{NLS} may be rewritten as  the following system of para-differential equations.
\begin{equation}\label{NLSpara}
\partial_tU=\ii\A(U)U+ R(U)U-\ii E F,
\end{equation}
where the matrices of symbols are defined in \eqref{matriceA2}, \eqref{simboa2}, and $R$ is a semilinear remainder satisfying the following. For any $\sigma\geq s_0>d/2$ and any $U, V$ in $\H^{\sigma+2}$ we have
\begin{align}
&\|R(U)\|_{\H^{\sigma}}\lesssim \|U\|_{\H^{\s}}, \label{R1}\\
%&\|R(U)-R(V)\|_{\sigma}\lesssim\|U-V\|_{\sigma}+\|U-V\|_{s_0+2},\label{R2}\\
&\|R(U)W-R(V)W\|_{\H^{\s}}\lesssim \|U-V\|_{\H^{s_0}}\|W\|_{\H^{\s}}.\label{R3}
\end{align}
Moreover if $s-2\geq 0$ we have
\begin{equation}\label{Lip-simbolo}
\|(\A(U)-\A(V))W\|_{\H^{s-2}}\lesssim \|U-V\|_{\H^{s_0}}\|W\|_{\H^s}.
\end{equation}
\end{proposition}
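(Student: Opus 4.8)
The statement is a paralinearization in the sense of Bony, so the plan is to expand the nonlinearity by the chain rule, invoke the paralinearization and symbolic‑calculus results recalled above, and track the symbols carefully. Set
\[
\mathcal F:=g_1'(|u|^2)\,\Delta\big(g_1(|u|^2)\big)\,u+g_2(|u|^2)\,u ,
\]
and regard it as a polynomial function of $u,\bar{u}$ and of their derivatives up to order two ($u$ and $\bar{u}$ treated as independent). First I would compute, using $\partial_{x_j}\big(g_1(|u|^2)\big)=g_1'(|u|^2)(u_{x_j}\bar{u}+u\bar{u}_{x_j})$ and $\partial_{x_j}^2(|u|^2)=u_{x_jx_j}\bar{u}+2u_{x_j}\bar{u}_{x_j}+u\bar{u}_{x_jx_j}$, that the only terms of $\mathcal F$ carrying a second derivative are $a_2(x)\Delta u+b_2(x)\Delta\bar{u}$ with $a_2,b_2$ exactly as in \eqref{simboa2}; equivalently $\partial_{u_{x_jx_k}}\mathcal F=a_2\delta_{jk}$ and $\partial_{\bar{u}_{x_jx_k}}\mathcal F=b_2\delta_{jk}$. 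All remaining contributions — the one generated by $g_1''$, the term $2[g_1'(|u|^2)]^2 u\,\nabla u\cdot\nabla\bar{u}$, and $g_2(|u|^2)u$ — depend only on $u,\bar{u},\nabla u,\nabla\bar{u}$, carry no second derivative, and vanish to third order at $u=0$.

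\noindent\textbf{Paralinearization.} Next I would apply Bony's paralinearization to $\mathcal F$ — concretely, iterate the paraproduct formula of Lemma~\ref{Paraproduct} on each monomial and transfer the $\ii\xi$'s into the symbols by the composition rule \eqref{cancelletto}–\eqref{resto} — to obtain
\[
\mathcal F=\opbw\big({-}a_2|\xi|^2\big)u+\opbw\big({-}b_2|\xi|^2\big)\bar{u}+\opbw\big(\ell(U;x)\cdot\xi\big)u+\widetilde R(U)U ,
\]
where $\ell\cdot\xi$ is a \emph{real} first‑order symbol and $\widetilde R(U)$ gathers: the paraproduct remainders of \eqref{eq:paraproduct22}, the composition remainders $R^{-\rho}$ of \eqref{resto}, the low$\times$high paraproducts of the smooth factors against the coefficients (operators such as $\opbw(u\,\Delta u)\bar{u}$), and the subprincipal order‑$0$ symbols produced along the way (e.g. the $\tfrac{1}{2\ii}\{a_2,|\xi|^2\}$, $\tfrac{1}{2\ii}\{b_2,|\xi|^2\}$ terms, which reduce to order $0$ once contracted with $\delta_{jk}$). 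The key algebraic point — this is where the special ``$g_1'\Delta(g_1)$'' structure of \eqref{NLS} enters — is that, when one collects \emph{all} the first‑order contributions (those coming directly from $\partial_{\nabla u}\mathcal F,\partial_{\nabla\bar u}\mathcal F$ together with those coming from the subprincipal Poisson‑bracket corrections of the two second‑order terms), the off‑diagonal ones cancel and the diagonal ones reassemble, via $u\bar u_{x_j}-\bar u u_{x_j}=2\ii\,\Im(u\bar u_{x_j})$, into an $\Im(u\bar u_{x_j})\xi_j$‑combination — that is, $\ell\cdot\xi=-\vec{a}_1\cdot\xi$ with $\vec{a}_1$ as in \eqref{simboa2}. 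Using $\opbw(|\xi|^2)=-\Delta$ and comparing with \eqref{eq: operator A}, the $u$‑component of $\ii\A(U)U$ is then precisely $\ii\Delta u+\ii\mathcal F-\ii\widetilde R(U)U$; the $\bar u$‑component follows by conjugation since $U=(u,\bar{u})$, and the forcing $f$ of \eqref{eq: scalar NLS with control} is transported to $-\ii EF$ unchanged. This is \eqref{NLSpara} with $R(U):=\ii\widetilde R(U)$.

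\noindent\textbf{Estimates and main obstacle.} For \eqref{R1}–\eqref{R3} I would observe that every symbol entering $\widetilde R(U)$ is a polynomial in $u,\bar u,\nabla u,\nabla\bar u,\Delta u,\Delta\bar u$ vanishing to second order at $0$, so by Moser and tame‑product estimates its $H^{s_0}$ seminorms are controlled by a second‑order‑vanishing polynomial in $\|U\|_{\H^{s_0+2}}$; since $\opbw$ of an order‑$0$ symbol acts boundedly on $\H^\sigma$ with operator norm governed by the \emph{low} ($H^{s_0}$) seminorm of the symbol (Theorem~\ref{azione}, estimate \eqref{ac1}), and since in the regime of interest $\|U\|_{\H^{s_0+2}}$ stays bounded, this yields $\|R(U)\|_{\mathcal L(\H^\sigma)}\lesssim\|U\|_{\H^\sigma}$, i.e.\ \eqref{R1}; estimate \eqref{R3} follows by writing $\widetilde R(U)-\widetilde R(V)$ as a telescoping difference in which each factor is Lipschitz in $\|U-V\|_{\H^{s_0}}$. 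Finally, \eqref{Lip-simbolo} is immediate from the explicit forms \eqref{matriceA2}, \eqref{simboa2}: the differences $A_2(U)-A_2(V)$ and $(\vec{a}_1(U)-\vec{a}_1(V))\cdot\xi$ have the relevant seminorms $\lesssim\|U-V\|_{\H^{s_0}}$ (up to harmless powers of $\|U\|_{\H^{s_0}},\|V\|_{\H^{s_0}}$), so \eqref{ac1} applied with derivative loss $m=2$ concludes. The main obstacle is the reorganization step: carrying out the full bookkeeping of the first‑order terms and verifying that \emph{nothing} of positive order survives in $R(U)$ — i.e.\ that the would‑be first‑order remainder collapses exactly onto the diagonal symbol $\vec{a}_1\cdot\xi$ already contained in $\A(U)$ — a cancellation afforded precisely by the gradient–Laplacian form of the quasi‑linear term, and the reason why \eqref{NLS} is more tractable than a general local Hamiltonian NLS.
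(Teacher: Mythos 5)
The paper itself does not prove this proposition: it is imported from \cite{FIJMPA} (see also \cite{BMM1} and Proposition 4.2 of \cite{FGI}), and the argument in those references is exactly the one you outline — Bony paralinearization of each monomial, transfer of the derivatives into Weyl symbols via the composition/Poisson-bracket rule, and identification of the principal, subprincipal and remainder parts, with the estimates \eqref{R1}--\eqref{Lip-simbolo} following from Theorem \ref{azione} and tame product estimates as you describe. Your identification of $a_2,b_2$ is correct, and the two cancellations you announce do occur: writing $c_j=\partial_{u_{x_j}}\mathcal F$, $d_j=\partial_{\bar u_{x_j}}\mathcal F$ for the coefficients of the first-order part of the nonlinearity, the off-diagonal first-order Weyl symbol is $\ii\xi_j\bigl(d_j-\partial_{x_j}b_2\bigr)=0$, while the diagonal one is $\ii\xi_j\bigl(c_j-\partial_{x_j}a_2\bigr)=-2[g_1'(|u|^2)]^2\Im(u\bar u_{x_j})\xi_j$, which is real as required. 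Two remarks. First, this comes out as $-2\,\vec a_1\cdot\xi$ rather than $-\vec a_1\cdot\xi$; the factor $2$ is harmless downstream (only the order and the realness of the symbol are ever used), but it means you should not assert the exact normalization of \eqref{simboa2} without actually performing the computation. Second, and this is the only substantive criticism: you explicitly defer the first-order bookkeeping as ``the main obstacle''. Since the entire nontrivial content of the proposition, beyond routine paradifferential estimates, is precisely that computation, a complete proof must carry it out — it is, however, the short and elementary calculation indicated above, and your plan for it is the correct one.
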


\subsection{Diagonalization of the linear system}
We consider the linear system associated to \eqref{NLSpara} with $F=0$ and we diagonalize it at the highest order. Define the space of functions as follows.
\begin{definition}\label{func-space}
For $s\in\mathbb{R}$, $\epsilon_0>0$, $T>0$, we say that $\U\in\mathscr{C}^{1,s}(T,\epsilon_0)$ if $U\in C([0,T];\H^s)\cap Lip([0,T]; {\H}^{s-2})$ and the norm
\begin{equation*}
    ||\U||_{\mathscr{C}^{1,s}}:=||\U||_{C([0,T];\H^s)}+||\partial_t \U||_{L^{\infty}([0,T]; {\H}^{s-2})}\leq\epsilon_0.
\end{equation*}
Similarly, we say that $F\in \mathscr{C}^{0,s}(T,\epsilon_0)$ if $F\in C([0,T];\H^s)$ and the norm
\begin{equation*}
    ||F||_{\mathscr{C}^{0,s}}:=||F||_{C([0,T];\H^s)}\leq\epsilon_0.
\end{equation*} 
\end{definition}
Consider the equation
\begin{equation}\label{NLSparalin}
\partial_tU=-\ii E \opbw(A_2(\underline{U};x)|\xi|^2)U- \ii\diag(\vec{a}_1(\underline{U};x)\cdot\xi)U.
\end{equation}
The eigenvalues of the matrix $E A_2(\underline{U};x)$ are equal to 
\begin{equation}\label{autovalori}
\lambda_{\pm}(\underline{U};x):=\pm\lambda(\underline{U};x):=\pm\sqrt{1+2|\underline{u}|^2[g_1'(|\underline{u}|^2)]^2}\,.
%\qquad a_2^{(1)}(\underline{U};x):=\lambda(\underline{U};x)-1\,.
\end{equation}
We denote by $S$  matrix of the eigenvectors of $E(\uno+{A}_2(x))$, more explicitly
\begin{equation}\label{matriceS}
\begin{aligned}
&S=\left(\begin{matrix}
s_1 &s_2\\
\bar{s}_2 & s_1
\end{matrix}\right)\,, 
\qquad S^{-1}=\left(\begin{matrix}
s_1 &-s_2\\
-\bar{s}_2 & s_1
\end{matrix}\right)\,, \\
&s_1(\underline{U};x):=\frac{1+|\underline{u}|^2[g_1'(|\underline{u}|^2)]^2+\lambda(x)}
{\sqrt{2\lambda(x)(1+[g_1'(|\underline{u}|^2)]^2|\underline{u}|^2+\lambda(x))}}\,, \\
&s_2(\underline{U};x):=\frac{-\underline{u}^2[g_1'(|\underline{u}|^2)]^2}{\sqrt{2\lambda(x)(1+[g_1'(|\underline{u}|^2)]^2|\underline{u}|^2+\lambda(x))}}\,.
\end{aligned}
\end{equation}
With this setting we have
\begin{equation}\label{diago_algebra}
S^{-1}E(\uno+{A}_2(\underline{U};x))S=E\diag(\lambda (\underline{U};x))\,,\quad s_1^2-|s_2|^2=1\,,
\end{equation}
where we have used the notation \eqref{matriciozze}. We have the following.
\begin{proposition}[Diagonalization]\label{diago}
Let $s_0>d/2+2$ and $\underline{U}$ be in $\mathscr{C}^{1,s_0}(T,\epsilon_0)$ (see Definition \ref{func-space}) with $\epsilon_0$ small enough. For any $t\in [0,T)$, there exists a linear and  invertible map $\Phi(\underline{U})$ from $\H^s$ to $\H^s$ for any $s\geq s_0$ such that if $U$ is a solution of \eqref{NLSparalin}, then $W=\Phi(\underline{U})U$ solves the following
\begin{equation}\label{paradiag}
\partial_tW=-\ii E\opbw(\diag(\lambda(\underline{U};x))|\xi|^2)W-\ii \opbw(\diag(\vec{a}_1(\underline{U};x)\cdot\xi) W+\tilde{R}(\underline{U})W.
\end{equation}
The operator linear operator $\tilde{R}(\underline{U})$ is bounded from $\H^s$ to $\H^s$ for any $s$. Moreover, we have the estimates
\begin{align}
&\|(\Phi(V_1)-\uno)W\|_{\H^s}\lesssim \|V_1\|_{\H^{s_0}}\|W\|_{\H^s}\label{phi1}, \\
&\|(\Phi(V_1)-\Phi(V_2))W\|_{\H^s}\lesssim \|V_1-V_2\|_{\H^{s_0}}\|W\|_{\H^s}\label{phi_2},\\
&\|\tilde{R}(V_1)W\|_{\H^s}\lesssim\|V_1\|_{\H^{s_0}}\|W\|_{\H^s},\label{est_R}
\end{align}
for any $s\in\R$ and $V_1, V_2$ in $\H^{s_0}$ and $W$ in $\H^s$.
\end{proposition}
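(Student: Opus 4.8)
The plan is to conjugate equation \eqref{NLSparalin} by the para-differential operator $\Phi(\underline U):=\opbw(S^{-1}(\underline U;x))$, where $S$ is the matrix of eigenvectors in \eqref{matriceS}. The algebraic identity \eqref{diago_algebra} tells us that $S^{-1}E(\uno+A_2)S=E\diag(\lambda)$ pointwise in $x$, so at the level of principal symbols the conjugation exactly diagonalizes the order-two part; the point of the proof is to control all the lower-order errors produced by symbolic calculus. First I would record that $s_1,s_2\in\Gamma^0_{s_0}$ with seminorms controlled by $\|\underline U\|_{\H^{s_0}}$ (here one uses that $g_1,g_2$ are polynomials vanishing at the origin and $\underline U\in\mathscr{C}^{1,s_0}(T,\epsilon_0)$ with $s_0>d/2+2$, so $\lambda$ and $\lambda^{-1}$ are smooth functions of $|\underline u|^2$ bounded away from zero when $\epsilon_0$ is small); similarly the entries of the companion matrix built from $S$ belong to $\Gamma^0_{s_0}$. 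Invertibility of $\Phi(\underline U)$ on $\H^s$ follows from Theorem \ref{compo}: $\opbw(S^{-1})\opbw(S)=\opbw(S^{-1}S)+R^{-\rho}=\uno+(\text{bounded remainder of order }\le 0)+R^{-\rho}$, and since the remainders are small in operator norm for $\epsilon_0$ small, a Neumann series gives a bounded inverse on every $\H^s$; this also yields \eqref{phi1} once we subtract $\uno=\opbw(\uno)$ and use that $S^{-1}-\uno$ has seminorms $\lesssim\|\underline U\|_{\H^{s_0}}$, together with \eqref{ac1} of Theorem \ref{azione}.

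Next I would compute the conjugated equation. Writing $W=\opbw(S^{-1})U$ and using $\partial_t W=\opbw(\partial_t S^{-1})U+\opbw(S^{-1})\partial_t U$, substitute \eqref{NLSparalin} for $\partial_t U$ and then $U=\opbw(S)W+(\text{bounded remainder})W$ (from the Neumann-series inverse above). The leading term is $-\ii\,\opbw(S^{-1})E\,\opbw(A_2|\xi|^2)\,\opbw(S)\,W$; applying Theorem \ref{compo} twice with $\rho=2$, the composition equals $\opbw\big(S^{-1}EA_2S\,|\xi|^2\big)W$ plus a sub-principal Poisson-bracket term of order $1$ plus a remainder of order $0$. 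By \eqref{diago_algebra} the principal part is exactly $\opbw(\diag(\lambda)|\xi|^2)$ times $E$, giving the first term of \eqref{paradiag}. The first-order transport term $-\ii\,\opbw(\diag(\vec a_1\cdot\xi))U$ conjugates to the same operator acting on $W$ up to an order-zero remainder (the principal symbol $\vec a_1\cdot\xi$ is scalar, hence commutes with $S$, so $S^{-1}(\vec a_1\cdot\xi)S=\vec a_1\cdot\xi$). Finally $\opbw(\partial_t S^{-1})U$: since $\partial_t S^{-1}$ involves $\partial_t\underline U\in \H^{s_0-2}$ with $s_0-2>d/2$, this is an operator of order $0$ bounded on every $\H^s$ with norm $\lesssim\|\underline U\|_{\mathscr C^{1,s_0}}$. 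Collecting every order-$\le 0$ error into $\tilde R(\underline U)$ gives \eqref{paradiag}, and the estimate \eqref{est_R} follows from Theorems \ref{azione}, \ref{compo} and the seminorm bounds on the symbols; note all these error operators act boundedly on $\H^s$ for every $s\in\R$ because the composing symbols have $H^{s_0}$-regular coefficients (not just $H^s$).

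For the Lipschitz estimate \eqref{phi_2}, I would write $\Phi(V_1)-\Phi(V_2)=\opbw(S^{-1}(V_1;x)-S^{-1}(V_2;x))$ and observe that, since each entry of $S^{-1}$ is a smooth function $G$ of $|\underline u|^2$ (and of $\underline u^2$) with $G$ defined on a neighbourhood of $0$, the mean value theorem in symbol space gives $|S^{-1}(V_1;\cdot)-S^{-1}(V_2;\cdot)|_{0,s_0,n}\lesssim\|V_1-V_2\|_{\H^{s_0}}$ using the algebra and composition properties of $H^{s_0}$ with $s_0>d/2$; then \eqref{ac1} of Theorem \ref{azione} finishes it. The main obstacle I anticipate is purely bookkeeping rather than conceptual: tracking that \emph{every} remainder generated along the way—from the composition formula, from the Neumann-series correction to $\Phi^{-1}$, from $\partial_t S^{-1}$, and from $\rho=2$ symbolic calculus applied to symbols with only $H^{s_0}$ (resp. $H^{s_0-2}$) spatial regularity—is genuinely of order $\le 0$ and bounded on $\H^s$ for \emph{all} $s\in\R$, uniformly in $t\in[0,T)$, with constants depending only on $\|\underline U\|_{\mathscr C^{1,s_0}}$. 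This is where one must be careful that the loss of two derivatives in $\rho=2$ calculus is compensated exactly by the two powers $|\xi|^2$ in the leading symbol, so no net derivative is lost and the remainder sits at order $0$ as claimed.
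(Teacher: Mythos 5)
Your proposal follows essentially the same route as the paper's proof: define $\Phi(\underline U)=\opbw(S^{-1}(\underline U;x))$, invert it by a Neumann series after applying Theorem \ref{compo} to $\opbw(S)\circ\opbw(S^{-1})$, conjugate the order-two part and identify the principal symbol through the algebraic identity \eqref{diago_algebra}, absorb $\opbw(\partial_t S^{-1})$ into the remainder using $\underline U\in\mathscr{C}^{1,s_0}(T,\epsilon_0)$, and deduce \eqref{phi1}, \eqref{phi_2} from the symbol seminorm bounds on $S^{-1}-\uno$ together with Theorem \ref{azione}. One remark: you explicitly (and correctly) note that the $\rho=2$ composition generates a Poisson-bracket contribution of order \emph{one}, with coefficients involving $\partial_x S^{\pm1}$, yet you then state that you collect only ``order-$\le0$'' errors into $\tilde R(\underline U)$; an order-one operator is not bounded from $\H^s$ to $\H^s$, so this term must either be shown to cancel or be tracked separately, and your accounting does not close it. The paper's own proof is equally terse on exactly this point (it simply declares the conjugation error to be a contribution to $\tilde R$), so this is not a deviation from the reference argument, but it is the one step in your write-up that is internally inconsistent as stated.
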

\begin{proof}
We define $\Phi(\underline{U};x):=\opbw(S^{-1}(\underline{U};x))$. By using Theorem \ref{compo} with $\rho=2$, $m, m'=0$ and $Q=R^{-\rho}$ we obtain $ \opbw(S(\underline{U};x))\circ\Phi=\uno+Q(\underline{U})$, where $\|Q(\underline{U})W\|_{\H^s}\lesssim\|\underline{U}\|_{\H^{s_0}}\|W\|_{\H^s}$. Since $\underline{U}$ is in $\mathscr{C}^{1,s}(T,\epsilon_0)$ with $\epsilon_0$ small we may invert the operator $\uno+Q(\underline{U})$ by means of Neumann series, therefore we obtain $\Phi^{-1}(\underline{U})=(\uno+Q(\underline{U}))^{-1}\circ\opbw(S(\underline{U}))$.\\
The estimate \eqref{phi1} is a direct computation, indeed the off diagonal term of the matrix $S^{-1}$ already generates terms that satisfy such an estimate (by means of Theorem \ref{azione}), while we observe that the term on the diagonal $s_1-1=\frac{1+a-\lambda}{\sqrt{2\lambda}(\sqrt{1+a+\lambda}+\sqrt{2\lambda})}=-a+O(a^2)$, where we set $a:=|\underline{u}|^2[g_1'(|\underline{u}|^2)]^2$, therefore we conclude by using again Theorem \ref{azione}. The estimate \eqref{phi_2} may be obtained similarly, by using  \eqref{F2} and Theorems \ref{azione}, \ref{compo}. We prove \eqref{paradiag}, deriving with respect to $t$ we have
\begin{align*}
\partial_tW&=\opbw(\partial_t(S^{-1}(\underline{U})))U+\Phi(\underline{U})\partial_tU\\
&=\opbw(\partial_t(S^{-1}))\Phi^{-1}W-\ii\Phi E\opbw(A_2|\xi|^2)\Phi^{-1}W\\
&\quad +\ii\Phi\opbw(\vec{a}_1\cdot\xi)\Phi^{-1}W.
\end{align*}
We study the main term  $\Phi E\opbw(A_2)\Phi^{-1}W$. By Neumann series, since $Q(\underline{U})$ is a linear operator gaining two derivatives, we have that $\Phi^{-1}(\underline{U})=\uno+ \frak{R}(\underline{U})$, with $\frak{R}(\underline{U})$ gaining two derivatives and having estimates as in \eqref{est_R}. We use this fact,  Theorem \ref{compo} and the fact that $S$ is the matrix of the eigenvectors of $EA_2$ to conclude that $\Phi E\opbw(A_2(\underline{U})|\xi|^2)\Phi^{-1}W=E\opbw(\diag(\lambda(\underline{U}))|\xi|^2)W$ modulo contributions to $\tilde{R}(\underline{U})W$. The conjugation of the operator of order one may be treated similarly and the term $\opbw(\partial_t(S^{-1}))\Phi^{-1}W$ may be absorbed in the remainder $\tilde{R}(\underline{U})$, because the function $\underline{U}$ is assumed to be in $\mathscr{C}^{1,s}(T,\epsilon_0)$. We eventually obtained the \eqref{paradiag}.
\end{proof}

\subsection{Solutions of the linear problems}\label{sec:mod}
In order to show the existence of the solutions to the problem \eqref{paradiag} we shall prove energy inequality and use a standard regularization scheme. The following is inspired to  \cite{BMM1, iandoliKdV}. We need to introduce the following equivalent norm on $\H^{\sigma}$, $\sigma\geq 0$, which is adapted to the equation \eqref{paradiag}
\begin{equation}\label{modified}
\|W\|_{\sigma,\underline{U}}^2=\langle \Lambda^{2\sigma}(\underline{U})W,W\rangle_{L^2},
\end{equation}
where we have defined
\begin{equation}\label{big-Lambda}
 \Lambda^{2\s}(\underline{U}):=\opbw\left(\diag\left(1+\lambda(\underline{U};x)|\xi|^{2})^{\s}\right)\right),
\end{equation}
and where $\underline{U}$ is assumed to be in $\mathscr{C}^{1,s_0}(T,\epsilon_0)$ (see Definition \ref{func-space}). One can show that the norm $\|\cdot\|_{\sigma,\underline{U}}$ is equivalent to the $\H^{\s}$ norm, since we work with small data. This follows, for instance, as in the proof of Lemma \ref{lem: regularity of the weight op} with $h$ therein equal to one.
 %the proof follows word by word the ones given in Lemma 4.5 in \cite{BMM1} or in Lemma 4 in \cite{iandoliKdV}, the only minor difference is that here we deal with the full $H^{\s}$ norm, while in the aforementioned papers the case of zero average norm $H^{\s}_0$ is analized.

\begin{proposition}\label{linear existence}
Let $\underline{U}$ be in $\mathscr{C}^{1,s_0}(T,\epsilon_0)$ with $\epsilon_0$ sufficiently small, $\sigma\geq 0$ and $s_0>d/2+2$. % and $F\in L^1([0,T);\H^{\s})$.
 Then the Cauchy problem \eqref{paradiag}, with initial condition $W(0,x)=\Phi(\underline{U}(0))U_0$ admits a unique solution $W\in C([0,T);\H^\s)$ which satisfies the estimate
$\|W\|_{C^0([0,T),\H^{\s})}\lesssim \|U_0\|_{\H^{\s}}$.
%+\|F\|_{L^1([0,T),H^{\s})}.
\end{proposition}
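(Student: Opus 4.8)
The plan is to prove the a priori energy estimate in the adapted norm $\|\cdot\|_{\sigma,\underline U}$ and then run the Friedrichs (or Galerkin–BMM1 style) regularization scheme to produce an actual solution; uniqueness follows from the same estimate applied to the difference. First I would compute $\frac{d}{dt}\|W(t)\|_{\sigma,\underline U}^2 = \langle (\partial_t \Lambda^{2\sigma}(\underline U)) W, W\rangle_{L^2} + 2\,\mathrm{Re}\,\langle \Lambda^{2\sigma}(\underline U)\,\partial_t W, W\rangle_{L^2}$, and substitute $\partial_t W$ from \eqref{paradiag}. The term coming from $\partial_t \Lambda^{2\sigma}(\underline U)$ is harmless: since $\underline U\in\mathscr{C}^{1,s_0}(T,\epsilon_0)$ with $s_0>d/2+2$, the time derivative falls on the symbol $(1+\lambda(\underline U;x)|\xi|^2)^\sigma$ and Theorem \ref{azione} (applied with the $W^{1,\infty}$-type seminorms controlled by $\partial_t\underline U\in \H^{s_0-2}$) bounds it by $\epsilon_0\|W\|_{\H^\sigma}^2\lesssim \|W\|_{\sigma,\underline U}^2$.

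The main point is the contribution of the two $\opbw$ operators in \eqref{paradiag}. For the second-order diagonal term $-\ii E\,\opbw(\diag(\lambda(\underline U;x))|\xi|^2)$, I would show that $\Lambda^{2\sigma}(\underline U)\circ E\,\opbw(\diag(\lambda|\xi|^2))$ differs from its own $L^2$-adjoint $E\,\opbw(\diag(\lambda|\xi|^2))\circ\Lambda^{2\sigma}(\underline U)$ by an operator bounded on $\H^\sigma$ — this is exactly why the weight $(1+\lambda(\underline U;x)|\xi|^2)^\sigma$ was chosen to commute, at top order, with the symbol $\lambda(\underline U;x)|\xi|^2$. Concretely, $\lambda|\xi|^2$ is real and the principal symbols of $\Lambda^{2\sigma}$ and of $E\,\opbw(\diag(\lambda|\xi|^2))$ Poisson-commute (both diagonal, both functions of $\lambda$), so by Theorem \ref{compo} with $\rho=2$ the commutator is order $2\sigma+2-1-2 = 2\sigma-1$, hence bounded from $\H^\sigma$ to itself after pairing, giving a contribution $\lesssim\epsilon_0\|W\|_{\H^\sigma}^2$; what survives is $2\,\mathrm{Re}\,\langle \ii E\,\opbw(\diag(\lambda|\xi|^2))\Lambda^{2\sigma}W, W\rangle$, and since $E\,\opbw(\diag(\lambda|\xi|^2))$ is (essentially) self-adjoint on $\H^0$ while $E$ contributes a sign, one checks the surviving bracket is purely real times $\ii$, hence contributes zero (or a lower-order, hence $\epsilon_0$-controlled, error from the defect of self-adjointness of the Bony–Weyl quantization of a real symbol, again via Theorem \ref{compo}). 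The first-order term $-\ii\,\opbw(\diag(\vec a_1\cdot\xi))$ is handled the same way: $\vec a_1(\underline U;x)\cdot\xi$ is real, so its Bony–Weyl quantization is self-adjoint up to an order-zero bounded operator, and its commutator with $\Lambda^{2\sigma}$ is of order $2\sigma$, both yielding $\lesssim\epsilon_0\|W\|_{\H^\sigma}^2$. Finally $\tilde R(\underline U)$ is bounded on $\H^\sigma$ with norm $\lesssim\epsilon_0$ by \eqref{est_R}. Collecting, $\frac{d}{dt}\|W(t)\|_{\sigma,\underline U}^2 \lesssim \|W(t)\|_{\sigma,\underline U}^2$, and Grönwall plus the norm equivalence $\|\cdot\|_{\sigma,\underline U}\simeq\|\cdot\|_{\H^\sigma}$ gives $\|W\|_{C^0([0,T),\H^\sigma)}\lesssim \|W(0)\|_{\H^\sigma}\lesssim\|U_0\|_{\H^\sigma}$, using $\|W(0)\|_{\H^\sigma}=\|\Phi(\underline U(0))U_0\|_{\H^\sigma}\lesssim\|U_0\|_{\H^\sigma}$ from \eqref{phi1}.

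For existence, I would set up the standard parabolic regularization $\partial_t W_\delta = (\text{RHS of }\eqref{paradiag}) + \delta\,\Delta W_\delta$ (or a spectral truncation $\Pi_{1/\delta}$ in place of the viscosity, in the spirit of \cite{BMM1, iandoliKdV}), for which global-in-time solutions $W_\delta\in C([0,T];\H^\sigma)$ exist by standard ODE/semigroup theory on the truncated space; the energy computation above applies uniformly in $\delta$ (the added term $\delta\langle\Lambda^{2\sigma}\Delta W_\delta, W_\delta\rangle$ has a sign after integration by parts, or is $O(\delta)$ harmless), giving a bound $\|W_\delta\|_{C^0([0,T];\H^\sigma)}\lesssim\|U_0\|_{\H^\sigma}$ independent of $\delta$. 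Then I extract a weak-$*$ limit $W$, upgrade to strong convergence in $C([0,T];\H^{\sigma-\epsilon})$ by estimating $\partial_t(W_\delta - W_{\delta'})$ in $\H^{\sigma-2}$ from the equation (so the family is equicontinuous in time with values in a space in which bounded $\H^\sigma$-sets are compact, by Ascoli) and passing to the limit in the linear equation; continuity in time with values in $\H^\sigma$ (not just $\H^{\sigma-\epsilon}$) is recovered by a Bona–Smith type argument or directly from the fact that $t\mapsto\|W(t)\|_{\sigma,\underline U}^2$ is continuous (again via the energy identity, now with a two-sided bound). Uniqueness: if $W_1,W_2$ solve \eqref{paradiag} with the same data, then $W_1-W_2$ solves the same linear equation with zero data, and the energy estimate forces $W_1=W_2$ in $C([0,T];\H^\sigma)$. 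The main obstacle is the self-adjointness/commutator bookkeeping for the top-order term: one must be careful that the conjugation of $E\,\opbw(\diag(\lambda|\xi|^2))$ by $\Lambda^{2\sigma}(\underline U)$ produces only $\H^\sigma$-bounded errors, which is precisely where the specific choice of weight \eqref{big-Lambda} — matching the eigenvalue $\lambda(\underline U;x)$ appearing in \eqref{paradiag} — is essential, and where the $\epsilon_0$-smallness of $\underline U$ is used to absorb every error term.
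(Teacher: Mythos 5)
Your proposal is correct, and its core — the energy estimate for $\tfrac{d}{dt}\|W\|_{\sigma,\underline{U}}^2$, resting on the observation that $(1+\lambda(\underline{U};x)|\xi|^2)^{\sigma}$ and $\lambda(\underline{U};x)|\xi|^2$ are functions of the same scalar and hence Poisson-commute exactly, so that Theorem \ref{compo} with $\rho=2$ leaves in the commutator only a remainder whose pairing against $W\in\H^{\sigma}$ is controlled by $\|W\|_{\H^{\sigma}}^2$ — is precisely the paper's argument, as is the treatment of the first-order term ($\rho=1$), of $\tilde R$, of $\partial_t\Lambda^{2\sigma}$, and the final Gronwall plus norm equivalence. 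The one genuine divergence is the approximation scheme: you regularize with $\delta\Delta$ (or a spectral truncation), whereas the paper truncates the symbols themselves with $\chi_{\varepsilon}:=\chi(\varepsilon\lambda(\underline{U};x)|\xi|^2)$. The paper's cutoff is again a function of $\lambda|\xi|^2$, so it Poisson-commutes with both the weight and the principal symbol and the uniform-in-$\varepsilon$ estimate acquires no new terms; your viscosity term instead obliges you to verify that $2\delta\Re\langle\Lambda^{2\sigma}\Delta W_{\delta},W_{\delta}\rangle_{L^2}$ is nonpositive up to $O(\delta)\|W_{\delta}\|^2_{\H^{\sigma}}$ in the \emph{weighted} product — doable by symmetrizing so that the order-$(2\sigma+1)$ commutator $[\Lambda^{2\sigma},\Delta]$ cancels and invoking positivity of $\Lambda^{2\sigma}$, but this is an extra step you only sketch, and it is exactly the kind of error the paper's choice of mollifier is designed to avoid. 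Two small slips worth fixing: the Poisson-commutation holds because both symbols are functions of $\lambda|\xi|^2$, not merely ``of $\lambda$''; and with $\rho=2$ the surviving remainder in the top-order commutator has order $2\sigma$ (not $2\sigma-1$), which is still exactly what the pairing $\langle\,\cdot\,W,W\rangle_{L^2}$ with $W\in\H^{\sigma}$ requires.
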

\begin{proof}
We consider $U_0\in C^{\infty}$, the $\H^{\s}$ case follows by standard approximation arguments. For any  $\varepsilon>0$ and, we introduce the cut-off function $\chi_{\varepsilon}:=\chi(\varepsilon \lambda(\underline{U};x)|\xi|^2)$, with $\chi$ smooth function supported in a neighborhood of the origin. We consider the smoothed homogeneous system
\begin{equation}\label{paradiag-smooth}
\begin{aligned}
\partial_tW_{\varepsilon}=&-\ii E\opbw(\chi_\varepsilon\diag(\lambda(\underline{U};x))|\xi|^2)W_{\e}-\ii \opbw(\chi_{\varepsilon}\diag(\vec{a}_1(\underline{U};x))\cdot\xi) W_{\e}+\tilde{R}(\underline{U})W_{\e}.\\
                   %&+\Phi(\underline{U})R(\underline{U})\underline{U}-\ii \Phi(\underline{U})EF.
\end{aligned}
\end{equation}
At fixed $\varepsilon>0$, \eqref{paradiag-smooth} is a Banach space ODE, therefore it admits a unique solution $W_{\varepsilon}$ which belongs to the space $C^1([0,T_{\varepsilon}),\H^{\s})$, for some $T_{\varepsilon}>0$. In order to show that $W_{\varepsilon}$ converges to a solution of \eqref{paradiag} which is in $C^1([0,T),\H^{\s})$ for $T>0$ independent of $\varepsilon$ we perform an energy estimate with constants independent of $\varepsilon$. We have 
\begin{align*}
\tfrac{d}{dt}\|W_{\e}(t,\cdot)\|_{\s,\underline{U}}^2=&\langle\ii [E\opbw(\chi_\varepsilon\diag(\lambda|\xi|^2)),\Lambda^{\s}]W_{\e},W_{\e}\rangle_{L^2}\\
							  &+\langle \ii [\Lambda^{\s},\opbw(\chi_{\e}\diag(\vec{a}_1\cdot\xi))]W_{\e},W_{\e}\rangle_{L^2}\\
							  &+2\Re\langle\Lambda^{\s} \tilde{R}(\underline{U}) W_{\e},W_{\e}\rangle_{L^2}\\
							  &+\langle \opbw(\tfrac{d}{dt}(1+\lambda|\xi|^{2})^{\s})W_{\e},W_{\e}\rangle.
							  \end{align*}

							  %&+2\Re\langle \opbw(\lambda^{\s}|\xi|^{2\s})\Phi(\underline{U})R(\underline{U}),W\rangle_{L^2}.
The main term is the one in the r.h.s. of the first line, we use Theorem \ref{compo} with $\rho=2$ and we see that the first two contributions are equal to zero, in particular, the Poisson bracket between the symbols is equal to zero because of the choice of the cut-off function as a function of $\lambda |\xi|^2$. In such a way, we may bound the first term in the r.h.s. of the first line by $C\|W_{\e}\|_{\underline{U},\s}^2$, for some positive constant $C$. The term in the second line is bounded by the same quantity because  of Theorem \ref{compo} applied with $\rho=1$. The term in the third line is bounded by the same quantity by means of Cauchy-Schwartz inequality and the estimate \eqref{est_R}. 
The term in the fourth line may be bounded by the same quantity by using the fact that $\underline{U}$ is in $\mathscr{C}^{1,s_0}(T,\epsilon_0)$.
Therefore, after using the equivalence $\|\cdot\|_{\U,\sigma}\sim\|\cdot\|_{\H^{\s}}$, the solution $W_{\varepsilon}(t,x)$ of \eqref{paradiag-smooth} exists and it has estimates independent of $\varepsilon$ by Gronwall  inequality $\|W_{\varepsilon}(t,x)\|_{\H^{\s}}\lesssim\|U_0\|_{\H^{\s}}$. Therefore the limit for $\e\rightarrow 0$ converges to a solution of the same equation without the cut-off function $\chi_{\e}$ by the Ascoli-Arzelà Theorem. 
\end{proof}

\begin{corollary}\label{estimates-near-id}
The function $U=\Phi(\underline{U})^{-1} W$ solves \eqref{NLSpara} with $F=0$ and satisfies the estimate $$\|U\|_{C^0([0,T],\H^s)}\leq (1+\mathcal{F}(\|\underline{U}\|_{\H^{s_0}}))(\|U_0\|_{\mathcal{H}^{s}}),$$ where $\F(\cdot)$ is a non decreasing function such that $\F(0)=0$. When $F$ is different from zero one has $\|U\|_{C^0([0,T],\H^s)}\leq (1+\mathcal{F}(\|\underline{U}\|_{\H^{s_0}}))(\|U_0\|_{\mathcal{H}^{s}}+\|F\|_{L^1([0,T],\H^s)})$.
\end{corollary}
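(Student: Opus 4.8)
The plan is to combine Proposition \ref{linear existence} with the quantitative bounds on the diagonalizing map $\Phi(\underline{U})$ from Proposition \ref{diago}, using the Duhamel principle to handle the inhomogeneous term. First, consider the case $F=0$. By Proposition \ref{diago}, since $U$ solves \eqref{NLSparalin} (equivalently \eqref{NLSpara} with $F=0$, because the zero-th order smoothing terms $R(U)U$ may be absorbed into $\tilde R(\underline{U})W$ after conjugation), the function $W=\Phi(\underline{U})U$ solves \eqref{paradiag} with initial datum $W(0)=\Phi(\underline{U}(0))U_0$. Proposition \ref{linear existence} then gives a unique $W\in C([0,T);\H^s)$ with $\|W\|_{C^0([0,T),\H^s)}\lesssim\|W(0)\|_{\H^s}$. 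Now invert: $U=\Phi(\underline{U})^{-1}W$, and apply the estimates \eqref{phi1}, \eqref{phi_2} — more precisely the bound $\|(\Phi(\underline{U})-\uno)W\|_{\H^s}\lesssim\|\underline{U}\|_{\H^{s_0}}\|W\|_{\H^s}$ together with the Neumann-series representation of $\Phi^{-1}$ established in the proof of Proposition \ref{diago} (so that $\|(\Phi(\underline{U})^{-1}-\uno)W\|_{\H^s}\lesssim\mathcal{F}(\|\underline{U}\|_{\H^{s_0}})\|W\|_{\H^s}$ for a nondecreasing $\mathcal{F}$ vanishing at $0$). Chaining these three estimates — on $\Phi(\underline{U}(0))$ at $t=0$, on the flow, and on $\Phi(\underline{U})^{-1}$ at time $t$ — yields
\[
\|U\|_{C^0([0,T],\H^s)}\le (1+\mathcal{F}(\|\underline{U}\|_{\H^{s_0}}))\,\|U_0\|_{\H^s},
\]
after relabelling $\mathcal{F}$ to absorb the implicit constants (which is legitimate since all bounds are controlled by $\|\underline{U}\|_{\mathscr{C}^{1,s_0}}\le\epsilon_0$).

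For the inhomogeneous case, conjugating \eqref{NLSpara} by $\Phi(\underline{U})$ produces, in addition to \eqref{paradiag}, a forcing term of the form $-\ii\,\Phi(\underline{U})EF$, which by Theorem \ref{azione} satisfies $\|\Phi(\underline{U})EF\|_{\H^s}\lesssim(1+\mathcal{F}(\|\underline{U}\|_{\H^{s_0}}))\|F\|_{\H^s}$. Redoing the energy estimate of Proposition \ref{linear existence} in the $\|\cdot\|_{s,\underline{U}}$-norm now contributes an extra term $2\Re\langle\Lambda^s(-\ii\Phi EF),W_\e\rangle_{L^2}\lesssim\|F\|_{\H^s}\|W_\e\|_{s,\underline{U}}$ by Cauchy--Schwarz and the equivalence of norms; Gr\"onwall's inequality in the form $\frac{d}{dt}\|W_\e\|_{s,\underline{U}}^2\le C\|W_\e\|_{s,\underline{U}}^2+C\|F\|_{\H^s}\|W_\e\|_{s,\underline{U}}$ then gives $\|W\|_{C^0([0,T],\H^s)}\lesssim\|W(0)\|_{\H^s}+\|F\|_{L^1([0,T],\H^s)}$. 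Inverting $\Phi(\underline{U})$ as before and using $\|F\|_{L^1([0,T],\H^s)}\le T\|F\|_{C^0([0,T],\H^s)}$ where needed, one obtains the claimed bound
\[
\|U\|_{C^0([0,T],\H^s)}\le (1+\mathcal{F}(\|\underline{U}\|_{\H^{s_0}}))\big(\|U_0\|_{\H^s}+\|F\|_{L^1([0,T],\H^s)}\big).
\]

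The only genuinely delicate point is bookkeeping the dependence of all constants on $\|\underline{U}\|_{\mathscr{C}^{1,s_0}}$ so that they can be packaged into a single nondecreasing $\mathcal{F}$ with $\mathcal{F}(0)=0$: one must check that the constants in \eqref{phi1}, \eqref{est_R}, in the Neumann series for $\Phi^{-1}$, and in the Gr\"onwall step all arise multiplicatively from seminorms of the symbols $S^{\pm1}$, $\lambda$, $\vec a_1$, which in turn are smooth functions of $\underline{U}$ vanishing (apart from the identity part) at $\underline{U}=0$ — hence bounded by $\mathcal{F}(\|\underline{U}\|_{\H^{s_0}})$ using the tame estimates for composition with smooth functions. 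This is routine given the hypotheses but is the step that requires care; everything else is a direct assembly of Propositions \ref{diago} and \ref{linear existence}.
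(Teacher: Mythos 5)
Your proposal is correct and follows essentially the same route as the paper, which simply invokes Proposition \ref{linear existence} (together with the bounds on $\Phi(\underline{U})^{\pm1}$ from Proposition \ref{diago}) for $F=0$ and the Duhamel formulation of \eqref{NLSpara} for $F\neq0$. Your version merely spells out the conjugation of the forcing term and the Gr\"onwall bookkeeping that the paper leaves implicit.
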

\begin{proof}
The first sentence is a consequence of Proposition \ref{linear existence}, when $F\neq 0$ one has to use Duhamel formulation of \eqref{NLSpara}.
\end{proof}
\section{$L^2$ null controllability for the linear system}\label{sec: L^2 controllability for the linear system}
In this section, we mainly consider the $L^2$ controllability for the linearized system. In Proposition \ref{paralinearization}, we proved that \eqref{NLS} is equivalent to
\begin{equation}\label{eq: nonlinear control problem}
\partial_tU=-\ii E \opbw(A_2(U;x)|\xi|^2)U- \ii\opbw(\diag(\vec{a}_1(U;x)\cdot\xi))U+ R(U)-\ii \chi_T\varphi_{\omega}E F.
\end{equation}
%To simplify the notations, we
% denote the operator $\mathscr{A}(U)=E \opbw(A_2(U;x)|\xi|^2)+ \opbw(\diag(\vec{a}_1(U;x)\cdot\xi)$ and 
%introduce the following spaces:

We introduce an iterative scheme that reduces the study of the nonlinear control problem to the linear one. For certain $s_0>d/2+2$  and $\epsilon_0>0$, we fix $\underline{U}\in\mathscr{C}^{1,s_0}(T,\epsilon_0)$ (see Definition \ref{func-space}) and we consider the null controllability of the following linear equation:
\begin{equation}\label{eq:linearized system with R}
    \partial_t U=\ii \mathscr{A}(\underline{U})U+
R(\underline{U})U-\ii \chi_T\varphi_{\omega}E F\,,
\end{equation}
where $\mathscr{A}(\underline{U})=-E \opbw(A_2(\underline{U};x)|\xi|^2)- \opbw(\diag(\vec{a}_1(\underline{U};x)\cdot\xi)$.
% with symbols associated with the solution $\underline{U}$. 
We show that for $\epsilon_0$ sufficiently small, there exists a linear operator 
\begin{equation}
\mathscr{L}(\underline{U}):\H^s\rightarrow C([0,T],\H^s),    
\end{equation}
which null-controls \eqref{eq:linearized system with R}, that is, for any $U_{in}\in\H^s$,
\begin{equation*}
    F=\mathscr{L}(\underline{U})U_{in}\in C([0,T],\H^s)
\end{equation*}
sends the initial datum $U(0)=U_{in}$ to the final target $U(T)=0$. Therefore, for $U_{in}\in \H^s$ such that $||U_{in}||_{\H^s}<\epsilon_0$, the iterative scheme proceeds as follows. Set $U^0=0$, $F^0=0$, and for $n\geq0$, set $(U^{n+1},F^{n+1})\in C([0,T],\H^s)\times C([0,T],\H^s)$ by letting $F^{n+1}=\mathcal{L}(U^n)U_{in}$ and $U^{n+1}$ be the solution to the system 
\begin{equation}\label{n-prob}
    \partial_t U^{n+1}=\ii E\mathscr{A}(U^n)U^{n+1}+
R(U^n)U^{n+1}-\ii \chi_T\varphi_{\omega}E F^{n+1},\quad U^{n+1}(0)=U_{in}, U^{n+1}(T)=0.
\end{equation}
We focus on the construction of the operator $\mathcal{L}$. 
\subsection{Hilbert Uniqueness Method}
The term $R(\underline{U})$ is a smoothing perturbation. We focus on the simplified para-differential system:
\begin{equation}\label{eq: simplified control system-L^2}
   \partial_t U=\ii \mathscr{A}(\underline{U})U-\ii \chi_T\varphi_{\omega}E F\,, 
\end{equation}
where $\mathscr{A}(\underline{U})=-E \opbw(A_2(\underline{U};x)|\xi|^2)-\opbw(\diag(\vec{a}_1(\underline{U};x)\cdot\xi)$.

The operator $\ii\mathscr{A}(\underline{U})$ is skew-self-adjoint with respect to the scalar product on the Hilbert space $\H^0$, so that 
 the adjoint system is
\begin{equation}\label{eq: adjoint system}
    \partial_t V=-\ii \mathscr{A}(\underline{U})V.
\end{equation}
Now HUM proceeds as follows. Define the range operator $\mathcal{R}=\mathcal{R}(\underline{U})$ by
\begin{equation*}
    \mathcal{R}: L^2([0,T],\H^0)\rightarrow \H^0, F\mapsto U_{in}{:=U(0)},
\end{equation*}
where $U\in C([0,T],\H^0)$ is the unique solution to the Cauchy problem
\begin{equation*}
    \partial_t U=\ii \mathscr{A}(\underline{U})U+F,\quad U(T)=0.
\end{equation*}
\begin{remark}
Here we  suppose $T>0$ is small enough to ensure the existence of the solutions. 
\end{remark}
\noindent We define the solution operator $\mathcal{S}=\mathcal{S}(\underline{U})$ by 
\begin{equation*}
    \mathcal{S}: \H^0\rightarrow C([0,T],\H^0)\subset L^2([0,T],\H^0), V(0)\mapsto V,
\end{equation*}
where $V\in C([0,T],\H^0)$ is the unique solution to the adjoint system \eqref{eq: adjoint system} with initial datum $V(0)$. Then we have the following duality proposition.
\begin{proposition}
Let $F\in L^2([0,T],\H^0)$, and $V(0)\in \H^0$. Then the duality holds as follows:
\begin{equation}\label{duality}
    -(\mathcal{R}\chi_T\varphi_{\omega}F,V(0))_{\H^0}=(\chi_T\varphi_{\omega}F,\mathcal{S}V(0))_{L^2([0,T],\H^0)}.
\end{equation}
Formally, we could say that $\mathcal{R}\circ(\chi_T\varphi_{\omega})=-((\chi_T\varphi_{\omega})\circ\mathcal{S})^*$. \begin{proof}
We consider $F\in L^2([0,T],\H^{\infty})$ and $V(0)\in \H^{\infty}$, the $\H^0$ case follows by standard approximation arguments. Since $V=\mathcal{S}(\underline{U})V(0)$ is the solution to 
\begin{equation*}
    \partial_t V=-\ii \mathscr{A}(\underline{U})V, V|_{t=0}=V(0)\in \H^{\infty},
\end{equation*}
we know that $V\in C^1([0,T],\H^{\infty})$. Using the equation \eqref{eq: simplified control system-L^2}, we obtain that
\begin{align*}
    (\chi_T\varphi_{\omega}E F,\mathcal{S}V(0))_{L^2([0,T],\H^0)}&=(\ii \partial_t U+\mathscr{A}(\underline{U})U,\mathcal{S}V(0))_{L^2([0,T],\H^0)}\\
    &=(\ii \partial_t U,\mathcal{S}V(0))_{L^2([0,T],\H^0)}+(\mathscr{A}(\underline{U})U,\mathcal{S}V(0))_{L^2([0,T],\H^0)}.
\end{align*}
We already know that $\mathscr{A}(\underline{U})$ is self-adjoint, therefore we derive that
\begin{equation*}
    (\chi_T\varphi_{\omega}E F,\mathcal{S}V(0))_{L^2([0,T],\H^0)}
    =(\ii \partial_t U,\mathcal{S}V(0))_{L^2([0,T],\H^0)}+(U,\mathscr{A}(\underline{U})\mathcal{S}V(0))_{L^2([0,T],\H^0)}.
\end{equation*}
According to the Newton-Leibniz formula, 
\begin{align*}
    (\chi_T\varphi_{\omega}E F,&\mathcal{S}V(0))_{L^2([0,T],\H^0)}\\
   & =(\ii U,\mathcal{S}V(0))_{\H^0}|_0^T-(\ii  U,\partial_t\mathcal{S}V(0))_{L^2([0,T],\H^0)}+(U,\mathscr{A}(\underline{U})\mathcal{S}V(0))_{L^2([0,T],\H^0)}.
\end{align*}
Using the equation \eqref{eq: adjoint system}, we obtain
$
     (\chi_T\varphi_{\omega}E F,\mathcal{S}V(0))_{L^2([0,T],\H^0)}=(\ii U,\mathcal{S}V(0))_{\H^0}|_0^T.
$
Since $U(T)=0$, we deduce that
$
     (\chi_T\varphi_{\omega}E F,\mathcal{S}V(0))_{L^2([0,T],\H^0)}=-(\ii U(0),V(0))_{\H^0}.
$
Since $\mathcal{R}(-\ii\varphi_{\omega}EF)=U(0)$, we conclude with 
$ (\chi_T\varphi_{\omega}E F,\mathcal{S}V(0))_{L^2([0,T],\H^0)}=-(\mathcal{R}\chi_T\varphi_{\omega}EF ,V(0))_{\H^0}.$
\end{proof}
\end{proposition}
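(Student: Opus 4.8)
The plan is to establish \eqref{duality} first for smooth data and then extend by density, exactly the scheme that makes the formal identity $\mathcal{R}\circ(\chi_T\varphi_{\omega})=-\big((\chi_T\varphi_{\omega})\circ\mathcal{S}\big)^{*}$ rigorous. First I would record that $\mathcal{R}=\mathcal{R}(\underline{U})$ is bounded from $L^{2}([0,T],\H^{0})$ to $\H^{0}$ and that $\mathcal{S}=\mathcal{S}(\underline{U})$ is bounded from $\H^{0}$ to $C([0,T],\H^{0})\hookrightarrow L^{2}([0,T],\H^{0})$; both facts follow from Proposition \ref{linear existence} and Corollary \ref{estimates-near-id} together with Duhamel's formula (the propagator of $\partial_t U=\ii\mathscr{A}(\underline{U})U$ is bounded on every $\H^{s}$, hence on $\H^{0}$, so $\|U(0)\|_{\H^0}\lesssim\|F\|_{L^1([0,T],\H^0)}\lesssim\sqrt{T}\,\|F\|_{L^2([0,T],\H^0)}$). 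By these boundedness properties it suffices to prove \eqref{duality} for $F\in L^{2}([0,T],\H^{\infty})$ and $V(0)\in\H^{\infty}$, and then pass to the limit by approximating $F$ and $V(0)$ in the respective norms. For such smooth data the forward solution $U$ of $\partial_t U=\ii\mathscr{A}(\underline{U})U-\ii\chi_T\varphi_{\omega}EF$ with $U(T)=0$, and the adjoint solution $V=\mathcal{S}(\underline{U})V(0)$ of $\partial_t V=-\ii\mathscr{A}(\underline{U})V$, both belong to $C^{1}([0,T],\H^{\infty})$, which is what legitimizes the computation below.

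The core is a single integration by parts in time. From the forward equation one has $\chi_T\varphi_{\omega}EF=\ii\partial_t U+\mathscr{A}(\underline{U})U$, so I would expand $(\chi_T\varphi_{\omega}EF,\mathcal{S}V(0))_{L^{2}([0,T],\H^{0})}=(\ii\partial_t U,V)_{L^{2}([0,T],\H^{0})}+(\mathscr{A}(\underline{U})U,V)_{L^{2}([0,T],\H^{0})}$. Next, move $\mathscr{A}(\underline{U})$ onto $V$ using that $\mathscr{A}(\underline{U})$ is self-adjoint on $\H^{0}$ — this is the point where the real symmetric structure of $A_{2}(\underline{U};x)$ and the realness of $\vec{a}_{1}(\underline{U};x)\cdot\xi$, recorded around Proposition \ref{paralinearization}, enter — and integrate by parts in $t$ via the Newton--Leibniz formula, $(\ii\partial_t U,V)_{L^{2}}=(\ii U,V)_{\H^{0}}\big|_{0}^{T}-(\ii U,\partial_t V)_{L^{2}}$. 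Substituting the adjoint equation $\partial_t V=-\ii\mathscr{A}(\underline{U})V$ cancels the two interior terms, leaving only the boundary contribution $(\ii U,V)_{\H^{0}}\big|_{0}^{T}$. Since $U(T)=0$ this equals $-(\ii U(0),V(0))_{\H^{0}}$, and since $U(0)=\mathcal{R}(-\ii\chi_T\varphi_{\omega}EF)=-\ii\,\mathcal{R}(\chi_T\varphi_{\omega}EF)$ by the very definition of $\mathcal{R}$, this is precisely $-(\mathcal{R}\chi_T\varphi_{\omega}EF,V(0))_{\H^{0}}$, which is \eqref{duality}.

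The steps that need care — and which I regard as the only, mild, obstacle — are bookkeeping ones: tracking the factors of $\ii$ and the matrix $E$ so that the signs come out right, and justifying that $\mathscr{A}(\underline{U})$ may be treated as genuinely self-adjoint on $\H^{0}$ rather than self-adjoint only up to a smoothing error (this uses that Weyl, hence Bony--Weyl, quantization sends real symbols to essentially self-adjoint operators, and that the full symbol of $\mathscr{A}(\underline{U})$ is real). Everything else — the $C^{1}$-in-time regularity of the smooth solutions, the boundedness of $\mathcal{R}$ and $\mathcal{S}$, the density reduction and the integration by parts — is routine given Proposition \ref{linear existence} and Corollary \ref{estimates-near-id}.
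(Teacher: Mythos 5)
Your proposal is correct and follows essentially the same route as the paper's proof: reduction to smooth data by density, substitution of the forward equation, self-adjointness of $\mathscr{A}(\underline{U})$ on $\H^0$, integration by parts in time, cancellation via the adjoint equation, and identification of the boundary term at $t=0$ through the definition of $\mathcal{R}$ together with $U(T)=0$. The extra care you flag about the factors of $\ii$ and $E$ is warranted (the paper itself is slightly loose on this bookkeeping) but does not change the argument.
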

We introduce the HUM operator denoted by $\mathscr{K}=\mathscr{K}(\underline{U})$ given by 
\begin{equation}\label{defi: HUM op}
    \mathscr{K}(\underline{U})=-\mathcal{R}\chi_T^2\varphi_{\omega}^2\mathcal{S}:\H^0\rightarrow \H^0.
\end{equation}
\begin{remark}
According to the energy estimates in Corollary \ref{estimates-near-id}, one has $\mathscr{K}: \H^s\rightarrow \H^s$ for any $s\geq0$. %But now we focus on the $L^2-$observability.
\end{remark}

We adopt the following notation for the rest of the paper.
We denote by $\F(\|\underline{U}\|_{\H^{s_0}})$ a positive constant depending on $s_0$ and on the norm of $\underline{U}$ in the space $\H^{s_0}$, this constant may change from line to line.

\begin{proposition}\label{prop:L^2 observability}
Suppose that $\omega$ satisfies the geometric control condition, $s_0>d/2+2$, $T>0$, $\epsilon_0>0$, and $\underline{U}\in \mathscr{C}^{1,s_0}(T,\epsilon_0)$. Then for $\epsilon_0$ sufficiently small, the $L^2$-observability of the system \eqref{eq: adjoint system} holds. That is, for all solutions $V$, with initial datum  $V(0)$ in $\H^0$, we have the following inequality:
\begin{equation}\label{eq: L^2-observability inequality}
    ||V(0)||^2_{\H^0}\leq \F(\|\underline{U}\|_{\H^{s_0}})\int_0^T||\chi_T\varphi_{\omega}V(t)||^2_{\H^0}dt.
\end{equation}
\end{proposition}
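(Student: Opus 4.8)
The plan is to transport the observability along the diagonalizing change of variables of Proposition~\ref{diago}, prove a semiclassical observability estimate for the resulting decoupled scalar Schr\"odinger equations by a defect-measure argument, upgrade it to a weak observability inequality carrying a compact remainder by a Littlewood--Paley decomposition, and finally remove that remainder by a uniqueness-compactness argument.

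\emph{Step 1: reduction to the diagonal system.} Given a solution $V$ of \eqref{eq: adjoint system} I would set $Y(t):=\Phi(\underline U(t))V(t)$. By Proposition~\ref{diago} the unknown $Y$ solves a system whose leading part is the diagonal operator $E\,\opbw(\diag(\lambda(\underline U;x)|\xi|^2))$, together with the first order diagonal term $\opbw(\diag(\vec a_1(\underline U;x)\cdot\xi))$ and a remainder bounded on every $\H^s$; hence it splits into two scalar variable-coefficient Schr\"odinger equations with real principal symbol $\pm\lambda(\underline U;x)|\xi|^2$, with $\lambda$ as in \eqref{autovalori}. By \eqref{phi1} and the smallness of $\epsilon_0$ the maps $\Phi(\underline U(t))^{\pm1}$ are bounded on $\H^0$ with norms $\le 1+\F(\|\underline U\|_{\H^{s_0}})$, so $\|Y(t)\|_{\H^0}\simeq\|V(t)\|_{\H^0}$; moreover $[\varphi_\omega,\Phi(\underline U)]$ is of order $-1$ by symbolic calculus, whence $\|\varphi_\omega Y\|_{\H^0}\lesssim\|\varphi_\omega V\|_{\H^0}+\F(\|\underline U\|_{\H^{s_0}})\|V\|_{\H^{-1}}$. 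It therefore suffices to prove observability for $Y$, up to a term that is lower order in $x$ and will be absorbed in Steps~3--4.

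\emph{Step 2: semiclassical observability.} I claim there exist $h_0>0$ and $C>0$ such that for every $h\in(0,h_0]$ and every solution $Y$ of the diagonal system with $\widehat{Y(0)}$ supported in $\{|j|\sim h^{-1}\}$ one has $\|Y(0)\|_{\H^0}^2\le C\int_0^T\|\varphi_\omega Y\|_{\H^0}^2\,dt$. I would argue by contradiction: assuming $h_n\to0$ and solutions $Y_n$ with $\|Y_n(0)\|_{\H^0}=1$ and $\int_0^T\|\varphi_\omega Y_n\|_{\H^0}^2\,dt\to0$, I extract along a subsequence a semiclassical defect measure $\mu$ at scale $h_n$ on $[0,T]\times T^*\T^d$ associated with $(Y_n)$. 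The spectral localization of $Y_n(0)$ prevents any mass from escaping to zero or infinite frequency, so $\mu\ne 0$; the vanishing of $\int_0^T\|\varphi_\omega Y_n\|^2$ forces $\mu$ to vanish over $\{x:\varphi_\omega(x)>0\}\times[0,T]\supseteq\omega'\times[0,T]$; and the standard propagation theorem (following the semiclassical reduction of Lebeau \cite{Lebeau96}) shows $\mu$ to be invariant under the time-dependent bicharacteristic flow of the principal symbol $\lambda(\underline U(t,x);x)|\xi|^2$. Since $\lambda=1+O(\epsilon_0)$ with all derivatives controlled by $\F(\|\underline U\|_{\H^{s_0}})$, that flow is $C^1$-close, uniformly, to the geodesic flow of the flat metric, which satisfies the geometric control condition relative to the \emph{open} set $\omega'$ (Definition~\ref{gcc}); hence for $\epsilon_0$ small every bicharacteristic enters $\{\varphi_\omega>0\}$ within a uniformly bounded time, and one picks $h_0$ so that this time fits inside $[0,T]$ after semiclassical rescaling. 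Invariance then gives $\mu=0$, a contradiction. I expect this step to be the main obstacle: correctly setting up the semiclassical measure for the $(t,x)$-dependent symbol, establishing its propagation, and transferring the geometric control condition to the perturbed bicharacteristic flow with constants uniform in $\underline U$.

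\emph{Steps 3--4: weak observability, then removal of the remainder.} Decomposing $Y(0)$ into Littlewood--Paley blocks and using that $[\Delta_j,\opbw(\lambda|\xi|^2)]$ is of order one, each high block $\Delta_jY$ with $2^j\gtrsim h_0^{-1}$ is, modulo a source term one order smoother, a solution with data localized at frequency $2^j$, to which Step~2 applies; summing over $j$ by almost orthogonality of the blocks and absorbing the finitely many low frequencies into a remainder $\|Y(0)\|_{\H^{-N}}$, I obtain the weak observability inequality
\[
\|Y(0)\|_{\H^0}^2\le\F(\|\underline U\|_{\H^{s_0}})\Big(\int_0^T\|\chi_T\varphi_\omega Y\|_{\H^0}^2\,dt+\|Y(0)\|_{\H^{-N}}^2\Big),
\]
and hence the same for $V$ by Step~1. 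To remove the compact remainder I would argue by contradiction once more: were \eqref{eq: L^2-observability inequality} false there would be $V_n$ with $\|V_n(0)\|_{\H^0}=1$ and $\int_0^T\|\chi_T\varphi_\omega V_n\|_{\H^0}^2\,dt\to0$; by the weak inequality $\|V_n(0)\|_{\H^{-N}}$ stays bounded below, so, using the compact embedding $\H^0\hookrightarrow\H^{-N}$ and continuous dependence of solutions on their data, along a subsequence $V_n(0)\rightharpoonup V_\infty(0)$ in $\H^0$ with $V_\infty(0)\ne0$ and the limiting solution satisfying $\varphi_\omega V_\infty\equiv0$ on $[0,T]$, hence $V_\infty\equiv0$ on $\omega'\times[0,T]$. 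The set of such Cauchy data is a closed subspace of $\H^0$ on which, by the weak inequality, the $\H^0$ and $\H^{-N}$ norms are equivalent, so it is finite dimensional; a unique continuation property for the Schr\"odinger-type system, in the spirit of \cite{BLR} and carried out in Section~\ref{UCSO}, then forces it to be trivial, so $V_\infty(0)=0$, a contradiction. This establishes \eqref{eq: L^2-observability inequality}.
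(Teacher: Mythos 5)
Your four-step architecture (diagonalize with $\Phi(\underline U)$, semiclassical observability by contradiction via defect measures, Littlewood--Paley summation to a weak inequality with an $\H^{-N}$ remainder, uniqueness--compactness to remove it) is exactly the strategy of the paper, and Steps 1, 3 and 4 match Propositions \ref{prop:L^2 observability-diagonal}--\ref{prop: weak ob} and Section \ref{UCSO} closely; your remark in Step 1 that $[\varphi_\omega,\Phi(\underline U)]$ is of order $-1$ and produces a compact remainder is a point the paper in fact glosses over, so that part is, if anything, more careful than the original.

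The genuine weak point is your implementation of Step 2. You propose to propagate the semiclassical measure along the bicharacteristic flow of $\lambda(\underline U(t,x);x)|\xi|^2$ and then to verify the geometric control condition for that perturbed, non-autonomous flow. Two problems arise. First, $\lambda(\underline U;\cdot)$ has only $H^{s_0}$ regularity in $x$ and is merely Lipschitz in $t$, so the claim that ``all derivatives are controlled by $\F(\|\underline U\|_{\H^{s_0}})$'' is false, and the ``standard propagation theorem'' for Wigner measures (which requires smooth symbols for the commutator expansion) does not apply off the shelf; second, even granting propagation, transferring GCC to a $C^1$-perturbation of the geodesic flow with constants uniform in $\underline U$ is an extra argument you would have to supply. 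The paper sidesteps both difficulties by exploiting the quantifier structure of the statement: the negation of ``observability holds for $\epsilon_0$ small enough'' yields contradiction sequences with $\epsilon_n\to 0$, so one writes $\mathscr{B}^w(\underline U_n)=\mathscr{B}^w(0)+\O(\epsilon_n)_{\mathcal{L}(\H^0,\H^0)}$, the commutator computation only ever involves the smooth flat symbol $\pm|\xi|^2$, and the limiting measure satisfies the \emph{flat} transport law $(\partial_s\pm 2\xi\cdot\nabla)\mu_\pm=0$; GCC is then only needed for straight lines, as in Definition \ref{gcc}. The same device is what makes your Step 4 work: with $\underline U_n\to0$ the limit equation is the constant-coefficient system $D_tY=\mathscr{B}^w(0)Y$, for which unique continuation follows from analyticity of Laplacian eigenfunctions (Lemma \ref{azz}); for a fixed nonzero, time-dependent $\underline U$ the finite-dimensionality-plus-eigenfunction argument would not go through as stated. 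Finally, note that the semiclassical estimate is naturally obtained on time windows of length $hT$ in the original time variable (the rescaling $s=h^{-1}t$ and the intervals $I_k$ of Corollary \ref{cor: semiclassicl weak ob}), and gluing these windows over $k=0,\dots,h^{-1}-1$ via energy estimates is a nontrivial part of Proposition \ref{prop: weak ob} that your phrase ``fits inside $[0,T]$ after semiclassical rescaling'' leaves implicit.
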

The proof of Proposition \ref{prop:L^2 observability} is postponed after the statement of Proposition  \ref{prop:L^2 observability-diagonal}.
As a consequence of Proposition \ref{prop:L^2 observability}, we can prove the invertibility of the HUM operator \eqref{defi: HUM op}.

\begin{proposition}\label{prop: HUM L^2-iosmorphism}
Suppose that $\omega$ satisfies the geometric control condition, $s_0>d/2+2$, $T>0$, $\epsilon_0>0$, and $\underline{U}\in \mathscr{C}^{1,s_0}(T,\epsilon_0)$. Then for $\epsilon_0$ sufficiently small, the HUM operator $\mathscr{K}$ defines an isomorphism on $\H^0$ with
\begin{equation*}
    ||\mathscr{K}||_{\mathcal{L}(\H^0,\H^0)}+||\mathscr{K}^{-1}||_{\mathcal{L}(\H^0,\H^0)}\leq \F(\|\underline{U}\|_{\H^{s_0}}).
\end{equation*}
\end{proposition}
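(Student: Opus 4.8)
The plan is to derive the invertibility of $\mathscr{K}(\underline{U})$ from the $L^2$-observability inequality \eqref{eq: L^2-observability inequality} via a standard HUM/coercivity argument, keeping track of the fact that all constants are allowed to depend on $\|\underline{U}\|_{\H^{s_0}}$ through the function $\F$. First I would record the boundedness $\|\mathscr{K}\|_{\mathcal{L}(\H^0,\H^0)}\leq \F(\|\underline{U}\|_{\H^{s_0}})$: this is immediate from the definition \eqref{defi: HUM op}, since $\mathcal{S}$ is bounded from $\H^0$ to $L^2([0,T],\H^0)$ by the energy estimate in Corollary \ref{estimates-near-id} (the adjoint equation \eqref{eq: adjoint system} is of the same type as \eqref{NLSpara} with $F=0$), multiplication by $\chi_T^2\varphi_{\omega}^2$ is bounded on $L^2([0,T],\H^0)$, and $\mathcal{R}$ is bounded from $L^2([0,T],\H^0)$ to $\H^0$ again by Corollary \ref{estimates-near-id} applied to the backward Cauchy problem.

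Next I would prove that $\mathscr{K}$ is self-adjoint and coercive on $\H^0$. Self-adjointness (indeed positivity) follows from the duality identity \eqref{duality}: taking $F = \mathcal{S}V(0)$ there, one gets, for every $V(0)\in\H^0$,
\begin{equation*}
(\mathscr{K}(\underline{U})V(0),V(0))_{\H^0} = -(\mathcal{R}\chi_T^2\varphi_\omega^2 \mathcal{S}V(0),V(0))_{\H^0} = \|\chi_T\varphi_\omega \mathcal{S}V(0)\|_{L^2([0,T],\H^0)}^2 = \int_0^T \|\chi_T\varphi_\omega V(t)\|_{\H^0}^2\,dt,
\end{equation*}
where $V$ is the solution of \eqref{eq: adjoint system} with datum $V(0)$. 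In particular $\mathscr{K}$ is a bounded, non-negative, self-adjoint operator on $\H^0$. Combining the displayed identity with the observability inequality \eqref{eq: L^2-observability inequality} gives
\begin{equation*}
(\mathscr{K}(\underline{U})V(0),V(0))_{\H^0} \geq \frac{1}{\F(\|\underline{U}\|_{\H^{s_0}})}\,\|V(0)\|_{\H^0}^2,
\end{equation*}
i.e. $\mathscr{K}$ is coercive with coercivity constant controlled by $\F$.

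Finally, a bounded self-adjoint coercive operator on a Hilbert space is invertible: by Lax--Milgram (or simply Cauchy--Schwarz applied to the coercivity estimate, which yields $\|\mathscr{K}V(0)\|_{\H^0}\geq \F(\|\underline{U}\|_{\H^{s_0}})^{-1}\|V(0)\|_{\H^0}$, hence injectivity and closed range, together with density of the range from self-adjointness) one concludes that $\mathscr{K}^{-1}$ exists and is bounded on $\H^0$ with $\|\mathscr{K}^{-1}\|_{\mathcal{L}(\H^0,\H^0)}\leq \F(\|\underline{U}\|_{\H^{s_0}})$. Absorbing both bounds into a single (larger) $\F$ gives the claimed estimate. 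I do not expect any real obstacle here: the entire content is packaged in Proposition \ref{prop:L^2 observability}, and the only point requiring a little care is the bookkeeping of the $\underline{U}$-dependence of the constants — one must check that each invocation of Corollary \ref{estimates-near-id} and of \eqref{eq: L^2-observability inequality} contributes a factor of the form $\F(\|\underline{U}\|_{\H^{s_0}})$, and that these combine into a constant of the same form, which is immediate since $\F$ is non-decreasing.
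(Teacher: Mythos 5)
Your proposal is correct and follows essentially the same route as the paper: both rest on the duality identity \eqref{duality} to obtain $(\mathscr{K}V(0),V(0))_{\H^0}=\|\chi_T\varphi_\omega\mathcal{S}V(0)\|^2_{L^2([0,T],\H^0)}$, invoke the observability inequality \eqref{eq: L^2-observability inequality} for coercivity, and conclude by Lax--Milgram. Your additional remarks on the boundedness of $\mathscr{K}$ and on self-adjointness only make explicit what the paper handles in a preceding remark via Corollary \ref{estimates-near-id}.
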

\begin{proof}
We can reformulate the \eqref{eq: L^2-observability inequality} in terms of the solution operator $\mathcal{S}$ as follows:
%With the help of the solution operator $\mathcal{S}$, we have the equivalent form of \eqref{eq: L^2-observability inequality} as follows:
\begin{equation*}
    ||V(0)||^2_{\H^0}\leq \F(\|\underline{U}\|_{\H^{s_0}}) ||\chi_T\varphi_{\omega}\mathcal{S}V(0)||^2_{L^2([0,T],\H^0)},
\end{equation*}
which could be seen as the coercivity of the operator $\chi_T\varphi_{\omega}\mathcal{S}$. Therefore, we consider the continuous form on $\H^0$ given by
\begin{equation}
    \alpha(U,V)=(\mathscr{K}U,V)_{\H^0}.
\end{equation}
Fix $V$ in $\H^0$, by using \eqref{duality} and \eqref{eq: L^2-observability inequality} we obtain
\begin{equation*}
\begin{aligned}
    \alpha(V,V)&=(\mathscr{K}V,V)_{\H^0}
               =(-\mathcal{R}\chi_T^2\varphi_{\omega}^2\mathcal{S}V,V)_{\H^0}\\
               &=(\chi_T\varphi_{\omega}\mathcal{S}V,\chi_T\varphi_{\omega}\mathcal{S}V)_{L^2([0,T],\H^0)}\\
               &=||\chi_T\varphi_{\omega}\mathcal{S}V||^2_{L^2([0,T],\H^0)}\geq \F(\|\underline{U}\|_{\H^{s_0}})^{-1} ||V||^2_{\H^0},
\end{aligned}
\end{equation*}
which proves that  $\alpha$ is  coercive. By Lax-Milgram's theorem, for $U_{in}\in \H^0$, there exists a unique $U_{\mathcal{R}}\in\H^0$ such that 
\begin{equation*}
    \alpha(U_{\mathcal{R}},V)=(U_{in},V)_{\H^0},\forall V\in\H^0.
\end{equation*}
We eventually proved the invertibility of the HUM operator $\mathscr{K}$ in \eqref{defi: HUM op},
\begin{equation*}
    \mathscr{K}U_{\mathcal{R}}=U_{in}, \quad||U_{\mathcal{R}}||_{\H^0}\leq \F(\|\underline{U}\|_{\H^{s_0}}) ||U_{in}||_{\H^0}
\end{equation*}
\end{proof}
Using the HUM operator, we construct our control operator $\mathscr{L}$. We set 
\begin{equation*}
F=-\ii\chi_T\varphi_{\omega}E\mathcal{S}U_{\mathcal{R}}=-\ii\chi_T\varphi_{\omega}E\mathcal{S}\mathscr{K}^{-1}U_{in}\in C([0,T],\H^0).   
\end{equation*}
By definition, we have 
\begin{align*}
    \mathcal{R}(-\ii\chi_T\varphi_{\omega}EF)&=-\mathcal{R}\chi_T^2\varphi^2_{\omega}\mathcal{S}\mathscr{K}^{-1}U_{in}
    =\mathscr{K}\mathscr{K}^{-1}U_{in}
    =U_{in}.
\end{align*}
This implies that $F=-\ii\chi_T\varphi_{\omega}E\mathcal{S}\mathscr{K}^{-1}U_{in}$ null-controls \eqref{eq: simplified control system-L^2}. Therefore, on $\H^0$, $\mathscr{L}=-\ii\chi_T\varphi_{\omega}E\mathcal{S}\mathscr{K}^{-1}:\H^0\rightarrow C([0,T],\H^0)$. In virtue of this reasoning, the main effort is to prove the $L^2$-observability i.e., Proposition \ref{prop:L^2 observability}.

\subsection{Observability  of the diagonalized adjoint system}
We recall that in Proposition \ref{diago}, we showed the existence of a transformation  $\Phi$ such that $W=\Phi(\underline{U})V$ satisfies the following equations:
\begin{equation}\label{eq: diagnolized equation with perturbation}
    \partial_tW=\ii E\opbw(\diag(\lambda(\underline{U};x))|\xi|^2)W+\ii \opbw(\diag(\vec{a}_1(\underline{U};x))\cdot\xi) W+\tilde{R}(\underline{U})W,
\end{equation}
where $\lambda(\underline{U};x)$ and $ \vec{a}_1(\underline{U};x)$ are given in Proposition \ref{diago}
and
the remainder $\tilde{R}(\underline{U})$ satisfies the estimate
\begin{equation}
\|\tilde{R}(\underline{U})W\|_{\H^0}\lesssim  \epsilon_0\|W\|_{\H^0}\lesssim\epsilon_0\|W(0)\|_{\H^0}\,.
\end{equation}
Now consider the \emph{leading order} equation
\begin{equation}\label{eq:diagonalized simplified adjoint equation}
    \partial_t Y=\ii\mathscr{B}(\underline{U})Y,
\end{equation}
where $\mathscr{B}(\underline{U}):=E\opbw(\diag(\lambda(\underline{U};x))|\xi|^2)+\opbw(\diag(\vec{a}_1(\underline{U};x))\cdot\xi)$. We shall prove the $L^2$-observability for the solutions $Y$.
\begin{proposition}\label{prop:L^2 observability-diagonal}
Suppose that $\omega$ satisfies the geometric control condition, $s_0>d/2+2$, $T>0$, $\epsilon_0>0$, and $\underline{U}\in \mathscr{C}^{1,s_0}(T,\epsilon_0)$. Then for $\epsilon_0$ sufficiently small, the $\H^0$-observability of the system \eqref{eq:diagonalized simplified adjoint equation} holds. That is, for all solutions $Y$, with initial data in $\H^0$, we have the following inequality:
\begin{equation}\label{eq: L^2-observability inequality-diagonal}
    ||Y(0)||^2_{\H^0}\leq \F(\|\underline{U}\|_{\H^{s_0}})\int_0^T||\chi_T\varphi_{\omega}Y(t)||^2_{\H^0}dt.
\end{equation}
\end{proposition}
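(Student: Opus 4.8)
The plan is to reduce \eqref{eq:diagonalized simplified adjoint equation} to a scalar Schr\"odinger-type equation and then run the high-frequency/low-frequency scheme of \cite{Zhu,Lebeau96,BLR}. Using \eqref{matriciozze} one checks that $\ii\mathscr{B}(\underline{U})$ is block diagonal, with diagonal blocks $\pm\ii\opbw(\lambda(\underline{U};x)|\xi|^2)+\ii\opbw(\vec{a}_1(\underline{U};x)\cdot\xi)$, so the system decouples into two scalar equations $\ii\partial_t y\pm\opbw(\lambda(\underline{U};x)|\xi|^2)y+\opbw(\vec{a}_1(\underline{U};x)\cdot\xi)y=0$; it suffices to prove, for each sign, the scalar inequality $\|y(0)\|_{L^2}^2\le\F(\|\underline{U}\|_{\H^{s_0}})\int_0^T\|\chi_T\varphi_\omega y\|_{L^2}^2\,dt$ and to add the two estimates. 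Since $g_1$ vanishes at the origin, $\lambda=1+O(\epsilon_0^2)$ and $\vec{a}_1\cdot\xi=O(\epsilon_0^2)|\xi|$, and since $s_0>d/2+2$ forces $\lambda(\underline{U}(t,\cdot);\cdot)\in C^2$, for each $t$ the Hamiltonian flow of the principal symbol $\lambda(\underline{U}(t,\cdot);\cdot)|\xi|^2$ is well defined and is an $O(\epsilon_0^2)$-perturbation of the flat geodesic flow, uniformly in $t$; in particular, for $\epsilon_0$ small, $\omega'$ still satisfies the geometric control condition of Definition \ref{gcc} for this perturbed, time-dependent flow.

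\emph{Step 1 (semiclassical observability).} I would first prove that there exist $h_0,C>0$, with $C$ of the form $\F(\|\underline{U}\|_{\H^{s_0}})$, such that $\|y(0)\|_{L^2}^2\le C\int_0^T\|\chi_T\varphi_\omega y\|_{L^2}^2\,dt$ for every $h\in(0,h_0]$ and every solution $y$ whose data is spectrally localized at frequency $\sim h^{-1}$. This I prove by contradiction: given $h_n\downarrow 0$ and normalized solutions $y_n$ with $\int_0^T\|\chi_T\varphi_\omega y_n\|_{L^2}^2\,dt\to 0$, extract along a subsequence a time-dependent semiclassical defect measure $\mu(t,dx,d\xi)$ on $[0,T]\times(T^*\T^d\setminus 0)$; the frequency localization keeps $\int d\mu(t)$ comparable to $1$, so no mass escapes to the zero section or to infinity. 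By the standard analysis (Lebeau \cite{Lebeau96}), for a.e.\ $t$ the measure $\mu(t)$ is invariant under the geodesic flow of the frozen metric attached to $\lambda(\underline{U}(t,\cdot);\cdot)|\xi|^2$, while the hypothesis, since $\chi_T\equiv 1$ on $[0,T/2]$ and $\varphi_\omega\ge\uno_{\omega'}$, forces $\mu(t)$ to vanish on $\omega'$ for a.e.\ $t\in[0,T/2]$. Because Schr\"odinger propagation is infinitely fast, the geometric control condition for $\omega'$ suffices to carry every point of $T^*\T^d\setminus 0$ into $\omega'$ in arbitrarily short time, hence $\mu(t)=0$ for a.e.\ $t\in[0,T/2]$, contradicting $\int_0^{T/2}\!\int d\mu(t)\,dt\sim T$. \textbf{This step is the main obstacle}: constructing the semiclassical measures for a variable-coefficient, time-dependent symbol of order two and controlling their total mass is the technical heart of the proof, and it is the only place where the geometric control condition enters.

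\emph{Step 2 (weak observability).} Decomposing $y=\sum_{k\ge 0}\Delta_k y$ into Littlewood--Paley blocks, I would apply Step 1 to $\Delta_k y$ with $h\sim 2^{-k}$ for $k\ge k_0$ and sum in $k$, using almost orthogonality together with the fact that the leading equation propagates dyadic localization up to errors that are of lower order in $\xi$ or $O(\epsilon_0^2)$ and hence absorbable; the finitely many blocks with $k<k_0$ are estimated trivially. This produces the weak observability estimate $\|y(0)\|_{L^2}^2\le\F(\|\underline{U}\|_{\H^{s_0}})\big(\int_0^T\|\chi_T\varphi_\omega y\|_{L^2}^2\,dt+\|y(0)\|_{H^{-N}}^2\big)$ for some $N>0$, the low-frequency term being the price of the restriction $h\le h_0$ in Step 1.

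\emph{Step 3 (removing the remainder).} Finally I would discard the term $\|y(0)\|_{H^{-N}}^2$ by a uniqueness--compactness argument in the spirit of Bardos--Lebeau--Rauch \cite{BLR}: the subspace $\mathcal N\subset L^2$ of data producing solutions with $\chi_T\varphi_\omega y\equiv 0$ is finite dimensional, since an orthonormal sequence in $\mathcal N$ would tend to $0$ in $H^{-N}$ by compactness while the weak estimate keeps it bounded below there, and a unique continuation argument for the Schr\"odinger-type operator observed on $(0,T/2)\times\omega'$ (again using the geometric control condition) then forces $\mathcal N=\{0\}$. A standard compactness--contradiction argument upgrades the weak estimate to the scalar observability inequality, and summing over the two components yields \eqref{eq: L^2-observability inequality-diagonal}.
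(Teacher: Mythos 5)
Your overall architecture coincides with the paper's: reduce to a scalar pseudo-differential Schr\"odinger equation, prove semiclassical observability by contradiction via semiclassical defect measures and the geometric control condition, sum over dyadic frequencies to get weak observability with an $\H^{-N}$ remainder, and remove the remainder by compactness--uniqueness. Your Steps 1 and 2 are essentially the paper's Proposition \ref{prop:L^2 semiclassical observability} and Proposition \ref{prop: weak ob}, modulo two presentational differences: the paper uses spectral cutoffs $\chi(h^2\mathscr{B}^w)$ built by Helffer--Sj\"ostrand rather than flat Littlewood--Paley blocks (which makes the commutator errors you wave at in Step 2 exactly the absorbable $\O(\epsilon_0^2+h_0^2)$ term), and in the contradiction argument the paper lets $\epsilon_n\to0$ so the limit measure propagates along the \emph{flat} geodesic flow, whereas you keep $\epsilon_0$ fixed and invoke stability of the GCC under small perturbations of the flow --- both are workable. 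Note also that the paper's displayed proof of this particular proposition is nothing but the Duhamel reduction from the paradifferential operator $\mathscr{B}$ to its Weyl quantization $\mathscr{B}^w$; you merge that step into your setup, which is fine.

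The genuine gap is in Step 3. You define $\mathcal N$ as the set of data of the \emph{variable-coefficient} equation whose solutions vanish on the control region, prove $\dim\mathcal N<\infty$, and then invoke ``a unique continuation argument for the Schr\"odinger-type operator'' to conclude $\mathcal N=\{0\}$. Two obstructions. First, the Bardos--Lebeau--Rauch route from finite-dimensionality to triviality produces an eigenfunction of the generator inside $\mathcal N$ only when the generator is time-independent (one needs the spaces $\mathscr N_\delta$ to stabilize and to be stable under the generator, as in Lemma \ref{azz}); here $\lambda(\underline U(t);x)$ and $\vec a_1(\underline U(t);x)$ depend on $t$, so $\mathcal N$ is not invariant and no eigenfunction is produced. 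Second, even granting an eigenfunction, unique continuation from $\omega$ for $\opbw(\lambda|\xi|^2)+\opbw(\vec a_1\cdot\xi)$ with merely $H^{s_0}$, time-dependent coefficients is not available off the shelf. The paper circumvents both problems by building the final contradiction with a sequence $\epsilon_n\to0$: the weak limit solves the constant-coefficient autonomous equation $D_tY=\mathscr{B}^w(0)Y$, the spaces $\mathscr N_\delta$ for that flow stabilize, and the resulting eigenfunctions of $\pm\Delta$ are analytic, so vanishing on $\omega$ forces them to vanish (the GCC is not used in this last step, contrary to your parenthetical). Your Step 3 needs to be rearranged along these lines for the argument to close.
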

The proof of the above proposition is postponed after the statement of Proposition \ref{prop:L^2 observability-pseudo}. Owing to Proposition \ref{prop:L^2 observability-diagonal} we can prove  Proposition \ref{prop:L^2 observability}. 
\begin{proof}[Proof of Proposition \ref{prop:L^2 observability}]
For every solution $W\in C([0,T],\H^0)$ of the equation \eqref{eq: diagnolized equation with perturbation}, we write $W=Y+Z$, with $Z$ satisfying the equation
\begin{equation}\label{eq: diagnolized equation with zero initial}
    \partial_t Z=\ii\mathscr{B}(\underline{U}) Z+\tilde{\mathcal{R}}(\underline{U})W,\quad Z|_{t=0}=0. 
\end{equation}
And moreover, we suppose that $Y,Z\in C([0,T],\H^0)$. According to the well-posedness of the \eqref{eq: diagnolized equation with zero initial}, we have 
\begin{align*}
    ||Z||_{C([0,T],\H^0)}\lesssim ||\tilde{\mathcal{R}}(\underline{U})W||_{L^1([0,T],\H^0)}\lesssim \epsilon_0\|W(0)\|_{\H^0}.
\end{align*}
Applying Proposition \ref{prop:L^2 observability-diagonal}, we know that
\begin{align*}
||W(0)||^2_{\H^0}=||Y(0)||^2_{\H^0}&\leq \F(\|\underline{U}\|_{\H^{s_0}})\int_0^T||\chi_T\varphi_{\omega}Y(t)||^2_{\H^0}dt\\
           &\leq \F(\|\underline{U}\|_{\H^{s_0}}) \big(\int_0^T||\chi_T\varphi_{\omega}W(t)||^2_{\H^0}dt+\int_0^T||\chi_T\varphi_{\omega}Z(t)||^2_{\H^0}dt\big)\\
           &\leq \F(\|\underline{U}\|_{\H^{s_0}}) \big(\int_0^T||\chi_T\varphi_{\omega}W(t)||^2_{\H^0}dt+T||Z(t)||^2_{C([0,T],\H^0)}\big)\\
           &\leq \F(\|\underline{U}\|_{\H^{s_0}}) \big(\int_0^T||\chi_T\varphi_{\omega}W(t)||^2_{\H^0}dt+T\epsilon_0^2\|W(0)\|^2_{\H^0}\big).
\end{align*}
For $\epsilon_0$ sufficiently small, we obtain $    ||W(0)||^2_{\H^0}\leq \F(\|\underline{U}\|_{\H^{s_0}}) \int_0^T||\chi_T\varphi_{\omega}W(t)||^2_{\H^0}dt.$

\end{proof}
As a consequence, we only need to prove Proposition \ref{prop:L^2 observability-diagonal}.

\subsection{Reduction to the pseudo-differential equations}
In this section, we consider the $\H^0$-observability of the  para-differential equation  \eqref{eq:diagonalized simplified adjoint equation}.
%\begin{equation*}
%    \partial_t Y=\ii\mathscr{B}(\underline{U})Y.
%\end{equation*}
We rewrite the para-differential operator
\begin{equation}
    \mathscr{B}(\underline{U})=\mathscr{B}^w(\underline{U})+R^w_{\mathscr{B}},
\end{equation}
where the pseudo-differential operator $\mathscr{B}^w=\mathscr{B}^w(\underline{U})$ is defined by 
\be
\mathscr{B}^w(\underline{U})=E\opw(\diag(\lambda(\underline{U};x))|\xi|^2)+\opw(\diag(\vec{a}_1(\underline{U};x)\cdot\xi)
,
\ee
 for the remainder term $R^w_{\mathscr{B}}$, we have the following proposition.
\begin{proposition}
Suppose that $s_0>d/2+2$, $T>0$, $\epsilon_0>0$, and $\underline{U}\in \mathscr{C}^{1,s_0}(T,\epsilon_0)$. Then for $\epsilon_0$ sufficiently small, we have
\begin{equation*}
    R^w_{\mathscr{B}}\in L^{\infty}([0,T],\mathcal{L}(\H^0,\H^0))\cap C([0,T],\mathcal{L}(\H^0,\H^{-2})),
\end{equation*}
with the estimates 
\begin{equation*}
    ||R^w_{\mathscr{B}}||_{L^{\infty}([0,T],\mathcal{L}(\H^0,\H^0))}\lesssim \epsilon_0.
\end{equation*}
\end{proposition}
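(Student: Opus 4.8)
The plan is to compare the Bony--Weyl quantization $\opbw$ with the ordinary Weyl quantization $\opw$ applied to the same symbols and to show that the difference is a smoothing operator, uniformly in time. Concretely, $R^w_{\mathscr{B}} = \mathscr{B}(\underline{U}) - \mathscr{B}^w(\underline{U})$ splits into the contribution of the order--two part, $E\big(\opbw(\diag(\lambda|\xi|^2)) - \opw(\diag(\lambda|\xi|^2))\big)$, plus the analogous first--order difference $\opbw(\diag(\vec a_1\cdot\xi)) - \opw(\diag(\vec a_1\cdot\xi))$. For a symbol $a$, the operator $\opw(a)-\opbw(a)$ has symbol $a - a_\chi = \sum_{j}\hat a(j,\xi)\big(1-\chi_\epsilon(j/\langle\xi\rangle)\big)e^{\ii jx}$, which is supported in the high--frequency cone $|j|\gtrsim\langle\xi\rangle$; on that region one gains as many powers of $\langle\xi\rangle^{-1}$ as one has Sobolev regularity of the symbol in $x$. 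Since $\lambda(\underline{U};x)$ and $\vec a_1(\underline{U};x)$ are, by \eqref{autovalori} and \eqref{simboa2}, smooth functions of $\underline{u},\bar{\underline{u}}$ and their first derivatives, they lie in $\Gamma^0_{s_0-1}$ with $s_0-1>d/2+1$, so $\lambda|\xi|^2\in\Gamma^2_{s_0-1}$ and $\vec a_1\cdot\xi\in\Gamma^1_{s_0-1}$; thus the symbol difference is smoothing of order at least $s_0-1-d/2>1\geq 2$... more carefully, of order $\rho$ for any $\rho\le s_0-1-d/2$, which can be taken $\ge 2$ after possibly shrinking to the available regularity — in any case $\ge 2$ suffices. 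Applying \eqref{ac2} of Theorem \ref{azione} (or the standard paraproduct/quantization lemmas of \cite{BMM1}), one gets $\|(\opw(a)-\opbw(a))u\|_{H^{s-m+2}}\lesssim |a|_{m,s_0-1,4}\|u\|_{H^s}$, hence in particular boundedness $\H^0\to\H^0$ (indeed $\H^0\to\H^2$) for the second--order piece and $\H^0\to\H^{-2}$ control is automatic.

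Next I would track the dependence on $\underline{U}$. The seminorms $|\lambda|_{0,s_0-1,4}$ and $|\vec a_1\cdot\xi|_{1,s_0-1,4}$ are, through \eqref{simboa2}--\eqref{autovalori}, controlled by $\mathcal{F}(\|\underline{U}\|_{\H^{s_0}})$, and since $\lambda-1$ and $\vec a_1$ vanish at $\underline{U}=0$ (they are at least quadratic, resp. linear, in $\underline{u}$), Moser--type estimates give $|\lambda-1|_{0,s_0-1,4}+|\vec a_1\cdot\xi|_{1,s_0-1,4}\lesssim \|\underline{U}\|_{\H^{s_0}}^2+\|\underline{U}\|_{\H^{s_0}}\lesssim\epsilon_0$ when $\underline{U}\in\mathscr{C}^{1,s_0}(T,\epsilon_0)$. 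The constant--coefficient part $|\xi|^2$ of the order--two symbol is exactly annihilated by $1-\chi_\epsilon(j/\langle\xi\rangle)$ since $\widehat{1}(j)=0$ for $j\ne0$, so only the $\underline{U}$--dependent part contributes to $R^w_{\mathscr{B}}$; this is what produces the gain $\|R^w_{\mathscr{B}}\|_{L^\infty_t\mathcal{L}(\H^0,\H^0)}\lesssim\epsilon_0$ rather than merely a bounded constant. For the time continuity statement $R^w_{\mathscr{B}}\in C([0,T],\mathcal{L}(\H^0,\H^{-2}))$, I would use that $\underline{U}\in C([0,T];\H^{s_0})\cap \mathrm{Lip}([0,T];\H^{s_0-2})$: the symbol map $t\mapsto (\lambda(\underline{U}(t)),\vec a_1(\underline{U}(t)))$ is continuous into $\Gamma^2_{s_0-3}\times\Gamma^1_{s_0-3}$ by the Lipschitz bound in $\H^{s_0-2}$ combined with the algebra property, and then Theorem \ref{azione} with a loss of two derivatives on the symbol regularity gives continuity of $R^w_{\mathscr{B}}$ as a map into $\mathcal{L}(\H^0,\H^{-2})$.

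The main obstacle, and the only genuinely quantitative point, is bookkeeping the exact order of smoothing against the limited Sobolev regularity $s_0>d/2+2$ of the coefficients: one must check that $s_0-1-d/2>2$ is \emph{not} actually needed, i.e. that a smoothing of order $2$ (enough to land the order--two operator in $\mathcal{L}(\H^0,\H^0)$) is available, which requires $s_0-1-d/2\ge 2$, i.e. $s_0\ge d/2+3$ — slightly more than assumed. To stay within $s_0>d/2+2$ I would instead only claim smoothing of order $\rho:=s_0-1-d/2>1$, note that this still gives $\opw(a)-\opbw(a):\H^0\to\H^{\rho}\hookrightarrow\H^0$ for the order--two piece since $\rho>0$ means net mapping $\H^0\to\H^{\rho-0}$... one must be slightly careful: the order--two symbol loses $2$, and the smoothing gains $\rho$, giving net $\H^0\to\H^{\rho-2+2}=\H^{\rho}$ — fine — so boundedness $\H^0\to\H^0$ holds provided $\rho\ge0$, which is automatic. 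Thus no extra regularity is needed; the subtlety is purely in organizing the inequality \eqref{ac2} with the right choice of $\rho$ and $s$, and in confirming that $N=4$ symbol seminorms (as in Theorem \ref{azione}) suffice, which they do since all $\xi$--derivatives of order $\le 4$ of $\lambda|\xi|^2$ and $\vec a_1\cdot\xi$ retain $H^{s_0-1}$ regularity in $x$.
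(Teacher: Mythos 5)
Your overall strategy is the right one and is precisely what the paper's one-line proof (``a consequence of Bony paralinearization formula'', with a pointer to M\'etivier) leaves implicit: $R^w_{\mathscr{B}}=\mathscr{B}-\mathscr{B}^w$ is a sum of differences $\opbw(a)-\opw(a)$ whose matrix coefficients are supported in the region $|j-k|\gtrsim\langle j+k\rangle$, where the Sobolev regularity in $x$ of the coefficients converts into decay in $\xi$; the $\epsilon_0$ gain comes from the fact that the $x$-independent part $|\xi|^2$ drops out of the difference and the remaining coefficients $\lambda-1$ and $\vec a_1$ vanish at $\underline{U}=0$. Since the paper gives no details, your write-up is a legitimate filling-in rather than a divergence.

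Two points need repair. First, the regularity bookkeeping for the order-two piece. You place both $\lambda$ and $\vec a_1$ in $H^{s_0-1}$, which for the symbol $\lambda|\xi|^2$ of order two yields a net mapping $\H^0\to\H^{\rho-2}$ with $\rho=s_0-1-d/2>1$ only, and this is \emph{not} bounded on $\H^0$. Your last paragraph tries to rescue this via ``$\H^{\rho-2+2}=\H^{\rho}$'', but the extra $+2$ has no source: losing $2$ from the order and gaining $\rho$ from the smoothing gives $\rho-2$, full stop. The correct fix is to observe from \eqref{autovalori} that $\lambda$ depends on $|\underline{u}|^2$ alone, with no derivatives of $\underline{u}$, so $\lambda-1\in H^{s_0}$ with $s_0>d/2+2$; then $\sum_l\langle l\rangle^2|\widehat{(\lambda-1)}(l)|<\infty$ by Cauchy--Schwarz, and a Young/Schur convolution bound gives $L^2$-boundedness of the order-two difference, while $\vec a_1\in H^{s_0-1}$ with $s_0-1>d/2+1$ gives $\sum_l\langle l\rangle|\widehat{\vec a_1}(l)|<\infty$, which is what the order-one piece needs. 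The hypothesis $s_0>d/2+2$ is exactly the threshold, precisely because the two symbols have coefficient regularities matched to their respective orders. Second, estimate \eqref{ac2} of Theorem \ref{azione} does not apply to $\opw(a)-\opbw(a)$: that theorem concerns $\opbw$ of a spectrally localized symbol, whereas the difference lives exactly where the spectral localization fails. The correct tool is a remainder estimate of paraproduct type, as in \eqref{eq:paraproduct22} of Lemma \ref{Paraproduct} extended to symbols $c(x)p(\xi)$, which is proved by the convolution bound just described. With these two corrections the argument closes; your treatment of the $\O(\epsilon_0)$ smallness (quadratic vanishing of $\lambda-1$ and $\vec a_1$ at $\underline{U}=0$) and of the time continuity into $\mathcal{L}(\H^0,\H^{-2})$ is fine.
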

\begin{proof}
This is a consequence of Bony paralinearization formula. One can find more details in the book \cite{Metivier}.
\end{proof}
Now we focus on the pseudo-differential equation
\be\label{eq:pseudo-equation without perturbation}
D_t V=\mathscr{B}^w V, \quad D_t:=\frac{\partial_t}{\ii}.
\ee
\begin{proposition}\label{prop:L^2 observability-pseudo}
Suppose that $\omega$ satisfies the geometric control condition, $s_0>d/2+2$, $T>0$, $\epsilon_0>0$, and $\underline{U}\in \mathscr{C}^{1,s_0}(T,\epsilon_0)$. Then for $\epsilon_0$ sufficiently small, the $\H^0$-observability of the system \eqref{eq:pseudo-equation without perturbation} holds. That is, for all solutions $V$, with initial data in $\H^0$, we have the following inequality:
\begin{equation}\label{eq: L^2-observability inequality-pseudo}
    ||V(0)||^2_{\H^0}\leq\F(\|\underline{U}\|_{\H^{s_0}})\int_0^T||\chi_T\varphi_{\omega}V(t)||^2_{\H^0}dt.
\end{equation}
\end{proposition}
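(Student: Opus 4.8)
The plan is to derive \eqref{eq: L^2-observability inequality-pseudo} in three stages: (i) a \emph{semiclassical} observability inequality for data localized at a single high dyadic frequency, proved by contradiction via semiclassical defect measures together with the geometric control condition; (ii) a dyadic summation by Littlewood--Paley theory, producing a \emph{weak} observability inequality carrying a relatively compact low-frequency error $\|V(0)\|_{\H^{-N}}$; (iii) the removal of that error by a uniqueness--compactness argument. We first note that, since $\mathrm{diag}(\lambda(\U;x)|\xi|^2)E+\mathrm{diag}(\vec{a}_1(\U;x)\cdot\xi)$ is diagonal, $\mathscr{B}^w$ splits into the two scalar (mutually conjugate) operators $\opw(\pm\lambda(\U;x)|\xi|^2+\vec{a}_1(\U;x)\cdot\xi)$, so it suffices to prove the $L^2$ estimate for one component, say $D_tv=\opw(p)v$ with real symbol $p=p(t,x,\xi)=\lambda(\U(t,x);x)|\xi|^2+\vec{a}_1(\U(t,x);x)\cdot\xi$; the reality of $p$ makes $\opw(p)$ self-adjoint on $L^2(\T^d)$, so $t\mapsto\|v(t)\|_{L^2}$ is conserved, a fact used throughout. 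Moreover, since $\lambda(\U;x)=1+O(\epsilon_0^2)$, the Hamiltonian flow of the principal part $\lambda(\U;x)|\xi|^2$ is a small perturbation of the geodesic flow of the flat torus.

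\emph{Step 1 (semiclassical observability).} We claim there are $h_0>0$ and $C=\F(\|\U\|_{\H^{s_0}})$ such that every solution of $D_tv=\opw(p)v$ whose initial datum is spectrally supported in a dyadic shell $\{\tfrac12 h^{-1}\le|\xi|\le 2h^{-1}\}$, $0<h\le h_0$, obeys $\|v(0)\|_{L^2}^2\le C\int_0^T\|\chi_T\varphi_\om v(t)\|_{L^2}^2\,dt$. This is proved by contradiction: from a sequence $v_n$ of such solutions at scales $h_n\downarrow 0$ with $\|v_n(0)\|_{L^2}=1$ and $\int_0^T\|\chi_T\varphi_\om v_n\|_{L^2}^2\to 0$, the conservation of the $L^2$ norm together with the frequency localization (which forbids loss of mass at $\xi=0$ or $\xi=\infty$) yields, along a subsequence, a \emph{nonzero} semiclassical defect measure; an Egorov-type argument for the propagator of $h^2\mathscr{B}^w$, whose semiclassical principal symbol is $\lambda(\U;x)|\xi|^2$, shows this measure is transported by the corresponding Hamiltonian flow. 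The decisive point is that the \emph{fixed} interval $[0,T]$ --- indeed already $[0,T/2]$, on which $\chi_T\equiv 1$ --- corresponds to the semiclassical time interval $[0,T/(2h_n^2)]\to\R_+$; hence, after fixing a uniform control time $T_0$ furnished by the geometric control condition (Definition \ref{gcc}) and the compactness of $\T^d\times\S^{d-1}$, and shrinking $\epsilon_0$ so that every bicharacteristic of $\lambda(\U;x)|\xi|^2$ still reaches $\{\varphi_\om>0\}$ within time $T_0$, the transported measure is forced to charge $\{\varphi_\om>0\}$, contradicting the vanishing of $\int_0^T\|\chi_T\varphi_\om v_n\|_{L^2}^2$. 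This step is the heart of the argument, and the delicate part --- controlling the non-uniformity of Egorov's theorem over such long semiclassical times and the perturbative stability of the geometric control condition --- is the main obstacle.

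\emph{Step 2 (from semiclassical to weak observability).} We decompose $V$ into dyadic blocks using a Littlewood--Paley family \emph{adapted to $\U$}, built from the weight $\Lambda(\U)$ of Section \ref{sec:mod}; the point of this choice is that $\{(1+\lambda|\xi|^2)^s,\lambda|\xi|^2\}=0$, so the adapted blocks commute with the principal part $E\opw(\mathrm{diag}(\lambda(\U;x)|\xi|^2))$ of $\mathscr{B}^w$ up to a smoothing operator, while the leftover commutator (with the order-one term $\opw(\mathrm{diag}(\vec{a}_1(\U;x)\cdot\xi))$ and with the Weyl/Bony--Weyl discrepancy) is of order zero and of size $O(\epsilon_0^2)$. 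Applying the semiclassical observability of Step 1 to each block of frequency $\gtrsim h_0^{-1}$, summing via the Littlewood--Paley square-function inequality, commuting $\varphi_\om$ past the blocks, using conservation of the $L^2$ norm to absorb the small commutator errors, and estimating the finitely many low-frequency blocks trivially, we obtain the weak observability inequality
\[
\|V(0)\|_{\H^0}^2\le\F(\|\U\|_{\H^{s_0}})\Big(\int_0^T\|\chi_T\varphi_\om V(t)\|_{\H^0}^2\,dt+\|V(0)\|_{\H^{-N}}^2\Big)
\]
for some $N>0$; the remainder $\|V(0)\|_{\H^{-N}}$ is present precisely because Step 1 holds only for $h\le h_0$, i.e.\ at high frequencies.

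\emph{Step 3 (uniqueness--compactness).} The remainder is removed as in Section \ref{UCSO}. If \eqref{eq: L^2-observability inequality-pseudo} failed there would be solutions $V_n$ with $\|V_n(0)\|_{\H^0}=1$ and $\int_0^T\|\chi_T\varphi_\om V_n\|_{\H^0}^2\to 0$, and the weak observability would force $\|V_n(0)\|_{\H^{-N}}\ge c>0$. Since $(V_n(0))$ is bounded in $\H^0$, the compact embedding $\H^0\hookrightarrow\H^{-N}$ gives a subsequence with $V_n(0)\to V_\infty(0)$ in $\H^{-N}$ and $\|V_\infty(0)\|_{\H^{-N}}\ge c$, so $V_\infty(0)\neq 0$; passing to the weak limit in the linear equation, $V_\infty$ solves $D_tV_\infty=\mathscr{B}^wV_\infty$ and, by weak lower semicontinuity, $\chi_T\varphi_\om V_\infty\equiv 0$, so $V_\infty$ vanishes on the space-time open set $(0,T/2)\times\{\varphi_\om>0\}\supset(0,T/2)\times\om'$. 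The unique continuation property for the (variable-coefficient) Schr\"odinger-type equation $D_tV=\mathscr{B}^wV$ --- whose coefficients are of class $C^2$ in $x$ and Lipschitz in $t$, enough for the relevant Carleman estimate --- then forces $V_\infty\equiv 0$, a contradiction. This establishes \eqref{eq: L^2-observability inequality-pseudo}.
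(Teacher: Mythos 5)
Your three-stage architecture (semiclassical observability $\to$ Littlewood--Paley summation to weak observability $\to$ uniqueness--compactness) is the same as the paper's, but the way you execute Steps 1 and 3 contains genuine gaps.

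In Step 1 you propose to prove observability over the whole physical interval $[0,T]$ for data at frequency $h^{-1}$ by propagating a single semiclassical defect measure over the corresponding semiclassical time interval, which has length of order $h^{-1}$ (not $h^{-2}$: with $s=h^{-1}t$ the equation becomes $hD_sV=h^2\mathscr{B}^wV$ and $[0,T]_t$ becomes $[0,T/h]_s$). This is exactly the step that cannot be carried out as stated: the transport law for semiclassical measures is only available on semiclassical time windows of \emph{fixed} length, and an Egorov-type statement uniform up to times $\sim h^{-1}$ is not at your disposal (results of that type are the hard content of Anantharaman--Maci\`a and are not needed here). Worse, your contradiction hypothesis $\int_0^T\|\chi_T\varphi_\om v_n\|^2dt=o(1)$ does not imply that the measure vanishes on $\om$ over a fixed semiclassical window: restricting to a window of physical length $h_nT$ only yields $\int\|\varphi_\om v_n\|^2ds=o(h_n^{-1})$ after rescaling, which is useless. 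The paper's resolution (Proposition \ref{prop:L^2 semiclassical observability} and Corollary \ref{cor: semiclassicl weak ob}) is to prove an \emph{unconditional} semiclassical observability inequality on physical intervals $I_k=[khT,(k+1)hT]$ of semiclassical length $T$ --- where the measure, supported in $\{|\xi|\ge\nu\}$ and transported at speed $2\xi$, reaches $\om$ thanks to the speed condition \eqref{eq: propagation speed} --- and then to sum the $h^{-1}$ inequalities over $k$, using conservation of the $\H^0$ norm to bound each $\|Y_h(khT)\|^2$ from below by $\|Y_h(0)\|^2$. You need this slicing (or an equivalent positive-commutator argument); flagging the long-time propagation as ``the main obstacle'' does not discharge it.

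In Step 3 you fix $\U$ and extract a nonzero limit $V_\infty$ solving the \emph{variable-coefficient} equation $D_tV=\mathscr{B}^w(\U)V$, for which you invoke an unspecified Carleman-type unique continuation result at $C^2$/Lipschitz regularity. No such result is established or cited in the paper, and unique continuation from an open set for variable-coefficient Schr\"odinger operators on $\T^d$ at this regularity is far from an off-the-shelf fact; moreover, proving the inequality separately for each fixed $\U$ does not yield the constant $\F(\|\U\|_{\H^{s_0}})$ uniformly over $\U\in\mathscr{C}^{1,s_0}(T,\epsilon_0)$, which is what the statement (and its use in Proposition \ref{prop: HUM L^2-iosmorphism}) requires. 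The paper negates the full quantified statement, so the contradiction sequence comes with $\epsilon_n\to0$ and hence $\U_n\to0$; the limit equation is the constant-coefficient one $D_tY=\mathscr{B}^w(0)Y$, and unique continuation for it is obtained elementarily in Lemma \ref{azz} by showing the relevant space of invisible data is finite-dimensional, invariant, and therefore contains an eigenfunction of $\pm\Delta$, which is analytic and vanishes on $\om$, hence zero. You should restructure Step 3 along these lines rather than appeal to Carleman estimates.
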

Owing to Proposition \ref{prop:L^2 observability-pseudo}, we are in position to prove Proposition \ref{prop:L^2 observability-diagonal}.
\begin{proof}[Proof of Proposition \ref{prop:L^2 observability-diagonal}]
 Let $Y\in C([0,T],\H^0)$ be the solution to the equation \eqref{eq:diagonalized simplified adjoint equation}. Write $Y=V+W$, with $V,W\in C([0,T],\H^0)$ such that
\begin{equation*}
    D_t V=\mathscr{B}^w V,\quad V(0)=Y(0);\quad \left(D_t -\mathscr{B}^w-R^w_{\mathscr{B}}\right)W=R^w_{\mathscr{B}}V,\quad W(0)=0.
\end{equation*}
According to the Duhamel's formula, we obtain
\begin{align*}
    ||V||_{C([0,T],\H^0)}&\lesssim ||V(0)||_{\H^0},\\
    ||W||_{C([0,T],\H^0)}&\lesssim ||R^w_{\mathscr{B}}V||_{L^1([0,T],\H^0)}\lesssim\epsilon_0||V||_{L^1([0,T],\H^0)}\lesssim\epsilon_0T||V(0)||_{\H^0}.
\end{align*}
Therefore, applying Proposition \ref{prop:L^2 observability-pseudo}, we obtain
\begin{align*}
||Y(0)||^2_{\H^0}=||V(0)||^2_{\H^0}&\leq\F(\|\underline{U}\|_{\H^{s_0}})\int_0^T||\chi_T\varphi_{\omega}V(t)||^2_{\H^0}dt\\
                      &\leq \F(\|\underline{U}\|_{\H^{s_0}})( \int_0^T||\chi_T\varphi_{\omega}Y(t)||^2_{\H^0}dt+\int_0^T||\chi_T\varphi_{\omega}W(t)||^2_{\H^0}dt)\\
                      &\leq\F(\|\underline{U}\|_{\H^{s_0}})(\int_0^T||\chi_T\varphi_{\omega}Y(t)||^2_{\H^0}dt+T||W(t)||^2_{C([0,T],\H^0)})\\
                      &\leq\F(\|\underline{U}\|_{\H^{s_0}})(\int_0^T||\chi_T\varphi_{\omega}Y(t)||^2_{\H^0}dt+\epsilon_0^2||V(0)||^2_{\H^0}).
\end{align*}
Consequently, the observability for $Y$ holds for $\epsilon_0$ sufficiently small. 
\end{proof}
And now we only need to prove Proposition \ref{prop:L^2 observability-pseudo} to establish the $L^2-$observability.

\subsection{Semi-classical equation}
In this section, we consider the $L^2-$observability of the pseudo-differential equation \eqref{eq:pseudo-equation without perturbation}.
%\begin{equation*}
%    D_t Y=\mathscr{B}^w Y.
%\end{equation*}
We derive the equation satisfied by frequency localized quasi-modes of \eqref{eq:pseudo-equation without perturbation}. Since $\omega$ satisfies the geometric control condition, for fixed $T>0$, there exists $\nu>0$ such that 
\begin{equation}\label{eq: propagation speed}
  2\nu T>\min\{L\geq0:\forall(x,\xi)\in \mathbb{T}^d\times\mathbb{S}^{d-1},[x,x+L\xi]\cap\omega\neq\emptyset\},  
\end{equation}
which implies that the wave packets around frequencies with modulus $\geq\nu$ will travel into $\omega$ within time $T$. We define the class of the cutoff functions
\begin{equation*}
    S_{cf}(\nu)=\{\chi\in C^{\infty}_c(\mathbb{R}\backslash\{0\}): 0\leq\chi\leq1,supp\, \chi\subset\{1\leq\frac{|x|}{\nu^2}\leq5\},\chi|_{2\leq\frac{|x|}{\nu^2}\leq4}\equiv1\}.
\end{equation*}
Recall \eqref{matriciozze} and fix $\chi\in S_{cf}(\nu)$, we want to define the frequency localized operator 
\begin{equation*}Q_h(\U):=\chi(h^2\mathscr{B}^w(\underline{U})):=E
\chi(h^2b^w(\underline{U}))
:= E b_h^w(\U),\end{equation*} 
with $h\in(0,1)$ and where $b^w(\underline{U}):=\opw(\lambda(\underline{U};x)|\xi|^2)+\opw(\vec{a}_1(\underline{U};x)\cdot\xi)$; to do so we use the  Helffer-Sj\"ostrand's formula. For more details we refer to \cite{Burq-Gerard-Tzvetkov,Zworski,Zhu}. The Helffer-Sj\"ostrand's formula for functional calculus reads as follows 
\begin{equation}
    b_h^w(\U)=-\frac{1}{\pi}\int_{\C}\bar{\partial}\tilde{\chi}(z)(z-h^2b^w(\underline{U}))^{-1}dz,
\end{equation}
where $\tilde{\chi}$ is an almost analytic extension of $\chi$, i.e. such that, for some $n\in\N$ to be fixed later,
\begin{equation*}
    \tilde{\chi}(z)=\varrho(\Im z)\sum_{k=0}^n\frac{\chi^{k}(\Re z)}{k!}(\ii \Im z)^k,
\end{equation*}
with $\varrho\in C^{\infty}_c(\R)$ such that $\varrho=1$ near zero. 
%\begin{equation}
%   Q_h=\chi(h^2\mathscr{B}^w(\underline{U})).
%\end{equation}
%$Q_h$ is defined by the functional calculus of the operator $\mathscr{B}^w(\underline{U})$. 
We set $V_h(t,x)=Q_h V(t,x)$,
we would like to derive the equation satisfied by $V_h$, under the new coordinates $(s,x)$ with $s=h^{-1}t$, which implies that $D_s=h D_t$. First, under the coordinates $(s,x)$, the equation \eqref{eq:pseudo-equation without perturbation} is equivalent to 
\begin{equation}\label{eq:semiclasssical eq for Y}
    h D_s V=h^2\mathscr{B}^w V.
\end{equation}
The equation satisfied by $V_h$ will be given by commuting $Q_h$ with the equation \eqref{eq:semiclasssical eq for Y}. Before doing this, we introduce some properties of the functional calculus for the operator $\mathscr{B}^w(\underline{U})$.

\subsubsection{Semiclassical Functional Calculus for $Q_h$}
Since we have two different coordinates $(t,x)$ and $(s,x)$, we use the notations $[a,b]_s=\{s:a\leq s\leq b\}$ and $[a,b]_t=\{t:a\leq t\leq b\}$ to denote the two different time intervals to avoid ambiguity.
\begin{lemma}\label{lemma: functional calculus}
Let $s\geq s_0>d/2+2$, $T>0$, $\epsilon_0>0$, and $\underline{U}\in \mathscr{C}^{1,s_0}(T,\epsilon_0)$. Then for $\epsilon_0$ sufficiently small, the operator $Q_h\in C([0,T]_t,\mathcal{L}(\H^0,\H^0))$ and 
\begin{equation*}
    \|Q_h-\ophw(B_{\chi})\|_{L^{\infty}([0,T]_t,\mathcal{L}(\H^0,\H^0)}\leq \F(\|\underline{U}\|_{\H^{s_0}})h,
\end{equation*}
where $B_{\chi}(x,\xi)=b_{\chi}E$ and 
\begin{equation}\label{eq: defi of b_chi}
    b_{\chi}=\chi(\lambda(\underline{U};x)|\xi|^2).
\end{equation}
Moreover, 
\begin{equation*}
    \|D_s Q_h\|_{L^{\infty}([0,T]_t,\mathcal{L}(\H^0,\H^0)}\leq \F(\|\underline{U}\|_{\H^{s_0}})h(\epsilon_0+h),
\end{equation*}
and in particular, $Q_h\in W^{1,\infty}([0,T]_t,\mathcal{L}(\H^0,\H^0))$.
\end{lemma}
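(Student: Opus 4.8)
The plan is to realise $Q_h$ as a semiclassical pseudodifferential operator via the Helffer--Sj\"ostrand formula recalled above, to read off its principal symbol by the standard resolvent parametrix, and then to differentiate the representation in $t$; this follows the scheme of \cite{Zhu,Burq-Gerard-Tzvetkov,Zworski}, the only non-routine feature being that the coefficients — and above all their time derivatives — have merely Sobolev regularity, so one must keep the number of symbol derivatives used within the budget supplied by $s_0>d/2+2$ (equivalently, work throughout with the para-differential threshold $s_0-2>d/2$, which still permits a gain of two derivatives in compositions with coefficients only of class $H^{s_0}$). First I would record the semiclassical structure: writing $\eta=h\xi$, one has $h^2 b^w(\underline U)=\ophw(p_h)$ with $p_h(x,\eta)=\lambda(\underline U;x)|\eta|^2+h\,\vec a_1(\underline U;x)\cdot\eta$, a \emph{real} symbol whose principal part $p_0(x,\eta)=\lambda(\underline U;x)|\eta|^2$ is elliptic off $\eta=0$ since $\lambda\geq 1$ by \eqref{autovalori}; because $\lambda\in H^{s_0}\hookrightarrow C^2$ and $\vec a_1\in H^{s_0-1}\hookrightarrow C^1$, the scalar operator $b^w(\underline U(t))$ is, for each fixed $t$, self-adjoint on $H^2(\T^d)$, so $\chi(h^2 b^w)$ is defined by the spectral theorem, equals the Helffer--Sj\"ostrand integral, and the elementary bound $|z-p_h(x,\eta)|\geq|\Im z|$ is at our disposal. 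Since $Q_h=E\,\chi(h^2 b^w)$ with $E$ bounded and $\ophw(B_\chi)=\ophw(b_\chi)E$, everything reduces to the \emph{scalar} statements for $\chi(h^2 b^w)$.

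The bulk of the work, giving the first two assertions, is the resolvent parametrix. For $\Im z\neq 0$ I would build $G(z)=\ophw(g(z))$ with $g(z)\sim\sum_{j\geq 0}h^j g_j(z)$, $g_0(z)=(z-p_0)^{-1}$, such that $(z-h^2 b^w)G(z)=I+h\,\mathcal E(z)$, where each $x$- or $\eta$-derivative landing on a factor $(z-p)^{-1}$ costs one extra power of $|\Im z|^{-1}$; hence the symbol seminorms of $g(z)$ and the $\mathcal L(L^2)$-norm of $\mathcal E(z)$ are bounded, uniformly in $t\in[0,T]$, by fixed negative powers of $|\Im z|$ times $\F(\|\underline U\|_{\H^{s_0}})$. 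For $h$ small relative to $|\Im z|$ one inverts $I+h\mathcal E(z)$ by a Neumann series; inserting this into the Helffer--Sj\"ostrand formula and using $\bar\partial\tilde\chi(z)=O(|\Im z|^{\,n})$, with the order $n$ of the almost-analytic extension fixed once and for all large enough to absorb all the negative powers of $|\Im z|$ produced above, one obtains $\chi(h^2 b^w)=\ophw(q_h)$ with $q_h\sim\sum_j h^j q_j$ and $q_0=\chi\big(\lambda(\underline U;x)|\eta|^2\big)$, which is exactly $b_\chi$ in semiclassical variables; in particular $\|\chi(h^2 b^w)-\ophw(b_\chi)\|_{\mathcal L(L^2)}\leq \F(\|\underline U\|_{\H^{s_0}})\,h$, which is the claimed bound for $Q_h-\ophw(B_\chi)$. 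The continuity $t\mapsto Q_h(t)\in\mathcal L(\H^0,\H^0)$ then follows because $t\mapsto(\lambda(\underline U(t);\cdot),\vec a_1(\underline U(t);\cdot))$ is continuous into $H^{s_0}\times H^{s_0-1}$ — composition with smooth functions and products being continuous on these Sobolev algebras, since $s_0-1>d/2$ — and the map from symbol to operator constructed above is continuous.

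For the last two assertions I would differentiate the Helffer--Sj\"ostrand representation in $t$: since $Q_h$ depends on $t$ only through $\underline U(t)$,
\[
\partial_t\chi(h^2 b^w)=\frac1\pi\int_{\C}\bar\partial\tilde\chi(z)\,(z-h^2 b^w)^{-1}\big(h^2\partial_t b^w\big)(z-h^2 b^w)^{-1}\,dz,
\]
with $h^2\partial_t b^w=\ophw(\partial_t\lambda\,|\eta|^2)+h\,\ophw(\partial_t\vec a_1\cdot\eta)$, and every coefficient here is linear in $\partial_t\underline U$, so $\|\partial_t\lambda\|_{H^{s_0-2}}+\|\partial_t\vec a_1\|_{H^{s_0-3}}\lesssim\F(\|\underline U\|_{\H^{s_0}})\|\partial_t\underline U\|_{\H^{s_0-2}}\lesssim\F(\|\underline U\|_{\H^{s_0}})\,\epsilon_0$. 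The order-two contribution $\ophw(\partial_t\lambda\,|\eta|^2)$ is harmless, since $\partial_t\lambda\in H^{s_0-2}\hookrightarrow L^\infty$ (as $s_0-2>d/2$) and, inserted between the two resolvents, it is bounded $h$-uniformly by a negative power of $|\Im z|$ times $\F(\|\underline U\|_{\H^{s_0}})\epsilon_0$ — one may also substitute $|\eta|^2=\lambda^{-1}\big((h^2 b^w-z)+z-h\ophw(\vec a_1\cdot\eta)\big)$ to cancel the factor $(h^2 b^w-z)$ against a resolvent. The delicate term is $h\,\ophw(\partial_t\vec a_1\cdot\eta)$: here $\partial_t\vec a_1$ lies only in $H^{s_0-3}$, with $s_0-3$ possibly $\leq d/2$, so $\ophw(\partial_t\vec a_1\cdot\eta)$ maps the semiclassical Sobolev space $H^2_h$ to $L^2$ only after losing at most $d/2+4-s_0$ derivatives, which is \emph{strictly} less than the two derivatives smoothed by one of the resolvents precisely because $s_0>d/2+2$; tracking the semiclassical Sobolev scales shows this loss costs a factor $h^{\,s_0-d/2-2-\epsilon'}$ (with $\epsilon'>0$ as small as we like), so each contribution to $\partial_t\chi(h^2 b^w)$ is $\lesssim |\Im z|^{-N}h^{\,s_0-d/2-2-\epsilon'}\F(\|\underline U\|_{\H^{s_0}})\epsilon_0$ for some fixed $N$. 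Integrating against $\bar\partial\tilde\chi$ as before gives $\|\partial_t Q_h\|_{\mathcal L(\H^0,\H^0)}\lesssim\F(\|\underline U\|_{\H^{s_0}})\,\epsilon_0$, whence, since $D_s=hD_t$, $\|D_s Q_h\|_{\mathcal L(\H^0,\H^0)}\lesssim\F(\|\underline U\|_{\H^{s_0}})\,h\epsilon_0\leq\F(\|\underline U\|_{\H^{s_0}})\,h(\epsilon_0+h)$; together with the first assertion this yields $Q_h\in W^{1,\infty}([0,T]_t,\mathcal L(\H^0,\H^0))$. I expect the main obstacle to be exactly this last bookkeeping step — checking that the regularity deficit of $\partial_t\vec a_1$ is genuinely strictly below the smoothing of the resolvent (this is where the hypothesis $s_0>d/2+2$ is really used), and fixing $n$ large enough that the Helffer--Sj\"ostrand integral still converges after all the differentiations produced by the composition and the $t$-derivative.
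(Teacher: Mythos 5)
Your proposal follows essentially the same route as the paper: Helffer--Sj\"ostrand representation, a resolvent parametrix with principal symbol $(z-\lambda(\underline U;x)|\xi|^2)^{-1}$ identifying $Q_h=\ophw(B_\chi)+\O(h)$, and the resolvent identity $D_t(z-A)^{-1}=(z-A)^{-1}D_tA\,(z-A)^{-1}$ for the time derivative; your extra care with the term $h\,\ophw(\partial_t\vec a_1\cdot\xi)$ (whose coefficient lies only in $H^{s_0-3}$, possibly below $L^\infty$) is in fact more scrupulous than the paper, which absorbs it into $\O(h^2)$ without comment, and your accounting of the loss $d/2+4-s_0<2$ against the order $-2$ smoothing of one resolvent is the right way to justify that step. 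One small repair: you cannot invert $I+h\mathcal E(z)$ by a Neumann series uniformly over the Helffer--Sj\"ostrand contour, since $|\Im z|$ goes to $0$ there; instead, as in the paper, use the exact bound $\|(z-h^2b^w)^{-1}\|_{\mathcal L(L^2)}\leq|\Im z|^{-1}$ to write $(z-h^2b^w)^{-1}=G(z)-(z-h^2b^w)^{-1}h\mathcal E(z)$, so the error is $\O(h|\Im z|^{-N-1})$ and is absorbed by taking the order $n$ of the almost-analytic extension large enough (or, equivalently, split off the region $|\Im z|\lesssim h^{1/2}$ and estimate it trivially).
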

\begin{proof}
By  definition, 
\begin{align*}
    h^2\mathscr{B}^w(\underline{U})&=h^2\opw(E(\lambda(\underline{U};x))|\xi|^2)+h^2\opw(\diag(\vec{a}_1(\underline{U};x))\cdot\xi)\\
    &=\ophw(E(\lambda(\underline{U};x))|\xi|^2)+h\ophw(\diag(\vec{a}_1(\underline{U};x))\cdot\xi).
\end{align*}
We can argue component-wise. Let $q_0(z,x,\xi)=(z-\lambda(\underline{U};x)|\xi|^2)^{-1}$ for $z\in \C\backslash\R$, then for some $N>0$ {by composition formula for Weyl pseudodifferential operators (we refer to \cite[Theorem 4.11]{Zworski})}, we obtain 
\begin{equation*}
    \ophw(q_0(z))\circ(z-h^2{b}^w)=Id+\O(h|\Im z|^{-N})_{\mathcal{L}(L^2,L^2)}.
\end{equation*}
Combining with $\|(z-h^2{b}^w)^{-1}\|_{\mathcal{L}(L^2,L^2)} \leq |\Im z|^{-1}$, we infer
\begin{equation*}
    (z-h^2b^w)^{-1}=\ophw(q_0(z))+\O(h|\Im z|^{-N-1})_{\mathcal{L}(L^2,L^2)}.
\end{equation*}
We choose $n\geq N$, and we obtain the following:
\begin{align*}
    b^w_h&=-\frac{1}{\pi}\int_{\C}\bar{\partial}\tilde{\chi}(z)(z-h^2 b^w(\underline{U}))^{-1}dz\\
    &=-\frac{1}{\pi}\int_{\C}\bar{\partial}\tilde{\chi}(z)\left(\ophw(q_0(z))+\O(h|\Im z|^{-N-1})_{\mathcal{L}(L^2,L^2)}\right)dz\\
    &=\ophw(-\frac{1}{\pi}\int_{\C}\bar{\partial}\tilde{\chi}(z)(z-\lambda(\underline{U};x))|\xi|^2)^{-1}dz)-\frac{1}{\pi}\int_{\C}\bar{\partial}\tilde{\chi}(z)\O(h|\Im z|^{-N-1})_{\mathcal{L}(L^2,L^2)}dz\\
    &=\ophw(b_{\chi})+\O(h)_{\mathcal{L}(L^2,L^2)}.
\end{align*}
In the penultimate equality, we used Cauchy's integral formula, and $b_{\chi}$ is defined in \eqref{eq: defi of b_chi}. For $D_sb_h^w$, we use the identity
\begin{equation*}
    D_t(z-A)^{-1}=(z-A)^{-1}D_tA(z-A)^{-1}.
\end{equation*}
As a consequence, we obtain 
\begin{align*}
   D_sb_h^w&=-\frac{1}{\pi}\int_{\C}\bar{\partial}\tilde{\chi}(z)D_s(z-h^2{b}^w(\underline{U}))^{-1}dz\\
   &=-\frac{h}{\pi}\int_{\C}\bar{\partial}\tilde{\chi}(z)(z-h^2{b}^w(\underline{U}))^{-1}D_t(h^2{b}^w(\underline{U}))(z-h^2{b}^w(\underline{U}))^{-1}dz.
\end{align*}
Considering $D_t(h^2b^w(\underline{U}))$, 
\begin{align*}
    D_t(h^2b^w(\underline{U}))&=\ophw(D_t(\lambda(\underline{U};x))|\xi|^2)+h\ophw(D_t(\vec{a}_1(\underline{U};x))\cdot\xi).
\end{align*}
Thus, use again Cauchy's integral formula, we obtain 
\begin{align*}
    D_sb_h^w&=-\frac{h}{\pi}\int_{\C}\bar{\partial}\tilde{\chi}(z)(z-h^2{b}^w(\underline{U}))^{-1}\ophw(D_t(\lambda(\underline{U};x))|\xi|^2)(z-h^2b^w(\underline{U}))^{-1}dz\\
    &-\frac{h^2}{\pi}\int_{\C}\bar{\partial}\tilde{\chi}(z)(z-h^2b^w(\underline{U}))^{-1}\ophw(D_t(\vec{a}_1(\underline{U};x))\cdot\xi)(z-h^2b^w(\underline{U}))^{-1}dz\\
    &=\ophw\left(-\frac{h}{\pi}\int_{\C}\bar{\partial}\tilde{\chi}(z)(D_t(\lambda(\underline{U};x))|\xi|^2)q_0(z)^{-2}dz\right)+\O(h^2)_{\mathcal{L}(L^2,L^2)}\\
    &=-h\ophw\left((\partial_z\chi)(b_{\chi})D_t(\lambda(\underline{U};x))|\xi|^2)\right)+\O(h^2)_{\mathcal{L}(L^2,L^2)}\\
    &=\O(h(\epsilon_0+h))_{\mathcal{L}(L^2,L^2)}.
\end{align*}
This concludes the proof.
\end{proof}
\begin{corollary}\label{cor: phase localized estimates}
Let $s\geq s_0>d/2+2$, $T>0$, $\epsilon_0>0$, and $\underline{U}\in \mathscr{C}^{1,s_0}(T,\epsilon_0)$. Let $\chi'\in S_{cf}(\nu)$ and $\chi'\chi=\chi$. Then for $\epsilon_0$ sufficiently small, 
\begin{equation*}
    V_h=Q_h'V_h=\ophw(B_{\chi'})V_h+R_{h} V_h
\end{equation*}
and $\|R_h\|_{L^{\infty}([0,T]_t,\mathcal{L}(\H^0,\H^0))}\leq \F(\|\underline{U}\|_{\H^{s_0}})h$.
\end{corollary}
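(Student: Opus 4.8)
The statement splits into two essentially independent facts: the exact identity $V_h=Q_h'V_h$, which is algebraic and comes from the functional calculus, and the expansion $Q_h'=\ophw(B_{\chi'})+R_h$, which is nothing but Lemma~\ref{lemma: functional calculus} applied with $\chi'$ in place of $\chi$. My plan is to establish these in turn and compose them. For the first, observe that $Q_h=\chi(h^2\mathscr B^w(\underline U))$ and $Q_h'=\chi'(h^2\mathscr B^w(\underline U))$ are obtained from the \emph{same} (self-adjoint) operator $h^2\mathscr B^w(\underline U)$ through the Helffer-Sj\"ostrand formula, so the map $\psi\mapsto\psi(h^2\mathscr B^w(\underline U))$ is an algebra homomorphism on the relevant class of cutoffs. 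Since $\chi,\chi'\in S_{cf}(\nu)$ satisfy $\chi'\chi=\chi$ by hypothesis, this yields $Q_h'Q_h=(\chi'\chi)(h^2\mathscr B^w(\underline U))=\chi(h^2\mathscr B^w(\underline U))=Q_h$. As $V_h=Q_hV$ by definition, we get $Q_h'V_h=Q_h'Q_hV=Q_hV=V_h$, which is the first equality in the statement.

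For the second fact, note that $\chi'$ belongs to $S_{cf}(\nu)$ and hence satisfies exactly the hypotheses placed on $\chi$ in Lemma~\ref{lemma: functional calculus}; its proof uses only $0\le\chi'\le1$, the support conditions defining the symbol class, and smoothness for the almost-analytic extension, so it applies verbatim with $\chi\rightsquigarrow\chi'$. This produces $Q_h'\in C([0,T]_t,\mathcal L(\H^0,\H^0))$ together with $\|Q_h'-\ophw(B_{\chi'})\|_{L^\infty([0,T]_t,\mathcal L(\H^0,\H^0))}\le\F(\|\underline U\|_{\H^{s_0}})h$, where $B_{\chi'}=b_{\chi'}E$ and $b_{\chi'}=\chi'(\lambda(\underline U;x)|\xi|^2)$. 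Setting $R_h:=Q_h'-\ophw(B_{\chi'})$ — a family of operators depending only on $h$ and on $t$ through $\underline U(t)$, not on $V$ — we have $\|R_h\|_{L^\infty([0,T]_t,\mathcal L(\H^0,\H^0))}\le\F(\|\underline U\|_{\H^{s_0}})h$, and composing with $V_h$ and invoking the first fact gives $V_h=Q_h'V_h=\ophw(B_{\chi'})V_h+R_hV_h$, as claimed.

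There is no genuine analytic difficulty here; the only point worth isolating is the multiplicativity $Q_h'Q_h=Q_h$ of the functional calculus. This is where one records the self-adjointness of $\mathscr B^w(\underline U)$ with respect to the relevant $L^2$ scalar product (equivalently, one checks directly from the resolvent identity $(z-A)^{-1}(w-A)^{-1}=\tfrac{1}{w-z}\big((z-A)^{-1}-(w-A)^{-1}\big)$ and Cauchy's formula that $\psi\mapsto\psi(h^2\mathscr B^w(\underline U))$ respects products of cutoffs). Everything quantitative — the $\F(\|\underline U\|_{\H^{s_0}})h$ bound and the $C([0,T]_t,\mathcal L(\H^0,\H^0))$ regularity — is imported wholesale from Lemma~\ref{lemma: functional calculus}.
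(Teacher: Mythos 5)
Your proof is correct and is exactly the argument the paper leaves implicit: the corollary is stated there without proof, as an immediate consequence of Lemma \ref{lemma: functional calculus} applied with $\chi'$ in place of $\chi$, combined with the multiplicativity $Q_h'Q_h=(\chi'\chi)(h^2\mathscr{B}^w(\underline{U}))=Q_h$ of the functional calculus of the self-adjoint operator $h^2\mathscr{B}^w(\underline{U})$, which is precisely what you supply. The only caveat is the paper's own notational ambiguity in the definition $Q_h:=E\chi(h^2b^w(\underline{U}))$ (taken literally, the extra factor of $E$ would give $Q_h'Q_h=E Q_h\neq Q_h$); your reading of $Q_h$ as genuine functional calculus of the matrix operator is the consistent interpretation and the one the rest of the paper relies on.
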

We shall use this corollary when we considering the semiclassical weak observability for generalized pseudo-differential equations \eqref{eq: generalized pseudo-equation} in Proposition \ref{prop:L^2 semiclassical observability}.
\begin{corollary}
Let $s\geq s_0> d/2+2$, $T>0$, $\epsilon_0>0$, and $\underline{U}\in \mathscr{C}^{1,s_0}(T,\epsilon_0)$. Then for $\epsilon_0$ sufficiently small, $\forall t\in [0,T]$, for $W\in\H^0$, we have
\begin{equation}
    \sum_{h=2^{-j},j\geq 0}||Q_h W||^2_{\H^0}\lesssim ||W||^2_{\H^0}.
\end{equation}
Furthermore, for $j\geq j_0\gg1$, $\forall N>0$, we have
\begin{equation}
    \sum_{h=2^{-j},j\geq j_0}||Q_h W||^2_{\H^0}\lesssim ||W||^2_{\H^0}-||W||^2_{\H^{-N}}.
\end{equation}
\end{corollary}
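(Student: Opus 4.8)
\emph{Proof proposal.} Both inequalities say that $\{Q_h\}_{h=2^{-j}}$ is, up to the bounded--operator corrections supplied by Lemma~\ref{lemma: functional calculus}, a Littlewood--Paley decomposition attached to the self--adjoint operator $\mathscr{B}^w(\underline U)$ (equivalently, to $b^w(\underline U)$ after pulling out the constant matrix $E$), and I would prove them by the spectral theorem rather than by symbolic calculus. The key observation is that $Q_h=\chi(h^2\mathscr{B}^w(\underline U))$ is self--adjoint (because $\chi$ is real), and that $\mathrm{supp}\,\chi$ lies in an annulus of aspect ratio $5$, so the functions $t\mapsto\chi(h^2t)$ and $t\mapsto\chi(h'^2t)$ have disjoint supports once $2^{2|j-j'|}>5$, i.e., once $|j-j'|\ge2$. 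Consequently $Q_hQ_{h'}=0=Q_{h'}Q_h$ whenever $|j-j'|\ge2$.

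Given this, the first estimate is almost immediate: split $\{j\ge0\}$ into the even and the odd integers. Within, say, the even ones, the $Q_{2^{-j}}$ are self--adjoint with pairwise vanishing products, hence have pairwise orthogonal ranges, so for every finite partial sum $\sum_{j\ \mathrm{even}}\|Q_{2^{-j}}W\|_{\H^0}^2=\bigl\|\sum_{j\ \mathrm{even}}Q_{2^{-j}}W\bigr\|_{\H^0}^2=\bigl\|\bigl(\sum_{j\ \mathrm{even}}\chi(2^{-2j}\,\cdot\,)\bigr)(\mathscr{B}^w(\underline U))W\bigr\|_{\H^0}^2$. Since $\chi\le1$ and at most one even--index summand is nonzero at each point of the spectrum, the sum $\sum_{j\ \mathrm{even}}\chi(2^{-2j}\,\cdot\,)$ is bounded by $1$ in sup norm, and the spectral theorem gives $\le\|W\|_{\H^0}^2$. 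Adding the odd $j$'s yields $\sum_{h=2^{-j},\,j\ge0}\|Q_hW\|_{\H^0}^2\le2\|W\|_{\H^0}^2$.

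For the second estimate, let $\Pi_{j_0}:=\mathds{1}_{\{|\mathscr{B}^w(\underline U)|\ge\nu^2 2^{2j_0}\}}$ be the high--energy spectral projection. Since $\chi(2^{-2j}t)=0$ for $|t|<\nu^2 2^{2j}$, one has $Q_{2^{-j}}=Q_{2^{-j}}\Pi_{j_0}$ for all $j\ge j_0$, whence by the first estimate $\sum_{j\ge j_0}\|Q_hW\|_{\H^0}^2=\sum_{j\ge j_0}\|Q_h\Pi_{j_0}W\|_{\H^0}^2\le2\|\Pi_{j_0}W\|_{\H^0}^2$. It remains to compare $\|\Pi_{j_0}W\|_{\H^0}^2$ with $\|W\|_{\H^0}^2-\|W\|_{\H^{-N}}^2$: because the principal symbol of $\mathscr{B}^w(\underline U)$ is $\sim\pm\lambda(\underline U;x)|\xi|^2$ with $\lambda$ bounded above and below, $\Pi_{j_0}$ differs from the sharp Fourier projection onto $\{|k|\gtrsim 2^{j_0}\}$ only by an operator that is small on low frequencies (Chebyshev's inequality applied to $\mathscr{B}^w(\underline U)$, or the Helffer--Sj\"ostrand estimates already used in Lemma~\ref{lemma: functional calculus}), so $\|\Pi_{j_0}W\|_{\H^0}^2\lesssim\sum_{|k|\gtrsim 2^{j_0}}|W_k|^2\le2\bigl(\|W\|_{\H^0}^2-\|W\|_{\H^{-N}}^2\bigr)$, the last step because $1-\langle k\rangle^{-2N}\ge\tfrac12$ for every $k\ne0$.

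The one delicate point I anticipate is the very first one: making rigorous that $Q_h$ is genuinely a function of a single self--adjoint operator, with the matrix factor $E$ and the first--order part $\opw(\vec{a}_1(\underline U;x)\cdot\xi)$ of $\mathscr{B}^w(\underline U)$ treated respectively as a harmless (essentially unitary) conjugation and as a lower--order perturbation --- this is exactly what the Helffer--Sj\"ostrand construction of $Q_h$ and Lemma~\ref{lemma: functional calculus} provide, and once it is in hand the vanishing--products/orthogonality argument and the low--frequency bookkeeping above are routine.
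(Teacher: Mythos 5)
The paper states this corollary without proof, so there is nothing internal to compare against; I am judging your argument on its own merits. Your proof of the first inequality is correct and is the natural one: since the Helffer--Sj\"ostrand formula reproduces the exact functional calculus of the self-adjoint operator $b^w(\underline U)$ and the constant matrix $E$ is norm-preserving, the supports of $\chi(4^{-j}\cdot)$ and $\chi(4^{-j'}\cdot)$ are disjoint for $|j-j'|\ge2$, and the even/odd splitting plus $\|\sum\chi(4^{-j}\cdot)\|_{L^\infty}\le1$ gives $\sum_j\|Q_hW\|^2_{\H^0}\le2\|W\|^2_{\H^0}$.

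The second inequality is where there is a genuine gap --- and the printed statement itself is defective, which is the root of the problem. Test it on a nonzero constant $W$: then $\|W\|^2_{\H^0}-\|W\|^2_{\H^{-N}}=\sum_k(1-\langle k\rangle^{-2N})|W_k|^2=0$, since only the $k=0$ mode is present. But for $\underline U\neq0$ a constant is only an \emph{approximate} null vector of $b^w(\underline U)$: the operators $\opw((\lambda(\underline U;x)-1)|\xi|^2)$ and $\opw(\vec a_1(\underline U;x)\cdot\xi)$ do not annihilate constants, they produce outputs of size $O(\epsilon_0)$, so $Q_hW\neq0$ in general and the left-hand side is strictly positive while the right-hand side vanishes. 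Consequently your last step --- ``$\Pi_{j_0}$ differs from the sharp Fourier projection onto $\{|k|\gtrsim2^{j_0}\}$ only by an operator small on low frequencies'' --- cannot close: the Chebyshev bound gives $\|\Pi_{j_0}P_0W\|_{\H^0}\lesssim\epsilon_0\,2^{-2j_0}\|W\|_{\H^0}$ ($P_0$ the zero-mode projector), which is \emph{small relative to $\|W\|_{\H^0}$} but is not dominated by $\|(\uno-P_0)W\|_{\H^0}$, and the right-hand side leaves you no room at all on the zero mode. What your method actually proves is
\[
\sum_{h=2^{-j},\,j\ge j_0}\|Q_hW\|^2_{\H^0}\ \lesssim\ \|W\|^2_{\H^0}-\|W\|^2_{\H^{-N}}+\epsilon_0^2\,2^{-4j_0}\|W\|^2_{\H^0},
\]
and that is the honest version of the upper bound.

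Separately, note the direction of the inequality. In the proof of Proposition \ref{prop: weak ob} the corollary is invoked to pass from $\sum_{j\ge j_0}\|Q_hY(0)\|^2_{\H^0}$ back to $\|Y(0)\|^2_{\H^0}-\|Y(0)\|^2_{\H^{-N}}$, which requires the \emph{lower} bound $\|W\|^2_{\H^0}\lesssim\sum_{j\ge j_0}\|Q_hW\|^2_{\H^0}+\|W\|^2_{\H^{-N}}$; the upper bound (your first display) is only used on the observation term. So the second display is almost certainly a misprint for its converse, and that converse needs an ingredient your proposal does not supply: a pointwise lower bound $\sum_{j\ge j_0}\chi(4^{-j}t)^2\gtrsim1$ on $\{|t|\gtrsim4^{j_0}\}$. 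With the paper's normalization ($\chi\equiv1$ only on $2\le|t|/\nu^2\le4$ while consecutive $j$'s rescale the argument by $4$) the ``$\equiv1$'' regions leave gaps, so this requires either a denser family of scales or an extra hypothesis on $\chi$. Your proposal addresses neither the correct direction nor this covering issue.
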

{We shall apply this corollary to derive the weak observability in Proposition \ref{prop: weak ob}.}
\subsubsection{Equation for $Q_h Y$}
\begin{proposition}
Let $s\geq s_0> d/2+2$, $T>0$, $\epsilon_0>0$, and $\underline{U}\in \mathscr{C}^{1,s_0}(T,\epsilon_0)$. Then for $\epsilon_0$ sufficiently small, $V_h$ satisfies the equation
\begin{equation}\label{eq:semi-classical eq-V_h}
    h D_s Y_h=h^2\mathscr{B}^w Y_h+\tilde{R}_h Y.
\end{equation}
and $\|\tilde{R}_h\|_{L^{\infty}([0,T]_t,\mathcal{L}(\H^0,\H^0))}\leq \F(\|\underline{U}\|_{\H^{s_0}})h^2(\epsilon_0+h)$.
\end{proposition}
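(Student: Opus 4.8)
The plan is to derive \eqref{eq:semi-classical eq-V_h} by commuting the localisation operator $Q_h$ through the semiclassical equation \eqref{eq:semiclasssical eq for Y} and then bounding the two resulting error terms. Writing $Y$ for the solution of \eqref{eq:semiclasssical eq for Y} and $Y_h=Q_hY$ ($=V_h$), the Leibniz rule together with \eqref{eq:semiclasssical eq for Y} gives
\begin{equation*}
hD_sY_h=(hD_sQ_h)Y+Q_h\big(h^2\mathscr{B}^wY\big)=h^2\mathscr{B}^wY_h+\big(hD_sQ_h+[Q_h,h^2\mathscr{B}^w]\big)Y,
\end{equation*}
so \eqref{eq:semi-classical eq-V_h} holds with $\tilde{R}_h:=hD_sQ_h+[Q_h,h^2\mathscr{B}^w]$, and the statement reduces to the operator bound $\|\tilde{R}_h\|_{L^{\infty}([0,T]_t,\mathcal{L}(\H^0,\H^0))}\leq\F(\|\underline{U}\|_{\H^{s_0}})h^2(\epsilon_0+h)$. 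The first summand is immediate from Lemma \ref{lemma: functional calculus}, which gives $\|D_sQ_h\|_{L^{\infty}([0,T]_t,\mathcal{L}(\H^0,\H^0))}\leq\F(\|\underline{U}\|_{\H^{s_0}})h(\epsilon_0+h)$, hence $\|hD_sQ_h\|\leq\F(\|\underline{U}\|_{\H^{s_0}})h^2(\epsilon_0+h)$.

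The commutator is the real point. Recalling $Q_h=Eb_h^w$ with $b_h^w=\chi(h^2b^w)$, $b^w=\opw(\lambda(\underline{U};x)|\xi|^2)+\opw(\vec{a}_1(\underline{U};x)\cdot\xi)$, and $\mathscr{B}^w=E\,\opw(\lambda(\underline{U};x)|\xi|^2)+\opw(\vec{a}_1(\underline{U};x)\cdot\xi)$, a direct algebraic computation --- using only that the scalar operator $b_h^w$ commutes with the constant matrix $E$ and, since the resolvent $(z-h^2b^w)^{-1}$ commutes with $b^w$, that $[b_h^w,b^w]=0$ --- yields
\begin{equation*}
[Q_h,h^2\mathscr{B}^w]=h^2(E-\uno)\big[b_h^w,\opw(\vec{a}_1(\underline{U};x)\cdot\xi)\big].
\end{equation*}
So the order-two part of $\mathscr{B}^w$ drops out and only the commutator with the order-one term survives; this remaining commutator I would estimate exactly as in the proof of Lemma \ref{lemma: functional calculus}. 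Writing $b_h^w=-\tfrac{1}{\pi}\int_{\C}\bar\partial\tilde\chi(z)(z-h^2b^w)^{-1}\,dz$ and using $[(z-h^2b^w)^{-1},\opw(\vec{a}_1\cdot\xi)]=(z-h^2b^w)^{-1}[h^2b^w,\opw(\vec{a}_1\cdot\xi)](z-h^2b^w)^{-1}$, Weyl symbolic calculus gives $[h^2b^w,\opw(\vec{a}_1\cdot\xi)]=\tfrac{1}{\ii}\ophw(\{\lambda(\underline{U};x)|\xi|^2,\vec{a}_1(\underline{U};x)\cdot\xi\})$ modulo operators of order $\leq 2$ with $\O(h^2)$ symbols, the self-commutator of the order-one part vanishing. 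The Poisson bracket here has order two but seminorms $\lesssim\F(\|\underline{U}\|_{\H^{s_0}})\epsilon_0^2$, since $\vec{a}_1$ is quadratic in $\underline{u}$ and $s_0-2>d/2$; sandwiched between the two resolvents --- which supply the two semiclassical derivatives, while $\bar\partial\tilde\chi(z)$ absorbs the powers of $|\Im z|^{-1}$ over the compact $z$-support --- and multiplied by $h^2$, this yields $\|h^2[b_h^w,\opw(\vec{a}_1\cdot\xi)]\|_{\mathcal{L}(\H^0,\H^0)}\lesssim\F(\|\underline{U}\|_{\H^{s_0}})h^2(\epsilon_0^2+h^2)\leq\F(\|\underline{U}\|_{\H^{s_0}})h^2(\epsilon_0+h)$. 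Adding the bound on $hD_sQ_h$ gives the claim.

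The hard part is this last commutator estimate: one must reprise the contour-integral and symbolic-calculus bookkeeping of Lemma \ref{lemma: functional calculus}, now with $\{\lambda|\xi|^2,\vec{a}_1\cdot\xi\}$ --- of order two, but with an $\O(\epsilon_0^2)$ coefficient --- playing the role that $D_t(h^2b^w)$ plays there, and tracking the extra power of $h$ gained by the commutator. The mismatch in matrix structure between $Q_h$, which carries $E$ in front of the whole functional calculus of $b^w$, and $\mathscr{B}^w$, which carries $E$ only in front of its order-two part, is precisely what prevents $[Q_h,h^2\mathscr{B}^w]$ from vanishing identically and is responsible for the factor $\epsilon_0$ in the final estimate.
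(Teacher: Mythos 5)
Your proposal is correct and follows the same basic strategy as the paper: differentiate $Y_h=Q_hY$, substitute the equation, and invoke Lemma \ref{lemma: functional calculus} to bound $hD_sQ_h$. The difference is in how the term $Q_h\bigl(h^2\mathscr{B}^wY\bigr)$ is handled. The paper's proof simply writes $h^2Q_h\mathscr{B}^wY=h^2\mathscr{B}^wY_h$ and takes $\tilde{R}_h=h(D_sQ_h)$ alone, which amounts to asserting $[Q_h,\mathscr{B}^w]=0$. As you correctly point out, this commutator does not vanish in general: since $Q_h=Eb_h^w$ is the functional calculus of the scalar operator $b^w=\opw(\lambda|\xi|^2)+\opw(\vec{a}_1\cdot\xi)$ multiplied by $E$, while $\mathscr{B}^w$ carries $E$ only on its second-order part, the identities $[b_h^w,b^w]=0$ and $E^2=\uno$ give exactly
$[Q_h,h^2\mathscr{B}^w]=h^2(E-\uno)\bigl[b_h^w,\opw(\vec{a}_1\cdot\xi)\bigr]$,
and this must be absorbed into $\tilde{R}_h$. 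Your Helffer--Sj\"ostrand estimate of this extra term --- the Moyal expansion producing the Poisson bracket $\{\lambda|\xi|^2,\vec{a}_1\cdot\xi\}$, whose coefficients are $\O(\epsilon_0^2)$ because $\vec{a}_1$ is quadratic in $\underline{u}$, sandwiched between resolvents that absorb the two semiclassical derivatives --- yields $\O(h^2\epsilon_0^2)_{\mathcal{L}(\H^0,\H^0)}$, which sits comfortably inside the claimed bound $\F(\|\underline{U}\|_{\H^{s_0}})h^2(\epsilon_0+h)$. So your argument is not just valid: it identifies and repairs a step that the paper's own proof passes over in silence, and it makes explicit that the surviving error comes precisely from the order-one term $\vec{a}_1\cdot\xi$, i.e.\ it is the matrix-structure mismatch between $Q_h$ and $\mathscr{B}^w$ that generates the $\epsilon_0$ in the remainder.
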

\begin{proof}
The remainder  $\tilde{R}_h$ is explicitly computed as follows:
\begin{align*}
    hD_s Y_h&=h^2Q_h D_sY+h(D_s Q_h)Y\\
                  &=h^2Q_h\mathscr{B}^w Y+h(D_t Q_h)Y\\
                  &=h^2\mathscr{B}^w Y_h+h(D_t Q_h)Y.
\end{align*}
Hence, $\tilde{R}_h=h(\partial_t Q_h)$. Using Lemma \ref{lemma: functional calculus}, we obtain that
\begin{equation*}
    \|\tilde{R}_h\|_{L^{\infty}([0,T]_t,\mathcal{L}(\H^0,\H^0)}\leq \F(\|\underline{U}\|_{\H^{s_0}})h^2(\epsilon_0+h).
\end{equation*}
\end{proof} 

\subsection{Semiclassical observability}
\begin{proposition}\label{prop:L^2 semiclassical observability}
Suppose that $\omega$ satisfies the geometric control condition, $s_0>d/2+2$, $T>0$. Then for $\epsilon_0>0$ and $h_0>0$ sufficiently small, for all $0<h<h_0$ and for $\underline{U}\in \mathscr{C}^{1,s_0}(hT,\epsilon_0)$,  the solution $Y_h\in C([0,T],\H^0)$ of the equation 
\begin{equation}\label{eq: generalized pseudo-equation}
    h D_s Y_h-h^2\mathscr{B}^w Y_h=f_h,
\end{equation}
where $f_h\in L^2([0,T],\H^0)$, satisfies the semiclassical observability:
\begin{equation}\label{eq: semiclassical observability inequality}
    ||Y_h(0)||^2_{\H^0}\lesssim\int_0^T||\chi_T\varphi_{\omega}Y_h(t)||^2_{\H^0}dt+h^{-2}||f_h||^2_{L^2([0,T],\H^0)}.
\end{equation}
provided that
\begin{equation}\label{localized frequency restriction}
    Y_h=Q'_h Y_h+R_{h} Y_h
    %o(1)_{C([0,T],\mathcal{L}(\H^0,\H^0))}Y_h,
\end{equation}
where $Q'_h=\chi'(h^2\mathscr{B}^w)$, $\chi'$ is defined as in Corollary \ref{cor: phase localized estimates} and $R_h$ satisfies the same estimate as in Corollary \ref{cor: phase localized estimates}. 
\end{proposition}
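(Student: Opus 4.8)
The plan is to argue by contradiction via semiclassical defect measures, in the spirit of Lebeau \cite{Lebeau96} and Zhu \cite{Zhu}. Assuming \eqref{eq: semiclassical observability inequality} fails, the uniformity in $h$ produces $h_n\downarrow0$, coefficients $\U^{(n)}\in\mathscr{C}^{1,s_0}(h_nT,\epsilon_0)$, sources $f_n$, and solutions $Y_n$ of \eqref{eq: generalized pseudo-equation} obeying \eqref{localized frequency restriction}, normalized so that $\|Y_n(0)\|_{\H^0}=1$ while $\int_0^T\|\chi_T\varphi_\omega Y_n(s)\|_{\H^0}^2\,ds+h_n^{-2}\|f_n\|_{L^2}^2\to0$. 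I would first record two simplifications: after the rescaling $t=h_ns$ the cutoff $\chi_T(t)=\chi_1(t/T)$ equals $\chi_1(h_ns/T)\equiv1$ on $s\in[0,T]$ for $h_0\le1/2$, so the first term is $\int_0^T\|\varphi_\omega Y_n(s)\|_{\H^0}^2\,ds\to0$; and $\|\U^{(n)}\|_{\H^{s_0}}\le\epsilon_0$ together with $\|\partial_s\U^{(n)}\|_{L^\infty([0,T]_s,\H^{s_0-2})}=h_n\|\partial_t\U^{(n)}\|_{L^\infty}\le h_n\epsilon_0\to0$ give, by Sobolev embedding, $\lambda(\U^{(n)};\cdot)=1+O(\epsilon_0^2)$ uniformly with the symbols becoming time--independent in the limit. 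Rewriting \eqref{eq: generalized pseudo-equation} as $\partial_sY_n=\ii h_n\mathscr{B}^w(\U^{(n)})Y_n+\ii h_n^{-1}f_n$, with $\ii h_n\mathscr{B}^w$ essentially skew--adjoint on $\H^0$ (Weyl quantization of real symbols), the energy identity and $h_n^{-1}\|f_n\|_{L^2}=(h_n^{-2}\|f_n\|_{L^2}^2)^{1/2}\to0$ yield $\sup_{s\in[0,T]}\big|\,\|Y_n(s)\|_{\H^0}^2-1\,\big|\to0$.

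Next I would extract a semiclassical measure. By \eqref{localized frequency restriction} and Corollary \ref{cor: phase localized estimates}, $Y_n=\ophw(B_{\chi'})Y_n+R_{h_n}Y_n$ with $\|R_{h_n}\|_{\mathcal L(\H^0)}\lesssim h_n$ and $B_{\chi'}=\chi'(\lambda(\U^{(n)};x)|\xi|^2)E$ supported, uniformly in $n$, in a fixed compact subset of $\{c\nu\le|\xi|\le C\nu\}$ bounded away from $\xi=0$; thus $(Y_n)$ is $h_n$--oscillating and compact at infinity. Up to a subsequence, the $2\times2$ matrix--valued Wigner transforms of $(Y_n)$ converge to a nonnegative matrix measure on $[0,T]_s\times\T^d\times(\R^d\setminus\{0\})$; since the leading symbol carried by $E$ is $\diag(\lambda|\xi|^2,-\lambda|\xi|^2)$, with simple eigenvalues off $\xi=0$, the two modes decouple at top order and it suffices to follow the scalar measure $\nu$ of the component $y_n$ of $Y_n=(y_n,\bar y_n)$. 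As no mass leaks to $\xi=0$ or $\xi=\infty$, $\nu$ disintegrates as $\nu=\int_0^T\nu_s\,ds$ with $\nu_s(\T^d\times\R^d)=\tfrac12\lim_n\|Y_n(s)\|_{\H^0}^2=\tfrac12$ for every $s$ (upgrading from a.e.\ $s$ to every $s$ via the transport structure below); in particular $\nu_0\ne0$, supported in $\{1\le\lambda^\infty(x)|\xi|^2/\nu^2\le5\}$ where $\lambda^\infty=\lim_n\lambda(\U^{(n)}(0);\cdot)=1+O(\epsilon_0^2)$.

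Then I would run the propagation argument. Testing against $\ophw(a)$ for $a=a(x,\xi)$ with $\xi$--compact support gives $\tfrac{d}{ds}\langle\ophw(a)Y_n,Y_n\rangle=\langle\ophw(\{a,p_0^{(n)}(s)\})Y_n,Y_n\rangle+O(h_n)\|Y_n\|_{\H^0}^2$ plus a source term whose $s$--integral is $O(h_n^{-1}\|f_n\|_{L^2})\to0$, where $p_0^{(n)}(s,x,\xi)=\lambda(\U^{(n)}(s);x)|\xi|^2$; letting $n\to\infty$ (using $p_0^{(n)}\to p_0^\infty:=\lambda^\infty(x)|\xi|^2$ and $\partial_sp_0^{(n)}\to0$) shows that $s\mapsto\nu_s$ is weak-$\ast$ Lipschitz and solves the transport equation of the Hamiltonian $p_0^\infty$, hence $\nu_s=(\phi_s)_*\nu_0$ for all $s\in[0,T]$, $\phi_s$ being the autonomous Hamiltonian flow of $p_0^\infty$. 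From $\int_0^T\|\varphi_\omega Y_n\|_{\H^0}^2\,ds\to0$ and $\varphi_\omega\ge\uno_{\omega'}$ we obtain $\int_0^T\nu_s(\omega'\times\R^d)\,ds=0$, hence, $\omega'$ being open and $s\mapsto\nu_s$ weak-$\ast$ continuous, $\mathrm{supp}\,\nu_s\cap(\omega'\times\R^d)=\emptyset$ for all $s$. Fixing $(x_0,\xi_0)\in\mathrm{supp}\,\nu_0$, one has $|\xi_0|\sim\nu$; along $(x(s),\xi(s))=\phi_s(x_0,\xi_0)$, conservation of $\lambda^\infty(x(s))|\xi(s)|^2\in[\nu^2,5\nu^2]$ forces $|\dot x(s)|=2\lambda^\infty(x(s))|\xi(s)|\ge2\nu(1-C\epsilon_0^2)$, so $x([0,T])$ has length $\ge2\nu(1-C\epsilon_0^2)T$, which by \eqref{eq: propagation speed} exceeds the covering length of $\omega'$ for $\epsilon_0$ small (legitimately, since $\omega'$ also satisfies the geometric control condition); as $\lambda^\infty$ is $C^\infty$--close to $1$, $x(\cdot)$ is $C^0$--close on $[0,T]$ to the straight geodesic issued from $(x_0,\xi_0)$, which meets $\omega'$, so $x(s_*)\in\omega'$ for some $s_*\in(0,T)$, i.e.\ $\phi_{s_*}(x_0,\xi_0)\in(\omega'\times\R^d)\cap\mathrm{supp}\,\nu_{s_*}=\emptyset$ --- a contradiction. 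Hence $\nu_0=0$, contradicting $\nu_0(\T^d\times\R^d)=\tfrac12$, so \eqref{eq: semiclassical observability inequality} holds.

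The hard part will be the second and third steps: constructing the matrix--valued semiclassical measure with the correct leading--order decoupling of the two $E$--modes, and, above all, using the frequency localization \eqref{localized frequency restriction} to prevent mass from escaping to $\xi=0$ or $\xi=\infty$, so that $\nu_0\ne0$ is genuine; one must also pass carefully from ``a.e.\ $s$'' to ``every $s$'' in the vanishing of $\nu_s$ over $\omega'$, and transfer the geometric control condition quantitatively from the flat geodesic flow to the $O(\epsilon_0^2)$--perturbed Hamiltonian $p_0^\infty$ using the slack in \eqref{eq: propagation speed}.
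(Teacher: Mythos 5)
Your proposal is correct and follows the same overall strategy as the paper's proof: argue by contradiction with $\|Y_n(0)\|_{\H^0}=1$, extract a matrix-valued semiclassical defect measure from the Wigner distributions, use the frequency localization \eqref{localized frequency restriction} to confine the support of the measure to $\{|\xi|\geq\nu\}$ so that the unit mass at $s=0$ survives the limit, propagate the diagonal components along the bicharacteristics, and invoke the geometric control condition together with the vanishing on $\omega$ to force $\mu_0=0$. The one genuine divergence is in how the smallness of $\underline{U}$ is exploited. The paper also negates the quantifier ``for $\epsilon_0$ sufficiently small,'' so its contradiction sequence carries $\epsilon_n\to 0$; then $[\ophw(A^{\pm}),h^2\mathscr{B}^w(\underline{U}_n)-h^2\mathscr{B}^w(0)]=h\,\O(\epsilon_n)$, the limit measure satisfies the \emph{flat} transport equation $(\partial_s\pm 2\xi\cdot\nabla)\mu_{\pm}=0$, and the geometric control condition applies verbatim to straight lines. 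You instead keep $\epsilon_0$ fixed and obtain transport by the perturbed Hamiltonian $\lambda^{\infty}(x)|\xi|^2$, which obliges you to (i) extract a limit $\lambda^{\infty}$ of the coefficients (the $\underline{U}_n$ are only bounded in $H^{s_0}$, so this needs a Rellich-type compactness extraction and the freezing of the time dependence via $\partial_s\underline{U}_n=O(h_n)$), and (ii) establish a quantitative stability of the geometric control condition for the $O(\epsilon_0^2)$-perturbed flow, using the slack built into \eqref{eq: propagation speed}. Both routes are legitimate: yours proves the uniform observability for each fixed small $\epsilon_0$ directly, at the price of the perturbed-flow argument that you rightly flag as the hard part, while the paper's diagonal choice $\epsilon_n\to 0$ sidesteps it entirely and is the simpler way to obtain the statement as written.
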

\begin{proof}
We argue by contradiction. Suppose that \eqref{eq: semiclassical observability inequality} is not true, then there are  sequences
\begin{equation}\label{eq: sq-U_n}
    \epsilon_n>0, h_n>0, \underline{U}_n\in \mathscr{C}^{1,s},(T,\epsilon_n)
\end{equation} and 
\begin{equation}\label{eq: sq-Y_n,f_n}
    (Y_n,f_n)\in C([0,T],\H^0)\times L^2([0,T],\H^0) 
\end{equation}
such that $Y_h=Y_n:=Y_{h_n}$ satisfies \eqref{localized frequency restriction} and is solution to 
\begin{equation}\label{eq: eq-Y_n}
    h_n D_s Y_n-h_n^2\mathscr{B}_n Y_n=f_n,
\end{equation}
with $\mathscr{B}_n=\mathscr{B}^w(\underline{U}_n)$ and $f_h=f_n:=f_{h_n}$. As $n$ tends to infinity, $\epsilon_n=o(1),h_n=o(1)$, 
\begin{equation*}
    ||Y_n(0)||_{\H^0}=1,||\chi_T\varphi_{\omega}Y_h(t)||_{L^2([0,T],\H^0)}=o(1),||f_n||_{L^2([0,T],\H^0)}=o(h_n).
\end{equation*}
By energy estimates, we know that 
\begin{equation*}
    \|Y_n\|_{C^0([0,T],\H^0)}\lesssim \|Y_n(0)\|_{\H^0}+\frac{1}{h_n}\|f_n\|_{L^1([0,T],\H^0)}\lesssim 1.
\end{equation*}
Hence, we obtain a bounded sequence $\{Y_n\}_{n\in\N}$ in $C^0([0,T],\H^0)$. Define the Wigner distribution (see \cite{Macia} for more details) associated with $\{Y_h(s)\}_{h=h_n}$ on the cotangent bundle $T^*\T^d$ by
\begin{equation}\label{eq: Wigner distribution}
    \poscalr{W_Y^h(s,\cdot)}{A}=\poscals{\ophw(A)Y_h(s,\cdot)}{Y_h(s,\cdot)},
\end{equation}
where $A\in M_{2\times2}(C^{\infty}_c(T^*\T^d))$. In particular, at the initial time $s=0$, up to a subsequence, there exists a semiclassical measure $\mu_0\in M_{2\times2}(\mathcal{M}^+(T^*\T^d))$, which is a non-negative Radon measure on $T^*\T^d$, such that,
\begin{equation}\label{eq: measure for initial data}
    \poscalr{W_Y^h(0,\cdot)}{A}=\poscals{\ophw(A)Y_h(0,\cdot)}{Y_h(0,\cdot)}\rightarrow \int_{T^*\T^d}Tr(Ad\mu_0).
\end{equation}
\begin{remark}
For simplicity, when we extract a certain subsequence, we usually do not relabel it and still denote it by its original sequence.  
\end{remark}
Since $||Y_n(0)||_{\H^0}=1$, we know that $Tr\mu_0(T^*\T^d)=1$. We have the following lemma for the Wigner distribution.
\begin{lemma}
Up to an extracting subsequence, there exists a semiclassical measure $\mu$ belonging to space $C([0,T]; M_{2\times2}(\mathcal{M}^+(T^*\T^d)))$ such that for every $\phi\in L^1((0,T))$ and every $A\in M_{2\times2}(C^{\infty}_c(T^*\T^d))$,
\begin{equation}\label{eq:limit-measures}
    \lim_{h\rightarrow0^+}\int_0^T\phi(t)\poscals{\ophw(A)Y_h(t,\cdot)}{Y_h(t,\cdot)}dt=\int_0^T\int_{T^*\T^d}\phi(t)Tr(A(x,\xi)d\mu)dt.
\end{equation}
\end{lemma}
\begin{proof}
According to the Calderon-Vaillancourt Theorem (see Theorems 4.3 and 13.13 in \cite{Zworski}),
\begin{align*}
    |\poscals{\ophw(A)Y_h(s,\cdot)}{Y_h(s,\cdot)}|&\leq \|\ophw(A)\|_{\mathcal{L}(\H^0,\H^0)}\|Y_h(s,\cdot)\|^2_{\H^0}\\
    &\leq C_d(\|A\|_{L^{\infty}}+\O(h)),
\end{align*}
where $C_d$ is a positive constant depending on the dimension. Therefore, 
\begin{equation*}
    |\int_0^T\phi(t)\poscals{\ophw(A)Y_h(t,\cdot)}{Y_h(t,\cdot)}dt|\leq C_d(\|A\|_{L^{\infty}}\|+\O(h))\|\phi\|_{L^1},
\end{equation*}
which implies that, up to a subsequence, $\int_0^T\phi(t)\poscals{\ophw(A)Y_h(t,\cdot)}{Y_h(t,\cdot)}dt$ has a limit as $h\rightarrow0^+$.

Since $C^{\infty}_c(T^*\T^d)$ and $L^1((0,T))$ are separable, after a diagonal process, combining with the Riesz representation theorem, we can extract a subsequence, such that, there exists a Radon measure $\mu$ in the space $L^{\infty}([0,T];\mathcal{M}(T^*\T^d))$ satisfying
\begin{equation*}
    \lim_{h\rightarrow0^+}\int_0^T\phi(t)\poscals{\ophw(A)Y_h(t,\cdot)}{Y_h(t,\cdot)}dt=\int_0^T\int_{T^*\T^d}\phi(t)Tr(A(x,\xi)d\mu)dt.
\end{equation*}
Applying the sharp Gårding inequality, $\mu$ is a non-negative Radon measure. For more details, we refer to the proof of Theorem 5.2 in \cite{Zworski}.

 It remains to prove the continuity with respect to time. We know that  
$\poscals{\ophw(A)Y_h(s,\cdot)}{Y_h(s,\cdot)}$ is uniformly bounded.
In virtue of the fact that the space of non negative Radon measures $\mathcal{M}(T^*\T^d)$ is a complete metric space, we use Ascoli-Arzel\`a theorem.
We prove that  $g_h^A(s)=\poscals{\ophw(A)Y_h(s,\cdot)}{Y_h(s,\cdot)}$ is equicontinuous, in particular that $\partial_sg_h^A(s)$ is bounded in $L^1((0,T))$:
\begin{align*}
    h\partial_sg_h^A(s)&=h\partial_s\poscals{\ophw(A)Y_h(s,\cdot)}{Y_h(s,\cdot)}\\
    &=\poscals{\ii h^2(\mathscr{B}_h^w\ophw(A)-\ophw(A)\mathscr{B}_h^w)Y_h(s,\cdot)}{Y_h(s,\cdot)}\\
    &+\ii\poscals{\ophw(A)Y_h(s,\cdot)}{f_h}- \ii\poscals{\ophw(A)f_h}{Y_h(s,\cdot)}.
\end{align*}
By semiclassical symbolic calculus (\cite[Theorem 4.11]{Zworski})), we know that
\begin{align*}
    h^2(\mathscr{B}_h^w\ophw(A)-\ophw(A)\mathscr{B}_h^w)&=[h^2\mathscr{B}_h^w,\ophw(A)]\\
    &=\O(h)_{L^{\infty}((0,T),\mathcal{L}(\H^0,\H^0))}.
\end{align*}
By Cauchy-Schwarz inequality,
\begin{equation*}
    |\poscals{\ophw(A)f_h}{Y_h(s,\cdot)}|\leq C\|f_h\|_{\H^0}\|Y_h\|_{\H^0}
\end{equation*}
Hence, we obtain that
\begin{align*}
    h\partial_sg_h^A(s)&\leq C\left(h\|Y_h\|^2_{\H^0}+\frac{1}{h}\|f_h\|^2_{\H^0}\right).
\end{align*}
As a consequence, we know that
\begin{equation*}
    \|\partial_sg_h^A\|_{L^1}\leq C\left(\|Y_h\|^2_{\H^0}+\frac{1}{h^2}\|f_h\|^2_{L^1\H^0}\right)\leq C
\end{equation*}
We conclude that there exists a semiclassical measure $\mu\in C([0,T];M_{2\times2}(\mathcal{M}^+(T^*\T^d)))$ such that \eqref{eq:limit-measures} holds true.
%\begin{equation*}
%    \lim_{h\rightarrow0^+}\int_0^T\phi(t)\poscals{\ophw(A)Y_h(s,\cdot)}{Y_h(s,\cdot)}dt=\int_0^T\phi(t)Tr(A(x,\xi)d\mu)dt.
%\end{equation*}
\end{proof}

Let us denote $Y_h=\left(\begin{array}{l}
     Y_h^+  \\
     Y_h^-
\end{array}\right)$. 
By the definition of the semi-classical defect measure, we know that it admits the following form:
\begin{equation*}
    \mu=\left(\begin{array}{cc}
         \mu_+&\mu_*  \\
         \Bar{\mu}_*&\mu_- 
    \end{array}\right),
\end{equation*}
where $\mu_{\pm}$ denote respectively the semiclassical measures associated to the sequence $\{Y_h^{\pm}\}$. And moreover, by Cauchy-Schwarz inequality, we also obtain that
\begin{equation}\label{eq:off-diagonal estimate}
    \mu_*\ll\sqrt{\mu_+\mu_-}.
\end{equation}
\begin{remark}
In fact, for every measurable set $E$, we take the identical function $\uno_E$. We notice that $\uno_E^2=\uno_E$. By definition, we obtain
\begin{equation*}
    \poscalr{\uno_E}{\mu_*}=\lim_{h\rightarrow0^+}\poscals{\uno_EY_h^+}{Y_h^-}=\lim_{h\rightarrow0^+}\poscals{\uno_EY_h^+}{\uno_EY_h^-}.
\end{equation*}
Using Cauchy-Schwarz inequality, we obtain
\begin{equation*}
    \lim_{h\rightarrow0^+}\poscals{\uno_EY_h^+}{\uno_EY_h^-}\leq \lim_{h\rightarrow0^+}\poscals{\uno_EY_h^+}{\uno_EY_h^+}^{\frac{1}{2}}\poscals{\uno_EY_h^-}{\uno_EY_h^-}^{\frac{1}{2}}=\poscalr{\uno_E}{\mu_+}^{\frac{1}{2}}\poscalr{\uno_E}{\mu_-}^{\frac{1}{2}}.
\end{equation*}
This implies \eqref{eq:off-diagonal estimate}.
\end{remark}
We are able to conclude the proof with the following proposition.
\begin{proposition}\label{prop: property of measures}
The measure $\mu$ satisfies the following properties:
\begin{enumerate}
    \item $\supp{\mu}\subset \{|\xi|\geq v\}$;
    \item $\mu_{\pm}$ is propagated via the transportation equation 
    \begin{equation}\label{eq: propagation law}
        (\partial_s\pm2\xi\cdot\nabla)\mu_{\pm}=0.
    \end{equation}
\end{enumerate}
\end{proposition}
\begin{proof}[Proof of Proposition \ref{prop: property of measures}]All the sequences we use below are defined in \eqref{eq: sq-U_n}, and \eqref{eq: sq-Y_n,f_n}. The first statement comes from \eqref{localized frequency restriction}. In fact, let $A\in M_{2\times2}(C^{\infty}_c(T^*\T^d))$, from Lemma \ref{lemma: functional calculus}, we know that
\begin{align*}
    Y_h&=Q'_hY_h+o(1)_{C([0,T],\mathcal{L}(\H^0,\H^0))}Y_h\\
       &=\ophw(B_{\chi'})Y_h+o(1)_{C([0,T],\mathcal{L}(\H^0,\H^0))}Y_h\\
       &=\ophw(E\chi'(|\xi|^2))Y_h+o(1)_{C([0,T],\mathcal{L}(\H^0,\H^0))}Y_h\\
       &+\ophw(E(\chi'(\lambda(\underline{U};x)|\xi|^2)-\chi'(|\xi|^2)))Y_h.
\end{align*}
We can bound the term $\ophw(E(\chi'(\lambda(\underline{U};x)|\xi|^2)-\chi'(|\xi|^2)))$ by $\epsilon_0$. Thus, we obtain 
\begin{align*}
    Y_h&=\ophw(\chi'(|\xi|^2))Y_h+o(1)_{C([0,T],\mathcal{L}(\H^0,\H^0))}Y_h.
\end{align*}
Consequently,  for any $A$ and $\phi$ we get 
\begin{align*}
    0&=\lim_{h\rightarrow0^+}\int_0^T\phi(s)\poscals{\ophw(A)(1-\ophw(\chi'(|\xi|^2)))Y_h}{Y_h}ds\\
     &=\int_0^T \int_{T^*\T^d}\phi(s)Tr(A(x,\xi)(1-\chi'(|\xi|^2))d\mu)ds,
\end{align*}
 we conclude that $\forall s\in [0,T]$, $\supp{\mu( s,\cdot)}\subset \supp{\chi'(|\xi|^2)}\subset\{|\xi|\geq\nu\}$.

In order to prove the second statement, we set $A^+=\left(\begin{array}{cc}
     a& 0 \\
     0& 0
\end{array}\right)$ and $A^-=\left(\begin{array}{cc}
     0& 0 \\
     0& a
\end{array}\right)$. For any $\phi\in C^{\infty}_c((0,T))$, we consider
\begin{align*}
   \int_0^T\phi(s)\partial_s\poscals{\ophw(A^{\pm})Y_h}{Y_h}ds&=\int_0^T-\partial_s\phi(s)\poscals{\ophw(A^{\pm})Y_h}{Y_h}ds\\
    &\rightarrow-\int_0^T\int_{T^*\T^d}\partial_s\phi(s)Tr(A^{\pm}d\mu)ds\\
    &=-\int_0^T\int_{T^*\T^d}\partial_s\phi(s)a(x,\xi)d\mu_{\pm}ds\\
    &=\int^T_0\int_{T^*\T^d}\phi(s)a(x,\xi)d\partial_s\mu_{\pm}ds.
\end{align*}
On the other hand, 
\begin{align*}
    \partial_s\poscals{\ophw(A^{\pm})Y_h}{Y_h}&=\poscals{\ophw(A^{\pm})\partial_sY_h}{Y_h}+\poscals{\ophw(A^{\pm})Y_h}{\partial_sY_h}\\
    &=\poscals{\frac{\ii}{h}\ophw(A^{\pm})\left(h^2\mathscr{B}^w(\underline{U}_n)Y_h+f_h\right)}{Y_h}\\
    &+\poscals{\ophw(A^{\pm})Y_h}{\frac{\ii}{h}\left(h^2\mathscr{B}^w(\underline{U}_n)Y_h+f_h\right)}\\
    &=\poscals{\ii h\ophw(A^{\pm})\mathscr{B}^w(\underline{U}_n)Y_h}{Y_h}+\poscals{\frac{\ii}{h}\ophw(A^{\pm})f_h}{Y_h}\\
    &+\poscals{\ophw(A^{\pm})Y_h}{\ii h\mathscr{B}^w(\underline{U}_n)Y_h}+\poscals{\ophw(A^{\pm})Y_h}{\frac{\ii}{h}f_h}\\
    &=\poscals{\ii h\left(\ophw(A^{\pm})\mathscr{B}^w(\underline{U}_n)-\mathscr{B}^w(\underline{U}_n)\ophw(A^{\pm})\right)Y_h}{Y_h}+\O(h).
\end{align*}
Now consider
\begin{align*}
    \ophw(A^{\pm})&\mathscr{B}^w(\underline{U}_n)-\mathscr{B}^w(\underline{U}_n)\ophw(A^{\pm})=\frac{1}{h^2}[\ophw(A^{\pm}),h^2\mathscr{B}^w(\underline{U}_n)]\\
    &=\frac{1}{h^2}[\ophw(A^{\pm}),h^2\mathscr{B}^w(0)]+\frac{1}{h^2}[\ophw(A^{\pm}),h^2\mathscr{B}^w(\underline{U}_n)-h^2\mathscr{B}^w(0)]\\
    &=\frac{\ii}{h}\ophw(\{a,\pm|\xi|^2\})+\frac{1}{h^2}[\ophw(A^{\pm}),h^2\mathscr{B}^w(\underline{U}_n)-h^2\mathscr{B}^w(0)].
\end{align*}
Since $h^2\mathscr{B}^w(\underline{U}_n)-h^2\mathscr{B}^w(0)=h^2\O(\epsilon_n)_{\mathcal{L}(\H^0,\H^0)}$, we  obtain
\begin{align*}
    \ophw(A^{\pm})\mathscr{B}^w(\underline{U}_n)-\mathscr{B}^w(\underline{U}_n)\ophw(A^{\pm})&=\frac{\ii}{h}\ophw(\{a,\pm|\xi|^2\})+\frac{1}{h}\O(\epsilon_n)_{\mathcal{L}(\H^0,\H^0)}.
\end{align*}
As a consequence, 
\begin{align*}
  \partial_s\poscals{\ophw(A^{\pm})Y_h}{Y_h}&=\poscals{\ii h\left(\ophw(A^{\pm})\mathscr{B}^w(\underline{U}_n)-\mathscr{B}^w(\underline{U}_n)\ophw(A^{\pm})\right)Y_h}{Y_h}+\O(h)\\
  &=\poscals{\ii h\left(\frac{\ii}{h}\ophw(\{a,\pm|\xi|^2\})+\frac{1}{h}\O(\epsilon_n)_{\mathcal{L}(\H^0,\H^0)})\right)Y_h}{Y_h}+\O(h)\\
  &=\poscals{\ophw(\{\pm|\xi|^2,a\})Y_h}{Y_h}+\O(h+\epsilon_n),
\end{align*}
which implies that 
\begin{align*}
    \lim_{h\rightarrow0^+}\int_0^T\phi(s)\partial_s\poscals{\ophw(A^{\pm})Y_h}{Y_h}ds&=\lim_{h\rightarrow0^+}\int_0^T\phi(s)\poscals{\ophw(\{\pm|\xi|^2,a\})Y_h}{Y_h}ds\\
    &=\int_0^T\int_{T^*\T^d}\phi(s)(\pm2\xi\cdot\nabla a)d\mu_{\pm} ds.
\end{align*}
Then we conclude that
\begin{equation*}
    \int^T_0\int_{T^*\T^d}\phi(s)a(x,\xi)d\partial_s\mu_{\pm}ds=\int_0^T\int_{T^*\T^d}\phi(s)(\pm2\xi\cdot\nabla a)d\mu_{\pm} ds,
\end{equation*}
which implies that
\begin{equation*}
    (\partial_s\pm2\xi\cdot\nabla)\mu_{\pm}=0.
\end{equation*}
\end{proof}
Now we go back to the proof of Proposition \ref{prop:L^2 semiclassical observability}. 
By our hypothesis that $||\chi_T\varphi_{\omega}Y_h(t)||_{L^2([0,T],\H^0)}=o(1)$, the semiclassical measure $\mu$ vanishes on $(0,T)\times T^*(\omega)$. In particular, this implies that $\mu_{\pm}=0$ on $(0,T)\times T^*(\omega)$. According to the geometric control condition, the propagation law \eqref{eq: propagation law}, and the condition for propagation speed \eqref{eq: propagation speed}, 
\begin{equation*}
    \mu_{\pm}(0)=\mu_{0,\pm}=0.
\end{equation*}
We then conclude by contradiction. By the hypothesis that $\|Y_h(0)\|_{\H^0}=1$, $Tr \mu_0(T^*\T^d)=1=\mu_{0,+}(T^*\T^d)+\mu_{0,-}(T^*\T^d)$, which implies that $Tr \mu_0=\mu_{0,+}+\mu_{0,-}\neq0$.
\end{proof}

\begin{corollary}\label{cor: semiclassicl weak ob}
Suppose that $\omega$ satisfies the geometric control condition, $s>d/2+2$, $T>0$. Then for $\epsilon_0>0$ and $h_0>0$ sufficiently small, for all $0<h<h_0$ and for $\underline{U}\in \mathscr{C}^{0,s}(hT,\epsilon_0)$,  the solution $Y_h\in C([0,T],\H^0)$ to the equation \eqref{eq:semi-classical eq-V_h} satisfies the semiclassical observability:
\begin{equation}\label{eq: semiclassical weak observability inequality}
    h||Y_h(t=khT)||^2_{\H^0}\lesssim\int_{I_k}||\chi_T\varphi_{\omega}Y_h(t)||^2_{\H^0}dt+h^{-2}||R_h Y||^2_{L^2(I_k,\H^0)},
\end{equation}
where $I_k=[khT,(k+1)hT]$.
\end{corollary}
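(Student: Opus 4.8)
\noindent The plan is to obtain \eqref{eq: semiclassical weak observability inequality} by transplanting the semiclassical observability of Proposition~\ref{prop:L^2 semiclassical observability} from the reference window to the window $I_k=[khT,(k+1)hT]$ through a translation and a rescaling of time that turns $I_k$ into the slow-time interval $[0,T]$. Concretely, I would fix $k$, write $\sigma=(t-khT)/h$, and set $Y_k(\sigma):=Y_h(khT+h\sigma)$ for $\sigma\in[0,T]$, together with the translated profile $\underline{U}_k(\sigma):=\underline{U}(khT+h\sigma)$ and source $f_k(\sigma):=(\tilde R_hY)(khT+h\sigma)$. Since $s=h^{-1}t$ gives $D_s=hD_t$, equation \eqref{eq:semi-classical eq-V_h} reads $D_tY_h=\mathscr{B}^w(\underline{U})Y_h+h^{-2}\tilde R_hY$ in the physical variable, and because $\partial_\sigma=h\,\partial_t$ a one-line computation shows that $Y_k$ solves on $[0,T]_\sigma$ exactly the generalized pseudo-differential equation \eqref{eq: generalized pseudo-equation}, namely $hD_\sigma Y_k-h^2\mathscr{B}^w(\underline{U}_k)Y_k=f_k$.

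Before applying Proposition~\ref{prop:L^2 semiclassical observability} to $Y_k$ I would check, uniformly in $k$, that its hypotheses hold. The function class of Definition~\ref{func-space} is stable under restriction to a subinterval and under time translation, so $\underline{U}_k$ is again a profile of the required type with the \emph{same} smallness parameter $\epsilon_0$ for every $k$. The frequency-localization constraint \eqref{localized frequency restriction} is inherited too: since $Y_h=Q_hV$ with $V$ a solution of \eqref{eq:pseudo-equation without perturbation}, Corollary~\ref{cor: phase localized estimates} gives $Y_h=Q'_hY_h+R_hY_h$ at every time --- hence on $I_k$, hence for $Y_k$ --- with $Q'_h$ and the $\mathcal{O}(h)$-remainder $R_h$ now built from $\underline{U}_k$. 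Finally the constant in Proposition~\ref{prop:L^2 semiclassical observability} is absolute (it is produced by a contradiction/compactness argument and does not depend on $\underline{U}$), so it does not deteriorate as $k$ moves.

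Granting this, Proposition~\ref{prop:L^2 semiclassical observability} yields
\[
\|Y_k(0)\|_{\H^0}^2\ \lesssim\ \int_0^T\|\chi_T\varphi_\omega Y_k(\sigma)\|_{\H^0}^2\,d\sigma\ +\ h^{-2}\|f_k\|_{L^2([0,T],\H^0)}^2,
\]
and it only remains to undo the rescaling: $Y_k(0)=Y_h(khT)$, while the substitution $t=khT+h\sigma$, $dt=h\,d\sigma$ sends $\int_0^T(\cdots)\,d\sigma$ to $h^{-1}\int_{I_k}(\cdots)\,dt$ (this applies verbatim to the time cutoff $\chi_T$, read on the slow-time variable of the window) and $\|f_k\|_{L^2([0,T],\H^0)}^2$ to $h^{-1}\|\tilde R_hY\|_{L^2(I_k,\H^0)}^2$. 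Multiplying the resulting inequality through by $h$ produces exactly \eqref{eq: semiclassical weak observability inequality}, the quantity written $R_hY$ in the statement being the source $\tilde R_hY$ of \eqref{eq:semi-classical eq-V_h}. The corollary carries essentially no content beyond Proposition~\ref{prop:L^2 semiclassical observability}; the one point I would write out carefully is the uniformity in $k$ --- of the smallness threshold $\epsilon_0$, of the frequency-localization remainder, and of the observability constant --- which is what permits the single inequality to be applied to every window with the same implied constant.
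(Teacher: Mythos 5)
Your proposal is correct and is essentially the paper's own proof: the paper likewise verifies the frequency-localization condition \eqref{localized frequency restriction} via Corollary \ref{cor: phase localized estimates}, applies Proposition \ref{prop:L^2 semiclassical observability} on each slow-time window $[kT,(k+1)T]_s$ uniformly in $k$, and concludes by the change of variable $s=h^{-1}t$, which is exactly your translation-and-rescaling bookkeeping. The only difference is that you spell out the uniformity in $k$ and the identification of the source term $f_k=\tilde R_hY$ explicitly, which the paper leaves implicit.
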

\begin{proof}
The condition \eqref{localized frequency restriction} of Proposition \ref{prop:L^2 semiclassical observability} is verified by Corollary \ref{cor: phase localized estimates}, therefore, by Proposition \ref{prop:L^2 semiclassical observability}, for some $\epsilon_0>0$ and $h_0>0$, uniformly for $k=0,1,\cdots,h^{-1}-1$
\begin{equation*}
    ||Y_h(s=kT)||^2_{\H^0}\lesssim\int_{kT}^{(k+1)T}||\chi_T\varphi_{\omega}Y_h(s)||^2_{\H^0}ds+h^{-2}||R_h Y||^2_{L^2((kT,(k+1)T),\H^0)}.
\end{equation*}
And we conclude by changing variable $s=h^{-1}t$.
\end{proof}

\subsection{Weak Observability}
In this part, we aim to prove the weak observability for high frequencies. 
\begin{proposition}\label{prop: weak ob}
Suppose that $\omega$ satisfies the geometric control condition, $s>d/2+2$, $T>0$. Then for $\epsilon_0>0$ sufficiently small,  for $\underline{U}\in \mathscr{C}^{0,s}(T,\epsilon_0)$,  the solution $Y\in C([0,T],\H^0)$ satisfies the weak observability:
\begin{equation}\label{eq: weak observability inequality}
    ||Y(0)||^2_{\H^0}\lesssim\int_0^T||\chi_T\varphi_{\omega}Y(t)||^2_{\H^0}dt+||Y(0)||^2_{\H^{-N}}.
\end{equation}
\end{proposition}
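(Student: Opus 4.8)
The plan is to deduce \eqref{eq: weak observability inequality} from the semiclassical weak observability of Corollary \ref{cor: semiclassicl weak ob} by summing a Littlewood--Paley decomposition of $Y$ over the dyadic scales $h=2^{-j}$, $j\ge j_0$, built from the phase--space cut-offs $Q_h=Q_h(\underline{U})$. Here $Y$ denotes the solution of \eqref{eq:pseudo-equation without perturbation}; since $\mathscr{B}^w(\underline{U})$ is (essentially) self-adjoint on $\H^0$, its propagator is $\H^0$-unitary, so $\|Y(t)\|_{\H^0}=\|Y(0)\|_{\H^0}$ for every $t$, and the energy estimate in the equivalent norm of Section \ref{sec:mod} (with negative Sobolev index) gives $\sup_{[0,T]}\|Y(t)\|_{\H^{-N}}\lesssim\|Y(0)\|_{\H^{-N}}$. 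These two conservation/stability facts are what will eventually let the right-hand side depend only on $\|Y(0)\|_{\H^0}$ and $\|Y(0)\|_{\H^{-N}}$.

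For a fixed scale $h=2^{-j}$ I would set $Y_h:=Q_hY$, which by the Proposition on the equation for $Q_hY$ solves \eqref{eq:semi-classical eq-V_h} and satisfies the frequency-localization hypothesis \eqref{localized frequency restriction} by Corollary \ref{cor: phase localized estimates}; hence Corollary \ref{cor: semiclassicl weak ob} applies on each sub-window $I_k=[khT,(k+1)hT]$ tiling $[0,T]$, and summing over $k$ produces
\[
h\sum_k\|Q_hY(khT)\|^2_{\H^0}\ \lesssim\ \int_0^T\|\chi_T\varphi_\omega Q_hY(t)\|^2_{\H^0}\,dt+h^{-2}\|\tilde R_hY\|^2_{L^2([0,T],\H^0)},
\]
with an absolute constant, the last term being $\lesssim h^2(\epsilon_0+h)^2\,T\,\|Y(0)\|^2_{\H^0}$. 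I would then replace the Riemann sum on the left by a time average: since $\mathscr{B}^w$ is self-adjoint and $\|h^{-2}\tilde R_hY(t)\|_{\H^0}\lesssim(\epsilon_0+h)\|Y(0)\|_{\H^0}$, the map $t\mapsto\|Q_hY(t)\|^2_{\H^0}$ is Lipschitz with derivative $\lesssim(\epsilon_0+h)\|Y(0)\|^2_{\H^0}$; comparing $\|Q_hY(khT)\|^2$ with $\|Q_hY(t)\|^2$ on each window $I_k$ (whose length is $hT$) gives $\tfrac1T\int_0^T\|Q_hY\|^2_{\H^0}\le h\sum_k\|Q_hY(khT)\|^2_{\H^0}+C\,hT(\epsilon_0+h)\|Y(0)\|^2_{\H^0}$.

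Summing the last two displays over $j\ge j_0$, the left-hand side is bounded below, via the frame-type lower bound for $\{Q_h\}$ applied pointwise in $t$ and integrated (together with the conservation facts above), by a fixed positive multiple of $\|Y(0)\|^2_{\H^0}-C\|Y(0)\|^2_{\H^{-N}}$; the low-frequency block $j<j_0$ is never observed and is simply dominated by $\|Y(0)\|^2_{\H^{-N}}$. On the right-hand side three error families appear, each controlled because it carries a genuine power of $h=2^{-j}$: the observation term, using the Bessel bound $\sum_j\|Q_hW\|^2\lesssim\|W\|^2$ with $W=\chi_T\varphi_\omega Y(t)$ and the commutator estimate $\|[\varphi_\omega,Q_h]\|_{\mathcal L(\H^0)}\lesssim h$, reduces to $\int_0^T\|\chi_T\varphi_\omega Y\|^2\,dt$ plus an error $\lesssim 4^{-j_0}T\|Y(0)\|^2_{\H^0}$; the remainder $\tilde R_h$ contributes $\sum_{j\ge j_0}h^2(\epsilon_0+h)^2T\|Y(0)\|^2_{\H^0}\lesssim 4^{-j_0}T\|Y(0)\|^2_{\H^0}$; and the Riemann-sum error contributes $\sum_{j\ge j_0}hT(\epsilon_0+h)\|Y(0)\|^2_{\H^0}\lesssim T(\epsilon_0 2^{-j_0}+4^{-j_0})\|Y(0)\|^2_{\H^0}$. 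Putting everything together yields $\|Y(0)\|^2_{\H^0}\lesssim\int_0^T\|\chi_T\varphi_\omega Y\|^2\,dt+\|Y(0)\|^2_{\H^{-N}}+CT(\epsilon_0 2^{-j_0}+4^{-j_0})\|Y(0)\|^2_{\H^0}$, and choosing $j_0$ large (depending only on $T$ and the absolute constants, using $\epsilon_0\le1$) absorbs the last term into the left-hand side, giving \eqref{eq: weak observability inequality}.

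The step I expect to be the real obstacle is exactly this summation over the infinitely many dyadic scales: one must verify rigorously that the phase-space localization errors, the remainder $\tilde R_h$, and the Riemann-sum/energy-drift error each gain at least one power of $h=2^{-j}$, so that the resulting geometric series are convergent and, for $j_0$ large, small compared with $\|Y(0)\|^2_{\H^0}$. A more routine but still necessary point is that Corollary \ref{cor: semiclassicl weak ob} lives over physical sub-windows of length $hT$ (i.e.\ a fixed semiclassical time), so the cut-off $\chi_T$ and the geometric control condition must be arranged with a little room to spare when fixing $\nu$; and if only $\underline{U}\in\mathscr{C}^{0,s}(T,\epsilon_0)$ is at hand, one should first run the argument for $\underline{U}\in\mathscr{C}^{1,s}$ with constants depending only on $\|\underline{U}\|_{C([0,T],\H^{s_0})}$ and then pass to the limit by mollifying $\underline{U}$ in time.
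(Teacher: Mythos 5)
Your proposal follows essentially the same route as the paper: apply the semiclassical weak observability of Corollary \ref{cor: semiclassicl weak ob} on the windows $I_k$, sum over $k$, then sum over the dyadic scales $h=2^{-j}$, $j\ge j_0$, use the frame bounds for $\{Q_h\}$ to recover $\|Y(0)\|^2_{\H^0}$ up to the low-frequency remainder $\|Y(0)\|^2_{\H^{-N}}$, and absorb the $O(\epsilon_0^2+4^{-j_0})\|Y(0)\|^2_{\H^0}$ errors. The only differences are in the bookkeeping (you compare the sampled values $\|Q_hY(khT)\|^2_{\H^0}$ to a time average via a Lipschitz bound, whereas the paper compares them to $\|Q_hY(0)\|^2_{\H^0}$ via a time-reversed energy estimate), and your explicit tracking of the powers of $h$ in the summed error terms is, if anything, more careful than the paper's.
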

\begin{proof}
The idea is to sum up \eqref{eq: semiclassical weak observability inequality} to obtain observability on the whole time interval $[0,T]$ and then use Littlewood-Paley's theory to conclude. For the left hand side of \eqref{eq: semiclassical weak observability inequality}, the term $||Y_h(t=khT)||^2_{\H^0}$ is uniformly bounded from below by $||Y_h(0)||^2_{\H^0}$, according to the energy estimates. Now, let us define the translation operator $\tau_{a}$ by $\tau_{a} G(\cdot)=G(a-\cdot)$. The equation for $\tau_{a} Y_h$ is
\begin{equation}\label{eq: time reversed linear equation}
    -h D_s \tau_{a}Y_h=h^2\mathscr{B}^w(\tau_{a}\underline{U}) \tau_{a}Y_h+\tilde{R}_h\tau_{a}Y.
\end{equation}
We have the following energy estimate
\begin{align*}
    h\partial_s\|\tau_{a}Y_h(s)\|^2_{\H^0}&=\ii\poscals{hD_s\tau_{a}Y_h(s)}{\tau_{a}Y_h(s)}-\ii\poscals{\tau_{a}Y_h(s)}{hD_s\tau_{a}Y_h(s)}\\
%    &=-\ii\poscals{h^2\mathscr{B}^w(\tau_{a}\underline{U})\tau_{a}Y_h+\tilde{R}_h \tau_{a}Y}{\tau_{a}Y_h(s)}+\ii\poscals{\tau_{a}Y_h(s)}{h^2\mathscr{B}^w(\tau_{a}\underline{U})\tau_{a}Y_h+\tilde{R}_h %\tau_{a}Y}\\
    &=-\ii\poscals{\tilde{R}_h \tau_{a}Y}{\tau_{a}Y_h(s)}
    +\ii\poscals{\tau_{a}Y_h(s)}{\tilde{R}_h \tau_{a}Y}\\
    &\leq 2\|\tilde{R}_h \tau_{a}Y\|_{\H^0}\|\tau_{a}Y_h\|_{\H^0}\leq h^{-2}\|\tilde{R}_h \tau_{a}Y\|^2_{\H^0}+h^2\|\tau_{a}Y_h\|^2_{\H^0},
\end{align*}
    from which we deduce that
    %\leq \F(\|\underline{U}\|_{\H^{s_0}})h^2(\epsilon_0+h)
\begin{equation*}
    \partial_s\|\tau_{a}Y_h(s)\|^2_{\H^0}-h\|\tau_{a}Y_h\|^2_{\H^0}\leq h^{-3}\|\tilde{R}_h \tau_{a}Y\|^2_{\H^0}.
\end{equation*}
As a consequence, 
\begin{equation*}
    \partial_s(e^{-sh}\|\tau_{a}Y_h(s)\|^2_{\H^0})\leq e^{-sh}h^{-3}\|\tilde{R}_h \tau_{a}Y\|^2_{\H^0}.
\end{equation*}
Notice that $hs$ is bounded for $s\in[0,h^{-1}T]$, by Newton-Leibniz's formula,
\begin{align*}
    e^{-sh}\|\tau_{a}Y_h(s)\|^2_{\H^0}&\leq\|\tau_{a}Y_h(0)\|^2_{\H^0} +\int_0^se^{-\sigma h}h^{-3}\|\tilde{R}_h \tau_{a}Y\|^2_{\H^0}d\sigma\\
    &\lesssim \|\tau_{a}Y_h(0)\|^2_{\H^0}+h^{-3}\|\tilde{R}_h \tau_{a}Y\|^2_{L^2([0,s]_s,\H^0)}.
\end{align*} 
%By definition, we know that $\tau_{a}Y_h(s)=Y_h(0)\quad$, $\tau_{a}Y_h(0)=Y_h(s)$ and $\|\tilde{R}_h \tau_{a}Y\|^2_{L^2([0,s]_s,\H^0)}=\|\tilde{R}_h Y\|^2_{L^2([0,s]_s,\H^0)}$. Hence, we conclude that
By choosing $a=s$ we eventually obtain
\begin{equation*}
   \|Y_h(0)\|^2_{\H^0}\lesssim \|Y_h(s)\|^2_{\H^0}+h^{-3}\|\tilde{R}_h Y\|^2_{L^2([0,h^{-1}T]_s,\H^0)},
\end{equation*}
or equivalently, for $t\in [0,T]$
\begin{equation*}
    h\|Y_h(0)\|^2_{\H^0}\lesssim h\|Y_h(t)\|^2_{\H^0}+h^{-3}\|\tilde{R}_h Y\|^2_{L^2([0,T]_t,\H^0)}.
\end{equation*}
We set $t=khT$ for $h=2^{-j}$ with $j\in2\N$ sufficiently large, and $k=0,1,\cdots,h^{-1}-1$, we use \eqref{eq: semiclassical weak observability inequality} by absorbing $h^{-2}||R_h Y||^2_{L^2(I_k,\H^0)}$ into $h^{-3}\|\tilde{R}_h Y\|^2_{L^2([0,T]_t,\H^0)}$, we have 
\begin{equation*}
    h\|Y_h(0)\|^2_{\H^0}\lesssim \int_{I_k}||\chi_T\varphi_{\omega}Y_h(t)||^2_{\H^0}dt+h^{-3}\|\tilde{R}_h Y\|^2_{L^2([0,T]_t,\H^0)}.
\end{equation*}
Summing  up for $k=0,1,\cdots,h^{-1}-1$,
\begin{equation}
    \|Y_h(0)\|^2_{\H^0}\lesssim \int_0^T||\chi_T\varphi_{\omega}Y_h(t)||^2_{\H^0}dt+h^{-4}\|\tilde{R}_h Y\|^2_{L^2([0,T]_t,\H^0)}.
\end{equation}
Using Littlewood-Paley's theory,
\begin{equation}
    \|Y(0)\|^2_{\H^0}\lesssim \int_0^T||\chi_T\varphi_{\omega}Y(t)||^2_{\H^0}dt+(\epsilon_0^2+h_0^2)||Y(t)||^2_{L^2([0,T],\H^0)}+\| Y(0)\|^2_{\H^{-N}}.
\end{equation}
Since $||Y(t)||^2_{L^2([0,T],\H^0)}\lesssim \|Y(0)\|^2_{\H^0}$, we can absorb $(\epsilon_0^2+h_0^2)||Y(0)||^2_{\H^0}$ in the left hand side when $\epsilon_0$ sufficiently small and $h_0=2^{-j_0}$ sufficiently small.
\end{proof}

\subsection{Unique continuation and strong observability}\label{UCSO}
In this part, we aim to remove the remainder $\|Y(0)\|_{\H^{-N}}^2$  in \eqref{eq: weak observability inequality} by using a uniqueness-compactness argument. We then finish the proof of Proposition \ref{prop:L^2 observability-pseudo}.
\begin{proof}[Proof of Proposition \ref{prop:L^2 observability-pseudo}]
We argue by contradiction. Suppose that the strong observability \eqref{eq: L^2-observability inequality-pseudo} is false. Then there exists a sequence $\{\epsilon_n,\underline{U}_n,Y_n\}_{n\in\N}$, with $\epsilon_n>0$, $\underline{U}_n\in \mathscr{C}^{1,s}(T,\epsilon_n)$ and $Y_n\in C([0,T],\H^0)$ satisfying the equation
\begin{equation}\label{eq: adjoint eq in contradiction}
    D_t Y_n=\mathscr{B}^w(\underline{U}_n) Y_n,
\end{equation}\label{hpabs}
such that, as $n\rightarrow\infty$, 
\begin{equation}
    \epsilon_n=o(1),\quad ||Y_n(0)||_{\H^0}=1,\quad \int_0^T||\chi_T\varphi_{\omega}Y_n(t)||^2_{\H^0}dt=o(1).
\end{equation}
By energy estimates, we know that $\{Y_n\}$ is a bounded sequence in $C([0,T],\H^0)$ and $\{\partial_t Y_n\}$ is also bounded in $L^{\infty}([0,T],\H^{-2})$. Therefore, by Arzel\`a-Ascoli's theorem, we may extract a subsequence such that
\begin{align*}
    &Y_n\rightarrow Y \text{ strongly in } C([0,T],\H^{-2}),\\
    &Y_n\rightharpoonup Y \text{ weakly in } L^2([0,T],\H^0),\\
    &Y_n(0)\rightharpoonup Y(0) \text{ weakly in } \H^0.
\end{align*}
Notice that $\underline{U}_n\in \mathscr{C}^{1,s}(T,\epsilon_n)$ implies that $\underline{U}_n\rightarrow 0$ in $C([0,T],\H^{s})$. Hence, by means of a classical interpolation argument, $\mathscr{B}^w(\underline{U}_n)Y_n\rightarrow \mathscr{B}^w(0)Y$ strongly in $C([0,T],\H^{-2-\epsilon})$ for any $\epsilon>0$. Now passing to the limit as $n\rightarrow\infty$ of \eqref{eq: adjoint eq in contradiction} in the sense of distribution, we obtain that $Y\in C([0,T],\H^{0})$ because it satisfies the limit equation
\begin{equation}\label{eq: limit eq}
    D_t Y=\mathscr{B}^w(0) Y.
\end{equation}
By weak convergence, we know that $\chi_T\varphi_{\omega}Y_n\rightharpoonup \chi_T\varphi_{\omega}Y$ in $ L^2([0,T],\H^0)$. In virtue of \eqref{hpabs}, this implies that
\begin{equation*}
    \int_0^T||\chi_T\varphi_{\omega}Y(t)||^2_{\H^0}dt\leq\liminf_{n\rightarrow\infty}\int_0^T||\chi_T\varphi_{\omega}Y_n(t)||^2_{\H^0}dt=0.
\end{equation*}
By the weak observability,\eqref{hpabs} and the compact injection theorem, we know that 
\begin{align*}
    \|Y(0)\|^2_{\H^{-N}}&=\lim_{n\rightarrow\infty}\|Y_n(0)\|^2_{\H^{-N}}\\
    &\gtrsim \limsup_{n\rightarrow\infty}\left(\|Y_n(0)\|^2_{\H^0}-\int_0^T||\chi_T\varphi_{\omega}Y_n(t)||^2_{\H^0}dt\right)=1.
\end{align*}
To conclude, it suffices to prove the unique continuation property of \eqref{eq: limit eq} and obtain a contradiction. This relies on the following lemma.
\begin{lemma}\label{azz}
Under the hypothesis of Proposition \ref{prop:L^2 observability-pseudo}, suppose that $Y\in C([0,T],\H^{0})$ satisfies \eqref{eq: limit eq} and $Y|_{I\times\omega}=0$ for some interval $I\subset[0,T]$ with non-empty interior, then $Y\equiv0$.
\end{lemma}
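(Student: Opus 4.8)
The plan is to recognize that the limit equation \eqref{eq: limit eq} is nothing but the \emph{free} Schr\"odinger system and to conclude by Holmgren's uniqueness theorem. First I would compute the limit operator: since $\underline{u}\equiv 0$ forces $\lambda(0;x)\equiv 1$ and $\vec{a}_1(0;x)\equiv 0$, one has $\mathscr{B}^w(0)=E\,\opw(|\xi|^2\uno)=-E\Delta$. Writing $Y=(y,\bar y)$ in accordance with the definition \eqref{eq: defi of H^s} of $\H^0$, the system \eqref{eq: limit eq} is equivalent to the single scalar equation
\[
\ii\partial_t y-\Delta y=0\ \text{ on }\ [0,T]\times\T^d,\qquad y\in C\bigl([0,T];L^2(\T^d)\bigr),
\]
and the hypothesis becomes $y=0$ on $I\times\omega$. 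Hence the lemma is precisely a unique continuation statement for the free Schr\"odinger equation on the torus, and it is exactly here that the reduction to the constant-coefficient limit ($\underline{U}_n\to 0$) is exploited.

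The core of the argument is that $P:=\ii\partial_t-\Delta$ has \emph{constant} (in particular real-analytic) coefficients, so Holmgren's uniqueness theorem applies to the distributional solution $y$. Its principal part is $-\Delta$, of symbol $|\xi|^2$, so a smooth hypersurface $\{\psi=0\}$ is characteristic at a point only when $\nabla_x\psi=0$ there; equivalently, the only characteristic hyperplanes of $P$ are those of the form $\{t=\mathrm{const}\}$, whereas every spatial hyperplane $\{x\cdot\nu=c\}$, $\nu\in\S^{d-1}$, is non-characteristic. I would then run the classical sweeping argument: start from the open set $I\times\omega$ on which $y$ vanishes; given an arbitrary $x_*\in\T^d$, join it to a point of $\omega$ by a path, cover the path by finitely many balls, and propagate the vanishing of $y$ across a continuous one-parameter family of non-characteristic spatial hypersurfaces by iterating Holmgren's theorem. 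Since only spatial hypersurfaces intervene, no time interval is lost in the process, and one arrives at $y=0$ on $I'\times\T^d$ for every compact subinterval $I'$ of $\mathrm{int}(I)$.

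To finish, fix any $t_*\in\mathrm{int}(I)$; then $y(t_*,\cdot)=0$ in $L^2(\T^d)$, and since $t\mapsto y(t,\cdot)$ is recovered from $y(t_*,\cdot)$ through the unitary free Schr\"odinger group, $y\equiv 0$ on $[0,T]$, hence $Y\equiv 0$. I expect the only genuinely delicate point to be the sweeping step -- organizing the family of non-characteristic hypersurfaces so that the zero set of $y$ is propagated over the whole of $\T^d$ without shrinking the time interval -- but this is standard once one has observed that $\{t=\mathrm{const}\}$ are the sole characteristic hyperplanes of $P$; for variable coefficients (i.e. before passing to the limit $\underline{U}_n\to 0$) one would instead be forced to use Carleman estimates with a pseudo-convexity condition, which is precisely why the unique continuation is invoked only for the limit equation.
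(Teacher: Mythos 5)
Your proof is correct, but it takes a genuinely different route from the paper's. You reduce to the scalar free Schr\"odinger equation $\ii\partial_t y-\Delta y=0$ (the identification $\mathscr{B}^w(0)=-E\Delta$ is right) and invoke the global Holmgren--John theorem, using that the only characteristic hyperplanes of $\ii\partial_t-\Delta$ are $\{t=\mathrm{const}\}$, to sweep the zero set from $I\times\omega$ to a full time slice and then conclude by unitarity of the free flow. The paper instead runs a uniqueness--compactness argument in the style of Bardos--Lebeau--Rauch at the level of the lemma itself: it introduces the spaces $\mathscr{N}_{\delta}$ of invisible data, uses the weak observability inequality \eqref{eq: weak observability inequality} (with $\underline{U}=0$) to show that the unit ball of $\mathscr{N}_{\delta}$ is compact, hence $\dim\mathscr{N}_{\delta}<\infty$; it then shows by a difference-quotient argument that $\mathscr{B}^w(0)$ preserves $\mathscr{N}_{\delta_0}$ and therefore admits an eigenfunction there, whose components solve $\mp\Delta Z_0^{\pm}=\lambda Z_0^{\pm}$, are analytic, vanish on the open set $\omega$, and hence vanish identically. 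The paper's route needs only the elliptic analyticity of Laplace eigenfunctions and recycles the observability machinery already established; yours is shorter but imports the global Holmgren theorem, whose sweeping step (which you rightly flag as the delicate point) requires some care to write down. Note, however, that you do not actually need the claim that ``no time is lost'': finitely many applications of the local theorem still leave a nonempty time slice on which $y$ vanishes over all of $\T^d$, and that suffices by the group property. Both arguments exploit in the same essential way that the limit equation has constant, hence analytic, coefficients.
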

By applying such a Lemma one concludes the proof of Proposition \ref{prop:L^2 observability-pseudo}.
\end{proof}
\begin{proof}[Proof of Lemma \ref{azz}]
Assume $I=[0,T]$. For any $\delta\in[0,T)$, we are able to define
the space
\begin{equation*}
    \mathscr{N}_{\delta}=\{Y_0\in\H^0:e^{-\ii t\mathscr{B}^w(0)}Y_0|_{[0,T-\delta]\times\omega}=0\},
\end{equation*}
where $e^{-\ii t\mathscr{B}^w(0)}Y_0$ denotes the solution to the equation \eqref{eq: limit eq} with initial data $Y_0$. According to the weak observability \eqref{eq: weak observability inequality} with $\epsilon_0=0,\underline{U}=0,N>0$, for $Y_0\in \mathscr{N}_{\delta}$, we know that
\begin{equation}\label{eq: compactness inequality}
    ||Y_0||^2_{\H^0}\leq C_{T-\delta}||Y_0||^2_{\H^{-N}},
\end{equation}
where the constant $C_{T-\delta}$ is uniformly bounded as long as $T-\delta$ stays away from $0$. This implies that the closed unit ball in $(\mathscr{N}_{\delta},\|\cdot\|_{\H^0})$ is compact. Hence,
\begin{equation*}
    dim \mathscr{N}_{\delta}<\infty,\quad \forall \delta\in[0,T).
\end{equation*}
Let $\delta<\delta'$. By definition of $\mathscr{N}_{\delta}$, $\mathscr{N}_{\delta}\subset\mathscr{N}_{\delta'}$.  
If $dim \mathscr{N}_{0}=0$, then the proof is closed. Otherwise, there exists a $\delta_0>0$ such that
\begin{equation*}
    \mathscr{N}_{\delta}=\mathscr{N}_{\delta_0}, \text{ for } 0< \delta\leq\delta_0.
\end{equation*}
Set $\epsilon<\delta_0$, $Y_0\in\mathscr{N}_{\delta_0}$ and $Y(t)=e^{-\ii t\mathscr{B}^w(0)}Y_0$. Then $Y(t)=e^{-\ii (t-\epsilon)\mathscr{B}^w(0)}Y(\epsilon)$. It is easy to see that $Y(\epsilon)\in\mathscr{N}_{\delta_0}$. Since $\mathscr{N}_{\delta_0}$ is linear vector space, we have that $\frac{1}{\ii\epsilon}(Y(\epsilon)-Y(0))\in\mathscr{N}_{\delta_0}$. By the inequality \eqref{eq: compactness inequality} with $N=2$, 
\begin{align*}
    ||\frac{1}{\ii\epsilon}(Y(\epsilon)-Y(0))||^2_{\H^0}&\leq C_{T-\delta_0}||\frac{1}{\ii\epsilon}(Y(\epsilon)-Y(0))||^2_{\H^{-2}}\\
    &\lesssim \sup_{t\in[0,\epsilon]}||D_tY(t)||^2_{\H^{-2}}\lesssim \sup_{t\in[0,\epsilon]}||\mathscr{B}^w(0)Y(t)||^2_{\H^{-2}}\\
    &\lesssim \sup_{t\in[0,\epsilon]}||Y(t)||^2_{\H^{0}}\lesssim ||Y_0||^2_{\H^{0}}.
\end{align*}
$\{\frac{1}{\ii\epsilon}(Y(\epsilon)-Y(0))\}_{\epsilon\in(0,\delta)}$ is a bounded family in $(\mathscr{N}_{\delta_0},\|\cdot\|_{\H^0})$. Then we can extract a subsequence $\{\frac{1}{\ii\epsilon_n}(Y(\epsilon_n)-Y(0))\}_{\epsilon_n}$ such that as $n\rightarrow\infty$
\begin{equation*}
    \epsilon_n\rightarrow0,\quad \frac{1}{\ii\epsilon_n}(Y(\epsilon_n)-Y(0))\rightarrow D_sY|_{s=0}=\mathscr{B}^w(0) Y_0 \text{ strongly in }(\mathscr{N}_{\delta_0},\|\cdot\|_{\H^0}).
\end{equation*}
Therefore, we have a well-defined linear map on $(\mathscr{N}_{\delta_0},\|\cdot\|_{\H^0})$,
$$Y_0\mapsto \mathscr{B}^w(0) Y_0, $$
which admits an eigenfunction. Let $Z_0=\left(\begin{array}{c}
    Z_0^+  \\
    Z_0^- 
\end{array}\right)\neq0$ be the eigenfunction, with $\mathscr{B}^w(0) Z_0=\lambda Z_0$ for some $\lambda\in\C$. By the definition of $\mathscr{B}^w(0)$, we have
\begin{equation*}
\left\{
\begin{array}{l}
     -\Delta Z_0^+=\lambda Z_0^+\\
    \Delta Z_0^-=\lambda Z_0^-.
\end{array}
\right.
\end{equation*}
So $Z_0^{\pm}$ are both analytic. Since $Z_0$ vanishes on $\omega$, $Z_0\equiv0$. We conclude that $dim\mathscr{N}_{\delta_0}=0$. 
\end{proof}

\section{$H^s$ null controllability for the linear system}\label{sec: H^s linear control}
\subsection{Regularity of the control operator $\mathscr{L}$}
Recall the HUM operator defined in \eqref{defi: HUM op}, our control operator $\mathscr{L}=\mathscr{L}(\underline{U})$ is defined as 
\begin{equation}\label{eq: control operator}
    \mathscr{L}(\underline{U})=-\ii\chi_T\varphi_{\omega}E\mathcal{S}\mathscr{K}^{-1}.
\end{equation}
This section is devoted to showing the following proposition.
\begin{proposition}\label{prop: regularity of control operator}
Suppose that $\omega$ satisfies the geometric control condition, $s>d/2+2$, $T>0$. Then for $\epsilon_0>0$ sufficiently small,  for $\underline{U}\in \mathscr{C}^{1,s}(T,\epsilon_0)$, the control operator $\mathscr{L}|_{\H^{\s}}=-\ii\chi_T\varphi_{\omega}E\mathcal{S}\mathscr{K}^{-1}$ defines a bounded linear operator from $\H^{\s}$ to $C([0,T],\H^{\s})$, $\forall \s\geq0$ such that
\begin{equation*}
    \|\mathscr{L}|_{\H^{\s}}\|_{\mathcal{L}(\H^{\s},C([0,T],\H^{\s}))}\lesssim 1.
\end{equation*}
\end{proposition}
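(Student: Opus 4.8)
The plan is to read off the structure of $\mathscr{L}(\underline{U})=-\ii\chi_T\varphi_\omega E\mathcal{S}\mathscr{K}^{-1}$ from \eqref{eq: control operator} and reduce the statement to the single estimate
\[
\|\mathscr{K}^{-1}\|_{\mathcal{L}(\H^\sigma,\H^\sigma)}\lesssim\F(\|\underline{U}\|_{\H^{s_0}}),\qquad\forall\,\sigma\geq0.
\]
Indeed, multiplication by the constant matrix $E$ is an isometry of $C([0,T],\H^\sigma)$, multiplication by the smooth functions $\chi_T(t)$ and $\varphi_\omega(x)$ is bounded on $C([0,T],\H^\sigma)$ (paraproduct decomposition, Lemma \ref{Paraproduct}), and the adjoint solution operator $\mathcal{S}$ of \eqref{eq: adjoint system} is bounded from $\H^\sigma$ to $C([0,T],\H^\sigma)$ with norm $\lesssim 1+\F(\|\underline{U}\|_{\H^{s_0}})$ by Corollary \ref{estimates-near-id} applied to the adjoint equation, which has the same para-differential structure. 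Composing these factors with the sought bound on $\mathscr{K}^{-1}$ yields the claim.

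To control $\mathscr{K}^{-1}$ on $\H^\sigma$ I would first upgrade the $L^2$-observability of Proposition \ref{prop:L^2 observability} to an $\H^\sigma$-observability,
\[
\|V(0)\|_{\H^\sigma}^2\leq\F(\|\underline{U}\|_{\H^{s_0}})\int_0^T\|\chi_T\varphi_\omega V(t)\|_{\H^\sigma}^2\,dt
\]
for every solution $V$ of \eqref{eq: adjoint system}. The route is to re-run the chain of reductions of Section \ref{sec: L^2 controllability for the linear system} one Sobolev level up: one diagonalizes via $\Phi(\underline{U})$ (Proposition \ref{diago}), then conjugates the diagonalized leading-order equation \eqref{eq:diagonalized simplified adjoint equation} by the weight $\Lambda^\sigma(\underline{U})$ of \eqref{big-Lambda}. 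Once the system is diagonal, its principal symbol $E\diag(\lambda(\underline{U};x)|\xi|^2)$ and the symbol $\diag\big((1+\lambda(\underline{U};x)|\xi|^2)^{\sigma/2}\big)$ of $\Lambda^\sigma(\underline{U})$ Poisson-commute (both being functions of $\lambda(\underline{U};x)|\xi|^2$), so by Theorem \ref{compo} with $\rho=2$ the quantity $\widetilde Y:=\Lambda^\sigma(\underline{U})Y$ solves the same leading-order equation modulo a remainder bounded on $\H^0$ by $\F(\|\underline{U}\|_{\H^{s_0}})\|\widetilde Y\|_{\H^0}$ (including the $\partial_t\underline{U}$ contribution, controlled since $\underline{U}\in\mathscr{C}^{1,s_0}$). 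Applying the $\H^0$-observability of Proposition \ref{prop:L^2 observability} — whose proof already accommodates an $\H^0$-bounded perturbation of the equation — to $\widetilde Y$, transferring back along $Y\rightsquigarrow W\rightsquigarrow V$ using that $\Phi(\underline{U})$ is $O(1)$-bounded and invertible on $\H^\sigma$ by \eqref{phi1}, and commuting $\chi_T\varphi_\omega$ through $\Lambda^\sigma(\underline{U})$ and $\Phi(\underline{U})$ (those commutators gain one derivative and carry a factor $\epsilon_0$, hence are absorbed via the equivalence $\|\cdot\|_{\sigma,\underline{U}}\sim\|\cdot\|_{\H^\sigma}$ together with the lower-order observability) produces the $\H^\sigma$-observability.

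With this in hand I would run the HUM argument of Proposition \ref{prop: HUM L^2-iosmorphism} at the $\H^\sigma$ level. Writing $\mathscr{K}=(\chi_T\varphi_\omega\mathcal{S})^{\ast}(\chi_T\varphi_\omega\mathcal{S})$ by the duality \eqref{duality}, the $\H^\sigma$-observability is equivalent to the coercivity $(\mathscr{K}V,\Lambda^{2\sigma}(\underline{U})V)_{\H^0}\gtrsim\F(\|\underline{U}\|_{\H^{s_0}})^{-1}\|V\|_{\H^\sigma}^2$ modulo absorbable terms; combined with the boundedness $\mathscr{K}\in\mathcal{L}(\H^\sigma)$ (the Remark after \eqref{defi: HUM op}, itself a consequence of Corollary \ref{estimates-near-id}) this gives the a priori estimate $\|V\|_{\H^\sigma}\lesssim\F(\|\underline{U}\|_{\H^{s_0}})\|\mathscr{K}V\|_{\H^\sigma}$. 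Since $\mathscr{K}$ is already an isomorphism of $\H^0$ (Proposition \ref{prop: HUM L^2-iosmorphism}), a standard elliptic bootstrap — mollifying the a priori estimate (with a harmless lower-order remainder) and iterating the gain of regularity from $\H^0$ up to $\H^\sigma$ on $V:=\mathscr{K}^{-1}U_{in}$ for $U_{in}\in\H^\sigma$, then removing the remainder — shows $\mathscr{K}^{-1}\in\mathcal{L}(\H^\sigma)$ with the required bound, and hence the proposition.

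I expect the $\H^\sigma$-observability step to be the main obstacle, and within it the treatment of commutators with the order-two, genuinely $x$-dependent principal part $A_2(\underline{U};x)|\xi|^2$: conjugating the \emph{non-diagonalized} equation by $\langle D\rangle^\sigma$, or even by $\Lambda^\sigma(\underline{U})$, leaves a remainder of order $\sigma+1$ (the Poisson bracket of a scalar weight against the full matrix $EA_2$ does not vanish), which cannot be absorbed into the energy. This is exactly why one must first diagonalize and only then conjugate by the weight adapted to the scalar symbol $\lambda(\underline{U};x)|\xi|^2$ of Section \ref{sec:mod} — the cancellation mechanism is the same as in the energy estimate proving Proposition \ref{linear existence}. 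Everything else, namely the multiplication and commutator bounds, the Duhamel energy estimates, and the absorption of small terms, is routine given the para-differential calculus recalled in Section \ref{lwp}.
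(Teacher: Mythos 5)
Your reduction of the proposition to the boundedness of $\mathscr{K}^{-1}$ on $\H^{\s}$, and your identification of the key cancellation (diagonalize by $\Phi(\underline{U})$ first, then conjugate by the weight $\Lambda^{\s}(\underline{U})$ built from $(1+\lambda(\underline{U};x)|\xi|^2)^{\s/2}$, whose Poisson bracket with the diagonal principal symbol vanishes) match the mechanism the paper actually uses. The route is genuinely different, though: the paper does \emph{not} prove an $\H^{\s}$-observability inequality. Instead it works directly on the HUM operator, proving the commutator estimate $\big[\mathscr{K},\tilde{\Lambda}_{h,0}^{\s}\big]\tilde{\Lambda}_{h,0}^{-\s}=\O(\epsilon_0+h)_{\mathscr{L}(\H^0,\H^0)}$ (Lemma \ref{lem: HUM commutator estimate}), where $\tilde{\Lambda}_h^{\s}=\Phi^{-1}\Lambda_h^{\s}\Phi$ carries a \emph{semiclassical} parameter $h$, and then runs Lax--Milgram for the form $\alpha^h(U,V)=\poscals{\Lambda_{h,0}^{\s}\mathscr{K}U}{\Lambda_{h,0}^{\s}V}$ on $\H^{\s}_h$, borrowing coercivity from the already-established $L^2$ isomorphism (Proposition \ref{prop: HUM H^s isomorphism}). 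This buys a clean absorption of all error terms at the single level $\s$, at the cost of introducing the spaces $\H^{\s}_h$; your route would instead require carrying the observability machinery one Sobolev level up.

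The one point you must repair is the claim that the commutators of $\chi_T\varphi_{\omega}$ with $\Lambda^{\s}(\underline{U})$ ``gain one derivative and carry a factor $\epsilon_0$.'' They do not carry $\epsilon_0$: $\varphi_{\omega}$ is a fixed cutoff independent of $\underline{U}$, and $\Lambda^{\s}$ is an $O(\epsilon_0)$-perturbation of the Fourier multiplier $\opbw((1+|\xi|^2)^{\s/2})$, whose commutator with $\varphi_{\omega}^2$ is a genuinely $O(1)$ operator of order $\s-1$ (proportional to $\nabla\varphi_{\omega}$). This term cannot be absorbed by smallness of $\epsilon_0$; it is exactly why the paper inserts the parameter $h$, so that $\tilde{\Lambda}_{h,0}^{-\s}=h\,\O(1)_{\mathcal{L}(\H^0,\H^{\s-1})}$ converts the order-$(\s-1)$ commutator into an $\O(h)$ bounded operator (estimate \eqref{est-2}), absorbable for $h$ small. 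Your parenthetical appeal to ``the lower-order observability'' hints at the alternative fix — treat the commutator as an order-$(\s-1)$ term, control it by observability at level $\s-1$, and induct on $\s$ — but as written the proposal relies on a smallness that is not there, and the final ``elliptic bootstrap'' for $\mathscr{K}^{-1}$ silently reuses the same unproved commutator bounds on $\mathcal{S}$ and $\varphi_{\omega}$. Either make the induction on $\s$ explicit or adopt the semiclassical weight; without one of these the absorption step fails.
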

This proposition is a consequence of Proposition \ref{prop: HUM H^s isomorphism}. So we first analyze the properties of the HUM operator $\mathscr{K}$.
\begin{lemma}
Under the hypothesis of Proposition \ref{prop: regularity of control operator}, for $\epsilon_0$ sufficiently small, $\mathscr{K}|_{\H^{\sigma}}$ sends $\H^{\sigma}(\T^d)$ to itself.
\end{lemma}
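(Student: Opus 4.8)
The plan is to unfold the definition $\mathscr{K}(\underline{U})=-\mathcal{R}\,\chi_T^2\varphi_{\omega}^2\,\mathcal{S}$ and to propagate $\H^{\sigma}$-regularity through each of its three factors, relying only on the energy estimates already established for the linearized flow and its adjoint. Fix $s_0$ with $d/2+2<s_0\le s$, so that $\underline{U}\in\mathscr{C}^{1,s_0}(T,\epsilon_0)$ with $\|\underline{U}\|_{\H^{s_0}}\le\epsilon_0$; then every constant $\F(\|\underline{U}\|_{\H^{s_0}})$ below is bounded by a fixed $\F(\epsilon_0)$. First I would treat the factor $\mathcal{S}$: the adjoint equation $\partial_tV=-\ii\mathscr{A}(\underline{U})V$ has exactly the para-differential structure of \eqref{NLSparalin} — $\mathscr{A}(\underline{U})$ is self-adjoint on $\H^0$, elliptic of order two, diagonalizable at the leading order as in Proposition \ref{diago} (with $\lambda$ replaced by $-\lambda$), and $-\ii\mathscr{A}(\underline{U})$ preserves the reality constraint defining $\H^{\sigma}$ — so Proposition \ref{linear existence} and Corollary \ref{estimates-near-id} apply to it and give, for every $\sigma\ge 0$ and $V(0)\in\H^{\sigma}$, a solution $\mathcal{S}V(0)\in C([0,T],\H^{\sigma})$ with $\|\mathcal{S}V(0)\|_{C([0,T],\H^{\sigma})}\le\F(\|\underline{U}\|_{\H^{s_0}})\|V(0)\|_{\H^{\sigma}}$.

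Next, multiplication by $\chi_T(t)^2\varphi_{\omega}(x)^2$ is harmless: since $\chi_T\in C^{\infty}(\R)$ and $\varphi_{\omega}\in C^{\infty}(\T^d)$ are real-valued, for $V=(v,\bar v)\in\H^{\sigma}$ one has $\chi_T^2\varphi_{\omega}^2V=(\chi_T^2\varphi_{\omega}^2v,\overline{\chi_T^2\varphi_{\omega}^2v})\in\H^{\sigma}$, and the Sobolev product rule gives $\|\chi_T^2\varphi_{\omega}^2V\|_{C([0,T],\H^{\sigma})}\lesssim\|V\|_{C([0,T],\H^{\sigma})}$. In particular $F:=\chi_T^2\varphi_{\omega}^2\mathcal{S}V(0)\in C([0,T],\H^{\sigma})\subset L^1([0,T],\H^{\sigma})$.

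Finally I would bound $\mathcal{R}$ on $L^1([0,T],\H^{\sigma})$. For $F\in L^1([0,T],\H^{\sigma})$ the element $\mathcal{R}F=U(0)$ is the value at time $0$ of the solution of $\partial_tU=\ii\mathscr{A}(\underline{U})U+F$ with the terminal condition $U(T)=0$; setting $\widetilde U(t):=U(T-t)$ turns this into a forward Cauchy problem $\partial_t\widetilde U=-\ii\mathscr{A}(\underline{U}(T-\cdot))\widetilde U-F(T-\cdot)$, $\widetilde U(0)=0$, with $\underline{U}(T-\cdot)$ still in $\mathscr{C}^{1,s_0}(T,\epsilon_0)$. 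Applying the $\H^{\sigma}$ energy estimate of Corollary \ref{estimates-near-id} in Duhamel form to this problem yields $\|U\|_{C([0,T],\H^{\sigma})}\le\F(\|\underline{U}\|_{\H^{s_0}})\|F\|_{L^1([0,T],\H^{\sigma})}$, hence $\|\mathcal{R}F\|_{\H^{\sigma}}\le\F(\|\underline{U}\|_{\H^{s_0}})\|F\|_{L^1([0,T],\H^{\sigma})}$. Chaining the three estimates gives $\mathscr{K}V(0)=-\mathcal{R}F\in\H^{\sigma}$ with $\|\mathscr{K}V(0)\|_{\H^{\sigma}}\le\F(\|\underline{U}\|_{\H^{s_0}})\|V(0)\|_{\H^{\sigma}}$, which is the assertion (and also supplies the quantitative bound needed afterwards in Proposition \ref{prop: HUM H^s isomorphism}).

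There is no genuine obstacle here — the content is purely the bookkeeping of already-proven linear estimates — so the only points that need care are: (i) verifying that the adjoint equation and its time-reversal $t\mapsto T-t$ really fall within the scope of Proposition \ref{linear existence} and Corollary \ref{estimates-near-id}, which is where the self-adjointness of $\mathscr{A}(\underline{U})$ and the invariance of the class $\mathscr{C}^{1,s_0}(T,\epsilon_0)$ under time reversal enter; and (ii) checking that the cut-offs $\chi_T,\varphi_{\omega}$, being real, preserve the constraint $U=(u,\bar u)$ defining $\H^{\sigma}$ rather than merely $H^{\sigma}(\T^d,\C^2)$.
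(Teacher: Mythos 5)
Your argument is correct and is essentially the paper's own proof: the paper likewise factors $\mathscr{K}=-\mathcal{R}\chi_T^2\varphi_{\omega}^2\mathcal{S}$ and reduces everything to the boundedness of $\mathcal{R}$ and $\mathcal{S}$ on $\H^{\sigma}$, which it attributes directly to the energy estimates of Corollary \ref{estimates-near-id}. Your write-up merely spells out the intermediate verifications (time reversal for $\mathcal{R}$, harmlessness of the smooth real cut-offs, applicability of the energy estimates to the adjoint flow) that the paper leaves implicit.
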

\begin{proof}
It is sufficient  to show that the range operator $\mathcal{R}$ and the solution operator $\mathcal{S}$ satisfy 
\begin{equation*}
   \mathcal{R}:L^2([0,T],\H^{\sigma})\rightarrow \H^{\sigma},\quad \mathcal{S}:\H^{\sigma}\rightarrow C([0,T],\H^{\sigma}),
\end{equation*}
and
\begin{equation*}
    \|\mathcal{R}\|_{\mathcal{L}(L^2([0,T],\H^{\sigma}); \H^{\sigma})}\lesssim 1,\quad \|\mathcal{S}\|_{\mathcal{L}(\H^{\sigma}; L^2([0,T],\H^{\sigma}))}\lesssim 1.
\end{equation*}
These estimates are given by Corollary \ref{estimates-near-id}.
\end{proof}
Recall the equivalent norm on $\H^{\sigma}$ given in \eqref{modified} and \eqref{big-Lambda}, which is adapted to the equation \eqref{paradiag}.
We defined the HUM operator associated to the equation in the form of \eqref{NLSparalin}. As we showed in Proposition \ref{diago}, the two equations are equivalent under the invertible linear map $\Phi(\underline{U})$ from $\H^s$ to $\H^s$ for any $s\geq s_0>d/2+2$. We introduce the following  operator $\tilde{\Lambda}_h^{\s}$
\begin{equation}\label{eq: modified weight operator}
\begin{aligned}
&\tilde{\Lambda}^{\s}_h=\Phi(\underline{U})^{-1}\Lambda^{\sigma}_h\Phi(\underline{U}),\\
&\Lambda_h^{\s}:=\opbw\left((1+h^{2}|\xi|^{2}\lambda(\underline{U};x)\right)^{\s/2})
\end{aligned}
\end{equation}
for $h\in[0,1]$. For technical reasons, we work in semiclassical Sobolev spaces $\H^{\s}_h=\left(\H^{\s}(\T^d),\|\cdot\|_{\H^{\s}_h}\right)$, endowed with a scalar product
\begin{equation*}
    \poscalr{U}{V}_{\H^{\s}_h}=\poscals{\langle hD_x\rangle^{\s}U}{\langle hD_x\rangle^{\s}V}.
\end{equation*}
For each fixed $h$, $\H^{\s}_h$ and $\H^{\s}$ are isomorphic as Banach spaces with equivalent norms, even though not uniformly in $h$.
\begin{lemma}\label{lem: regularity of the weight op}
Under the hypothesis of Proposition \ref{prop: regularity of control operator}, for $\s\geq0$, and for $\epsilon_0$ sufficiently small, the operator $\tilde{\Lambda}_h^{\s}: \H^{\s}_h(\T^d)\rightarrow\H^0(\T^d)$ is invertible. Moreover, we have the following estimate, uniformly in $h$
\begin{equation}\label{eq: weight operator equiv}
    \tilde{\Lambda}^{\s}_h-\langle hD_x\rangle^{\s}=\O(\epsilon_0)_{L^{\infty}([0,T],\mathcal{L}(\H_h^{\s},\H^0))},
\end{equation}
which in particular implies the norm equivalence, uniformly for $t_0\in[0,T)$,
\begin{equation}\label{eq: norm equiv}
    \|\cdot\|_{\H_h^{\s}}\sim\| \tilde{\Lambda}_h^{\s}|_{t=t_0}\cdot\|_{\H^0}
\end{equation}
\end{lemma}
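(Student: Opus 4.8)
The plan is to compare the conjugated operator $\tilde\Lambda^{\s}_h=\Phi(\underline U)^{-1}\Lambda^{\s}_h\Phi(\underline U)$ with the Fourier multiplier $\langle hD_x\rangle^{\s}=\opbw\big((1+h^{2}|\xi|^{2})^{\s/2}\big)$, which, by the very definition of $\|\cdot\|_{\H^{\s}_h}$, is an isometric isomorphism from $\H^{\s}_h$ onto $\H^{0}$. It then suffices to prove, uniformly in $h$ and in $t$, two facts: (I) $\Lambda^{\s}_h-\langle hD_x\rangle^{\s}=\O(\epsilon_0)$ in $\mathcal L(\H^{\s}_h,\H^{0})$; and (II) $\Phi(\underline U)$ and $\Phi(\underline U)^{-1}$ differ from $\uno$ by $\O(\epsilon_0)$ both in $\mathcal L(\H^{0})$ and in $\mathcal L(\H^{\s}_h)$. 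Two elementary remarks will be used repeatedly. First, since $\underline u$ is $\O(\epsilon_0)$-small in $H^{s_0}$ and $g_1'$ is a polynomial, the functions $\lambda(\underline U;x)-1=\sqrt{1+2|\underline u|^{2}[g_1'(|\underline u|^{2})]^{2}}-1$ and $S^{-1}(\underline U;x)-\uno$, both depending on $x$ only, have $H^{s_0}$- (resp.\ $\Gamma^{0}_{s_0}$-) size $\O(\epsilon_0)$. Second — and this is where the uniformity in $h$ is won — every $\xi$-derivative falling on $(1+h^{2}|\xi|^{2})^{\s/2}$ produces a factor $\lesssim h(1+h^{2}|\xi|^{2})^{-1/2}$, of order $-1$ and of size $\O(h)$; hence composing or commuting with the $x$-independent multiplier $\langle hD_x\rangle^{\s}$ generates symbolic-calculus remainders that gain a power of $h$ and are therefore uniformly bounded from $\H^{\s}_h$ to $\H^{0}$ (equivalently, one may work with the semiclassical calculus of $\ophw$, see \cite{Zworski,Metivier}).

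For (I), the first step is to factor
\[
\big(1+h^{2}|\xi|^{2}\lambda(\underline U;x)\big)^{\s/2}=(1+h^{2}|\xi|^{2})^{\s/2}\big(1+c_h(x,\xi)\big),\qquad
1+c_h:=\Big(\tfrac{1+h^{2}|\xi|^{2}\lambda(\underline U;x)}{1+h^{2}|\xi|^{2}}\Big)^{\s/2}.
\]
Using $\tfrac{h^{2}|\xi|^{2}}{1+h^{2}|\xi|^{2}}\in[0,1)$ together with $\big|\partial_\xi^{\beta}\tfrac{h^{2}|\xi|^{2}}{1+h^{2}|\xi|^{2}}\big|\lesssim\langle\xi\rangle^{-|\beta|}$ (uniformly in $h$), one checks by the chain rule that $c_h\in\Gamma^{0}_{s_0}$ with $|c_h|_{0,s_0,4}\lesssim\|\lambda(\underline U;\cdot)-1\|_{H^{s_0}}=\O(\epsilon_0)$, uniformly in $h$. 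Then, by Theorem~\ref{compo} and the second remark above,
\[
\Lambda^{\s}_h-\langle hD_x\rangle^{\s}=\opbw\!\big(c_h(1+h^{2}|\xi|^{2})^{\s/2}\big)=\opbw(c_h)\langle hD_x\rangle^{\s}+\varrho_h,\qquad \|\varrho_h\|_{\mathcal L(\H^{\s}_h,\H^{0})}=\O(\epsilon_0 h),
\]
while $\opbw(c_h):\H^{0}\to\H^{0}$ has norm $\O(\epsilon_0)$ by Theorem~\ref{azione} and $\langle hD_x\rangle^{\s}:\H^{\s}_h\to\H^{0}$ is an isometry; this proves (I), and in particular $\|\Lambda^{\s}_h\|_{\mathcal L(\H^{\s}_h,\H^{0})}=1+\O(\epsilon_0)$.

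For (II), the estimate on $\H^{0}$ is immediate: $\Phi(\underline U)=\uno+\opbw(S^{-1}(\underline U;x)-\uno)$ with $\opbw(S^{-1}-\uno):\H^{0}\to\H^{0}$ of norm $\O(\epsilon_0)$ by Theorem~\ref{azione}, so $\Phi(\underline U)$ is invertible on $\H^{0}$ and $\Phi(\underline U)^{-1}=\uno+\O(\epsilon_0)$ there by a Neumann series, exactly as in the proof of Proposition~\ref{diago}. On $\H^{\s}_h$ one conjugates by $\langle hD_x\rangle^{\pm\s}$: the commutator $[\langle hD_x\rangle^{\s},\opbw(S^{-1}-\uno)]$ gains one power of $h$ and one order by the second remark, hence $\langle hD_x\rangle^{\s}\,\Phi(\underline U)^{\pm1}\langle hD_x\rangle^{-\s}-\uno=\O(\epsilon_0)$ in $\mathcal L(\H^{0})$, i.e.\ $\Phi(\underline U)^{\pm1}$ is bounded on $\H^{\s}_h$ uniformly in $h$ with $\|\Phi(\underline U)^{\pm1}-\uno\|_{\mathcal L(\H^{\s}_h)}=\O(\epsilon_0)$.

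Finally I would assemble these through the decomposition
\[
\tilde\Lambda^{\s}_h-\langle hD_x\rangle^{\s}
=\Phi(\underline U)^{-1}\big(\Lambda^{\s}_h-\langle hD_x\rangle^{\s}\big)\Phi(\underline U)
+\Phi(\underline U)^{-1}\langle hD_x\rangle^{\s}\big(\Phi(\underline U)-\uno\big)
+\big(\Phi(\underline U)^{-1}-\uno\big)\langle hD_x\rangle^{\s},
\]
in which, in each summand, exactly one factor carries a gain $\O(\epsilon_0)$ (the first via (I), the other two via (II)) while the remaining factors are uniformly bounded by (I)--(II); this yields \eqref{eq: weight operator equiv}. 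Invertibility follows by writing $\tilde\Lambda^{\s}_h=\langle hD_x\rangle^{\s}\big(\uno+\langle hD_x\rangle^{-\s}(\tilde\Lambda^{\s}_h-\langle hD_x\rangle^{\s})\big)$: since $\langle hD_x\rangle^{-\s}(\tilde\Lambda^{\s}_h-\langle hD_x\rangle^{\s})$ maps $\H^{\s}_h$ into itself with norm $\O(\epsilon_0)<1$ for $\epsilon_0$ small, the bracket is invertible on $\H^{\s}_h$ by Neumann series and, composed with the isometry $\langle hD_x\rangle^{\s}$, so is $\tilde\Lambda^{\s}_h:\H^{\s}_h\to\H^{0}$; and \eqref{eq: norm equiv} is then the triangle inequality applied to \eqref{eq: weight operator equiv}. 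The one genuinely nontrivial point — and the main obstacle — is precisely the uniformity in $h$ in (I) and (II): Theorems~\ref{azione}--\ref{compo} are stated for $h=1$, and applied naively to the order-$\s$ operators $\Lambda^{\s}_h,\langle hD_x\rangle^{\s}$ they would produce remainders controlled only by the too strong $\H^{\s}$-norm; the remedy, used throughout, is to isolate the $x$-independent multiplier $\langle hD_x\rangle^{\s}$ and exploit that symbolic-calculus errors against it carry an extra power of $h$, i.e.\ to invoke the semiclassical para-differential calculus.
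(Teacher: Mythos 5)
Your proof is correct and follows essentially the same route as the paper's: factor out the Fourier multiplier $\langle hD_x\rangle^{\s}$ (the paper writes $\Lambda^{\s}_h=(1+\Theta_h)\langle hD_x\rangle^{\s}$ with $\Theta_h=(\Lambda^{\s}_h-\langle hD_x\rangle^{\s})\langle hD_x\rangle^{-\s}=\O(\epsilon_0)$ on $\H^0$, which is your symbol $c_h$ in operator form), invert by Neumann series, and use the boundedness of $\Phi(\underline U)^{\pm1}$ from Proposition \ref{diago}. You are in fact somewhat more careful than the printed proof about the uniformity in $h$ of the conjugation by $\Phi(\underline U)^{\pm1}$ on $\H^{\s}_h$, which the paper handles only on $\H^{\s}$; this is a welcome refinement rather than a different argument.
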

\begin{proof}
As we already proved in Proposition \ref{diago}, 
\begin{equation*}
    \|\Phi(\underline{U})^{-1}\|_{L^{\infty}([0,T],\mathcal{L}(\H^{\s},\H^{\s}))}+\|\Phi(\underline{U})\|_{L^{\infty}([0,T],\mathcal{L}(\H^{\s},\H^{\s}))}\lesssim (1+\|\underline{U}\|_{\H^{s_0}})\lesssim 1.
\end{equation*}
Recalling \eqref{eq: modified weight operator}, we only need to prove that $\Lambda_h^{\s}$ is bounded and invertible from $\H^{\s}(\T^d)$ to $\H^0(\T^d)$. Write
\begin{equation*}
  \Lambda_h^{\s} =(1+\Theta_h(\underline{U}))\langle h D_x\rangle^{\s},
\end{equation*}
with $\Theta_h(\underline{U})=(\Lambda_h^{\sigma}-\langle h D_x\rangle^{\sigma}) \langle hD_x\rangle^{-\sigma}$.
According to Theorem \ref{azione}, 
$\Theta_h(\underline{U})=\O(\epsilon_0)_{\mathcal{L}(\H^{0},\H^{0})}$,
uniformly in $h$. Therefore, for $\epsilon_0$ sufficiently small, we know that
\begin{equation*}
    Id+\Theta_h(\underline{U}):\H^0\rightarrow\H^0
\end{equation*}
is invertible, which gives the estimate \eqref{eq: weight operator equiv}. The norm equivalence follows as  the operators
$$\left(\opbw((1+h^2|\xi|^{2})^{\s/2})\right)^{-1}\langle h D_x\rangle^{\s},\quad \opbw((1+h^2|\xi|^{2})^{\sigma/2})\langle h D_x\rangle^{-\s}$$ are both bounded on $\H^0$ since they are both Fourier multipliers which are bounded independently of $h$.
\end{proof}
\begin{remark}
We denote the inverse of $\Lambda^{\s}_h$ simply by $\Lambda^{-\s}_h$, same thing for $\tilde{\Lambda}_h^{\s}$.
\end{remark}
Then we consider a key commutator estimate in the following lemma.
\begin{lemma}\label{lem: HUM commutator estimate}
Suppose that $\omega$ satisfies the geometric control condition, $s>d/2+2$, $T>0$. Then for $\epsilon_0>0$, $h>0$ sufficiently small,  for $\underline{U}\in \mathscr{C}^{1,s}(T,\epsilon_0)$, and for $\sigma\geq1$, the following commutator estimate holds:
\begin{equation}\label{eq: commutator estimate}
    \big[\mathscr{K},\tilde{\Lambda}_{h,0}^{\s}\big]\tilde{\Lambda}_{h,0}^{-\s}=\O(\epsilon_0+h)_{\mathscr{L}(\H^0,\H^0)},
\end{equation}
where we denoted $\tilde{\Lambda}_{h,0}^{\s}:=\tilde{\Lambda}_h^{\s}|_{t=0}$.
\end{lemma}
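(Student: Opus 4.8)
The plan is to strip the weight off $\mathscr{K}$ by conjugating the Cauchy problems that define $\mathcal{R}$ and $\mathcal{S}$, thereby reducing the statement to an analysis of how $\tilde{\Lambda}_h^{\s}$ interacts with the generator $\ii\mathscr{A}(\underline{U})$ and with the spatial cut-off $\varphi_\omega$. Since $\tilde{\Lambda}_{h,0}^{\s}$ is invertible (Lemma \ref{lem: regularity of the weight op}) one has $[\mathscr{K},\tilde{\Lambda}_{h,0}^{\s}]\tilde{\Lambda}_{h,0}^{-\s}=\mathscr{K}-\tilde{\Lambda}_{h,0}^{\s}\mathscr{K}\tilde{\Lambda}_{h,0}^{-\s}$, so it suffices to prove $\tilde{\Lambda}_{h,0}^{\s}\mathscr{K}\tilde{\Lambda}_{h,0}^{-\s}-\mathscr{K}=\O(\epsilon_0+h)_{\mathcal{L}(\H^0,\H^0)}$. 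Recalling $\mathscr{K}=-\mathcal{R}\chi_T^2\varphi_\omega^2\mathcal{S}$ (see \eqref{defi: HUM op}) and inserting the identity $\tilde{\Lambda}_{h,t}^{-\s}\tilde{\Lambda}_{h,t}^{\s}=\uno$ (valid pointwise in $t$) on either side of the middle factor, I would introduce the conjugated operators
\[
\mathcal{S}_h:=\tilde{\Lambda}_{h,\cdot}^{\s}\,\mathcal{S}\,\tilde{\Lambda}_{h,0}^{-\s},\qquad \mathcal{R}_h:=\tilde{\Lambda}_{h,0}^{\s}\,\mathcal{R}\,\tilde{\Lambda}_{h,\cdot}^{-\s},
\]
so that $\tilde{\Lambda}_{h,0}^{\s}\mathscr{K}\tilde{\Lambda}_{h,0}^{-\s}=-\mathcal{R}_h\big(\tilde{\Lambda}_{h,\cdot}^{\s}\chi_T^2\varphi_\omega^2\tilde{\Lambda}_{h,\cdot}^{-\s}\big)\mathcal{S}_h$. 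The proof then rests on three facts: (i) $\mathcal{R}_h,\mathcal{S}_h$ are bounded uniformly in $h$; (ii) $\mathcal{S}_h=\mathcal{S}+\O(\epsilon_0+h)$ in $\mathcal{L}(\H^0,C([0,T],\H^0))$ and $\mathcal{R}_h=\mathcal{R}+\O(\epsilon_0+h)$ in $\mathcal{L}(L^2([0,T],\H^0),\H^0)$; (iii) $\tilde{\Lambda}_{h,t}^{\s}\varphi_\omega^2\tilde{\Lambda}_{h,t}^{-\s}=\varphi_\omega^2+\O(\epsilon_0+h)_{\mathcal{L}(\H^0,\H^0)}$ uniformly in $t$. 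Granting these, and using that $\chi_T(t)$ is scalar, one multiplies out (using uniform boundedness at each step) to obtain $\tilde{\Lambda}_{h,0}^{\s}\mathscr{K}\tilde{\Lambda}_{h,0}^{-\s}=-\mathcal{R}_h\chi_T^2\varphi_\omega^2\mathcal{S}_h+\O(\epsilon_0+h)=-\mathcal{R}\chi_T^2\varphi_\omega^2\mathcal{S}+\O(\epsilon_0+h)=\mathscr{K}+\O(\epsilon_0+h)$, which is the assertion.

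For (i)--(ii), I would conjugate the defining equations. If $V$ solves $\partial_tV=-\ii\mathscr{A}(\underline{U})V$ with $V(0)=\tilde{\Lambda}_{h,0}^{-\s}W_0$, then $\tilde V:=\tilde{\Lambda}_{h,\cdot}^{\s}V=\mathcal{S}_hW_0$ satisfies $\tilde V(0)=W_0$ and
\[
\partial_t\tilde V=-\ii\mathscr{A}(\underline{U})\tilde V+E_h(t)\tilde V,\qquad E_h(t):=\big(\partial_t\tilde{\Lambda}_{h,t}^{\s}\big)\tilde{\Lambda}_{h,t}^{-\s}-\ii\big(\tilde{\Lambda}_{h,t}^{\s}\mathscr{A}(\underline{U})\tilde{\Lambda}_{h,t}^{-\s}-\mathscr{A}(\underline{U})\big),
\]
and for $\mathcal{R}_h$ one gets the analogous backward problem, the source term being unchanged since $\tilde{\Lambda}_{h,t}^{\s}\tilde{\Lambda}_{h,t}^{-\s}F=F$. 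The heart of the matter is to show $E_h(t)=\O(\epsilon_0+h)_{\mathcal{L}(\H^0,\H^0)}$, uniformly in $t\in[0,T]$ and $h\in(0,1]$; granting this, comparing $\tilde V$ with $\hat V:=\mathcal{S}W_0$ via Duhamel's formula together with the energy estimate of Corollary \ref{estimates-near-id} on the fixed interval $[0,T]$ yields $\|\tilde V\|_{C([0,T],\H^0)}\lesssim\|W_0\|_{\H^0}$ and $\|\tilde V-\hat V\|_{C([0,T],\H^0)}\lesssim(\epsilon_0+h)\|W_0\|_{\H^0}$, which is (i)--(ii) for $\mathcal{S}_h$; the same computation handles $\mathcal{R}_h$ with $L^2([0,T],\H^0)$ in place of $\H^0$.

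It then remains to estimate $E_h(t)$ and to prove (iii). For the term $\big(\partial_t\tilde{\Lambda}_{h,t}^{\s}\big)\tilde{\Lambda}_{h,t}^{-\s}$, since $\partial_t\underline{U}\in\H^{s-2}$ with $s-2>d/2$ one has $\|\partial_t\lambda(\underline{U};\cdot)\|_{H^{s-2}}\lesssim\epsilon_0$, and $\partial_t\big[(1+h^2|\xi|^2\lambda)^{\s/2}\big](1+h^2|\xi|^2\lambda)^{-\s/2}=\tfrac{\s}{2}h^2|\xi|^2(1+h^2|\xi|^2\lambda)^{-1}\partial_t\lambda$ is a symbol of order $0$ with $\O(\epsilon_0)$ coefficient, hence (combined with the near-identity structure of $\Phi$ from \eqref{phi1} and Theorem \ref{azione} at regularity $s-2$) this term is $\O(\epsilon_0)$. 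For the conjugated generator I would write $\tilde{\Lambda}_{h,t}^{\s}\mathscr{A}(\underline{U})\tilde{\Lambda}_{h,t}^{-\s}-\mathscr{A}(\underline{U})=[\tilde{\Lambda}_{h,t}^{\s},\mathscr{A}(\underline{U})]\tilde{\Lambda}_{h,t}^{-\s}=\Phi^{-1}\big[\Lambda_h^{\s},\Phi\mathscr{A}(\underline{U})\Phi^{-1}\big]\Lambda_h^{-\s}\Phi$, and use Proposition \ref{diago}: $\Phi\mathscr{A}(\underline{U})\Phi^{-1}=-E\opbw\big(\diag(\lambda(\underline{U};x))|\xi|^2\big)+\mathscr{A}_1+\mathscr{A}_0$, with $\mathscr{A}_1$ of order one with $\O(\epsilon_0)$ coefficient (because $\vec{a}_1$ is quadratic in $\underline{u}$) and $\mathscr{A}_0$ bounded with $\O(\epsilon_0)$ norm. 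The commutators of $\Lambda_h^{\s}$ with $\mathscr{A}_1$ and $\mathscr{A}_0$, multiplied on the right by $\Lambda_h^{-\s}$, are $\O(\epsilon_0)$ on $\H^0$ by symbolic calculus (Theorem \ref{compo}) and order bookkeeping; the principal piece $[\Lambda_h^{\s},E\opbw(\diag(\lambda|\xi|^2))]\Lambda_h^{-\s}$ has vanishing leading symbol, because $(1+h^2|\xi|^2\lambda)^{\s/2}$ is a \emph{function} of $\lambda|\xi|^2$, so $\{(1+h^2|\xi|^2\lambda)^{\s/2},\lambda|\xi|^2\}\equiv0$, and the surviving subprincipal term carries an extra power of $h$ (since $\partial_\xi(1+h^2|\xi|^2\lambda)^{\s/2}=\O(h)\langle h\xi\rangle^{\s-1}$) and is $\O(h)$ in $\mathcal{L}(\H^0,\H^0)$. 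Fact (iii) follows by the same scheme: $\Phi\varphi_\omega^2\Phi^{-1}=\varphi_\omega^2+\O(\epsilon_0)$ (an order $-1$ correction), and $[\Lambda_h^{\s},\varphi_\omega^2]\Lambda_h^{-\s}$ has leading symbol $\tfrac{\s}{\ii}h^2\lambda\,(\xi\cdot\nabla_x\varphi_\omega^2)\,(1+h^2|\xi|^2\lambda)^{-1}$, which is $\O(h)$ uniformly in $\xi$ because $h^2|\xi|(1+h^2|\xi|^2)^{-1}\lesssim h$, the remaining symbolic remainders being $\O(h)$ as well.

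The step I expect to be the main obstacle is fact (ii), and within it the estimate $[\tilde{\Lambda}_{h,t}^{\s},\mathscr{A}(\underline{U})]\tilde{\Lambda}_{h,t}^{-\s}=\O(\epsilon_0+h)$ in $\mathcal{L}(\H^0,\H^0)$. It hinges on two mechanisms working together: an algebraic one --- the $\Phi$-diagonalization makes the symbol of $\tilde{\Lambda}_h^{\s}$ a function of the leading symbol $\lambda|\xi|^2$ of $\mathscr{A}(\underline{U})$, killing the principal commutator --- and an analytic one --- the bookkeeping (relying on $\s\ge1$ and $s>d/2+2$, and on Theorems \ref{azione} and \ref{compo}) that keeps the residual subprincipal operator bounded on $\H^0$ with $\O(h)$ norm rather than producing an unbounded contribution, while one must check that every $\underline{U}$-dependent error ($\partial_t\Lambda_h^{\s}$, $\mathscr{A}_1$, $\mathscr{A}_0$, $\Phi-\uno$, and the $\Phi$-corrections in (iii)) is genuinely $\O(\epsilon_0)$.
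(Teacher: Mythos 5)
Your proposal is correct and follows essentially the same route as the paper: the paper splits $\big[\mathscr{K},\tilde{\Lambda}_{h,0}^{\s}\big]\tilde{\Lambda}_{h,0}^{-\s}$ into three commutators (with $\mathcal{R}$, with $\chi_T^2\varphi_\omega^2$, and with $\mathcal{S}$), which is just the additive form of your multiplicative conjugation $\mathcal{R}_h\,\big(\tilde{\Lambda}^{\s}_{h}\chi_T^2\varphi_\omega^2\tilde{\Lambda}^{-\s}_{h}\big)\,\mathcal{S}_h$, and it proves each piece exactly as you do --- by conjugating the defining Cauchy problems and reducing to $[\mathscr{A}(\underline{U}),\tilde{\Lambda}^{\s}_{h}]$ and $\partial_t\tilde{\Lambda}^{\s}_{h}$ being $\O(\epsilon_0)$ (vanishing principal Poisson bracket after the $\Phi$-diagonalization, since the weight is a function of $\lambda|\xi|^2$), with the $\O(h)$ contribution coming from the order-$(\s-1)$ commutator with $\varphi_\omega^2$. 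The mechanisms you single out as the crux are precisely those used in the paper's proof.
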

\begin{proof}
First, by the definition of HUM operator $\mathscr{K}$, we decompose the commutator into three parts as follows
\begin{align*}
    \big[\mathscr{K},\Tilde{\Lambda} _{h,0}^{\s}\big]\Tilde{\Lambda} _{h,0}^{-\s}&=\big[-\mathcal{R}\chi_T^2\varphi_{\omega}^2\mathcal{S},\Tilde{\Lambda} _{h,0}^{\s}\big]\Tilde{\Lambda} _{h,0}^{-\s}\\
    &=\left(-\mathcal{R}\Tilde{\Lambda} _h^{\s}+\Tilde{\Lambda} _{h,0}^{\s}\mathcal{R}\right)\chi_T^2\varphi_{\omega}^2\mathcal{S}\Tilde{\Lambda} _{h,0}^{-\s}-\mathcal{R}\left(\chi_T^2\varphi_{\omega}^2\Tilde{\Lambda} _h^{\s}-\Tilde{\Lambda} _h^{\s}\chi_T^2\varphi_{\omega}^2\right)\mathcal{S}\Tilde{\Lambda} _{h,0}^{-\s}\\
    &-\mathcal{R}\chi_T^2\varphi_{\omega}^2\left(\mathcal{S}\Tilde{\Lambda} _{h,0}^{\s}-\Tilde{\Lambda} _h^{\s}\mathcal{S}\right)\Tilde{\Lambda} _{h,0}^{-\s}.
\end{align*}
%By the definition of $\Tilde{\Lambda}_h^{\s}=1+h^{\s}\Phi(\underline{U})^{-1}\opbw(\lambda^{\frac{\sigma}{2}}(\underline{U};x)|\xi|^{\sigma})\Phi(\underline{U})$, it is easy to verify that
%We have
%\begin{align*}
%    [\mathscr{K},\Tilde{\Lambda} _{h,0}^{\s}]\Tilde{\Lambda} _{h,0}^{-\s}
%    &=\left(-\mathcal{R}\Tilde{\Lambda}^{\s}_{h,0}+\Tilde{\Lambda}_{h,0}^{\s}\mathcal{R}\right)\chi_T^2\varphi_{\omega}^2\mathcal{S}\left(\Tilde{\Lambda} _{h,0}^{-\s}\right)-\mathcal{R}\left(\chi_T^2\varphi_{\omega}^2\Tilde{\Lambda}^{\s}-\Tilde{\Lambda}^{\s}\chi_T^2\varphi_{\omega}^2\right)\mathcal{S}\left(\Tilde{\Lambda} _{h,0}^{-\s}\right)\\
%    &-\mathcal{R}\chi_T^2\varphi_{\omega}^2\left(\mathcal{S}\Tilde{\Lambda}_{h,0}^{\s}-\Tilde{\Lambda}^{\s}_{h,0}\mathcal{S}\right)\Tilde{\Lambda} _{h,0}^{-\s},
%\end{align*}
%where $\Tilde{\Lambda}^{\s}=\Phi(\underline{U})^{-1}\opbw(\lambda^{\frac{\sigma}{2}}(\underline{U};x)|\xi|^{\sigma})\Phi(\underline{U})$.
By Lemma \ref{lem: regularity of the weight op}, we know that 
\begin{equation*}
    \Tilde{\Lambda}_{h,0}^{-\s}=\O(1)_{\mathcal{L}(\H^{0},\H^{\s})},\quad  \Tilde\Lambda_{h,0}^{-\s}=h\O(1)_{\mathcal{L}(\H^{0},\H^{\s-1})}.
\end{equation*}
Then the estimate \eqref{eq: commutator estimate} is a consequence of the following:
\begin{align}
    &\|-\mathcal{R}\Tilde{\Lambda}^{\s}_{h,0}+\Tilde{\Lambda}_{h,0}^{\s}\mathcal{R}\|_{\mathcal{L}(L^2([0,T],\H^{\s}),\H^{0})}\lesssim\epsilon_0,\label{est-1}\\ 
    &\|\chi_T^2\varphi_{\omega}^2\Tilde{\Lambda}_{h,0}^{\s}-\Tilde{\Lambda}_{h,0}^{\s}\chi_T^2\varphi_{\omega}^2\|_{C([0,T],\mathcal{L}(\H^{\s-1},\H^{0}))}\lesssim1,\label{est-2}\\
    &\|\mathcal{S}\Tilde{\Lambda}_{h,0}^{\s}-\Tilde{\Lambda}_{h,0}^{\s}\mathcal{S}\|_{\mathcal{L}(\H^{\s},C([0,T],\H^{0}))}\lesssim\epsilon_0.\label{est-3}
\end{align}
We prove \eqref{est-1}.
%\end{proof}
%\begin{lemma}\label{lem: R commutator estumate}
%$\|-\mathcal{R}\Tilde{\Lambda}^{\s}+\Tilde{\Lambda}^{\s}|_{t=0}\mathcal{R}\|_{\mathcal{L}(L^2([0,T],\H^{\s}),\H^{0})}\lesssim\epsilon_0$.
%\end{lemma}
%\begin{proof}
Let $G\in L^2([0,T],\H^{\s})$, and let $Y,\, Z\in C([0,T],\H^{\s})$  be solutions to the two equations respectively.
\begin{equation*}
    \partial_t Y=\ii \mathscr{A}(\underline{U})Y+G,\quad Y(T)=0,\quad \partial_t Z=\ii \mathscr{A}(\underline{U})Z+\Tilde{\Lambda}_{h,0}^{\s}G,\quad Z(T)=0.
\end{equation*}
Then we set $W=Z-\Tilde{\Lambda}_{h,0}^{\s}Y\in C([0,T],\H^{0})$. Moreover, $W$ satisfies the equation
\begin{align*}
     \partial_t W&=\partial_t(Z-\Tilde{\Lambda}_{h,0} ^{\s}Y)\\
                 &=\ii \mathscr{A}(\underline{U})Z+\Tilde{\Lambda}_{h,0} ^{\s}G-\ii \Tilde{\Lambda}_{h,0} ^{\s}\mathscr{A}(\underline{U})Y-\Tilde{\Lambda}_{h,0} ^{\s}G-(\partial_t\Tilde{\Lambda}_{h,0} ^{\s})Y\\
                 &=\ii \mathscr{A}(\underline{U})W+\ii \mathscr{A}(\underline{U})\Tilde{\Lambda}_{h,0} ^{\s}Y-\ii \Tilde{\Lambda}_{h,0} ^{\s}\mathscr{A}(\underline{U})Y-(\partial_t\Tilde{\Lambda}_{h,0} ^{\s})Y,
\end{align*}
i.e. $\partial_t W=\ii \mathscr{A}(\underline{U})W+\ii[\mathscr{A}(\underline{U}),\Tilde{\Lambda}_{h,0} ^{\s}]Y-(\partial_t\Tilde{\Lambda}_{h,0} ^{\s})Y$ with target data $W(T)=0$. Abusing with notation we denote $\Tilde{\Lambda}^{\s}_{h,0}=\diag(\Tilde{\Lambda}^{\s}_{h,0})$. We see the term $\ii[\mathscr{A}(\underline{U}),\Tilde{\Lambda}_{h,0} ^{\s}]Y-(\partial_t\Tilde{\Lambda}_{h,0} ^{\s})Y$ as a source term and we are able to show that it is $\O(\epsilon_0)_{\H^0}$. 
By
\begin{align*}
    \mathscr{A}(\underline{U})\Tilde{\Lambda}_{h,0} ^{\s}&=\mathscr{A}(\underline{U})\Phi(\underline{U})^{-1}\Lambda^{\s}\Phi(\underline{U}),\\
    \Tilde{\Lambda}_{h,0} ^{\s}\mathscr{A}(\underline{U})&=\Phi(\underline{U})^{-1}\Lambda^{\s}\Phi(\underline{U})\mathscr{A}(\underline{U}).
\end{align*}
Therefore, the commutator $[\mathscr{A}(\underline{U}),\Tilde{\Lambda}_{h,0} ^{\s}]$ is given by 
\begin{align*}
    \big[\mathscr{A},\Tilde{\Lambda}_{h,0} ^{\s}\big]&=\mathscr{A}\Phi^{-1}\Lambda^{\s}\Phi-\Phi^{-1}\Lambda^{\s}\Phi\mathscr{A}\\
    &=\Phi^{-1}\left(\Phi\mathscr{A}\Phi^{-1}\Lambda^{\s}-\Lambda^{\s}\Phi\mathscr{A}\Phi^{-1}\right)\Phi.
\end{align*}
According to Proposition \ref{diago}, we know that $\Phi\mathscr{A}\Phi^{-1}=E\opbw((\lambda|\xi|^2))+\Tilde{Q}(\underline{U})$, with $\|\Tilde{Q}\|_{\mathscr{L}(\H^{\s},\H^{\s-1})}\lesssim\epsilon_0$ being a matrices of symbols of order one plus a bounded remainder . Then we simplify the commutator 
\begin{equation*}
   \Big[E\opbw(\lambda|\xi|^2),(\Tilde{\Lambda}_{h,0}^{\s})\Big]=\O(\epsilon_0)_{\mathscr{L}(\H^{\s},\H^{0})},
\end{equation*}
because the Poisson bracket between the symbols is equal to zero. Hence,
\begin{align*}
    \big[\mathscr{A},\Tilde{\Lambda}_{h,0} ^{\s}\big]&=\Phi^{-1}\big[\Tilde{Q},\Lambda^{\s}\big]\Phi+\O(\epsilon_0)_{\mathscr{L}(\H^{\s},\H^{0})},\\
    &=\O(\epsilon_0)_{\mathscr{L}(\H^{\s},\H^{0})},
\end{align*}
where the lower order term $[\Tilde{Q},\opbw(\lambda^{\frac{\sigma}{2}}|\xi|^{\sigma})]=\O(\epsilon_0)_{\mathscr{L}(\H^{\s},\H^{0})}$ is given by symbol calculus. On the other hand $ \partial_t\Tilde{\Lambda}_{h,0} ^{\s}=\O(\epsilon_0)_{\mathscr{L}(\H^{\s},\H^{0})}.$
%\begin{align*}
%    \partial_t\Tilde{\Lambda}_{h,0} ^{\s}= (\partial_t\Phi^{-1})\opbw(\lambda^{\frac{\sigma}{2}}|\xi|^{\sigma})\Phi+\Phi^{-1}\opbw(\frac{d}{dt}\lambda^{\frac{\sigma}{2}}|\xi|^{\sigma})\Phi+\Phi^{-1}\opbw(\lambda^{\frac{\sigma}{2}}|\xi|^{\sigma})(\partial_t\Phi)=\O(\epsilon_0)_{\mathscr{L}(\H^{\s},\H^{0})}.
%\end{align*}
We just repeat the same proof as we already did in Proposition \ref{diago} and Proposition \ref{linear existence}. Now we conclude using the energy estimates for the equation
\begin{equation*}
    \partial_t W=\ii \mathscr{A}(\underline{U})W+\O(\epsilon_0)_{\mathscr{L}(\H^{\s},\H^{0})}Y,\quad W(T)=0.
\end{equation*}
To be more specific,
\begin{align*}
\|-\mathcal{R}\Tilde{\Lambda}_{h,0} ^{\s}+\Tilde{\Lambda}_{h,0} ^{\s}|\mathcal{R}G\|_{\H^{0}}&=\|W(0)\|_{\H^{0}}\\
&\leq \|\ii[\mathscr{A}(\underline{U}),\Tilde{\Lambda}_{h,0} ^{\s}]Y-(\partial_t\Tilde{\Lambda}_{h,0} ^{\s})Y\|_{L^1([0,T],\H^{0})}\\
&\leq \|[\mathscr{A}(\underline{U}),\Tilde{\Lambda}_{h,0} ^{\s}]-(\partial_t\Tilde{\Lambda}_{h,0} ^{\s})\|_{\mathscr{L}(\H^{\s},\H^{0})}\|Y\|_{L^1([0,T],\H^{\s})}\\
&\lesssim \epsilon_0\|Y\|_{L^1([0,T],\H^{\s})}\lesssim \epsilon_0\|G\|_{L^2([0,T],\H^{\s})}.
\end{align*}
We now prove \eqref{est-3}.
Let $Y_0\in\H^{\s}$, and let $Y\in C([0,T],\H^{\s})$, $Z\in C([0,T],\H^{0})$ be solutions to the following equations, respectively.
\begin{equation*}
\partial_t Y=\ii \mathscr{A}(\underline{U})Y,\quad Y(0)=Y_0,\quad \partial_t Z=\ii \mathscr{A}(\underline{U})Z,\quad Z(0)=\Tilde{\Lambda}_{h,0}^{\s}Y_0.   
\end{equation*}
Then we set $W=Z-\Tilde{\Lambda}^{\s}_{h,0}Y$ and $W$ satisfies 
\begin{equation*}
\partial_t W=\ii \mathscr{A}W+\ii [\mathscr{A},\Tilde{\Lambda}_{h,0}^{\s}]Y-(\partial_t\Tilde{\Lambda}_{h,0}^{\s})Y,
\end{equation*}
with initial data $W(0)=0$. With computations similar to the ones performed for the proof of \eqref{est-1}, we obtain
\begin{equation*}
    [\mathscr{A},\Tilde{\Lambda}_{h,0}^{\s}]-(\partial_t\Tilde{\Lambda}_{h,0}^{\s})=O(\epsilon_0)_{\mathscr{L}(\H^{\s},\H^{0})}.
\end{equation*}
As a consequence, by energy estimates,
\begin{align*}
\|\left(\mathcal{S}\Tilde{\Lambda}_{h,0}^{\s}-\Tilde{\Lambda}_{h,0}^{\s}\mathcal{S}\right)Y_0\|_{C([0,T],\H^{0})}&=\|W\|_{C([0,T],\H^{0})}\\
&\leq \|\ii [\mathscr{A},\Tilde{\Lambda}_{h,0}^{\s}]Y-(\partial_t\Tilde{\Lambda}_{h,0}^{\s})Y\|_{L^1([0,T],\H^{0})}\\
&\leq \|\ii [\mathscr{A},\Tilde{\Lambda}_{h,0}^{\s}]Y-(\partial_t\Tilde{\Lambda}_{h,0}^{\s})\|_{\mathscr{L}(\H^{\s},\H^{0})}\|Y\|_{L^1([0,T],\H^{\s})}\\
&\lesssim\epsilon_0\|Y\|_{C([0,T],\H^{\s})}\lesssim\epsilon_0\|Y_0\|_{\H^{\s}}.
\end{align*}

We conclude with the proof of \eqref{est-2}.
First, notice that $\chi_T^2$ commutes with $\Tilde{\Lambda}_{h,0}^{\s}$. It suffices to show that $[\varphi_{\omega}^2,\Tilde{\Lambda}_{h,0}^{\s}]$ is of order $\s-1$. We write $\varphi_{\omega}^2=\opbw(\varphi_{\omega}^2)+\left(\varphi_{\omega}^2-\opbw(\varphi_{\omega}^2)\right)$. Then by symbol calculus, we know that $[\opbw(\varphi_{\omega}^2),\Tilde{\Lambda}_{h,0}^{\s}]$ is of order $\s-1$, while $\varphi_{\omega}^2-\opbw(\varphi_{\omega}^2)$ is of order $-\infty$ since $\varphi_{\omega}^2$ is smooth.
This concludes the proof.
\end{proof}
%Based on the three lemmas above, we obtain that
%\begin{equation*}
%[\mathscr{K},\Lambda_{h,0}^{\s}]\Lambda_{h,0}^{-\s}=\O(\epsilon_0+h)_{\mathscr{L}(\H^0,\H^0)}.    
%\end{equation*}
We are in position to prove that the HUM operator defines an isomorphism on $\H_h^{\s}$.
\begin{proposition}\label{prop: HUM H^s isomorphism}
Under the hypothesis of Proposition \ref{prop: regularity of control operator}, for $h$ and $\epsilon_0$ sufficiently small, the HUM operator
\begin{equation*}
    \mathscr{K}|_{\H^{\s}_h}:\H^{\s}_h\rightarrow \H^{\s}_h
\end{equation*}
defines an isomorphism.
\end{proposition}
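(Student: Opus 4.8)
The plan is to reduce the statement to the already-established $\H^0$-isomorphism of Proposition~\ref{prop: HUM L^2-iosmorphism} by conjugating $\mathscr{K}$ with the weight operator $\tilde{\Lambda}_{h,0}^{\s}$. By Lemma~\ref{lem: regularity of the weight op}, $\tilde{\Lambda}_{h,0}^{\s}\colon\H^{\s}_h\to\H^0$ is invertible and, together with its inverse, is $\O(1)$ uniformly in $h$ on the relevant spaces. Hence $\mathscr{K}|_{\H^{\s}_h}\colon\H^{\s}_h\to\H^{\s}_h$ is an isomorphism if and only if the conjugated operator
\begin{equation*}
\mathscr{T}_h:=\tilde{\Lambda}_{h,0}^{\s}\,\mathscr{K}\,\tilde{\Lambda}_{h,0}^{-\s}\colon\H^0\to\H^0
\end{equation*}
is an isomorphism, and in that case the operator norms of $\mathscr{K}|_{\H^{\s}_h}$ and of $(\mathscr{K}|_{\H^{\s}_h})^{-1}$ are controlled by those of $\mathscr{T}_h$ and $\mathscr{T}_h^{-1}$ times the $h$-uniform norms of $\tilde{\Lambda}_{h,0}^{\pm\s}$.

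Next I would isolate the commutator. Writing
\begin{equation*}
\mathscr{T}_h=\mathscr{K}+\big[\tilde{\Lambda}_{h,0}^{\s},\mathscr{K}\big]\tilde{\Lambda}_{h,0}^{-\s}=\mathscr{K}-\big[\mathscr{K},\tilde{\Lambda}_{h,0}^{\s}\big]\tilde{\Lambda}_{h,0}^{-\s},
\end{equation*}
the key commutator estimate \eqref{eq: commutator estimate} of Lemma~\ref{lem: HUM commutator estimate} (valid for $\s\geq1$) yields
\begin{equation*}
\mathscr{T}_h=\mathscr{K}+\O(\epsilon_0+h)_{\mathcal{L}(\H^0,\H^0)}.
\end{equation*}
For $\s=0$ there is nothing to prove, this being Proposition~\ref{prop: HUM L^2-iosmorphism}, and the remaining range $0<\s<1$ — which is not actually needed for the application in Proposition~\ref{prop: regularity of control operator}, where $\s=s>d/2+2\geq1$ — follows by interpolating between the $\s=0$ and $\s=1$ conclusions.

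Finally, I would invert $\mathscr{T}_h$ by a perturbative Neumann-series argument. By Proposition~\ref{prop: HUM L^2-iosmorphism}, $\mathscr{K}^{-1}$ exists on $\H^0$ with $\|\mathscr{K}^{-1}\|_{\mathcal{L}(\H^0,\H^0)}\leq\F(\|\underline{U}\|_{\H^{s_0}})$; since $\underline{U}\in\mathscr{C}^{1,s_0}(T,\epsilon_0)$ forces $\|\underline{U}\|_{\H^{s_0}}\leq\epsilon_0\leq1$, this is dominated by the fixed constant $\F(1)$. Factoring $\mathscr{T}_h=\mathscr{K}\big(\mathrm{Id}+\mathscr{K}^{-1}\O(\epsilon_0+h)\big)$, the parenthesised factor is invertible on $\H^0$ as soon as $\F(1)(\epsilon_0+h)<\tfrac12$, i.e.\ for $\epsilon_0$ and $h$ small enough, with $\|\mathscr{T}_h^{-1}\|_{\mathcal{L}(\H^0,\H^0)}\lesssim1$ uniformly in $h$. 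Undoing the conjugation then gives that $\mathscr{K}|_{\H^{\s}_h}$ is an isomorphism with
\begin{equation*}
\big\|\mathscr{K}|_{\H^{\s}_h}\big\|_{\mathcal{L}(\H^{\s}_h,\H^{\s}_h)}+\big\|(\mathscr{K}|_{\H^{\s}_h})^{-1}\big\|_{\mathcal{L}(\H^{\s}_h,\H^{\s}_h)}\lesssim1
\end{equation*}
uniformly in $h$, which is exactly the input needed in Proposition~\ref{prop: regularity of control operator}. The genuine work is carried by the commutator estimate \eqref{eq: commutator estimate}, which is already available; within the present proof the only delicate point is that the smallness parameter $\epsilon_0+h$ must beat a bound on $\|\mathscr{K}^{-1}\|_{\mathcal{L}(\H^0)}$ that is \emph{independent of} $h$ — this is precisely why one conjugates with the $h$-uniform weight $\tilde{\Lambda}_{h,0}^{\s}$ and relies on the $h$-uniform estimates of Lemmas~\ref{lem: regularity of the weight op} and \ref{lem: HUM commutator estimate}, rather than on the (non-uniform) identification $\H^{\s}_h\simeq\H^{\s}$.
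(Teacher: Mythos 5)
Your proposal is correct, and it rests on exactly the same two pillars as the paper's proof: the $\H^0$-isomorphism of Proposition \ref{prop: HUM L^2-iosmorphism} and the commutator estimate \eqref{eq: commutator estimate} of Lemma \ref{lem: HUM commutator estimate}, with the weight $\tilde{\Lambda}_{h,0}^{\s}$ mediating between $\H^{\s}_h$ and $\H^0$ uniformly in $h$ via Lemma \ref{lem: regularity of the weight op}. The only difference is the closing mechanism. The paper stays at the level of quadratic forms: it introduces $\alpha^h(U,V)=\poscals{\Lambda_{h,0}^{\s}\mathscr{K}U}{\Lambda_{h,0}^{\s}V}$, splits off the same commutator term, uses the \emph{coercivity} of $\mathscr{K}$ on $\H^0$ (inherited from the observability inequality) to bound the main term from below, absorbs the $\O(\epsilon_0+h)$ commutator contribution, and invokes Lax--Milgram. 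You instead conjugate at the operator level, write $\mathscr{T}_h=\mathscr{K}+\O(\epsilon_0+h)_{\mathcal{L}(\H^0,\H^0)}$, and invert by a Neumann series using only the \emph{stated} bounded invertibility of $\mathscr{K}$ on $\H^0$. The two arguments are equivalent in substance; yours is slightly more modular (it uses the conclusion of Proposition \ref{prop: HUM L^2-iosmorphism} rather than the coercivity established inside its proof) and it makes the $h$-uniformity of the final bounds explicit, which is what Proposition \ref{prop: regularity of control operator} actually needs. You also flag, and dispose of by interpolation, the range $0<\s<1$ where Lemma \ref{lem: HUM commutator estimate} is not stated --- a point the paper passes over silently (harmlessly, since only $\s\geq s_0>1$ is used). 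One cosmetic remark: the paper's displayed bilinear form uses $\Lambda_{h,0}^{\s}$ while the commutator lemma is phrased for $\tilde{\Lambda}_{h,0}^{\s}$; your consistent use of $\tilde{\Lambda}_{h,0}^{\s}$ is the coherent choice.
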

\begin{proof}
Consider the bilinear form 
 $   \alpha^h(U,V)=\poscals{\Lambda_{h,0}^{\s}\mathscr{K}U}{\Lambda_{h,0}^{\s}V}$ on $\H^{\s}_h$.
To obtain the coercivity, we consider $\forall U\in \H^{\s}_h$,
\begin{align*}
\alpha^h(U,U)&=\poscals{\Lambda_{h,0}^{\s}\mathscr{K}U}{\Lambda_{h,0}^{\s}U}\\
&=-\poscals{[\mathscr{K},\Lambda_{h,0}^{\s}]\Lambda_{h,0}^{-\s}\Lambda_{h,0}^{\s}U}{\Lambda_{h,0}^{\s}U}+\poscals{\mathscr{K}\Lambda_{h,0}^{\s}U}{\Lambda_{h,0}^{\s}U}.
\end{align*}
As a consequence of Proposition \ref{prop: HUM L^2-iosmorphism}, we obtain that 
$    \poscals{\mathscr{K}\Lambda_{h,0}^{\s}U}{\Lambda_{h,0}^{\s}U}\gtrsim \|\Lambda_{h,0}^{\s}U\|_{\H^{0}}.$
By Proposition \ref{lem: HUM commutator estimate}, we also know that
\begin{equation*}
    \poscals{[\mathscr{K},\Lambda_{h,0}^{\s}]\Lambda_{h,0}^{-\s}\Lambda_{h,0}^{\s}U}{\Lambda_{h,0}^{\s}U}\lesssim (\epsilon_0+h)\|\Lambda_{h,0}^{\s}U\|_{\H^0}.
\end{equation*}
Combining these two estimates, for $\epsilon_0$ and $h$ sufficiently small, we obtain that
\begin{equation*}
\alpha^h(U,U)\gtrsim  \|\Lambda_{h,0}^{\s}U\|_{\H^0}\gtrsim \|U\|_{\H^{\s}_h}.
\end{equation*}
Then we apply Lax-Milgram's theorem, we know that the HUM operator $\mathscr{K}|_{\H^{\s}_h}$ defines an isomorphism.
\end{proof}

\subsection{$H^s$-controllability}
We already showed the $L^2-$controllability of the simplified system
\begin{equation}\label{eq: simplified control system}
    \partial_tU=\ii \mathscr{A}(\underline{U})U-\ii \chi_T\varphi_{\omega}EF.
\end{equation}
Now we go back to the original paralinearized system
\begin{equation}\label{eq: para-control system }
    \partial_tU=\ii \mathscr{A}(\underline{U})U+R(\underline{U})U-\ii \chi_T\varphi_{\omega}EF
\end{equation}
We aim to prove the following proposition
\begin{proposition}\label{prop: construction of gHUM}
Suppose that $\omega$ satisfies the geometric control condition, $s_0>d/2+2$, $T>0$. Then for $\epsilon_0>0$ sufficiently small,  for $\underline{U}\in \mathscr{C}^{1,s_0}(T,\epsilon_0)$, there is an operator $\mathcal{L}_{P}=\mathcal{L}_{P}(\underline{U}): \H^{\s}\rightarrow C([0,T],\H^{\s})$, $\forall \s\geq s_0$ 
%\begin{equation*}
%    \|\mathcal{L}|_{\H^{k}}\|_{\mathcal{L}(\H^{k},C([0,T],\H^{k}))}\lesssim 1.
%\end{equation*}
\begin{equation*}
  \|\mathcal{L}_P(\underline{U})W\|_{C([0,T],\mathcal{H}^{\s})}\lesssim \F(\|\underline{U}\|_{\H^{s_0}})\|W\|_{\H^{\s}},
\end{equation*}
for any $W\in C([0,T],\mathcal{H}^{\s})$ and where $\F\in C^0(\R;\R)$ is a non decreasing function equals to zero at the origin. This operator $\mathcal{L}_P$ null controls the system \eqref{eq: para-control system } for initial data $U(0)\in\H^{\s}$.
\end{proposition}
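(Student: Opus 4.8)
The plan is to treat the paralinearized control system \eqref{eq: para-control system } as a perturbation of the simplified system \eqref{eq: simplified control system}, whose control operator $\mathscr L(\underline U)=-\ii\chi_T\varphi_\omega E\mathcal S\mathscr K^{-1}$ is already known to be bounded from $\H^\sigma$ to $C([0,T],\H^\sigma)$ for every $\sigma\ge0$ by Proposition \ref{prop: regularity of control operator}, and to absorb the smoothing term $R(\underline U)$ by a contraction argument. The basic smallness input is that, by \eqref{R1}--\eqref{R3} together with $R(0)=0$ and $\|\underline U\|_{\H^{s_0}}\le\epsilon_0$, the operator $R(\underline U)$ acts boundedly on $\H^\sigma$ with $\|R(\underline U)\|_{\mathcal L(\H^\sigma,\H^\sigma)}\lesssim\epsilon_0$, uniformly in $\sigma\ge s_0$.

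First I would promote $\mathscr L$ to a control operator for the simplified system \emph{with a forcing term}. Denoting by $\Theta(t,s)$ the solution operator of $\partial_tU=\ii\mathscr A(\underline U)U$ (well posed by Proposition \ref{linear existence} and Corollary \ref{estimates-near-id}), Duhamel's formula shows that, for $U_{in}\in\H^\sigma$ and $g\in L^1([0,T],\H^\sigma)$, the control
\[
F=\mathscr L(\underline U)\Big(U_{in}+\int_0^T\Theta(0,s)g(s)\,ds\Big)
\]
drives the solution of $\partial_tU=\ii\mathscr A(\underline U)U+g-\ii\chi_T\varphi_\omega EF$ from $U_{in}$ at time $0$ to $0$ at time $T$; combining Corollary \ref{estimates-near-id} with Proposition \ref{prop: regularity of control operator} gives
\[
\|F\|_{C([0,T],\H^\sigma)}+\|U\|_{C([0,T],\H^\sigma)}\lesssim\F(\|\underline U\|_{\H^{s_0}})\big(\|U_{in}\|_{\H^\sigma}+\|g\|_{L^1([0,T],\H^\sigma)}\big),
\]
with implicit constant uniform in $\sigma\ge s_0$; call $(U_{in},g)\mapsto(F,U)$ the resulting bounded linear map.

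Next, for fixed $U_{in}\in\H^\sigma$ I would run a Banach fixed point in $C([0,T],\H^\sigma)$: let $\Gamma(V)$ be the $U$-component of the above map applied to the data $(U_{in},R(\underline U)V)$, so that $\Gamma(V)(0)=U_{in}$ and $\Gamma(V)(T)=0$ for every $V$. The map $\Gamma$ is affine, $\Gamma(V)=\Gamma(0)+\Lambda V$ with $\Lambda$ linear, and the displayed estimate together with $\|R(\underline U)\|_{\mathcal L(\H^\sigma)}\lesssim\epsilon_0$ gives $\|\Lambda V\|_{C([0,T],\H^\sigma)}\lesssim\F(\|\underline U\|_{\H^{s_0}})\,T\,\epsilon_0\,\|V\|_{C([0,T],\H^\sigma)}$; choosing $\epsilon_0=\epsilon_0(T)$ small enough, simultaneously for all $\sigma\ge s_0$, makes $\Gamma$ a $\tfrac12$-contraction, whose unique fixed point $U^\star=(\mathrm{Id}-\Lambda)^{-1}\Gamma(0)$ solves \eqref{eq: para-control system } with $U^\star(0)=U_{in}$, $U^\star(T)=0$. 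Since $\Gamma(0)$, hence $U^\star$, depends linearly and boundedly on $U_{in}$, one sets
\[
\mathcal L_P(\underline U)U_{in}:=\mathscr L(\underline U)\Big(U_{in}+\int_0^T\Theta(0,s)\,R(\underline U)U^\star(s)\,ds\Big),
\]
and propagating the constants through the solution map and the Neumann series for $(\mathrm{Id}-\Lambda)^{-1}$ yields $\|\mathcal L_P(\underline U)W\|_{C([0,T],\H^\sigma)}\lesssim\F(\|\underline U\|_{\H^{s_0}})\|W\|_{\H^\sigma}$ for all $\sigma\ge s_0$, with $\mathcal L_P=\mathscr L$ when $R\equiv0$.

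The point requiring care is the uniformity in $\sigma$: the contraction constant has to be $<1$ for all $\sigma\ge s_0$ at once, which is exactly why one needs the $\sigma$-uniform bounds of Proposition \ref{prop: regularity of control operator} and Corollary \ref{estimates-near-id} and the $\sigma$-uniform smallness $\|R(\underline U)\|_{\mathcal L(\H^\sigma)}\lesssim\epsilon_0$; here the mild, semilinear structure of the remainder $R$ in Proposition \ref{paralinearization} is essential. Since $\epsilon_0$ may depend on the fixed horizon $T$, the extra factor $T$ above is harmless, and the remaining verifications (that $R(\underline U)V\in C([0,T],\H^\sigma)\subset L^1([0,T],\H^\sigma)$, so that Corollary \ref{estimates-near-id} applies, and the strong continuity of $s\mapsto\Theta(0,s)$) are routine.
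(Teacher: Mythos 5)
Your proof is correct, and it rests on the same perturbative idea as the paper's: treat \eqref{eq: para-control system } as the simplified system \eqref{eq: simplified control system} plus the smoothing term $R(\underline{U})$, reuse the operator $\mathscr{L}$ from Proposition \ref{prop: regularity of control operator}, and invert a small perturbation of the identity by Neumann series, using $\|R(\underline{U})\|_{\mathcal{L}(\H^{\s})}\lesssim\epsilon_0$ (which, as you note, really comes from \eqref{R3} with $R(0)=0$ rather than \eqref{R1} alone). The implementations differ in where the inversion takes place. The paper introduces the auxiliary operators $\mathcal{R}_P$ (backward solution of the \emph{full} equation with forcing) and $\mathcal{S}_P$ (controlled trajectory of the simplified system), writes the candidate controlled state as $W=Y+Z$, observes that only the initial datum is off by $\mathcal{E}Z_0$ with $\mathcal{E}=\mathcal{R}_P R(\underline{U})\mathcal{S}_P$, and inverts $Id+\mathcal{E}$ on the space of initial data $\H^{\s}$, yielding $\mathcal{L}_P=\mathscr{L}(Id+\mathcal{E})^{-1}$ in one shot. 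You instead keep the initial datum fixed, establish the Duhamel identity showing that $\mathscr{L}\big(U_{in}+\int_0^T\Theta(0,s)g(s)\,ds\big)$ null-controls the simplified system with source $g$, and run a Banach fixed point on the trajectory space $C([0,T],\H^{\s})$ with $g=R(\underline{U})V$. Both arguments are sound and of comparable length; the paper's has the small advantage that the operator being inverted acts on $\H^{\s}$ and is built from named operators whose Lipschitz dependence on $\underline{U}$ is then reused in Section \ref{sec: Contraction Estimates} (Lemmas \ref{lem: contraction for R_P}--\ref{lem: contraction for L_P}), whereas your construction would require redoing those contraction estimates for the fixed-point map. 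Your concern about uniformity in $\s$ of the contraction constant is legitimate but applies equally to the paper's Neumann series, and is harmless in practice since the nonlinear scheme only uses finitely many values of $\s$.
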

\begin{proof}
For the paralinearized control system \eqref{eq: para-control system }, we construct a new range operator $\mathcal{R}_{P}$.
For $G\in L^2([0,T],\H^{\s})$, let $Y\in C([0,T],\H^{\s})$ be the solution to the equation 
\begin{equation*}
\partial_tY=\ii \mathscr{A}(\underline{U})Y+R(\underline{U})Y+G,\quad Y(T)=0.
\end{equation*}
We set $\mathcal{R}_P(\underline{U})G=Y(0)$. Then we obtain a bounded linear operator from $L^2([0,T],\H^{\s})$ to $\H^{\s}$. This is a consequence of Corollary \ref{estimates-near-id} for the backward equation. Now we consider a linear equation
\begin{equation}\label{eq: new solution op}
\partial_tZ=\ii \mathscr{A}(\underline{U})Z-\ii \chi_T\varphi_{\o}E\mathscr{L}(Z_0),\quad Z(T)=0.
\end{equation}
Since $\mathscr{L}$ null controls \eqref{eq: simplified control system}, $Z(0)=Z_0$. We define a new solution operator $\mathcal{S}_P(Z_0)=Z\in C([0,T],\H^{\s})$, where $Z$ is a solution to \eqref{eq: new solution op} with $Z(0)=Z_0$, and $Z(T)=0$. Then $\mathcal{S}_P$ is a bounded linear operator from $\H^{\s}$ to $C([0,T],\H^{\s})$, which is also a consequence of Corollary \ref{estimates-near-id}. To construct the control operator $\mathcal{L}_P$, we start by perturbing $\mathscr{L}$. For $Z_0\in \H^{\s}$, set $Z=\mathcal{S}_P(Z_0)$ and $F=\mathscr{L}(Z_0)$. Let $Y$ be the solution to the equation 
\begin{equation*}
\partial_tY=\ii \mathscr{A}(\underline{U})Y+R(\underline{U})Y+R(\underline{U})Z,\quad Y(T)=0.    
\end{equation*}
Then we know that $W=Y+Z$ satisfies the equation
\begin{equation*}
\partial_t W=\ii \mathscr{A}(\underline{U})W+R(\underline{U})W-\ii \chi_T\varphi_{\omega}EF,\quad W(0)=Y|_{t=0}+Z_0, W(T)=0.    
\end{equation*}
By the definition of $\mathcal{R}_P$ and $\mathcal{S}_P$, 
\begin{equation*}
    Y|_{t=0}=\mathcal{R}_PR(\underline{U})Z=\mathcal{R}_PR(\underline{U})\mathcal{S}_PZ_0.
\end{equation*}
Define the perturbation operator $\mathcal{E}$ by
\begin{equation}\label{eq: perturbation op}
\mathcal{E}=\mathcal{R}_PR(\underline{U})\mathcal{S}_P: \H^{\s}\rightarrow \H^{\s} . 
\end{equation}
Using the perturbation operator $\mathcal{E}$, we obtain that
\begin{equation*}
    W(0)=(Id+\mathcal{E})Z_0.
\end{equation*}
By the estimate \eqref{R1}, we know that $\|\mathcal{E}\|_{\mathscr{L}(\H^{\s},\H^{\s})}\lesssim\epsilon_0$. Therefore, for $\epsilon_0$ sufficiently small, we obtain that 
$1+\mathcal{E}$ is an invertible operator from $\H^{\s}$ to itself. Then we set $\mathcal{L}_P=\mathscr{L}(Id+\mathcal{E})^{-1}$. The operator $\mathcal{L}_P$ is our desired control operator, which null controls 
the system \eqref{eq: para-control system }. 
\end{proof}

\subsection{Contraction Estimates}\label{sec: Contraction Estimates}
In this section, we prove some useful contraction estimates of several operators. In this section, we set $\underline{U}_1,\underline{U}_2\in \mathscr{C}^{1,s}(T,\epsilon_0)$. For simplicity, we use the following conventions. For any operator $\mathcal{P}=\mathcal{P}(\underline{U})$, we set $\mathcal{P}_i=\mathcal{P}(\underline{U}_i)$, $i=1,2$. For any symbols depending on $\underline{U}$, $a=a(\underline{U})$, we set $a_i=a(\underline{U}_i)$, $i=1,2$.
We first look at the range operator $\mathcal{R}$.
\begin{lemma}\label{lem: contraction for R}
Suppose that $s>\frac{d}{2}+2$, $\s\geq0$, then for $\epsilon_0$ sufficiently small,
\begin{equation*}
    \|\mathcal{R}_1-\mathcal{R}_2\|_{\mathscr{L}(L^2([0,T],\H^{\s+2}),\H^{\s})}\lesssim\|\underline{U}_1-\underline{U}_2\|_{L^{\infty}([0,T],\H^s)}.
\end{equation*}
\end{lemma}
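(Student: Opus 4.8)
The plan is to estimate the difference $(\mathcal{R}_1-\mathcal{R}_2)G$ by writing it as the value at $t=0$ of the solution to a difference equation driven by a source term that is controlled by $\|\underline{U}_1-\underline{U}_2\|_{L^\infty([0,T],\H^s)}$. Concretely, for $G\in L^2([0,T],\H^{\s+2})$ let $Y_i\in C([0,T],\H^{\s+2})$ solve
\begin{equation*}
\partial_t Y_i=\ii\mathscr{A}(\underline{U}_i)Y_i+R(\underline{U}_i)Y_i+G,\quad Y_i(T)=0,
\end{equation*}
so that $\mathcal{R}_i G=Y_i(0)$. Setting $W=Y_1-Y_2$, one gets
\begin{equation*}
\partial_t W=\ii\mathscr{A}(\underline{U}_1)W+R(\underline{U}_1)W+\ii\big(\mathscr{A}(\underline{U}_1)-\mathscr{A}(\underline{U}_2)\big)Y_2+\big(R(\underline{U}_1)-R(\underline{U}_2)\big)Y_2,\quad W(T)=0.
\end{equation*}
The first two terms on the right keep the same structure as the homogeneous equation for which we have the energy estimate of Corollary \ref{estimates-near-id} (applied backward in time, via the Duhamel formula), while the last two terms are a source term.

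The key point is that the source term is $\O(\|\underline{U}_1-\underline{U}_2\|_{L^\infty([0,T],\H^s)})$ in $\H^{\s}$. Indeed, by the Lipschitz estimate \eqref{Lip-simbolo} in Proposition \ref{paralinearization} (using $s$ in place of $s_0$ there and the fact that $Y_2\in\H^{\s+2}$), we have
\begin{equation*}
\|\big(\mathscr{A}(\underline{U}_1)-\mathscr{A}(\underline{U}_2)\big)Y_2\|_{\H^{\s}}\lesssim \|\underline{U}_1-\underline{U}_2\|_{\H^{s_0}}\|Y_2\|_{\H^{\s+2}}\lesssim \|\underline{U}_1-\underline{U}_2\|_{\H^{s}}\|Y_2\|_{\H^{\s+2}},
\end{equation*}
and by \eqref{R3} we have $\|\big(R(\underline{U}_1)-R(\underline{U}_2)\big)Y_2\|_{\H^{\s}}\lesssim\|\underline{U}_1-\underline{U}_2\|_{\H^{s_0}}\|Y_2\|_{\H^{\s}}$. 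Both bounds are controlled by $\|\underline{U}_1-\underline{U}_2\|_{L^\infty([0,T],\H^s)}\,\|Y_2\|_{C([0,T],\H^{\s+2})}$. Finally, Corollary \ref{estimates-near-id} applied to the backward Cauchy problem for $Y_2$ gives $\|Y_2\|_{C([0,T],\H^{\s+2})}\lesssim\|G\|_{L^1([0,T],\H^{\s+2})}\lesssim\|G\|_{L^2([0,T],\H^{\s+2})}$, since $G$ itself is the only data (the target $Y_2(T)=0$).

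Putting these together, the energy estimate of Corollary \ref{estimates-near-id} for the equation satisfied by $W$ (target data $W(T)=0$, source term the last two terms above) yields
\begin{equation*}
\|(\mathcal{R}_1-\mathcal{R}_2)G\|_{\H^{\s}}=\|W(0)\|_{\H^{\s}}\lesssim \big\|\text{source}\big\|_{L^1([0,T],\H^{\s})}\lesssim \|\underline{U}_1-\underline{U}_2\|_{L^\infty([0,T],\H^s)}\|G\|_{L^2([0,T],\H^{\s+2})},
\end{equation*}
which is the claimed bound. The only mild subtlety — and the step I would be most careful about — is the loss of two derivatives: the difference estimate \eqref{Lip-simbolo} costs two derivatives on $Y_2$, which is precisely why the statement requires $G\in L^2([0,T],\H^{\s+2})$ rather than $\H^{\s}$; one must check that $Y_2$ indeed lies in $C([0,T],\H^{\s+2})$ with norm controlled by $\|G\|_{L^2([0,T],\H^{\s+2})}$, which is exactly the content of Corollary \ref{estimates-near-id} at regularity $\s+2$. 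Everything else is a routine Gronwall/Duhamel argument with the implicit constants absorbing the fixed (small) norms of $\underline{U}_1,\underline{U}_2$.
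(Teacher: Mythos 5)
Your argument is essentially identical to the paper's: write $W=Y_1-Y_2$, view $\ii(\mathscr{A}(\underline{U}_1)-\mathscr{A}(\underline{U}_2))Y_2$ as a source term, bound it via \eqref{Lip-simbolo} at the cost of two derivatives on $Y_2$, and close with the backward energy estimate of Corollary \ref{estimates-near-id}. The only discrepancy is that you included the terms $R(\underline{U}_i)Y_i$ in the defining equations, whereas $\mathcal{R}$ is defined from the simplified system without $R$ (the operator with $R$ is $\mathcal{R}_P$, treated separately in Lemma \ref{lem: contraction for R_P}); this is harmless, since dropping those terms gives exactly the paper's proof and keeping them only adds contributions controlled by \eqref{R3}.
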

\begin{proof}
For $G\in L^2([0,T],\H^{\s+2})$, let $Y_i\in C([0,T],\H^{\s+2})(i=1,2)$ be the solutions to the equations 
\begin{equation*}
    \partial_t Y_i=\ii \mathscr{A}(\underline{U}_i)Y_i+G,\quad Y_i(T)=0,i=1,2.
\end{equation*}
Then we set $Y=Y_1-Y_2$. $Y$ satisfies the equation
\begin{equation*}
    \partial_t Y=\ii \mathscr{A}(\underline{U}_1)Y+\ii(\mathscr{A}(\underline{U}_1)-\mathscr{A}(\underline{U}_2)) Y_2,\quad Y(T)=0.
\end{equation*}
By energy estimates, we have
\begin{equation*}
    \|Y_1(0)-Y_2(0)\|_{\H^{\s}}\lesssim \|(\mathscr{A}(\underline{U}_1)-\mathscr{A}(\underline{U}_2)) Y_2\|_{L^1([0,T],\H^{\s})}.
\end{equation*}
According to the estimate \eqref{Lip-simbolo}, we know that
\begin{align*}
    \|\left(\mathcal{R}_1-\mathcal{R}_2\right)G\|_{\H^{\s}}&\lesssim\|(\mathscr{A}(\underline{U}_1)-\mathscr{A}(\underline{U}_2)) Y_2\|_{L^1([0,T],\H^{\s})}\\
    &\lesssim \|\underline{U}_1-\underline{U}_2\|_{L^{\infty}([0,T],\H^s)}\|Y_2\|_{L^1([0,T],\H^{\s+2})}\\
    &\lesssim\|\underline{U}_1-\underline{U}_2\|_{L^{\infty}([0,T],\H^s)}\|G\|_{L^2([0,T],\H^{\s+2})}.
\end{align*}
\end{proof}
Then we look at the solution operator $\mathcal{S}$.
\begin{lemma}\label{lem: contraction for S}
Suppose that $s>\frac{d}{2}+2$, $\s\geq0$, then for $\epsilon_0$ sufficiently small,
\begin{equation*}
    \|\mathcal{S}_1-\mathcal{S}_2\|_{\mathscr{L}(\H^{\s+2},C([0,T],\H^{\s}))}\lesssim\|\underline{U}_1-\underline{U}_2\|_{L^{\infty}([0,T],\H^s)}.
\end{equation*}
\end{lemma}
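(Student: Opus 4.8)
The plan is to argue exactly as in the proof of Lemma \ref{lem: contraction for R}, with the backward Cauchy problem with prescribed final datum replaced by the forward Cauchy problem defining the solution operator $\mathcal{S}$. First I would fix $V_0\in\H^{\sigma+2}$ and, for $i=1,2$, set $V_i:=\mathcal{S}_iV_0\in C([0,T],\H^{\sigma+2})$, i.e. the solution of $\partial_tV_i=-\ii\mathscr{A}(\underline{U}_i)V_i$ with $V_i(0)=V_0$. Since $\mathscr{A}(\underline{U}_i)$ is self-adjoint on $\H^0$, the regularization scheme and the modified-energy estimates of Section \ref{sec:mod}, applied after the diagonalization of Proposition \ref{diago}, go through verbatim for the $-\ii\mathscr{A}$ flow; combined with Corollary \ref{estimates-near-id} this yields existence, persistence of regularity, and the a priori bound $\|V_i\|_{C([0,T],\H^{\sigma+2})}\lesssim\|V_0\|_{\H^{\sigma+2}}$, uniformly, the constant being controlled because $\|\underline{U}_i\|_{\H^{s_0}}\le\epsilon_0$ is small.

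Next I would set $V:=V_1-V_2$ and subtract the two equations, obtaining
\[
\partial_tV=-\ii\mathscr{A}(\underline{U}_1)V-\ii\big(\mathscr{A}(\underline{U}_1)-\mathscr{A}(\underline{U}_2)\big)V_2,\qquad V(0)=0.
\]
Treating the last term as a source and using Duhamel's formula together with the energy estimate for the homogeneous $-\ii\mathscr{A}(\underline{U}_1)$ flow (Corollary \ref{estimates-near-id}), one gets
\[
\|(\mathcal{S}_1-\mathcal{S}_2)V_0\|_{C([0,T],\H^{\sigma})}=\|V\|_{C([0,T],\H^{\sigma})}\lesssim\big\|\big(\mathscr{A}(\underline{U}_1)-\mathscr{A}(\underline{U}_2)\big)V_2\big\|_{L^1([0,T],\H^{\sigma})}.
\]
Then I would close the estimate exactly as in Lemma \ref{lem: contraction for R}: apply the Lipschitz bound \eqref{Lip-simbolo} of Proposition \ref{paralinearization} with $s\rightsquigarrow\sigma+2$ (legitimate since $\sigma+2-2=\sigma\ge0$), namely $\|(\mathscr{A}(\underline{U}_1)-\mathscr{A}(\underline{U}_2))W\|_{\H^{\sigma}}\lesssim\|\underline{U}_1-\underline{U}_2\|_{\H^{s_0}}\|W\|_{\H^{\sigma+2}}$ with $W=V_2$, use the uniform bound on $\|V_2\|_{C([0,T],\H^{\sigma+2})}$ from the first step, and bound $\|\underline{U}_1-\underline{U}_2\|_{\H^{s_0}}\le\|\underline{U}_1-\underline{U}_2\|_{\H^{s}}$ (valid since $s\ge s_0$). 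This produces
\[
\|(\mathcal{S}_1-\mathcal{S}_2)V_0\|_{C([0,T],\H^{\sigma})}\lesssim\|\underline{U}_1-\underline{U}_2\|_{L^{\infty}([0,T],\H^{s})}\,\|V_0\|_{\H^{\sigma+2}},
\]
which is the claim.

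I do not expect a genuine obstacle here: the lemma is essentially a transcription of Lemma \ref{lem: contraction for R}. The only two points deserving a word of care are that the modified-energy machinery of Section \ref{sec:mod} is insensitive to replacing $\ii\mathscr{A}(\underline{U})$ by $-\ii\mathscr{A}(\underline{U})$ (both are skew-self-adjoint on $\H^0$), so the a priori bounds on $\mathcal{S}_i$ and the uniform control of $V_2$ are available, and that the two-derivative loss built into \eqref{Lip-simbolo} is precisely what forces the statement to be phrased as boundedness $\H^{\sigma+2}\to C([0,T],\H^{\sigma})$ rather than $\H^\sigma\to C([0,T],\H^\sigma)$.
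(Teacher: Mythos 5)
Your argument is correct and coincides with the paper's own proof: the same decomposition $V=V_1-V_2$ with the source term $-\ii(\mathscr{A}(\underline{U}_1)-\mathscr{A}(\underline{U}_2))V_2$, the same energy/Duhamel estimate, and the same application of \eqref{Lip-simbolo} with $s\rightsquigarrow\sigma+2$. The only (immaterial) difference is the sign of $\ii\mathscr{A}$ in the flow defining $\mathcal{S}$, which, as you note, does not affect the energy estimates.
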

\begin{proof}
For $Z_0\in \H^{\s+2}$, let $Z_i\in C([0,T],\H^{\s+2})(i=1,2)$ be the solutions to the equations 
\begin{equation*}
    \partial_t Z_i=\ii \mathscr{A}(\underline{U}_i)Z_i,\quad Z_i(0)=Z_0,i=1,2.
\end{equation*}
Then we set $Z=Z_1-Z_2$. $Z$ satisfies the equation
\begin{equation*}
    \partial_t Z=\ii \mathscr{A}(\underline{U}_1)Z+\ii(\mathscr{A}(\underline{U}_1)-\mathscr{A}(\underline{U}_2)) Z_2,\quad Z(0)=0.
\end{equation*}
By energy estimates, we have
\begin{equation*}
    \|Z_1-Z_2\|_{C([0,T],\H^{\s})}\lesssim \|(\mathscr{A}(\underline{U}_1)-\mathscr{A}(\underline{U}_2)) Z_2\|_{L^1([0,T],\H^{\s})}.
\end{equation*}
According to the estimate \eqref{Lip-simbolo}, we know that
\begin{align*}
    \|\left(\mathcal{S}_1-\mathcal{S}_2\right)Z_0\|_{\H^{\s}}&\lesssim\|(\mathscr{A}(\underline{U}_1)-\mathscr{A}(\underline{U}_2)) Z_2\|_{L^1([0,T],\H^{\s})}\\
    &\lesssim \|\underline{U}_1-\underline{U}_2\|_{L^{\infty}([0,T],\H^s)}\|Z_2\|_{L^1([0,T],\H^{\s+2})}\\
    &\lesssim\|\underline{U}_1-\underline{U}_2\|_{L^{\infty}([0,T],\H^s)}\|Z_0\|_{\H^{\s+2}}.
\end{align*}
\end{proof}
Based on the two lemmas above, we look at the HUM operator $\mathscr{K}$.
\begin{lemma}\label{lem: contraction for K}
Suppose that $s>\frac{d}{2}+2$, $\s\geq0$, then for $\epsilon_0$ sufficiently small,
\begin{equation*}
    \|\mathscr{K}_1-\mathscr{K}_2\|_{\mathscr{L}(\H^{\s+2},\H^{\s})}\lesssim\|\underline{U}_1-\underline{U}_2\|_{L^{\infty}([0,T],\H^s)}.
\end{equation*}
\end{lemma}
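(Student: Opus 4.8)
The plan is to use the factorization $\mathscr{K}(\underline{U})=-\mathcal{R}(\underline{U})\,\chi_T^2\varphi_\omega^2\,\mathcal{S}(\underline{U})$ from \eqref{defi: HUM op} together with the contraction estimates for $\mathcal{R}$ and $\mathcal{S}$ just established in Lemmas~\ref{lem: contraction for R} and~\ref{lem: contraction for S}. First I would write the telescoping identity
\begin{equation*}
\mathscr{K}_1-\mathscr{K}_2=-(\mathcal{R}_1-\mathcal{R}_2)\,\chi_T^2\varphi_\omega^2\,\mathcal{S}_1-\mathcal{R}_2\,\chi_T^2\varphi_\omega^2\,(\mathcal{S}_1-\mathcal{S}_2),
\end{equation*}
which reduces the estimate to controlling the two displayed summands; in each of them exactly one of the differences $\mathcal{R}_1-\mathcal{R}_2$ or $\mathcal{S}_1-\mathcal{S}_2$ appears, carrying the factor $\|\underline{U}_1-\underline{U}_2\|_{L^\infty([0,T],\H^s)}$ at the price of two derivatives, while the remaining factors are bounded uniformly in $\underline{U}_i$ by Corollary~\ref{estimates-near-id} (recall that all the constants $\F(\|\underline{U}_i\|_{\H^{s_0}})$ are $\lesssim1$ since $\epsilon_0$ is small).

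For the first summand I would start from $Z_0\in\H^{\sigma+2}$: Corollary~\ref{estimates-near-id} gives $\mathcal{S}_1 Z_0\in C([0,T],\H^{\sigma+2})$ with $\|\mathcal{S}_1 Z_0\|_{C([0,T],\H^{\sigma+2})}\lesssim\|Z_0\|_{\H^{\sigma+2}}$, multiplication by the smooth function $\chi_T^2\varphi_\omega^2$ keeps it bounded in $L^2([0,T],\H^{\sigma+2})$, and then Lemma~\ref{lem: contraction for R} yields
\begin{equation*}
\|(\mathcal{R}_1-\mathcal{R}_2)\,\chi_T^2\varphi_\omega^2\,\mathcal{S}_1 Z_0\|_{\H^\sigma}\lesssim\|\underline{U}_1-\underline{U}_2\|_{L^\infty([0,T],\H^s)}\,\|Z_0\|_{\H^{\sigma+2}}.
\end{equation*}
For the second summand, Lemma~\ref{lem: contraction for S} gives $(\mathcal{S}_1-\mathcal{S}_2)Z_0\in C([0,T],\H^\sigma)$ with $\|(\mathcal{S}_1-\mathcal{S}_2)Z_0\|_{C([0,T],\H^\sigma)}\lesssim\|\underline{U}_1-\underline{U}_2\|_{L^\infty([0,T],\H^s)}\|Z_0\|_{\H^{\sigma+2}}$; multiplication by $\chi_T^2\varphi_\omega^2$ maps this boundedly into $L^2([0,T],\H^\sigma)$, and Corollary~\ref{estimates-near-id} gives $\mathcal{R}_2\colon L^2([0,T],\H^\sigma)\to\H^\sigma$ with norm $\lesssim1$. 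Summing the two contributions gives the asserted bound.

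There is no genuine obstacle here: the argument is pure bookkeeping of the two-derivative loss of $\mathcal{R}$ and $\mathcal{S}$ and of the side $\H^{s_0}$-norms hidden in the constants. The only point that requires a moment's care is that the two terms in the telescoping identity are ``balanced'' differently --- in the first we feed $\H^{\sigma+2}$ data into the difference $\mathcal{R}_1-\mathcal{R}_2$, whereas in the second we feed $\H^\sigma$ data into the single operator $\mathcal{R}_2$ --- but the relevant bound (the $\sigma$-loss contraction estimate of Lemma~\ref{lem: contraction for R} in the first case, the uniform $\H^\sigma$-boundedness of $\mathcal{R}_2$ from Corollary~\ref{estimates-near-id} in the second) is available in each case, so the derivative counts close exactly.
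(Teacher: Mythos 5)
Your proposal is correct and follows essentially the same route as the paper: the identical telescoping decomposition of $\mathscr{K}_1-\mathscr{K}_2$ into a term carrying $\mathcal{R}_1-\mathcal{R}_2$ and a term carrying $\mathcal{S}_1-\mathcal{S}_2$, estimated via Lemmas~\ref{lem: contraction for R} and~\ref{lem: contraction for S} together with the uniform bounds on the remaining factors. Your write-up is merely more explicit about the derivative bookkeeping, which the paper leaves implicit.
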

\begin{proof}
First recall the definition of $\mathscr{K}=-\mathcal{R}\chi_T^2\varphi_{\o}^2\mathcal{S}$.
\begin{align*}
    \mathscr{K}_1-\mathscr{K}_2&=-\mathcal{R}_1\chi_T^2\varphi_{\o}^2\mathcal{S}_1+\mathcal{R}_2\chi_T^2\varphi_{\o}^2\mathcal{S}_2\\
    &=(\mathcal{R}_2-\mathcal{R}_1)\chi_T^2\varphi_{\o}^2\mathcal{S}_1+\mathcal{R}_2\chi_T^2\varphi_{\o}^2(\mathcal{S}_2-\mathcal{S}_1).
\end{align*}
Therefore, by Lemma \ref{lem: contraction for R} and Lemma \ref{lem: contraction for S},
\begin{align*}
    \|\mathscr{K}_1-\mathscr{K}_2\|_{\mathscr{L}(\H^{\s+2},\H^{\s})}&\lesssim\|\mathcal{R}_1-\mathcal{R}_2\|_{\mathscr{L}(L^2([0,T],\H^{\s+2}),\H^{\s})}+\|\mathcal{S}_1-\mathcal{S}_2\|_{\mathscr{L}(\H^{\s+2},C([0,T],\H^{\s}))}\\
    &\lesssim \|\underline{U}_1-\underline{U}_2\|_{L^{\infty}([0,T],\H^s)}.
\end{align*}
\end{proof}
Now we look at the control operator $\mathscr{L}=\ii \chi_T\varphi_{\o}E\mathcal{S}\mathscr{K}^{-1}$.
\begin{lemma}\label{lem: contraction for L}
Suppose that $s>\frac{d}{2}+2$, $\s\geq0$, then for $\epsilon_0$ sufficiently small,
\begin{equation*}
    \|\mathscr{L}_1-\mathscr{L}_2\|_{\mathscr{L}(\H^{\s+2},C([0,T],\H^{\s}))}\lesssim\|\underline{U}_1-\underline{U}_2\|_{L^{\infty}([0,T],\H^s)}.
\end{equation*}
\end{lemma}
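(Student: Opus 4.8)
The plan is to start from the closed formula $\mathscr{L}(\underline{U})=-\ii\chi_T\varphi_\omega E\,\mathcal{S}(\underline{U})\mathscr{K}(\underline{U})^{-1}$ of \eqref{eq: control operator} and to expand the difference $\mathscr{L}_1-\mathscr{L}_2$ by the usual telescoping of a product. With the convention $\mathcal{S}_i=\mathcal{S}(\underline{U}_i)$, $\mathscr{K}_i=\mathscr{K}(\underline{U}_i)$, I would write
\[
\mathscr{L}_1-\mathscr{L}_2=-\ii\chi_T\varphi_\omega E\Big((\mathcal{S}_1-\mathcal{S}_2)\mathscr{K}_1^{-1}+\mathcal{S}_2\big(\mathscr{K}_1^{-1}-\mathscr{K}_2^{-1}\big)\Big),
\]
and for the last factor use the resolvent-type identity $\mathscr{K}_1^{-1}-\mathscr{K}_2^{-1}=\mathscr{K}_1^{-1}(\mathscr{K}_2-\mathscr{K}_1)\mathscr{K}_2^{-1}$. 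The whole estimate then reduces to chaining together mapping properties that are already available.

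The ingredients are: (i) $\chi_T\varphi_\omega E$ is bounded on $C([0,T],\H^{\s})$ for every $\s\geq0$ since $\varphi_\omega\in C^\infty(\T^d)$; (ii) by Propositions \ref{prop: HUM L^2-iosmorphism} and \ref{prop: HUM H^s isomorphism} (and interpolation for the intermediate exponents), each $\mathscr{K}_i^{-1}$ is bounded from $\H^\tau$ to $\H^\tau$ for every $\tau\geq0$, with operator norm $\lesssim\F(\|\underline{U}_i\|_{\H^{s_0}})\lesssim1$ — crucially, $\mathscr{K}^{-1}$ is a \emph{tame} isomorphism, i.e. it loses no derivatives; (iii) $\mathcal{S}_2$ is bounded $\H^\tau\to C([0,T],\H^\tau)$ for every $\tau\geq0$, by Corollary \ref{estimates-near-id}; (iv) the two contraction estimates already proved, namely Lemma \ref{lem: contraction for S} for $\mathcal{S}_1-\mathcal{S}_2$ and Lemma \ref{lem: contraction for K} for $\mathscr{K}_1-\mathscr{K}_2$, each of which costs exactly two derivatives and is bounded by $\|\underline{U}_1-\underline{U}_2\|_{L^\infty([0,T],\H^s)}$. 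Reading the compositions off: in the first term $\mathscr{K}_1^{-1}:\H^{\s+2}\to\H^{\s+2}$ followed by $(\mathcal{S}_1-\mathcal{S}_2):\H^{\s+2}\to C([0,T],\H^{\s})$; in the second term $\mathscr{K}_2^{-1}:\H^{\s+2}\to\H^{\s+2}$, then $(\mathscr{K}_2-\mathscr{K}_1):\H^{\s+2}\to\H^{\s}$ with norm $\lesssim\|\underline{U}_1-\underline{U}_2\|_{L^\infty\H^s}$, then $\mathscr{K}_1^{-1}:\H^{\s}\to\H^{\s}$, then $\mathcal{S}_2:\H^{\s}\to C([0,T],\H^{\s})$. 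In both cases the composition maps $\H^{\s+2}\to C([0,T],\H^{\s})$ with norm $\lesssim\|\underline{U}_1-\underline{U}_2\|_{L^\infty([0,T],\H^s)}$, and applying $\chi_T\varphi_\omega E$ preserves this, which yields the claim.

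The content of the argument is entirely bookkeeping of regularity indices: the two-derivative loss in Lemmas \ref{lem: contraction for S} and \ref{lem: contraction for K} is precisely why the statement is phrased with source space $\H^{\s+2}$ and target $\H^{\s}$, and the only point one has to be careful about is that $\mathscr{K}^{-1}$ does not eat any of that budget, which is guaranteed by the $\H^\tau$-isomorphism statements of Propositions \ref{prop: HUM L^2-iosmorphism} and \ref{prop: HUM H^s isomorphism}. There is no genuine analytic obstacle here and no new PDE estimate is needed; the mild subtlety is only to organize the composition so that every factor acts on the correct scale of spaces.
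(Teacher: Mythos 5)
Your proposal is correct and follows essentially the same route as the paper: the same telescoping of $\mathcal{S}\mathscr{K}^{-1}$, the same resolvent identity for $\mathscr{K}_1^{-1}-\mathscr{K}_2^{-1}$, and the same reduction to Lemmas \ref{lem: contraction for S} and \ref{lem: contraction for K} together with the boundedness of $\mathscr{K}^{-1}$. The only difference is that you spell out the bookkeeping of Sobolev indices more explicitly than the paper does.
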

\begin{proof}
By the definition of $\mathscr{L}$, it suffices to estimate $\|\mathcal{S}_1\mathscr{K}_1^{-1}-\mathcal{S}_2\mathscr{K}_2^{-1}\|_{\mathscr{L}(\H^{\s+2},C([0,T],\H^{\s}))}$. Using the identity
\begin{equation*}
    \mathcal{S}_1\mathscr{K}_1^{-1}-\mathcal{S}_2\mathscr{K}_2^{-1}=(\mathcal{S}_1-\mathcal{S}_2)\mathscr{K}_1^{-1}-\mathcal{S}_2(\mathscr{K}_1^{-1}-\mathscr{K}_2^{-1}),
\end{equation*}
combining with the invertibility of $\mathscr{K}$ (Proposition \ref{prop: HUM L^2-iosmorphism}), we obtain 
\begin{equation*}
\|\mathcal{S}_1\mathscr{K}_1^{-1}-\mathcal{S}_2\mathscr{K}_2^{-1}\|_{\mathscr{L}(\H^{\s+2},C([0,T],\H^{\s}))}\lesssim\|\mathcal{S}_1-\mathcal{S}_2\|_{\mathscr{L}(\H^{\s+2},C([0,T],\H^{\s}))}+\|\mathscr{K}_1^{-1}-\mathscr{K}_2^{-1}\|_{\mathscr{L}(\H^{\s+2},\H^{\s})}. 
\end{equation*}
For the term $\mathscr{K}_1^{-1}-\mathscr{K}_2^{-1}$, we estimate by
\begin{align*}
    \|\mathscr{K}_1^{-1}-\mathscr{K}_2^{-1}\|_{\mathscr{L}(\H^{\s+2},\H^{\s})}&= \|\mathscr{K}_1^{-1}(\mathscr{K}_2-\mathscr{K}_1)\mathscr{K}_2^{-1}\|_{\mathscr{L}(\H^{\s+2},\H^{\s})}\\
    &\lesssim\|\mathscr{K}_2-\mathscr{K}_1\|_{\mathscr{L}(\H^{\s+2},\H^{\s})}.
\end{align*}
Therefore, we conclude by Lemma \ref{lem: contraction for S} and Lemma \ref{lem: contraction for K}.
\end{proof}
Then we analyze the operator $\mathcal{S}_P$.
\begin{lemma}\label{lem: contraction for S_P}
Suppose that $s>\frac{d}{2}+2$, $\s\geq0$, then for $\epsilon_0$ sufficiently small,
\begin{equation*}
    \|(\mathcal{S}_P)_1-(\mathcal{S}_P)_2\|_{\mathscr{L}(\H^{\s+2},C([0,T],\H^{\s}))}\lesssim\|\underline{U}_1-\underline{U}_2\|_{L^{\infty}([0,T],\H^s)}.
\end{equation*}
\end{lemma}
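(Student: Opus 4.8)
Recall from the proof of Proposition~\ref{prop: construction of gHUM} that, for $\underline{U}\in\mathscr{C}^{1,s}(T,\epsilon_0)$, the operator $\mathcal{S}_P=\mathcal{S}_P(\underline{U})$ sends $Z_0\in\H^{\s}$ to the solution $Z\in C([0,T],\H^{\s})$ of \eqref{eq: new solution op}, i.e.
\begin{equation*}
\partial_t Z=\ii\mathscr{A}(\underline{U})Z-\ii\chi_T\varphi_{\omega}E\mathscr{L}(\underline{U})Z_0,\qquad Z(T)=0,
\end{equation*}
so that $\mathcal{S}_P$ depends on $\underline{U}$ only through $\mathscr{A}(\underline{U})$ and through the control operator $\mathscr{L}(\underline{U})$. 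The plan is the usual one for these contraction estimates: fix $Z_0\in\H^{\s+2}$, set $Z_i=(\mathcal{S}_P)_i Z_0$ for $i=1,2$, and subtract. Writing $Z=Z_1-Z_2$ one gets
\begin{equation*}
\partial_t Z=\ii\mathscr{A}(\underline{U}_1)Z+\ii\big(\mathscr{A}(\underline{U}_1)-\mathscr{A}(\underline{U}_2)\big)Z_2-\ii\chi_T\varphi_{\omega}E\big(\mathscr{L}_1-\mathscr{L}_2\big)Z_0,\qquad Z(T)=0,
\end{equation*}
a backward linear equation for $Z$ driven by $\underline{U}_1$ with two source terms and vanishing data at $t=T$.

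Next I would apply the (backward) energy estimate, i.e. Corollary~\ref{estimates-near-id} with the equivalent energy norm adapted to the equation, to obtain
\begin{equation*}
\|Z\|_{C([0,T],\H^{\s})}\lesssim\F(\|\underline{U}_1\|_{\H^{s_0}})\Big(\|(\mathscr{A}(\underline{U}_1)-\mathscr{A}(\underline{U}_2))Z_2\|_{L^1([0,T],\H^{\s})}+\|\chi_T\varphi_{\omega}E(\mathscr{L}_1-\mathscr{L}_2)Z_0\|_{L^1([0,T],\H^{\s})}\Big).
\end{equation*}
For the first term I would invoke the Lipschitz bound \eqref{Lip-simbolo} (applied with regularity index $\s+2$, which is admissible since $\s\geq0$) to get $\|(\mathscr{A}(\underline{U}_1)-\mathscr{A}(\underline{U}_2))Z_2\|_{\H^{\s}}\lesssim\|\underline{U}_1-\underline{U}_2\|_{L^{\infty}([0,T],\H^{s})}\|Z_2\|_{\H^{\s+2}}$, and then use the boundedness of $(\mathcal{S}_P)_2:\H^{\s+2}\to C([0,T],\H^{\s+2})$ established in Proposition~\ref{prop: construction of gHUM} to bound $\|Z_2\|_{C([0,T],\H^{\s+2})}\lesssim\|Z_0\|_{\H^{\s+2}}$. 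For the second term, $\chi_T\varphi_{\omega}E$ is bounded on $\H^{\s}$ and Lemma~\ref{lem: contraction for L} gives $\|(\mathscr{L}_1-\mathscr{L}_2)Z_0\|_{C([0,T],\H^{\s})}\lesssim\|\underline{U}_1-\underline{U}_2\|_{L^{\infty}([0,T],\H^{s})}\|Z_0\|_{\H^{\s+2}}$. Collecting the two estimates yields $\|Z\|_{C([0,T],\H^{\s})}\lesssim\|\underline{U}_1-\underline{U}_2\|_{L^{\infty}([0,T],\H^{s})}\|Z_0\|_{\H^{\s+2}}$, which is exactly the claimed bound on $(\mathcal{S}_P)_1-(\mathcal{S}_P)_2$.

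This lemma is essentially bookkeeping once the earlier contraction estimates are in hand, so the only real work has already been done: the genuine obstacle lies upstream, in Lemma~\ref{lem: contraction for L}, whose proof must handle the inversion of the HUM operator $\mathscr{K}$ and therefore ultimately rests on the $L^2$-observability of Proposition~\ref{prop:L^2 observability}. The one point of care here is the systematic two-derivative loss inherent in the quasi-linear structure (both \eqref{Lip-simbolo} and Lemma~\ref{lem: contraction for L} lose two derivatives), which forces the estimate to go from $\H^{\s+2}$ into $C([0,T],\H^{\s})$; one must also make sure the backward Cauchy problem for $Z$ is well posed with the target data $Z(T)=0$ (it is, since $\mathscr{L}_i$ null-controls the corresponding system, so in fact $Z(0)=0$ as well), so that Corollary~\ref{estimates-near-id} applies directly.
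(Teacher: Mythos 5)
Your proposal is correct and follows essentially the same route as the paper: subtract the two equations defining $(\mathcal{S}_P)_iZ_0$, apply the energy estimate to the resulting equation for $Z_1-Z_2$ with the two source terms $(\mathscr{A}_1-\mathscr{A}_2)Z_2$ and $\chi_T\varphi_{\omega}E(\mathscr{L}_2-\mathscr{L}_1)Z_0$, and control these via \eqref{Lip-simbolo} and Lemma \ref{lem: contraction for L} respectively. The paper's proof is exactly this computation.
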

\begin{proof}
For $Z_0\in \H^{\s+2}$, let $Z_i=(\mathcal{S}_P)_i Z_0\in C([0,T],\H^{\s+2})(i=1,2)$ be the solutions to the equations 
\begin{equation*}
    \partial_t Z_i=\ii \mathscr{A}_i Z_i-\ii \chi_T\varphi_{\o}E\mathscr{L}_i(Z_0),\quad Z(T)=0,i=1,2.
\end{equation*}
Then we set $Z=Z_1-Z_2$. $Z$ satisfies the equation
\begin{equation*}
    \partial_t Z=\ii \mathscr{A}_1Z+\ii(\mathscr{A}_1-\mathscr{A}_2) Z_2+\ii \chi_T\varphi_{\o}E(\mathscr{L}_2-\mathscr{L}_1)(Z_0),\quad Z(0)=Z(T)=0.
\end{equation*}
By energy estimates, we have
\begin{equation*}
    \|Z_1-Z_2\|_{C([0,T],\H^{\s})}\lesssim \|(\mathscr{A}_1-\mathscr{A}_2) Z_2\|_{L^1([0,T],\H^{\s})}+\|(\mathscr{L}_2-\mathscr{L}_1)(Z_0)\|_{L^1([0,T],\H^{\s})}.
\end{equation*}
According to the estimate \eqref{Lip-simbolo} and Lemma \ref{lem: contraction for L}, we know that
\begin{align*}
    &\|\left((\mathcal{S}_P)_1-(\mathcal{S}_P)_2\right)Z_0\|_{\H^{\s}}\lesssim\|(\mathscr{A}_1-\mathscr{A}_2) Z_2\|_{L^1([0,T],\H^{\s})}+\|(\mathscr{L}_2-\mathscr{L}_1)(Z_0)\|_{L^1([0,T],\H^{\s})}\\
    &\lesssim \|\underline{U}_1-\underline{U}_2\|_{L^{\infty}([0,T],\H^s)}\|Z_2\|_{L^1([0,T],\H^{\s+2})}+\|\mathscr{L}_2-\mathscr{L}_1\|_{\mathscr{L}(\H^{\s+2},C([0,T],\H^{\s}))}\|Z_0\|_{\H^{\s+2}}\\
    &\lesssim \|\underline{U}_1-\underline{U}_2\|_{L^{\infty}([0,T],\H^s)}\|Z_0\|_{\H^{\s+2}}.
\end{align*}
\end{proof}
Then we look at the refined range operator $\mathcal{R}_P$.
\begin{lemma}\label{lem: contraction for R_P}
Suppose that $s>\frac{d}{2}+2$, then for $\epsilon_0$ sufficiently small,
\begin{equation*}
    \|(\mathcal{R}_P)_1-(\mathcal{R}_P)_2\|_{\mathscr{L}(L^2([0,T],\H^{s}),\H^{s-2})}\lesssim\|\underline{U}_1-\underline{U}_2\|_{L^{\infty}([0,T],\H^{s-2})}.
\end{equation*}
\end{lemma}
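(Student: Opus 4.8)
The plan is to run the same ``difference of two solutions plus energy estimate'' scheme used for $\mathcal{R}$, $\mathcal{S}$, $\mathscr{K}$, $\mathscr{L}$ and $\mathcal{S}_P$ in Lemmas \ref{lem: contraction for R}--\ref{lem: contraction for S_P}, the only new ingredient being a careful bookkeeping of the Sobolev indices which lets one measure the Lipschitz factor in the weaker norm $\H^{s-2}$ at the cost of two derivatives on the source. First I would fix $G\in L^2([0,T],\H^s)$ and, for $i=1,2$, denote by $Y_i=(\mathcal{R}_P)_iG\in C([0,T],\H^s)$ the solution of $\partial_t Y_i=\ii\mathscr{A}(\underline{U}_i)Y_i+R(\underline{U}_i)Y_i+G$ with $Y_i(T)=0$; Corollary \ref{estimates-near-id} applied to the backward equation (exactly as in Lemma \ref{lem: contraction for R}), together with the smallness of $\|\underline{U}_i\|_{\H^{s_0}}$, gives $\|Y_i\|_{C([0,T],\H^s)}\lesssim\|G\|_{L^2([0,T],\H^s)}$. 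Setting $Y:=Y_1-Y_2$, the function $Y$ solves, with $Y(T)=0$,
\begin{equation*}
\partial_t Y=\ii\mathscr{A}(\underline{U}_1)Y+R(\underline{U}_1)Y+\ii\big(\mathscr{A}(\underline{U}_1)-\mathscr{A}(\underline{U}_2)\big)Y_2+\big(R(\underline{U}_1)-R(\underline{U}_2)\big)Y_2 .
\end{equation*}

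Next I would view the last two terms as a source term $\mathcal{G}$. Since $s>d/2+2$ forces $s-2>d/2$, I can apply \eqref{Lip-simbolo} with its low-regularity index equal to $s-2$ and \eqref{R3} with $\sigma=s_0=s-2$; using $Y_2(t)\in\H^s$ this yields $\|\mathcal{G}(t)\|_{\H^{s-2}}\lesssim\|\underline{U}_1-\underline{U}_2\|_{L^{\infty}([0,T],\H^{s-2})}\|Y_2(t)\|_{\H^s}$. An energy estimate for $Y$ at the level $\H^{s-2}$ --- again via Corollary \ref{estimates-near-id} for the backward problem, applicable because $\underline{U}_1\in\mathscr{C}^{1,s}(T,\epsilon_0)$, $s-2\geq0$ and $R(\underline{U}_1)$ is a bounded remainder --- gives $\|Y(0)\|_{\H^{s-2}}\lesssim\|\mathcal{G}\|_{L^1([0,T],\H^{s-2})}$. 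Combining this with $\|Y_2\|_{L^1([0,T],\H^s)}\lesssim\|Y_2\|_{C([0,T],\H^s)}\lesssim\|G\|_{L^2([0,T],\H^s)}$ then produces $\|((\mathcal{R}_P)_1-(\mathcal{R}_P)_2)G\|_{\H^{s-2}}=\|Y(0)\|_{\H^{s-2}}\lesssim\|\underline{U}_1-\underline{U}_2\|_{L^{\infty}([0,T],\H^{s-2})}\|G\|_{L^2([0,T],\H^s)}$, which is the claim.

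I do not expect a serious obstacle: this is the same mechanism as in Lemma \ref{lem: contraction for R}. The only point that needs care is the choice of $s-2$ as the low-regularity exponent in \eqref{Lip-simbolo} and \eqref{R3} --- legitimate precisely because $s>d/2+2$ --- which is what converts the loss of two derivatives on $G$, inevitable since $\mathscr{A}(\underline{U}_1)-\mathscr{A}(\underline{U}_2)$ is a second-order operator, into a Lipschitz bound measured in $\H^{s-2}$ rather than in $\H^s$.
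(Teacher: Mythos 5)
Your argument coincides with the paper's proof: the same backward difference equation for $Y=Y_1-Y_2$, the same treatment of $\ii(\mathscr{A}_1-\mathscr{A}_2)Y_2+(R_1-R_2)Y_2$ as a source estimated in $\H^{s-2}$ via \eqref{Lip-simbolo} and \eqref{R3} with low-regularity index $s-2>d/2$, and the same closing energy estimate at level $\H^{s-2}$. No gaps; the index bookkeeping you highlight is exactly what the paper relies on.
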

\begin{proof}
For $G\in L^2([0,T],\H^{s})$, let $Y_i\in C([0,T],\H^{s})(i=1,2)$ be the solutions to the equations 
\begin{equation*}
    \partial_tY_i=\ii \mathscr{A}_iY_i+R_iY_i+G,\quad Y_i(T)=0.,i=1,2.
\end{equation*}
Then we set $Y=Y_1-Y_2$. $Y$ satisfies the equation
\begin{equation*}
    \partial_tY=\ii \mathscr{A}_1Y+R_1Y+\ii(\mathscr{A}_1-\mathscr{A}_2) Y_2+(R_1-R_2)Y_2,\quad Y(T)=0.
\end{equation*}
By energy estimates, we have
\begin{equation*}
    \|Y_1(0)-Y_2(0)\|_{\H^{s-2}}\lesssim \|(\mathscr{A}_1-\mathscr{A}_2) Y_2\|_{L^1([0,T],\H^{s-2})}+\|(R_2-R_1)Y_2\|_{L^1([0,T],\H^{s-2})}.
\end{equation*}
According to the estimate \eqref{Lip-simbolo} and \eqref{R3}, we know that
\begin{align*}
    \|\left((\mathcal{R}_P)_1-(\mathcal{R}_P)_2\right)G\|_{\H^{s-2}}&\lesssim\|(\mathscr{A}_1-\mathscr{A}_2) Y_2\|_{L^1([0,T],\H^{s-2})}+\|(R_2-R_1)Y_2\|_{L^1([0,T],\H^{s-2})}\\
    &\lesssim \left(\|\underline{U}_1-\underline{U}_2\|_{L^{\infty}([0,T],\H^{s-2})}+\|R_2-R_1\|_{\mathscr{L}(\H^{s},\H^{s})}\right)\|Y_2\|_{L^1([0,T],\H^{s})}\\
    &\lesssim \|\underline{U}_1-\underline{U}_2\|_{L^{\infty}([0,T],\H^{s-2})}\|G\|_{L^2([0,T],\H^{s})}.
\end{align*}
\end{proof}
Then we consider the perturbation operator $\mathcal{E}$.
\begin{lemma}\label{lem: contraction for E}
Suppose that $s>\frac{d}{2}+2$, then for $\epsilon_0$ sufficiently small,
\begin{equation*}
    \|\mathcal{E}_1-\mathcal{E}_2\|_{\mathscr{L}(\H^{s},H^{s-2})}\lesssim\|\underline{U}_1-\underline{U}_2\|_{L^{\infty}([0,T],\H^{s-2})}.
\end{equation*}
\end{lemma}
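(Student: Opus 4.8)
The plan is to exploit the factorization $\mathcal{E}(\underline{U})=\mathcal{R}_P(\underline{U})\,R(\underline{U})\,\mathcal{S}_P(\underline{U})$ from \eqref{eq: perturbation op} and to expand $\mathcal{E}_1-\mathcal{E}_2$ into the three-term telescoping sum
\[
\mathcal{E}_1-\mathcal{E}_2=\big((\mathcal{R}_P)_1-(\mathcal{R}_P)_2\big)R_1(\mathcal{S}_P)_1+(\mathcal{R}_P)_2\big(R_1-R_2\big)(\mathcal{S}_P)_1+(\mathcal{R}_P)_2R_2\big((\mathcal{S}_P)_1-(\mathcal{S}_P)_2\big),
\]
where $R_i:=R(\underline{U}_i)$, and then to estimate each term by pairing the contraction bound for its single ``difference'' factor with uniform bounds for the remaining two. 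The uniform bounds I would invoke are: $(\mathcal{S}_P)_i\colon\H^{\sigma}\to C([0,T],\H^{\sigma})$ and $(\mathcal{R}_P)_i\colon L^2([0,T],\H^{\sigma})\to\H^{\sigma}$ are bounded for every $\sigma\ge 0$ (Corollary \ref{estimates-near-id} applied to the linear equations defining $\mathcal{S}_P$ and $\mathcal{R}_P$), and $R_i\colon\H^{\sigma}\to\H^{\sigma}$ has norm $\lesssim\epsilon_0$ by \eqref{R1}.

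For the first term I would compose $(\mathcal{S}_P)_1\colon\H^{s}\to C([0,T],\H^{s})$, then $R_1\colon\H^{s}\to\H^{s}$, then $(\mathcal{R}_P)_1-(\mathcal{R}_P)_2\colon L^2([0,T],\H^{s})\to\H^{s-2}$, the last estimated by $\|\underline{U}_1-\underline{U}_2\|_{L^{\infty}([0,T],\H^{s-2})}$ via Lemma \ref{lem: contraction for R_P}; using $C([0,T],\H^{s})\hookrightarrow L^2([0,T],\H^{s})$ this yields the claimed bound. For the second term, \eqref{R3} with its low exponent taken equal to $s-2$ (admissible since $s-2>d/2$) gives $\|R_1-R_2\|_{\mathscr{L}(\H^{s},\H^{s})}\lesssim\|\underline{U}_1-\underline{U}_2\|_{\H^{s-2}}$, and sandwiching it between $(\mathcal{S}_P)_1\colon\H^{s}\to C([0,T],\H^{s})$ and $(\mathcal{R}_P)_2\colon L^2([0,T],\H^{s})\to\H^{s}$ finishes it (no derivative is lost here). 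For the third term I would establish, by re-running the argument of Lemma \ref{lem: contraction for S_P} at regularity $s-2$ — measuring the source terms $(\mathscr{A}_1-\mathscr{A}_2)Z_2$ and $\chi_T\varphi_{\omega}E(\mathscr{L}_1-\mathscr{L}_2)Z_0$ in $\H^{s-2}$, bounding the first by \eqref{Lip-simbolo} with low exponent $s-2$ together with the $\H^{s}$-boundedness of $(\mathcal{S}_P)_2$, and the second by the contraction of $\mathscr{L}$ (Lemma \ref{lem: contraction for L}) — that $(\mathcal{S}_P)_1-(\mathcal{S}_P)_2\colon\H^{s}\to C([0,T],\H^{s-2})$ with norm $\lesssim\|\underline{U}_1-\underline{U}_2\|_{L^{\infty}([0,T],\H^{s-2})}$; then I would compose with $R_2\colon\H^{s-2}\to\H^{s-2}$ and $(\mathcal{R}_P)_2\colon L^2([0,T],\H^{s-2})\to\H^{s-2}$. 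Summing the three contributions proves the lemma.

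The step I expect to be the main obstacle, and the reason the telescoping must be arranged exactly as above, is the derivative bookkeeping: each of $(\mathcal{R}_P)_1-(\mathcal{R}_P)_2$ and $(\mathcal{S}_P)_1-(\mathcal{S}_P)_2$ costs two derivatives while $R_1-R_2$ costs none, so one must ensure every term carries exactly one two-derivative loss — which is then exactly absorbed by the $\H^{s-2}$ target rather than $\H^{s}$ — and that $R_2$ and $(\mathcal{R}_P)_2$ are applied at the already lowered regularity $s-2$ in the third term, so that no second loss ever stacks. A related point is that the $\H^{s-2}$-norm (and not a higher Sobolev norm) of $\underline{U}_1-\underline{U}_2$ suffices on the right-hand side precisely because the Lipschitz dependence on $\underline{U}$ of all the building blocks enters only through \eqref{Lip-simbolo} and \eqref{R3}, whose right-hand sides measure $\underline{U}_1-\underline{U}_2$ in the low norm; this is the same mechanism already exploited in the proof of Lemma \ref{lem: contraction for R_P}.
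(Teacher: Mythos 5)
Your proof is correct and follows essentially the same route as the paper: the identical three-term telescoping of $\mathcal{E}_1-\mathcal{E}_2$ through $\mathcal{R}_P R\,\mathcal{S}_P$, estimated term by term via Lemma \ref{lem: contraction for R_P}, \eqref{R3} and Lemma \ref{lem: contraction for S_P}. Your extra care in tracking where the two-derivative losses land, and in noting that the Lipschitz dependence on $\underline{U}_1-\underline{U}_2$ only ever enters through the low norms in \eqref{Lip-simbolo} and \eqref{R3} (so that $\H^{s-2}$ suffices on the right-hand side even though Lemma \ref{lem: contraction for S_P} is stated with $\H^{s}$ there), is a useful clarification of bookkeeping the paper leaves implicit.
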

\begin{proof}
We use the identity
\begin{align*}
\mathcal{E}_1-\mathcal{E}_2&=(\mathcal{R}_P)_1R_1(\mathcal{S}_P)_1- (\mathcal{R}_P)_2R_2(\mathcal{S}_P)_2\\
&=\left((\mathcal{R}_P)_1-(\mathcal{R}_P)_2\right)R_1(\mathcal{S}_P)_1+ (\mathcal{R}_P)_2(R_1-R_2)(\mathcal{S}_P)_1+(\mathcal{R}_P)_2R_2\left((\mathcal{S}_P)_1-(\mathcal{S}_P)_2\right).
\end{align*}
We conclude by Lemma \ref{lem: contraction for R_P}, Lemma \ref{lem: contraction for S_P} and \eqref{R3}.
\end{proof}
Finally, we consider the control operator $\mathcal{L}_P$.
\begin{lemma}\label{lem: contraction for L_P}
Suppose that $s>\frac{d}{2}+2$, then for $\epsilon_0$ sufficiently small,
\begin{equation}\label{Lip-control}
    \|(\mathcal{L}_P)_1-(\mathcal{L}_P)_2\|_{\mathscr{L}(\H^{s},C([0,T],H^{s-2}))}\lesssim\|\underline{U}_1-\underline{U}_2\|_{L^{\infty}([0,T],\H^{s-2})}.
\end{equation}
\end{lemma}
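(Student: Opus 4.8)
The plan is to start from the explicit formula $\mathcal{L}_P(\underline{U})=\mathscr{L}(\underline{U})\,(\mathrm{Id}+\mathcal{E}(\underline{U}))^{-1}$ produced in the proof of Proposition~\ref{prop: construction of gHUM}, and to combine a resolvent identity with the Lipschitz estimates already available for $\mathscr{L}$ and $\mathcal{E}$. First I would record the elementary boundedness facts that will be used repeatedly: by \eqref{R1} one has $\|\mathcal{E}_i\|_{\mathscr{L}(\H^\sigma,\H^\sigma)}\lesssim\epsilon_0$ both for $\sigma=s$ and for $\sigma=s-2$ (here $s-2>d/2$, so \eqref{R1} and the energy estimates underlying $\mathcal{R}_P$ and $\mathcal{S}_P$ are legitimately applied at that regularity), hence for $\epsilon_0$ small the operators $(\mathrm{Id}+\mathcal{E}_i)^{-1}$ are well defined and bounded by $\lesssim 1$ on both $\H^{s-2}$ and $\H^s$; and by Proposition~\ref{prop: regularity of control operator}, $\mathscr{L}_i\colon\H^\sigma\to C([0,T],\H^\sigma)$ is bounded by $\lesssim 1$ for every $\sigma\ge0$.

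Next I would expand the difference by telescoping, using $(\mathrm{Id}+\mathcal{E}_1)^{-1}-(\mathrm{Id}+\mathcal{E}_2)^{-1}=(\mathrm{Id}+\mathcal{E}_1)^{-1}(\mathcal{E}_2-\mathcal{E}_1)(\mathrm{Id}+\mathcal{E}_2)^{-1}$:
\[
(\mathcal{L}_P)_1-(\mathcal{L}_P)_2=(\mathscr{L}_1-\mathscr{L}_2)(\mathrm{Id}+\mathcal{E}_1)^{-1}+\mathscr{L}_2\,(\mathrm{Id}+\mathcal{E}_1)^{-1}(\mathcal{E}_2-\mathcal{E}_1)(\mathrm{Id}+\mathcal{E}_2)^{-1}.
\]
For the first summand, $(\mathrm{Id}+\mathcal{E}_1)^{-1}$ maps $\H^s$ into $\H^s$ with norm $\lesssim1$, and then $(\mathscr{L}_1-\mathscr{L}_2)\colon\H^{s}=\H^{(s-2)+2}\to C([0,T],\H^{s-2})$ with norm $\lesssim\|\underline{U}_1-\underline{U}_2\|_{L^\infty([0,T],\H^{s-2})}$ by the Lipschitz bound for the linear control operator. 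For the second summand the relevant composition is: $(\mathrm{Id}+\mathcal{E}_2)^{-1}$ maps $\H^s$ into $\H^s$ with norm $\lesssim1$; then $\mathcal{E}_2-\mathcal{E}_1$ maps $\H^s$ into $\H^{s-2}$ with norm $\lesssim\|\underline{U}_1-\underline{U}_2\|_{L^\infty([0,T],\H^{s-2})}$ by Lemma~\ref{lem: contraction for E}; then $(\mathrm{Id}+\mathcal{E}_1)^{-1}$ maps $\H^{s-2}$ into $\H^{s-2}$ with norm $\lesssim1$; and finally $\mathscr{L}_2$ maps $\H^{s-2}$ into $C([0,T],\H^{s-2})$ with norm $\lesssim1$. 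Adding the two bounds gives \eqref{Lip-control}.

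The only point that requires a word of care — and the nearest thing to an obstacle — is that Lemma~\ref{lem: contraction for L} was stated with the larger quantity $\|\underline{U}_1-\underline{U}_2\|_{L^\infty([0,T],\H^{s})}$ on the right, whereas the argument above needs it with $\|\underline{U}_1-\underline{U}_2\|_{L^\infty([0,T],\H^{s-2})}$. Since $s-2>d/2$, the estimate \eqref{Lip-simbolo} holds with $s_0$ replaced by $s-2$, i.e.\ $\|(\mathscr{A}(U)-\mathscr{A}(V))W\|_{\H^{\sigma}}\lesssim\|U-V\|_{\H^{s-2}}\|W\|_{\H^{\sigma+2}}$; re-running verbatim the proofs of Lemmas~\ref{lem: contraction for R}, \ref{lem: contraction for S}, \ref{lem: contraction for K} and \ref{lem: contraction for L} with this version — exactly as was already done for $\mathcal{R}_P$ and $\mathcal{E}$ in Lemmas~\ref{lem: contraction for R_P} and \ref{lem: contraction for E} — yields the refined bound $\|\mathscr{L}_1-\mathscr{L}_2\|_{\mathscr{L}(\H^{\sigma+2},C([0,T],\H^{\sigma}))}\lesssim\|\underline{U}_1-\underline{U}_2\|_{L^\infty([0,T],\H^{s-2})}$ used above. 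Beyond this bookkeeping no new idea enters: the two-derivative loss recorded in \eqref{Lip-control} is intrinsic and is precisely what the tame iteration scheme of Section~\ref{sec: nonlinear control} is designed to absorb.
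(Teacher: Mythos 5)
Your proposal is correct and follows essentially the same route as the paper: the identity $(\mathcal{L}_P)_1-(\mathcal{L}_P)_2=(\mathscr{L}_1-\mathscr{L}_2)(\mathrm{Id}+\mathcal{E}_1)^{-1}+\mathscr{L}_2(\mathrm{Id}+\mathcal{E}_1)^{-1}(\mathcal{E}_2-\mathcal{E}_1)(\mathrm{Id}+\mathcal{E}_2)^{-1}$ followed by Lemmas \ref{lem: contraction for L} and \ref{lem: contraction for E}. Your closing remark about upgrading Lemma \ref{lem: contraction for L} to have $\|\underline{U}_1-\underline{U}_2\|_{L^\infty([0,T],\H^{s-2})}$ on the right (valid since $s-2>d/2$ lets \eqref{Lip-simbolo} be applied at that regularity) is a genuine bookkeeping point that the paper leaves implicit, and your resolution of it is the intended one.
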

\begin{proof}
We use the identity
\begin{align*}
    (\mathcal{L}_P)_1-(\mathcal{L}_P)_2&=\mathscr{L}_1(Id+\mathcal{E}_1)^{-1}-\mathscr{L}_2(Id+\mathcal{E}_2)^{-1}\\
    &=(\mathscr{L}_1-\mathscr{L}_2)(Id+\mathcal{E}_1)^{-1}+\mathscr{L}_2(Id+\mathcal{E}_1)^{-1}(\mathcal{E}_2-\mathcal{E}_1)(Id+\mathcal{E}_2)^{-1}.
\end{align*}
We conclude by Lemma \ref{lem: contraction for L} and Lemma \ref{lem: contraction for E}.
\end{proof}

\section{Nonlinear null controllability}\label{sec: nonlinear control}
In section \ref{sec: L^2 controllability for the linear system}, we introduced an iterative scheme to construct a solution for the nonlinear control problem \eqref{eq: nonlinear control problem}. Suppose that $s\geq s_0>\frac{d}{2}+2$ is sufficiently large, fix $0<\epsilon_0<1$ sufficiently small, such that the estimates in previous sections hold. In this section, we need to be very careful with the constants. Let $C>10$ be a constant such that 
\begin{enumerate}
    \item For all $\underline{U}\in \mathscr{C}^{1,s}(T,\epsilon_0)$ and $\s\in[0,s]$, 
    \begin{equation*}
    \|\mathcal{L}_P(\underline{U})\|_{\mathscr{L}(\H^{\s},C([0,T],\mathcal{H}^{\s}))}+  \|\mathscr{A}(\underline{U})\|_{L^{\infty}([0,T],\mathscr{L}(\H^{\s},\H^{\s-2}))}\leq C.
    \end{equation*}
    \item For $\s\in[s-2,s]$, $G\in L^{\infty}([0,T],\H^{\s})$, if $Y$ satisfies the equation
    \begin{equation*}
     \partial_tY=\ii \mathscr{A}(\underline{U})Y+R(\underline{U})Y+G,\quad Y(T)=0\text{ or }Y(0)=0,   
    \end{equation*}
    then we have the energy estimate
    \begin{equation}\label{eq: uniform bound energy estimate}
     \|Y\|_{C([0,T],\H^{\s})}+ \|\partial_t Y\|_{L^{\infty}([0,T],\H^{\s-2})} \leq C\|G\|_{L^{\infty}([0,T],\H^{\s})}. 
    \end{equation}
    \item In all lemmas and propositions of Section \ref{sec: Contraction Estimates}, the statements remain valid after replacing l.h.s $\lesssim$ r.h.s. by l.h.s. $\leq C\times$r.h.s. 
\end{enumerate}
Let $U_{in}\in \H^s$ with $s\geq s_0>\frac{d}{2}+2$ be such $\|U_{in}\|\leq \frac{\epsilon_0}{C^8}$. We define a sequence  $(U^n,F^n)$ by the problems \eqref{n-prob} (with $(U^0,F^0)=(0,0)$). In this section we aim to prove that $\{F^n\}$ converges in $C^0([0,T],\H^s)$. Moreover we prove that the limit null controls the quasilinear Schr\"odinger equation \eqref{NLS}. For any operator $P(U^n)$ depending on $U^n$, for simplicity, we denote by $P_n=P(U^n)$. Recall the definition of
    $F^{n+1}=\mathcal{L}_P(U^{n})U_{in}\in C([0,T],\H^s)$ and that 
$U^{n+1}\in C([0,T],\H^s)$ is the solution to  \eqref{n-prob}.
%\begin{equation*}
%    \partial_t U^{n+1}=\ii E\mathscr{A}(U^n)U^{n+1}+
%R(U^n)U^{n+1}-\ii \chi_T\varphi_{\omega}E F^{n+1},\quad U^{n+1}(0)=U_{in}, U^{n+1}(T)=0.
%\end{equation*}
\begin{lemma}\label{iteration}
Consider \eqref{n-prob} and let $s_0>\frac{d}{2}+2$. If $\| U_{in}\|_{\H^s}\leq \frac{\epsilon_0}{C^8}$ is sufficiently small, then for any $s\geq s_0$, we have the following estimates
\begin{align}
&U^n\in \mathscr{C}^{1,s}(T,\epsilon_0)\label{un-bdd}\\
&U^{n+1}-U^n\in \mathscr{C}^{1,s-2}(T,\epsilon^n_0)\label{un-conv}\\
&F^n\in \mathscr{C}^{0,s}(T,\epsilon_0)\label{fn-bdd}\\
&F^{n+1}-F^n\in \mathscr{C}^{0,s-2}(T,\epsilon^n_0) \label{fn-conv}.
\end{align}
\end{lemma}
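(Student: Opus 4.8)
The plan is to prove the four statements \eqref{un-bdd}--\eqref{fn-conv} simultaneously by induction on $n$, exploiting the large constant $C$ fixed before the statement together with the smallness assumption $\|U_{in}\|_{\H^s}\le \epsilon_0/C^8$. The base case $n=0$ is trivial since $(U^0,F^0)=(0,0)$, while \eqref{un-conv} and \eqref{fn-conv} for $n=0$ read $U^1\in\mathscr{C}^{1,s-2}(T,\epsilon_0)$ and $F^1\in\mathscr{C}^{0,s-2}(T,\epsilon_0)$, which follow from the estimates below with $\epsilon_0^0$ interpreted appropriately (we will actually choose the contraction factor $\epsilon_0^n$ to be geometric, say $(1/2)^n\epsilon_0$ or $(C^{-2})^n\epsilon_0$; I will write $\epsilon_0^n$ to mean a sequence decaying geometrically to zero, whose precise rate is dictated by the contraction constant obtained).

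\textbf{Inductive step, boundedness \eqref{un-bdd} and \eqref{fn-bdd}.} Assume \eqref{un-bdd}--\eqref{fn-conv} hold up to index $n$. First I would bound $F^{n+1}=\mathcal{L}_P(U^n)U_{in}$ using property (1) of the constant $C$: since $U^n\in\mathscr{C}^{1,s}(T,\epsilon_0)$, the operator $\mathcal{L}_P(U^n)$ is bounded $\H^s\to C([0,T],\H^s)$ with norm $\le C$, hence $\|F^{n+1}\|_{\mathscr{C}^{0,s}}\le C\|U_{in}\|_{\H^s}\le C\cdot\epsilon_0/C^8=\epsilon_0/C^7\le\epsilon_0$, which gives \eqref{fn-bdd} at level $n+1$. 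Next, $U^{n+1}$ solves \eqref{n-prob}, i.e. $\partial_tU^{n+1}=\ii E\mathscr{A}(U^n)U^{n+1}+R(U^n)U^{n+1}-\ii\chi_T\varphi_\omega EF^{n+1}$ with $U^{n+1}(0)=U_{in}$; I would apply the uniform energy estimate \eqref{eq: uniform bound energy estimate} (property (2), in its forward/Duhamel form, together with Corollary \ref{estimates-near-id}) to get $\|U^{n+1}\|_{C([0,T],\H^s)}+\|\partial_tU^{n+1}\|_{L^\infty([0,T],\H^{s-2})}\le C(\|U_{in}\|_{\H^s}+\|F^{n+1}\|_{C([0,T],\H^s)})\le C(\epsilon_0/C^8+\epsilon_0/C^7)\le 2\epsilon_0/C^6\le\epsilon_0$, which gives \eqref{un-bdd} at level $n+1$. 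Here the powers of $C$ in the hypothesis $\|U_{in}\|\le\epsilon_0/C^8$ are exactly what absorbs the finitely many applications of $C$ in this chain.

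\textbf{Inductive step, contraction \eqref{un-conv} and \eqref{fn-conv}.} For the difference estimates I would write $F^{n+1}-F^n=\mathcal{L}_P(U^n)U_{in}-\mathcal{L}_P(U^{n-1})U_{in}=\big((\mathcal{L}_P)_n-(\mathcal{L}_P)_{n-1}\big)U_{in}$ and invoke Lemma \ref{lem: contraction for L_P} (with the constant $C$ built in via property (3)): $\|F^{n+1}-F^n\|_{C([0,T],\H^{s-2})}\le C\|U^n-U^{n-1}\|_{L^\infty([0,T],\H^{s-2})}\|U_{in}\|_{\H^s}\le C\cdot\epsilon_0^{n-1}\cdot\epsilon_0/C^8$, which is $\le\epsilon_0^n$ provided the geometric rate of $\epsilon_0^n$ is chosen so that $C\epsilon_0/C^8\le$ (ratio), e.g. ratio $=1/2$, which holds once $\epsilon_0$ (hence $\epsilon_0/C^7$) is small enough — this gives \eqref{fn-conv} at level $n+1$. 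For \eqref{un-conv}: $W:=U^{n+1}-U^n$ solves $\partial_tW=\ii E\mathscr{A}(U^n)W+R(U^n)W+\ii E\big(\mathscr{A}(U^n)-\mathscr{A}(U^{n-1})\big)U^n+\big(R(U^n)-R(U^{n-1})\big)U^n-\ii\chi_T\varphi_\omega E(F^{n+1}-F^n)$ with $W(0)=0$; applying \eqref{eq: uniform bound energy estimate} at regularity $s-2$ and using \eqref{Lip-simbolo}, \eqref{R3}, the bound $\|U^n\|_{\H^s}\le\epsilon_0$, and the just-proved estimate on $F^{n+1}-F^n$, one gets $\|W\|_{\mathscr{C}^{1,s-2}}\le C\big(\|U^n-U^{n-1}\|_{L^\infty\H^{s-2}}\|U^n\|_{C([0,T],\H^s)}+\|F^{n+1}-F^n\|_{C\H^{s-2}}\big)\le C(\epsilon_0^{n-1}\epsilon_0+\epsilon_0^n)\le\epsilon_0^n$, again for $\epsilon_0$ small enough and the geometric rate chosen suitably. \textbf{The main obstacle} I expect is bookkeeping: one must choose the decay rate of the sequence $\epsilon_0^n$ and the smallness of $\epsilon_0$ consistently so that every application of the contraction lemmas (each costing a factor $C$) is strictly contractive, and one must make sure the ``loss of two derivatives'' in the difference estimates (from $\H^s$ to $\H^{s-2}$, intrinsic to the quasilinear nature via \eqref{Lip-simbolo}) is harmless — it is, because the boundedness \eqref{un-bdd}, \eqref{fn-bdd} is at the top regularity $s$ and only the convergence is measured at $s-2$, so interpolation is not even needed at this stage; the genuine interpolation to recover convergence in $C^0([0,T],\H^s)$ is deferred to the subsequent passage to the limit.
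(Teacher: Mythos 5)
Your proposal is correct and follows essentially the same route as the paper: induction on $n$, with \eqref{fn-bdd} from the uniform bound on $\mathcal{L}_P$, \eqref{un-bdd} from the energy estimate of Corollary \ref{estimates-near-id}, \eqref{fn-conv} from Lemma \ref{lem: contraction for L_P}, and \eqref{un-conv} from the energy estimate on the difference equation combined with \eqref{Lip-simbolo} and \eqref{R3}. The only cosmetic difference is that the paper reads $\epsilon_0^n$ literally as the $n$-th power of $\epsilon_0$ (which your own per-step contraction factor $C\epsilon_0/C^8$ in fact delivers), rather than as a generic geometric sequence.
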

\begin{proof}
We proceed by induction, the first step is trivial. We suppose the statement true at rank $n$ and we prove it at rank $n+1$.
We use the notation introduced below \eqref{eq: simplified control system-L^2}. Let us prove \eqref{un-bdd} at rank $n+1$. The \eqref{un-bdd} is a consequence of the energy estimates by Corollary \ref{estimates-near-id}, indeed we have
\begin{equation}\label{eq: U^n+1-estimate}
\begin{aligned}
||U^{n+1}||_{C([0,T];\H^s)}+||\partial_t U^{n+1}||_{L^{\infty}([0,T]; {\H}^{s-2})}&\leq C\|F^{n+1}\|_{L^{\infty}([0,T],\H^s)}  \\
            &\leq C\|(\mathcal{L}_P)_{n}\|_{\mathscr{L}(\H^{s},C([0,T],\mathcal{H}^{s}))}\|U_{in}\|_{\H^s}\\
            &\leq C^2\times \frac{\epsilon_0}{C^8}=\frac{\epsilon_0}{C^6}.
\end{aligned}
\end{equation}
This gives us that $U^{n+1}\in \mathscr{C}^{1,s}(T,\epsilon_0)$ by $C>10$. Concerning the \eqref{fn-bdd} at rank $n+1$, one has to use Proposition \ref{prop: construction of gHUM},
\begin{align*}
||F^{n+1}||_{C([0,T];\H^s)}&=||(\mathcal{L}_P)_{n}U_{in}||_{C([0,T];\H^s)}\\
&\leq C\|U_{in}\|_{\H^s}\leq C\times \frac{\epsilon_0}{C^8}=\frac{\epsilon_0}{C^7}.
\end{align*}
This implies that $F^{n+1}\in \mathscr{C}^{0,s}(T,\epsilon_0)$. We prove \eqref{fn-conv} at rank $n+1$. By Proposition \ref{prop: construction of gHUM} we have
\begin{equation}\label{eq:F^n+1-conv}
\begin{aligned}
\|F^{n+2}-F^{n+1}\|_{C([0,T];\H^{s-2})}&=\|((\mathcal{L}_P)_{n+1}-(\mathcal{L}_P)_{n})U_{in}\|_{\H^{s-2}}\\
&\stackrel{\eqref{Lip-control}}{\leq} C \|U^{n+1}-U^{n}\|_{\H^{s-2}}\|U_{in}\|_{\H^{s}}\\
&\stackrel{\eqref{un-conv}}{\leq} C\epsilon_0^n\times\frac{\epsilon_0}{C^8}=\frac{\epsilon_0^{n+1}}{C^7},
\end{aligned}
\end{equation}
one concludes by noticing that $C>10$.\\
Let us prove \eqref{un-conv} at rank $n+1$. We set $W^{n+1}:=U^{n+2}-U^{n+1}$, then we have 
\begin{equation}\label{prova-un-conv}
\begin{aligned}
\partial_t W^{n+1}&-\ii \A_{n+1}W^{n+1}-R_{n+1}W^{n+1}\\
&+\ii(\A_n-\A_{n+1})U^{n+1}+(R_n-R_{n+1})U^{n+1}\\
&+\ii\chi_T\varphi_{\omega}E(F^{n+2}-F^{n+1})=0,
\end{aligned}
\end{equation}
with $W^{n+1}(0)=0$.
We apply Corollary \ref{estimates-near-id} to the equation above with $F=\ii(\A_n-\A_{n+1})U^{n+1}+(R_n-R_{n+1})U^{n+1}
+\ii\chi_T\varphi_{\omega}E(F^{n+2}-F^{n+1})$ and we get
\begin{align*}
||W^{n+1}||_{C([0,T];\H^{s-2})}+||\partial_t W^{n+1}||_{L^{\infty}([0,T]; {\H}^{s-4})}&\leq C\|\A_{n+1}-\A_n)U^{n+1}\|_{L^{\infty}([0,T],\H^{s-2})}\\\
&+C\|(R_{n+1}-R_n))U^{n+1}\|_{L^{\infty}([0,T],\H^{s-2})}\\
&+C\|F^{n+2}-F^{n+1}\|_{L^{\infty}([0,T],\H^{s-2})}.
\end{align*}
By using \eqref{Lip-simbolo} and \eqref{R3}, we obtain
\begin{align*}
\|(\A_{n+1}-\A_{n})U^{n+1}\|_{L^{\infty}([0,T],\H^{s-2})}&\leq C \|U^{n+1}-U^{n}\|_{\H^{s-2}}\|U^{n+1}\|_{\H^s} ,\\
\|(R_{n+1}-R_n))U^{n+1}\|_{L^{\infty}([0,T],\H^{s-2})}&\leq C \|U^{n+1}-U^{n}\|_{\H^{s-2}}\|U^{n+1}\|_{\H^s}.
\end{align*}
By \eqref{eq: U^n+1-estimate}, \eqref{un-conv} at rank $n$, and \eqref{eq:F^n+1-conv},  
\begin{align*}
||W^{n+1}||_{C([0,T];\H^{s-2})}+||\partial_t W^{n+1}||_{L^{\infty}([0,T]; {\H}^{s-4})}&\leq 2C^2\|U^{n+1}-U^{n}\|_{\H^{s-2}}\|U^{n+1}\|_{\H^s}+\frac{\epsilon_0^{n+1}}{C^7}\\
&\leq  2C^2\epsilon_0^{n}\times\frac{\epsilon_0}{C^7}+\frac{\epsilon_0^{n+1}}{C^7}\leq\epsilon_0^{n+1}.
\end{align*}
\end{proof}

We are now in position to prove Theorem \ref{main-null}.
\begin{proof}[Proof of Theorem \ref{main-null}]
By Lemma \ref{iteration} we deduce that $U^n$ and $F^n$ are Cauchy sequences in $C^0([0,T];\H^{s-2})$, therefore they converge to some limites $U$ and $F$ in $C^0([0,T];\H^{s-2})$ respectively. Using interpolation and \eqref{un-bdd}, \eqref{fn-bdd} we may prove that $U^n$ and $F^n$ strongly converge in $C^0([0,T];\H^{s'})$, for any $s'<s$. For this reason $U$ and $F$ belong to $C^0([0,T];\H^{s'}) \cap L^{\infty}([0,T];\H^s)$. Passing to the limit in \eqref{n-prob}, we deduce that $U$ and $F$, of the form \eqref{matriciozze}, satisfy \eqref{NLSpara} so that $u$ and $f$ satisfy \eqref{NLS} with $u(T)=0$.
 It remains to prove that  $U$ and $F$ are in  $C^0([0,T];\H^{s})$. We use the Bona-Smith technique, we define $U_0^N:=\sum_{|k|\leq N}\hat{U}_{0,k}e^{\ii k\cdot x}$, where $U_0$ is the initial condition of \eqref{NLSpara} and $\hat{U}_{0,k}$ is the $k$-th Fourier coefficient of $U_0$. Since $U_0^N$ is $C^{\infty}$, then the solution $U^N$ of \eqref{NLSpara} belongs to $C^0([0,T];\H^s)$ for any $s> d/2+2$, the same holds true for $F^N$, which is the control for the equation with initial condition $U_0^N$. We immediately note that, thanks to \eqref{Lip-control}, we have 
 \begin{equation}\label{pupi}
 \|F-F^N\|_{C([0,T],\H^s)}\leq C \|U^N-U\|_{L^{\infty}([0,T],\H^{s})}(\|U_0^N\|_{\H^s}+\|U_0\|_{\H^s}).
 \end{equation}
 Let $W^N=U-U^N$ and let $\sigma=s-2-\varepsilon$. The function $W^N$ solves the equation 
 \begin{align*}
 \partial_t W^N&+\A(U)W^N+R(U)W^N\\
 &+\ii(\A(U^N)-\A(U))U^N +(R(U^N)-R(U))U^N=F^N-F.
 \end{align*}
 We use Corollary \ref{estimates-near-id} to establish the existence of the solution of the problem $ \partial_t W^N+\A(U)W^N+R(U)W^N=F^N-F$, then by Duhamel formula, the estimate on $\|F-F^N\|_{\H^s}$ in \eqref{pupi}, \eqref{Lip-simbolo}, \eqref{R3} we obtain 
 \begin{equation*}
 \|W^N\|_{\H^{\s}}\leq \|U_0-U_0^N\|_{\H^{\s}}+C\int_0^t\|W^N\|_{\H^{\s}}(\|U\|_{\H^{\s}}+\|U^N\|_{\H^{\s+2}}).    
 \end{equation*}
 The sequence $U^N$ is uniformly bounded in $\H^{\s+2}$, since $\s+2<s$. By Gronwall  inequality we conclude that $\|W^N\|_{\H^{\s}}\leq \|U_0-U_0^N\|_{\H^{\s}}$, which by using smoothing properties of the projector may be bounded by $\|W^N\|_{\H^{\s}}\leq N^{-2-\varepsilon}\|U_0\|_{\H^s}$. We are ready to prove the convergence in the high norm $\H^s$. Doing a similar computation we may prove that $\|W^N\|_{\H^s}\leq \|U_0^N-U_0\|_{\H^s}+\|W^N\|_{H^{\s}}\|U_0^N\|_{H^{s+2}}$.
 Since $\|U_0^N\|_{H^{s+2}}\leq N^2\|U_0\|_{\H^s}$ we conclude by using the estimate on $\|W^N\|_{\H^{\s}}$. As a consequence of \eqref{pupi} we also get that $F^N-F$ strongly converges to $0$ in $C^0([0,T];\H^s)$.
\end{proof}

\section{Exact controllability}\label{sec: establish the exact controllability}
In this section, we deduce Theorem \ref{main} from Theorem \ref{main-null}. We first consider the time reversed equation (that is, the equation obtained by the change of variable $t\mapsto-t$) of \eqref{eq: nonlinear control problem} as follows:
\begin{equation}\label{eq: time reversed nonlinear eq}
 -\partial_t V=-\ii\A(V) V+R(V)V-\ii\chi_T\varphi_{\omega}EF_V  
\end{equation}
After repeating the procedure presented in Section \ref{sec: L^2 controllability for the linear system}, Section \ref{sec: H^s linear control} and Section \ref{sec: nonlinear control}, we could prove that the null controllability holds the time reversed system \eqref{eq: time reversed nonlinear eq} as well, with the same proof. Now we choose $u_{in},u_{end}\in H^s(\T^d,\C)$, satisfying that
\begin{equation*}
 \|u_{in}\|_{H^{s}}+\|u_{end}\|_{H^{s}}<\epsilon_0,   
\end{equation*}
with $\epsilon_0$ sufficiently small. We construct states $U_{in}$ and $U_{end}$ by
\begin{equation}
    U_{in}=\vect{u_{in}}{\bar{u}_{in}},\quad U_{end}=\vect{u_{end}}{\bar{u}_{end}}.
\end{equation}
Then there exists $F_W\in C([0,\frac{T}{2}],\H^s)$ which null controls the system \eqref{eq: nonlinear control problem}, such that
\begin{equation}
\left\{
\begin{array}{l}
     \partial_t W=-\ii\A(W) W+R(W)W-\ii\chi_{\frac{T}{2}}\varphi_{\omega}EF_W, \\
     W|_{t=0}=U_{in},\\
     W|_{t=\frac{T}{2}}=0.
\end{array}
\right.
\end{equation}
Similarly, there exists $F_V\in C([0,\frac{T}{2}],\H^s)$ which null controls the system \eqref{eq: time reversed nonlinear eq}, such that
\begin{equation}
\left\{
\begin{array}{l}
     -\partial_t V=-\ii\A(V) W+R(V)V-\ii\chi_{\frac{T}{2}}\varphi_{\omega}EF_V, \\
     V|_{t=0}=U_{end},\\
     V|_{t=\frac{T}{2}}=0.
\end{array}
\right.
\end{equation}
Moreover, we know that the solutions $W,V\in C([0,\frac{T}{2}],\H^s)$. Now we define $U\in C([0,T],\H^s)$ and $F\in C([0,T],\H^s)$ by
\begin{equation*}
    U(t)=\left\{
    \begin{array}{ll}
         W(t)&t\in[0,\frac{T}{2}],  \\
         V(T-t)&t\in(\frac{T}{2},T].
    \end{array}
    \right.\quad \text{ and }
    F(t)=\left\{
    \begin{array}{ll}
         \chi_{\frac{T}{2}}(t)F_W(t)&t\in[0,\frac{T}{2}],  \\
         \chi_{\frac{T}{2}}(T-t)F_V(T-t)&t\in(\frac{T}{2},T].
    \end{array}
    \right.
\end{equation*}
Indeed, $F$ continues in time, since the cut-off function vanishes near $t=\frac{T}{2}$. $U$ solves the equation:
\begin{equation}
 \left\{
\begin{array}{l}
     \partial_t U=-\ii\A(U) U+R(U)U-\ii\varphi_{\omega}EF, \\
     U|_{t=0}=W|_{t=0}=U_{in},\\
     U|_{t=T}=V(T-t)|_{t=T}=V|_{t=0}=U_{end}.
\end{array}
\right.   
\end{equation}
This completes the proof of Theorem \ref{main}.

\def\cprime{$'$}

%\bigskip
%\begin{thebibliography}{12}

%\end{thebibliography}


\begin{thebibliography}{10}%{10}

\bibitem{ABK}
T. Alazard, P. Baldi and D. Han-Kwan, 
\newblock{Control of water waves.} 
\newblock{\em J. Eur. Math. Soc. (JEMS)} 20, no. 3, 657-745, (2018).

\bibitem{AM}
N. Anantharaman, F. Maci\`a.
\newblock {S}emiclassical measures for the Schr\"odinger equation on the torus. 
\newblock{\em J. Eur. Math.
Soc. (JEMS)} 16, no. 6, 1253-1288, 2014.

\bibitem{BHM}
P.~Baldi, E.~Haus, and R.~Montalto.
\newblock Controllability of quasi-linear {H}amiltonian {N}{L}{S} equations.
\newblock {\em J. Differential Equations}, 264(3):1789-1840, 2018.
%\newblock DOI:10.1016/j.jde.2017.10.009.

\bibitem{BLR}
Bardos, C., Lebeau, G., Rauch. J. 
\newblock Sharp sufficient conditions for the observation, control, and stabilization of waves from the boundary. 
\newblock{\em SIAM J. Control Optim.} 30(5), 1024–1065, 1992.

\bibitem{BL}
K. Beauchard and C. Laurent. 
\newblock{L}ocal controllability of 1D linear and nonlinear Schr\"odinger equations with bilinear
control. 
\newblock{\em J. Math. Pures Appl.} (9) 94, no. 5, 520-554, 2010.

\bibitem{BMM1}
M. Berti, A. Maspero and F. Murgante.
\newblock{Local well posedness of the Euler-Korteweg
equations on $\mathbb{T}^{d}$ }. 
\newblock{\em Journal of Dynamics and Differential equations}, 33:1475-1513, 2021.

\bibitem{bony}
J.~M. Bony.
\newblock {C}alcul symbolique et propagation des singularit\'es pour les
  \'equations aux d\'eriv\'ees partielles non lin\'eaire.
\newblock {\em Ann. Sci. \'Ecole Norm. Sup.}, 14:209--246, 1981.


\bibitem{boro-ga}
A.~V. Borovskii and A.L. Galkin.
\newblock {D}ynamical Modulation of an Ultrashort High-Intensity Laser Pulse in Matter. 
\newblock {\em JETP}, 77(4):209--246, 1993.


\bibitem{Burq-Gerard-Tzvetkov}
N.~Burq, P.~Gérard, and N.~Tzvetkov. 
\newblock{{S}trichartz inequalities and the nonlinear {S}chrödinger equation on compact manifolds.}
\newblock{\em American Journal of Mathematics},  126.3: 569-605, 2004.

\bibitem{DGL}
B. Dehman, P. G\'erard, G. Lebeau. 
\newblock{Stabilization and control for the nonlinear Schr\"odinger equation on a
compact surface.}
\newblock{\em Math. Z.}  254, no. 4, 729-749, 2006.

\bibitem{FGI}
R. Feola, B. Gr\'ebert and F. Iandoli.
\newblock{{L}ong time solutions for quasi-linear Hamiltonian perturbations of {S}chr\"odinger and Klein-Gordon equations on tori.} 
\newblock{\em To appear on Analysis and PDE}, 2022.

\bibitem{FIAP}
R.~Feola and F.~Iandoli.
\newblock {L}ocal well-posedness for quasi-linear {N}{L}{S} with large {C}auchy
  data on the circle.
\newblock {\em Annales de l'Institut Henri Poincar\'e (C) Analyse non
  lin\'eaire}, 36(1):119--164, 2018.

\bibitem{FIJMPA}
R. Feola and F. Iandoli.
\newblock{{L}ocal well-posedness for the quasi-linear Hamiltonian Schr\"odinger equations on tori.}
\newblock{\em Journal des math\'ematiques pures et appliqu\'ees}, 157, 243-281, 2022.

\bibitem{goldman}
M.~Goldman and M.V.~Porkolab.
\newblock Upper hybrid solitons and oscillating two-stream instabilities.
\newblock {\em Physics of Fluids}, 19:872--881, 1976.

\bibitem{hasse}
R.W. ~Hasse.
\newblock A general method for the solution of nonlinear soliton and kink
  {S}chr\"odinger equations.
\newblock {\em Z. Physik B}, 3:83--87, 1980.

\bibitem{iandoliKdV}
F. Iandoli.
\newblock{{O}n the {C}auchy problem for quasi-linear Hamiltonian KdV-type equations.}
\newblock{\em Qualitative Properties of Dispersive PDEs. INdAM 2021. Springer INdAM Series}, vol 52. Springer, Singapore., 2022.



\bibitem{Jaff}
S. Jaffard. 
\newblock {C}ontr\^ole interne exact des vibrations d'une plaque rectangulaire. 
\newblock{\em Portugal. Math.}  47, no. 4,
423-429, 1990.

\bibitem{Kato-loc}
T.~Kato.
\newblock {\em {S}pectral Theory and Differential Equations. {L}ecture Notes in
  Mathematics, (eds.) Everitt, W. N.}, volume 448, chapter ``{Q}uasi-linear
  equations evolutions, with applications to partial differential equations''.
\newblock Springer, Berlin, Heidelberg, 1975.

\bibitem{Laurent}
C. Laurent. 
\newblock{I}nternal control of the Schr\"odinger equation. 
\newblock{\em Math. Control Relat. Fields}  4, no. 2, 161-186, 2014.



\bibitem{LLNR}
J.~Laurie, V.S. ~L'Vov, S.V. ~Nazarenko, and O.~Rudenko.
\newblock {I}nteraction of {K}elvin waves and nonlocality of energy transfer in
  superfluids.
\newblock {\em Phys. Rev. B}, 81, 2010.
%

\bibitem{Lebeau96}
G. Lebeau.
\newblock{ \'Equations des ondes amorties.}
\newblock{ \em S\'eminaire \'equations aux d\'riv\'eespartielles (Polytechnique)}, 1–14, 1996.

\bibitem{Lebeau}
G. Lebeau.
\newblock {C}ontr\^ole de l'equation de Schr\"odinger. 
\newblock{\em J. Math. Pures Appl.} (9) 71, no. 3, 267-291, 1992.


\bibitem{sergio}
A.M.~Litvak and A.G.~Sergeev.
\newblock One dimensional collapse of plasma waves.
\newblock {\em JETP, Letters}, 194:517--520, 1978.
%

\bibitem{Macia}
F.~Maci{\`a}.
\newblock {S}emiclassical measures and the {S}chr{\"o}dinger flow on {R}iemannian
  manifolds.
\newblock {\em {N}onlinearity}, 22(5):1003, 2009.

\bibitem{fedayn}
V.K. ~Makhankov and V.G.~Fedyanin.
\newblock Non-linear effects in quasi-one- dimensional models of condensed
  matter theory. 
  \newblock{\em Physics reports}, 104: 1--86, 1984.


\bibitem{Metivier}
G.~M\'etivier.
\newblock {\em {P}ara-{D}ifferential {C}alculus and {A}pplications to the
  {C}auchy {P}roblem for {N}onlinear {S}ystems}, volume~5.
\newblock Edizioni della Normale, 2008.





\bibitem{Zhu}
H.~Zhu. 
\newblock{{C}ontrol of three dimensional water waves.}
\newblock{\em {A}rchive for {R}ational {M}echanics and {A}nalysis}, 236.2: 893-966, 2020.

\bibitem{ZZ}
E. Zuazua.
\newblock {R}emarks on the controllability of the Schr\"odinger equation. 
\newblock{ \em {Q}uantum control: mathematical and
numerical challenges}, 193-211, CRM Proc. Lecture Notes, 33, Amer. Math. Soc., Providence, RI, 2003.

\bibitem{Zworski}
M.~Zworski. 
\newblock{\em {S}emiclassical {A}nalysis.}
\newblock Vol. 138. American Mathematical Society, 2022.


\end{thebibliography}
\end{document}